\documentclass[11 pt,a4paper,twoside,reqno]{amsart} 
\usepackage{amsfonts,amssymb,amscd,amsmath,enumerate,verbatim,calc}
\usepackage{amsmath,amssymb,amsfonts, mathrsfs, amsthm}
\usepackage{amscd, amssymb, latexsym, amsmath, amscd}
\usepackage[normalem]{ulem}

\textwidth = 15 cm 
\textheight = 22.7 cm 
\topmargin = 0.5 cm 
\oddsidemargin = .5 cm 
\evensidemargin = .5 cm 
\usepackage{amsmath}
\usepackage{amsfonts}
\usepackage{amssymb}
\usepackage{setspace}
\usepackage{amsthm,mathrsfs,textcomp}
\usepackage[utf8]{inputenc}	
\usepackage[T1]{fontenc}	
\usepackage{mathtools}
\usepackage[super]{nth}

\usepackage[all]{xy}
\usepackage{pb-diagram}
\usepackage{enumerate}
\usepackage[hidelinks]{hyperref}
\usepackage[capitalize]{cleveref} 
\usepackage{tikz, tikz-cd} 
\usetikzlibrary{positioning}
\usetikzlibrary{calc, arrows, decorations.markings} \tikzset{>=latex}
\usepackage{parskip, verbatim}
\usepackage{microtype,lmodern} 
\usepackage[utf8]{inputenc} 
\usepackage[T1]{fontenc} 
\usepackage{enumerate,comment,braket,xspace,csquotes} 
\usepackage{multirow}
\usepackage{stmaryrd}

\usepackage{xcolor}
\hypersetup{
	colorlinks,
	linkcolor={red!50!black},
	citecolor={blue!50!black},
	urlcolor={blue!80!black}
}

\input xy
\xyoption{all}
\usepackage{pb-diagram}
\usepackage[all]{xy}
\input xy
\xyoption{all}

\parindent= 18 pt

\setcounter{page}{1}





\newcommand{\rH}{\mathrm H}


\newcommand{\fm}{\mathfrak m}

\newcommand{\fs}{\mathfrak s}

\newcommand{\cA}{\mathcal A}
\newcommand{\cB}{\mathcal B}
\newcommand{\cC}{\mathcal C}
\newcommand{\cD}{\mathcal D}

\newcommand{\cF}{\mathcal F}
\newcommand{\cH}{\mathcal H}
\newcommand{\cG}{\mathcal G}

\newcommand{\cM}{\mathcal M}

\newcommand{\cO}{\mathcal O}

\newcommand{\cR}{\mathcal R}
\newcommand{\cS}{\mathcal S}

\newcommand{\cX}{\mathcal X}

\newcommand{\cZ}{\mathcal Z}
\newcommand{\Hecke}{\mathcal{H}}


\newcommand{\bZ}{\mathbb{Z}}

\newcommand{\bD}{\mathbb{D}}

\newcommand{\bC}{\mathbb{C}}
\newcommand{\bR}{\mathbb{R}}
\newcommand{\bS}{\mathbb{S}}

\newcommand{\bL}{\mathbb{L}}

\newcommand{\bG}{\mathbb{G}}



\newcommand{\lmod}{\mhyphen \mathbf{mod}}
\newcommand{\rmod}{\mathbf{mod}\mhyphen}
\newcommand{\rmodnd}{\mathbf{mod}\nd\mhyphen}
\newcommand{\rmodfinl}{\mathbf{mod}^\finl\mhyphen}
\newcommand{\Rep}{\operatorname{Rep}}

\newcommand{\Vect}{\operatorname{{Vect}}}

\newcommand{\bfi}{\mathbf{i}}
\newcommand{\bfr}{\mathbf{r}}

\makeatletter
\newcommand{\oset}[3][0.2ex]{%
	\mathrel{\mathop{#3}\limits^{
			\vbox to#1{\kern-2\ex@
				\hbox{$\scriptstyle#2$}\vss}}}}
\makeatother



\newcommand{\ev}{\mathsf{ev}}
\newcommand{\finl}{\mathsf{fl}}
\newcommand{\nd}{^\mathsf{nd}}

\newcommand{\tr}{\mathsf{tr}}

\newcommand{\gr}{\mathsf{gr}}

\newcommand{\inv}{^{-1}}


\newcommand{\SL}{\operatorname{SL}}



\newcommand{\wG}{{\widetilde{G}}}
\newcommand{\wP}{{\widetilde{P}}}
\newcommand{\wK}{{\widetilde{K}}}

\newcommand{\wQ}{{\widetilde{Q}}}

\newcommand{\wL}{{\widetilde{L}}}
\newcommand{\wM}{{\widetilde{M}}}
\newcommand{\wT}{{\widetilde{T}}}

\newcommand{\Go}{{G^\circ}}
\newcommand{\dGo}{{\Go\times\Go}}
\newcommand{\Lo}{{L^\circ}}

\newcommand{\wGo}{{\wG^\circ}}
\newcommand{\wLo}{{\wL^\circ}}

\newcommand{\dpi}{\pi\boxtimes\pi^\vee}
\newcommand{\dpio}{\pi_0\boxtimes\pi_0^\vee}
\newcommand{\dpis}{\pi_\sigma\boxtimes\pi_\sigma^\vee}


\DeclareMathOperator{\Ext}{Ext}
\DeclareMathOperator{\Stab}{Stab}

\DeclareMathOperator{\Hom}{Hom}
\DeclareMathOperator{\End}{End}

\DeclareMathOperator{\rank}{rank}

\DeclareMathOperator{\RHom}{RHom}

\DeclareMathOperator{\Coh}{Coh}
\DeclareMathOperator{\supp}{supp}

\DeclareMathOperator{\id}{\mathsf{id}}

\DeclareMathOperator{\Id}{\mathrm{Id}}
\newcommand{\Irr}{\operatorname{Irr}}

\newcommand{\proj}{{\operatorname{proj}}}
\newcommand{\ind}{\operatorname{ind}}
\newcommand{\Ind}{\operatorname{Ind}}
\newcommand{\Res}{\operatorname{Res}}

\newcommand{\can}{\mathsf{can}}
\newcommand{\op}{{op}}

\newcommand{\git}{/\mkern-6mu/}

\newcommand{\indu}{{\mathrm{ind}}}

\newcommand{\perf}{\mathrm{perf}}
\newcommand{\Nak}{\mathrm{Nak}}
\newcommand{\sm}{\mathrm{sm}}


\mathchardef\mhyp="2D  
\newcommand{\mhyphen}{\mhyp}



\theoremstyle{plain}
\newtheorem{theorem}{Theorem}[section]
\newtheorem*{theorem*}{Theorem}

\newtheorem{lemma}[theorem]{Lemma}
\newtheorem*{lemma*}{Lemma}
\newtheorem{cor}[theorem]{Corollary}
\newtheorem{corollary}[theorem]{Corollary}

\newtheorem{proposition}[theorem]{Proposition}

\theoremstyle{definition} \theoremstyle{definition}
\newtheorem{remark}[theorem]{Remark}
\newtheorem{example}[theorem]{Example}

\newtheorem{definition}[theorem]{Definition}

\theoremstyle{remark}

\newcommand{\cs}{{\rm cs}}
\newcommand{\I}{\mathfrak{I}}
\newcommand{\HH}{\mathcal{H}}
\numberwithin{equation}{section}
\begin{document}

	\title{Homological duality for covering groups of reductive $p$-adic groups}
	\author{Drago\c s Fr\u a\c til\u a and Dipendra Prasad}
	\date{\today}

\renewcommand\rightmark{Homological Duality for $p$-adic groups}
\renewcommand\leftmark{   Drago\c s Fr\u a\c til\u a and Dipendra Prasad       }

	\maketitle
		\begin{center}
		\emph{To Dick Gross, with admiration.}
	\end{center}	
	
		\begin{abstract}	
		In this largely expository paper we extend properties of the homological duality functor
                $\RHom_\cH(-,\cH)$
                where $\cH$ is the Hecke algebra of a
                reductive $p$-adic group, to the case where 
it is the Hecke algebra of a
finite central extension of a reductive $p$-adic group.
		The most important properties being that $\RHom_\cH(-,\cH)$ is concentrated in a single degree for irreducible representations and that it gives rise to Schneider--Stuhler duality for Ext groups (a Serre functor like property). 
		Our simple proof is self-contained and bypasses the localization techniques of \cite{SchStu,BezrPhD} improving slightly on \cite{NoriPras}.
		Along the way we also study Grothendieck--Serre duality with respect to the Bernstein center and provide a proof of the folklore result  that on admissible modules this functor is nothing else but the contragredient duality.
		We single out a necessary and sufficient condition for when these three dualities agree on finite length modules in a given block. 
		In particular, we show this is the case for all cuspidal blocks as well as, due to a result of Roche \cite{Roche-parab}, on all blocks with trivial stabilizer in the relative Weyl group.
	\end{abstract}

	\setcounter{tocdepth}{1}
	
	\tableofcontents
		
	\section{Introduction}
	\subsection{}
	Let $G$ be a reductive $p$-adic group or a covering group (finite central extension) of such a group.
	The homological duality for the abelian category  $\cM(G)$ of smooth representations of $G$
	is defined at the level of derived categories as
	\[ D_h:=\RHom_\cH(-,\cH)\colon \cD^b(\cM(G))\to \cD^b(\cM(G))^\op\]
	where $\cH$ is the Hecke algebra of $G$.
	
	An important property established by Bernstein in his Harvard notes \cite{BerNotes} is that if $\pi$ is irreducible then $D_h(\pi)$ is concentrated in a single degree (and is irreducible).
	We will denote this representation by $\bD_h(\pi)$.
	
	On the other hand, in \cite{SchStu} Schneider and Stuhler also prove this property using localization techniques on the
	Bruhat--Tits building.
	Moreover, they show  that this functor $\bD_h$ enjoys a Serre functor like property, namely that for any irreducible representation $\pi$ of $G$ and any smooth representation $\pi'$ of $G$, there is a natural isomorphism
	\begin{align}\label{Eq:intro Sch-Stuh duality}
		\Ext^i_G(\pi,\pi')^*\simeq  \Ext_G^{d(\pi)-i}(\pi',\bD_h(\pi)^\vee),
	\end{align}
	where $d(\pi)\geq 0$ is an integer  which is bounded by the split rank of $G$ that we define later. Actually in \cite{SchStu}, the isomorphism \eqref{Eq:intro Sch-Stuh duality} was proved only in the subcategory of representations with a fixed central character. 
	The more general version was proved in \cite{NoriPras} and \cite{BBK}.
	
	One of the aims of this work is to present a direct proof of \eqref{Eq:intro Sch-Stuh duality} for covering groups that does not make use of \cite{SchStu} or of localization techniques.
	The strategy is to first show that the full homological duality $D_h$ satisfies a Serre functor like property
	\[ \RHom_\cH(\pi,\pi')^*\simeq \RHom_\cH(\pi',D_h(\pi)^\vee) \]
	for any smooth representations $\pi$ and $\pi'$ with $\pi$ finitely generated.
	This is easy and follows from basic adjunctions naturally extended to idempotented algebras.
	For convenience of the reader, and for lack of a precise reference, we provide all the details in \cref{S:abstract duality theorem}.
	
	The second step, namely showing that $D_h(\pi)$ is concentrated in a single degree for an irreducible representation, is non-formal and is based on several ingredients: Bernstein decomposition, second adjointness and a special property of the algebra governing a cuspidal block (Frobenius-symmetric algebra over its center).
	Although the proof is already in \cite{BerNotes} we hope that the argument that we present is more streamlined.
	
	Another property of the homological functor $D_h$ is that on finite length cuspidal representations it agrees (up to a shift) with the contragredient functor. 
	A sketch of this result for irreducible representations appears already in \cite{BerNotes} but we were not able to supply the details so we decided to include a different proof with full details.
	To this effect we were led to study the Grothendieck--Serre duality over the Bernstein center $D_{GS/\cZ}$ (see \cref{S:dualities on finite length} for the definition).
	In particular, we provide a proof of the folklore result that $D_{GS/\cZ}$
	agrees with the contragredient for admissible representations.
	Moreover we identify a necessary and sufficient condition (see condition (FsG) in Definition \ref{FsG})
        for the two functors $D_h$ and $D_{GS/\cZ}$ to agree (up to a shift) on a Bernstein block.
	In order to state the condition, we need to introduce some notation.
	
	If $\fs=[L,\rho]$ is a cuspidal datum, up to conjugation and inertia,  then there is an equivalence of the Bernstein block $\cM(G)_\fs$ with the module category $\rmod\cR_\fs$ for some algebra $\cR_\fs$ with center $\cZ_\fs$ that can be described explicitly (see \cref{T:center of M(G)_s as invariants}).
	The condition alluded to before, which we call Frobenius-symmetric-Gorenstein, since it is reminiscent of both these properties, is
	\[ D_{GS/{\cZ_\fs}}(\cR_\fs)\simeq \cR_\fs[d] \text{ as $\cR_\fs$-bimodules}\]
	where $d=d(\fs)$ is the split rank of the center of $L$ (equals the Krull dimension of $\cZ_\fs$).
	We are able to check this condition on a cuspidal block $\fs=(\rho,G)$ because in this case $\cZ_\fs$ is a Laurent-polynomial algebra and $\cR_\fs$ is an  Azumaya algebra (see \cref{P:R_rho is Azumaya and Z Laur pol} and \cref{C:Frob sym cond for R_rho}).
	Therefore we deduce that 
	the homological duality and the Grothendieck--Serre duality agree with the contragredient duality on all finite length cuspidal representations 
	in a given block.
	Moreover, if $\fs=[L,\rho]$ is a cuspidal datum, by a result of Roche \cite[Theorem 3.1]{Roche-parab}, parabolic induction induces an equivalence of the blocks $\cM(L)_{[\rho]}\to\cM(G)_{\fs}$ if and only if the stabilizer of the inertia class $[\rho]$ in the relative Weyl group $N_G(L)/L$ is trivial, i.e., if and only if there is no non-trivial $w\in N_G(L)/L$ such that ${}^w\rho \simeq \rho\chi$ for some unramified character $\chi$ of $L$.
	We also deduce that  for these blocks,  $D_h$ and $D_{GS/\cZ}$ agree (up to a shift).
	
	There is yet another duality\footnote{It is indeed involutive but  \emph{co}variant and  so the name duality is slightly misleading.} on smooth representations, namely the Aubert--Zelevinski duality.
	Originally, it arose in representation theory of finite
	groups of Lie type where it was defined on the Grothendieck group of finite dimensional representations.
	Deligne--Lusztig introduced what is now called the Deligne--Lusztig complex in order to have a definition of the involution
	at the level of representations.
	For $p$-adic groups as well as their covering groups, the involution so constructed
	is usually called the Aubert--Zelevinski involution.
	It can be proved (\cite{Aub,BBK,BezrPhD}) that the Aubert--Zelevinski involution defines an exact functor
	from the abelian category $\cM(G)$ to itself, hence it
	extends trivially to a functor on the derived category $\cD^b(\cM(G))$:
	\[D_{AZ}\colon \cD^b(\cM(G))\to \cD^b(\cM(G)).\]

	The same involution was considered by Schneider--Stuhler in \cite{SchStu} and by Bezrukavnikov in \cite[Theorem 4.13]{BezrPhD} where the following isomorphism of functors was  proved using localization techniques 
	\begin{align}\label{intro:D_AZ D_GS=D_h}
		D_{AZ}\circ D_{GS/\cZ}\simeq D_h.
	\end{align}
	Actually, in \cite{SchStu} this was shown only for finite length modules and in the Grothendieck group whereas in \cite{BezrPhD} the more general result was established.
	The isomorphism \eqref{intro:D_AZ D_GS=D_h} was recently revisited in \cite{BBK} where a simpler proof was given using the geometry of the wonderful compactification.
	
	At the level of Grothendieck groups, it was  shown in \cite{Aub} that $D_{AZ}$ commutes with the contragredient and the  commutation formulae with the parabolic induction and restriction functors were proved. 
	None of these are known for representations, say of finite length, of covering groups,
	as even for linear groups, all these
	assertions are proved by relating Aubert--Zelevinski involution to the homological duality (see \eqref{intro:D_AZ D_GS=D_h}) which is not yet
	 available for covering groups.
	 
	Results similar to Aubert's were proved in the context of smooth representations of covering groups in the work of Ban--Jantzen \cite{DubJan-I,DubJan-II} (again for the Grothendieck group of finite length representations). 
	In this work we do not address the question of whether \eqref{intro:D_AZ D_GS=D_h} holds for covering groups nor do we study the Aubert--Zelevinsky involution in this context.
	An immediate obstacle to generalizing the approach of \cite{BBK} is that as
	covering groups are no longer linear,  we do not have at our disposal an obvious wonderful compactification.
	However, in favor of the isomorphism \eqref{intro:D_AZ D_GS=D_h} is the fact that $[d]\circ D_h=D_{GS/\cZ}$ on cuspidal blocks, a result that we prove in \cref{S:dualities on finite length}.
	By the aforementioned result of Roche, this continues to be the case for blocks $\fs=[L,\rho]$ such that the stabilizer of the inertia class of $\rho$ is trivial in the relative Weyl group $N_G(L)/L$.
	
	One reason for interest in homological duality, and the attendant Serre functor property, is in the context of
	Ext analogues of branching laws, as discussed in \cite{Pra18}. Recall that usually the branching laws in the context of $p$-adic group are considered for a representation
	$\pi_1$ of a reductive group $G$  to a representation $\pi_2$ of a closed subgroup $H$, as $\Hom_H(\pi_1,\pi_2)$ (and never as
	$\Hom_H(\pi_2,\pi_1)$). It has been suggested
	in \cite{Pra18} that the success of these branching laws in the various cases studied stems from the vanishing of $\Ext^i_H(\pi_1,\pi_2), i>0$,
	whereas by the Serre functor property,  $\Ext^i_H(\pi_2,\pi_1) = 0, i< d(\pi_2)$.
	In particular,  $\Hom_H(\pi_2,\pi_1)$ is typically zero (so no wonder it is never considered!), and shows up only through the higher extension groups, i.e.,  $\Ext^{d(\pi_2)}_H(\pi_2,\pi_1)$.

	A big chunk of the paper is expository: we give an exposition of some basic representation theoretic results in this context to show that they generalize in a  naive way to the case of covering groups culminating with Bernstein's decomposition (following closely the linear setting). We include a short discussion of Grothendieck--Serre duality limiting ourselves to the context that is sufficient for the applications we have in mind.
	The reader familiar with the above classical results should directly consult \S\ref{S:homological duality}, \S\ref{S:duality SchSt}, \S\ref{S:dualities on finite length}.
	
	\subsection{} 
	Below follows a more precise description of our work.
	Let $G=\bG(F)$ be a $p$-adic reductive group and let $\wG$ be a finite covering group of $G$.
	Denote by $\cH$ the Hecke algebra of $\wG$ and by $\cM(\wG)$ the category of smooth complex representations of $\wG$.
	The Levi subgroups and parabolic subgroups of $\wG$ are, by definition, the pullbacks of those of $G$.
	We have parabolic induction and restriction functors (Jacquet modules)  defined as in the linear case.
	A representation is called cuspidal if it is killed by all proper parabolic restriction functors.
	
	We denote by $\cB(\wG)$ the equivalence classes (conjugation and inertia) of pairs $(\wL,\rho)$ of a Levi subgroup $\wL$  and an irreducible cuspidal representation $\rho$ of $\wL$.
	
	The following is the Bernstein decomposition for $\wG$ (see \cite{BerNotes,BerDel,Renard} for the linear case): 
	\begin{theorem} \label{T:Bernstein dec-intro}
		We have a block decomposition
		\[ \cM(\wG)\simeq \prod_{\fs\in \cB(\wG)} \cM(\wG)_\fs. \]
		Moreover, each block $\cM(\wG)_\fs$ is equivalent to the category of modules over some $\bC$-algebra $\cR_\fs$ containing a finitely generated commutative subalgebra over which it is finite.
	\end{theorem}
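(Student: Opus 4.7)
The plan is to closely follow Bernstein's original proof for linear reductive groups, as presented in \cite{BerNotes,BerDel,Renard}, adapting each step to the covering-group setting by exploiting the fact that $\wG\to G$ is a finite central extension with kernel a finite cyclic group $\mu$. Any smooth representation of $\wG$ decomposes canonically as a direct sum over the characters of $\mu$. Crucially, since the Levi and parabolic subgroups of $\wG$ are by definition preimages of those of $G$, and the unipotent radical of a parabolic $\wP\subset\wG$ projects isomorphically onto its image in $G$, all the basic structural theorems---exactness of the Jacquet functor, Bernstein's second adjointness, Harish-Chandra's finiteness theorem for cuspidal data, and finite length of parabolic inductions of cuspidals---extend verbatim from the linear case by applying them $\mu$-isotypically.

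First I would establish the block decomposition $\cM(\wG)=\prod_{\fs\in\cB(\wG)}\cM(\wG)_\fs$. The steps are: (i) every irreducible smooth representation $\pi$ of $\wG$ admits a cuspidal support $(\wL,\rho)$, unique up to $\wG$-conjugacy, such that $\pi$ is a subquotient of $\Ind_\wP^\wG\rho$, by induction on semisimple rank using Jacquet's theorem; (ii) two cuspidal supports are declared inertially equivalent when they are $\wG$-conjugate up to a twist by an unramified character of the Levi (a character trivial on every compact subgroup), and the set of equivalence classes is $\cB(\wG)$; (iii) for irreducibles $\pi_1,\pi_2$ with distinct inertial supports one shows $\Ext^i_\wG(\pi_1,\pi_2)=0$ for every $i\ge 0$ by reducing through second adjointness to the analogous vanishing on the Levi, where the two cuspidals lie over disjoint orbits in the torus of unramified characters and hence admit no nontrivial morphisms or extensions. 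Combined with the fact that every smooth representation is the filtered colimit of its finitely generated subrepresentations, this yields the product decomposition of abelian categories.

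For the second assertion, I would construct an explicit progenerator of each block. Fix a cuspidal pair $(\wL,\rho)$ representing $\fs$, and let $X_{\mathrm{nr}}(\wL)$ denote the complex affine torus of unramified characters of $\wL$. Form the universal twisted family $\rho_\univ:=\rho\otimes\cO(X_{\mathrm{nr}}(\wL))$, a $(\wL,\cO(X_{\mathrm{nr}}(\wL)))$-bimodule which is smooth over $\wL$, and set
\[
\Pi_\fs:=\Ind_\wP^\wG\rho_\univ.
\]
Standard arguments (compact induction of cuspidals, second adjointness, and the finiteness theorem) show that $\Pi_\fs$ is a finitely generated projective generator of $\cM(\wG)_\fs$, so that setting $\cR_\fs:=\End_\wG(\Pi_\fs)^{\op}$ yields an equivalence $\cM(\wG)_\fs\simeq\rmod\cR_\fs$. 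The center of $\cR_\fs$ contains the algebra $\cO(X_{\mathrm{nr}}(\wL))^{W_\fs}$ of invariants under the finite stabilizer $W_\fs$ of $[\rho]$ in the relative Weyl group $N_\wG(\wL)/\wL$; since $X_{\mathrm{nr}}(\wL)$ is an algebraic torus and $W_\fs$ is finite, this invariant ring is a finitely generated $\bC$-algebra. Finiteness of $\cR_\fs$ over it follows because at each unramified twist $\chi$ the specialization $\cR_\fs\otimes\bC_\chi$ is a finite direct sum of finite-dimensional $\Hom$ spaces between parabolically induced cuspidals.

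The main technical obstacle is showing that $\Pi_\fs$ is indeed projective of finite type, with endomorphism algebra finite over the described invariant subalgebra. This ultimately rests on Harish-Chandra's finiteness theorem and second adjointness for $\wG$, which reduce cleanly to the linear case since Jacquet restriction on $\wG$ descends to that on $G$ on each $\mu$-isotypic component. A secondary subtlety, not a real obstacle, is that the torus $X_{\mathrm{nr}}(\wL)$ for a covering group may differ from $X_{\mathrm{nr}}(L)$ by a finite isogeny; this is immaterial since only the property of being a complex algebraic torus of finite type is used.
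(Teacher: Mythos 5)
Your route is the same as the paper's: follow the linear case of Bernstein--Deligne/Renard, and realize each block as modules over the endomorphism algebra of a progenerator built from the universal unramified twist of a cuspidal (your $\Ind_\wP^\wG(\rho\otimes\cO(X_{\mathrm{nr}}(\wL)))$ is the paper's $\bfi_{\wL,\wP}^{\wG}(\ind_{\wLo}^{\wL}(\rho|_{\wLo}))$, since $\ind_{\wLo}^{\wL}(\bC)\simeq\bC[\wL/\wLo]=\cO(\cX(\wL))$). Two steps, however, do not go through as written. For the product decomposition, Ext-orthogonality between irreducibles with distinct inertial supports plus a filtered-colimit reduction only gives a fully faithful functor $\prod_{\fs}\cM(\wG)_\fs\hookrightarrow\cM(\wG)$; the whole difficulty is essential surjectivity, i.e., that \emph{every} object is a direct sum of objects of the blocks (condition (2) of \cref{P:categ is product if and only if}). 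Your outline omits the two inputs that make this work: (i) for the cuspidal part, the splitting theory of compact (mod center) representations --- a compact irreducible subquotient is automatically a direct summand --- together with the uniform admissibility theorem, which supplies the finiteness condition (KF) needed to split off $\cM(\wG)_\cs$; and (ii) for the induced part, an embedding of every representation of $\cM(\wG)_\indu$ into a direct sum of parabolic inductions of cuspidals (where the decomposition is tautological), followed by the Goursat-type argument passing to subrepresentations. (Also, the claim that everything reduces "verbatim" by working $\mu$-isotypically is only a heuristic: the isotypic piece for a nontrivial character of $\mu$ is not the representation category of any linear group, so the proofs must be rerun rather than quoted; and $\mu$ is finite abelian, not necessarily cyclic.)

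The second gap is the finiteness of $\cR_\fs$ over the commutative subalgebra. Finite-dimensionality of every specialization $\cR_\fs\otimes\bC_\chi$ does not imply that $\cR_\fs$ is a finite module: compare $\bC[x^{\pm1}]$ over $\bC[x]$, whose fibers are all of dimension $\le 1$ but which is not finite. The correct argument uses second adjointness and the geometric lemma: $\End_\wG(\Pi_\fs)\simeq\Hom_{\wL}\bigl(\Pi_{[\rho]},\bfr_{\wL,\wP^-}^{\wG}\bfi_{\wL,\wP}^{\wG}(\Pi_{[\rho]})\bigr)$, and the Jacquet module on the right carries a finite filtration whose subquotients are Weyl-group twists of $\Pi_{[\rho]}$, each finite over $\cA=\bC[\Lambda(\wL)]$; this is what yields finiteness over $\cO(\cX(\wL)/\cG_\rho)$ (equivalently over its invariants under the finite stabilizer in the relative Weyl group).
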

	We will outline the proof in \cref{S:Bernstein dec} which follows closely the linear case as exposed in \cite{BerNotes} or \cite{Renard}.
	To that end, and to fix the notation, we will collect in Sections 1, 2, 3, 4 all the necessary results from the classical theory (i.e., the linear case) as well as some rudiments from category theory that go into the proof of it.
	This is meant to convince the reader that the same strategy as in the linear case gives also the Bernstein decomposition for covering groups.
	
	Using the second adjointness theorem of Bernstein, we prove the equivalence of
        $\cM(\wG)_\fs$ with $\rmod\cR_\fs$ in \cref{S:Blocks as module cats}.
        For completeness, we also give in \cref{S:second adj},  a skeleton of the proof of the second adjointness theorem
        which is meant to convince the reader that the proof from the linear case (for which the first complete proof was
        published by Bushnell in		\cite{Bush-loc})         goes through without changes for covering groups. 
        In doing this, we have followed the exposition in \cite[VI.9.7]{Renard} of the linear case which makes use of completion functors in order to streamline the arguments.

	The argument we present follows \cite{Renard} rather than \cite{BerNotes} in that it uses completion
        of modules in order to streamline the proof.
	We take advantage of this opportunity to introduce the completion functors in \cref{SS:completion} and we show that (at least on admissible representations) it allows one to recover the lost properties of the invariants $(-)^N$ functor.
	See also \cref{C:invar and coinvar are isomorphic} for a far more advanced version (essentially equivalent to the second adjointness theorem).
	
	For an irreducible representation $\pi$, the block $\fs=[\wL,\rho]$ that corresponds to $\pi$
	is called  the cuspidal support of $\pi$. We denote by $d(\pi)=d(\fs)=d(\wL)$, the dimension of a maximal split torus in the center of the algebraic group $\bL$. 
	
	The category of (smooth) representations of $\wG$ has finite global dimension (see \cref{SS:finite homological dimension}).
	Consider the homological duality functor $D_h$ on the bounded derived category of $\wG$-modules
	\[ D_h:=\RHom_\cH(-,\cH)\colon \cD^b(\cM(\wG))\to \cD^b(\cM(\wG))^\op. \]
	It is surprisingly easy to show that\footnote{In the context of finite dimensional algebras, this functor is called \emph{Nakayama duality}, and it was observed in \cite[3.2 Example 3]{BonKapr} that it is a Serre functor.} $(\,)^\vee \circ D_h$ satisfies a Serre-functor-like property for the full $\RHom$, namely:
	\begin{theorem}\label{T:Serre functor on derived cat-intro} (see \cref{C:natural pairing RHom is perfect})
		For $\pi, \pi' \in \cD^b(\cM(\wG))$ with $\pi$ finitely generated, we have a natural pairing of complexes of $\bC$-vector spaces
		\[ \RHom_\cH(\pi,\pi')\otimes^L_\bC  \RHom_\cH(\pi',D_h(\pi)^\vee)\to \RHom_\cH(\pi,D_h(\pi)^\vee)\to \bC\]
		providing a natural isomorphism
		\[ \RHom_\cH(\pi,\pi')^* = \RHom_\cH(\pi',D_h(\pi)^\vee) \]
		where $(-)^*$ is taking the dual vector space degree-wise.
	\end{theorem}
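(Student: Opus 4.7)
The plan is to exhibit the stated isomorphism as a chain of two standard identities, extended to the idempotented algebra setting: a tensor--hom adjunction involving the smooth contragredient, and the ``Hom versus tensor'' comparison for finitely generated modules. The pairing of the theorem will then match the natural one by formal verification.

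First, I would set up the tensor--hom adjunction in the smooth category. For a smooth right $\cH$-module $V$ and a smooth left $\cH$-module $N$, the smooth contragredient $V^\vee$ (a smooth left $\cH$-module) satisfies
\[ \Hom_\cH(N, V^\vee) \;\simeq\; \Hom_\bC(V \otimes_\cH N,\, \bC).\]
This is the standard adjunction: because $N$ and $V$ are smooth, any $\cH$-linear map $N \to V^*$ automatically factors through the smooth part $V^\vee = \bigcup_e eV^*$. Since the smooth dual is exact on the abelian category of smooth modules, deriving in $V$ yields
\[ \RHom_\cH(N, V^\vee) \;\simeq\; (V \otimes^L_\cH N)^*.\]

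Second, I would apply this with $V = D_h(\pi) = \RHom_\cH(\pi,\cH)$, carrying its canonical right $\cH$-module structure from right multiplication on $\cH$, and with $N = \pi'$, to obtain
\[ \RHom_\cH\bigl(\pi',\, D_h(\pi)^\vee\bigr) \;\simeq\; \bigl(\RHom_\cH(\pi,\cH) \otimes^L_\cH \pi'\bigr)^*.\]
Now I would show that the natural evaluation map
\[ \RHom_\cH(\pi,\cH) \otimes^L_\cH \pi' \;\longrightarrow\; \RHom_\cH(\pi,\pi'),\qquad \phi \otimes v \longmapsto \bigl(x \mapsto \phi(x)\cdot v\bigr),\]
is a quasi-isomorphism whenever $\pi$ is finitely generated. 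This is transparent when $\pi = \cH e$ for an idempotent $e$, since both sides reduce canonically to $e\pi'$. For a general $\pi$ finitely generated in $\cD^b(\cM(\wG))$, the finite global dimension recalled in \cref{SS:finite homological dimension} ensures that $\pi$ admits a bounded resolution by finitely generated projective modules, each of which is a summand of a finite direct sum of modules of the form $\cH e$; the statement then follows by additivity and the five lemma.

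Combining these steps produces the isomorphism $\RHom_\cH(\pi,\pi')^* \simeq \RHom_\cH(\pi', D_h(\pi)^\vee)$. It remains to recognize that this agrees with the pairing of the theorem: the evaluation $\RHom_\cH(\pi, D_h(\pi)^\vee) \to \bC$ is the linear functional dual to the class of $\id_\pi$ under the identification $\RHom_\cH(\pi,\pi) \simeq D_h(\pi) \otimes^L_\cH \pi$ from the third step, while the Yoneda composition corresponds under the adjunction to the obvious action maps. The main technical care lies in handling the non-unital, idempotented algebra $\cH$ and the smooth contragredient: once the tensor--hom adjunction and the existence of finitely generated projective resolutions are set up in this setting, the proof reduces to a formal chain of isomorphisms, and the bookkeeping will be carried out in \cref{S:abstract duality theorem}.
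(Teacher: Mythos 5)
Your proposal is correct and follows essentially the same route as the paper: your ``evaluation map'' $\RHom_\cH(\pi,\cH)\otimes^L_\cH\pi'\to\RHom_\cH(\pi,\pi')$ being a quasi-isomorphism for finitely generated (hence perfect) $\pi$ is exactly \cref{L:Hom(M N)=Hom(M A)otimesN}, proved there by the same reduction to $\cH e$ and bounded finitely generated projective resolutions, and your smooth tensor--hom adjunction is \eqref{Eq:tensor hom adjunction non unitary}; combining them as in \cref{T:Nakayama functor is Serre} gives the stated pairing. The only point worth flagging is that passing from ``finitely generated'' to ``perfect'' uses noetherianity of $\cM(\wG)$ in addition to finite global dimension, as recorded in \cref{SS:finite homological dimension}.
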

	Notice that we do not claim that $\RHom_\cH(\pi,D_h(\pi)^\vee)\to \bC$ is an isomorphism in general but merely that such a canonical map exists.

	For a parabolic $\wP$ with Levi decomposition $\wP=\wL N$,  we denote by $\bfi_{\wL,\wP}^\wG$ (resp., $\bfr_{\wL,\wP}^\wG$) the parabolic induction (resp., restriction) functors.
	Here are further homological properties that one can prove about $D_h$:
	\begin{theorem} \label{T:homol prop of D_h-intro}
		The functor $D_h\colon\cD^b(\cM(\wG))\to (\cD^b(\cM(\wG)))^\op$ enjoys the following properties
		\begin{enumerate}
			\item\label{T:subpoint:vanishing Ext for D_h} If $\pi\in\cM(\wG)_\fs^\finl$ is a finite length representation in a fixed Bernstein block, then $D_h(\pi)$ is concentrated in degree $d(\fs)$,
			\item\label{subp:intro-D_h2=1} $D_h^2\simeq \Id$,
			\item\label{subp:intro-D_h on cusp} If $\pi$ is cuspidal of finite length and lives in a single block, then 
			\[D_h(\pi) = \pi^\vee[-d(\pi)],\]
			\item $D_h \circ \bfi_{\wL,\wP}^{\wG} = \bfi_{\wL,\wP^-}^{\wG} \circ D_h$ and  $D_h\circ \bfr_{\wL,\wP}^{\wG} = \bfr_{\wL,\wP}^{\wG} \circ D_h$,
			\item\label{T:subpoint:exact involution} $\bD_h:=D_h[d(\fs)]$ restricted to finite length representations in $\cM(\wG)_\fs$ is an exact involution providing an equivalence
			\[ \bD_h\colon\cM(\wG)_\fs^{\finl}\xrightarrow{\sim} (\cM(\wG)_{\fs^\vee}^{\finl} )^\op,\]
			where $\fs^\vee$ is the contragredient block of $\fs$
		\end{enumerate}
	\end{theorem}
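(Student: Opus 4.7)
The plan is to establish the five statements in the order (3) for cuspidal blocks, (4), (2), (1), (3) in general, (5), using the Bernstein decomposition (\cref{T:Bernstein dec-intro}) to reduce to a single block $\cM(\wG)_\fs\simeq\rmod\cR_\fs$, Bernstein's second adjointness theorem to propagate information through parabolic functors, and the structure of cuspidal blocks as the base case.

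First I would prove (3) in the cuspidal case $\fs=[\wG,\rho]$. Under $\cM(\wG)_\fs\simeq\rmod\cR_\fs$, the functor $D_h$ becomes $\RHom_{\cR_\fs}(-,\cR_\fs)$. The key structural fact is that for such a block $\cR_\fs$ is Azumaya over its center $\cZ_\fs$, which is itself a Laurent polynomial algebra in $d(\fs)$ variables. The Azumaya property rewrites $\RHom_{\cR_\fs}(-,\cR_\fs)$ in terms of the Grothendieck--Serre dual $\RHom_{\cZ_\fs}(-,\cZ_\fs)$; for a finite length module $M$ this is concentrated in degree $d(\fs)$ because $\cZ_\fs$ is a regular ring of Krull dimension $d(\fs)$ with trivial canonical module. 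Tracking the twists identifies the result with $\pi^\vee[-d(\fs)]$.

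Next, (4) follows formally from Bernstein's second adjointness $\bfi_{\wL,\wP}^{\wG}\dashv\bfr_{\wL,\wP^-}^{\wG}$: since $D_h=\RHom(-,\cH)$ is contravariant, it interchanges left and right adjoints, and applying $\RHom(-,\cH_\wG)$ to the adjunction isomorphism while tracking the bimodule structure of $\cH$ yields $D_h\circ\bfi_{\wL,\wP}^{\wG}\simeq\bfi_{\wL,\wP^-}^{\wG}\circ D_h$; the compatibility with $\bfr$ is obtained analogously from the first adjunction. Property (2) then follows because $\cR_\fs$ has finite global dimension, so finitely generated modules are perfect and biduality is an isomorphism on perfect complexes. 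With the cuspidal case of (3), compatibility (4) and involutivity (2) in hand, assertion (1) is obtained by a subquotient argument: any irreducible in $\cM(\wG)_\fs$ with $\fs=[\wL,\rho]$ is a subquotient of $\bfi_{\wL,\wP}^{\wG}(\sigma)$ for some irreducible cuspidal $\sigma\in\cM(\wL)_{[\rho]}$, and the cuspidal case of (3) combined with (4) forces $D_h$ of this induction to be concentrated in degree $d(\wL)=d(\fs)$; concentration propagates to composition factors and by induction on length to all finite length objects in the block, using $D_h^2\simeq\Id$ to bound the support of $D_h$ simultaneously from above and below. The general case of (3) and the involution statement (5) are then immediate consequences of (1), (4) and the cuspidal case.

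The principal obstacle in this program is the cuspidal base case: the verification that $\cR_\fs$ is Azumaya over a Laurent polynomial algebra of Krull dimension $d(\fs)$, and the consequent extraction of the Frobenius-symmetric-Gorenstein property that makes $D_h$ explicitly computable. Everything else---the propagation via (4), the biduality argument for (2) and the subquotient argument for (1)---is formal once one has the Bernstein decomposition and the second adjointness theorem for covering groups, both of which are established earlier in the paper following the strategy of the linear case.
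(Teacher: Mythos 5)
Your identification of the crux --- the Azumaya/(FsG) structure of $\cR_{[\rho]}$ over its Laurent-polynomial center, which makes $D_h$ computable on a cuspidal block --- matches the paper exactly, and your treatments of (2), the cuspidal statement (3), and (5) coincide with the paper's. The genuine problem is your derivation of (1). The paper uses no d\'evissage there: it proves $\Ext^i_{\wG}(\pi,\cH)=0$ for $i\neq d(\fs)$ for \emph{all} finite length $\pi$ in the block in one stroke, by reducing (via projectivity of $\cH(\wG)$ and the projective generator $\Pi_\fs=\bfi_{\wL,\wP}^{\wG}(\Pi_{[\rho]})$) to $\Ext^i_{\wG}(\pi,\Pi_\fs)$, then applying Frobenius reciprocity to get $\Ext^i_{\wL}(\bfr_{\wL,\wP}^{\wG}\pi,\Pi_{[\rho]})$, and finally invoking the commutative-algebra vanishing $\Ext^i_{\cR_{[\rho]}}(V,\cR_{[\rho]})=0$ for $i\neq d$ and $V$ finite dimensional. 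Your route only establishes concentration for the specific objects $\bfi_{\wL,\wP}^{\wG}(\sigma)$ and then asserts that ``concentration propagates to composition factors \ldots by induction on length.'' That step fails as stated: for $0\to A\to B\to C\to 0$ with $D_h(B)$ concentrated in degree $d$, the long exact sequence only yields $H^i(D_hA)\simeq H^{i+1}(D_hC)$ for $i\notin\{d-1,d\}$, which is perfectly consistent with both being nonzero, and neither induction on length nor $D_h^2\simeq\Id$ closes this up in any evident way. A correct d\'evissage does exist --- one inducts on the \emph{cohomological degree}, ascending from $0$ using that every finite length object of the block is a quotient of an induced-from-finite-length-cuspidal module, and descending from the global dimension using that it is also a submodule of one --- but that is a different and more delicate argument than the one you describe, and the paper's direct adjunction to the Levi renders it unnecessary.

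A secondary point: part (4) is not a purely formal consequence of the two adjointness theorems. A contravariant functor conjugating one adjoint into another presupposes knowing what $D_h\circ\bfi_{\wL,\wP}^{\wG}$ is, which is what one is trying to prove. The paper's proof reduces to finitely generated projectives and then needs the concrete identification of $(\id\times\bfr_{\wP})(\cH(\wG))$ with $(\id\times\bfi_{\wP})(\cH(\wL))$ as $\wL\times\wG$-modules (both being $\delta^{1/2}\otimes\cC^\infty_c(\wG/N)$), together with a computation of the Jacquet module of $\Hom_{\wG}(W,\cH(\wG))$ for $W$ finitely generated projective. Your phrase ``tracking the bimodule structure of $\cH$'' gestures at this, but the substance of the proof lies precisely in that tracking, not in the adjunction formalism.
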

	
	Part \eqref{T:subpoint:vanishing Ext for D_h} is proved in \cref{SS:vanishing}.
	The involution statement is shown in \cref{C:subp main thm:involut}.
	Part \eqref{subp:intro-D_h on cusp} is showed in \cref{S:dualities on finite length} after a quick interlude on the Grothendieck--Serre duality.
	The commutation with parabolic induction and restriction is dealt with in \cref{SS:Dh and ind res}.
	The last part is a consequence of \eqref{T:subpoint:vanishing Ext for D_h},\eqref{subp:intro-D_h2=1} and \eqref{subp:intro-D_h on cusp}. 
	See also \cref{C:subp main thm:involut}.
	
	Let us say a few words about the Grothendieck--Serre duality which we will briefly
        review in \cref{SS:general algebra,S:dualities on finite length} in the generality that we need.
	Any representation $\pi\in\cM(\wG)$, besides having an action of $\wG$, has also an action of the Bernstein center $\cZ$, and by definition these two actions commute.
	Therefore, any operation that one does to a $\cZ$-module will produce again a $\wG$-module.
	The most trivial operation one can think of is taking the contragredient.
	This works well, in the sense that it is an involution, on admissible representations. 
	One would like to extend the contragredient  in a functorial  way to all finitely generated modules such that it stays an involution.
	Working with a fixed Bernstein block at a time, the question now is sent in the commutative algebra court: we have a nice ring (invariants under a finite group of a Laurent polynomial algebra, in particular Cohen--Macaulay) and finitely generated modules over it.
	We would like to extend the contragredient from finite length modules to all finitely generated modules in such a way that it stays an involution.
	One way to do this, coming from algebraic geometry, is to use the Grothendieck--Serre duality.
	Although all the properties are nice and formal, one must now work in the derived category since the abelian category of modules is not preserved anymore under this duality.
	
	For a commutative regular ring $A$ of Krull dimension $d$ (always containing a field $k$), the normalized dualizing (or canonical) complex is defined to be $\omega_A^\circ:=\omega_A[d]\in\cD^b(A\lmod)$, where $\omega_A=\Lambda^d\Omega_A^1$ is the module of top differentials on $A$.
	If $A\to B$ is a finite map of commutative rings and $\omega_A^\circ$ is the normalized dualizing complex of $A$, then $\omega_B^\circ:=\RHom_A(B,\omega_A^\circ)$ is the normalized dualizing complex of $B$.
	
	In our context, the (component of the) Bernstein center $\cZ_\fs$ is Cohen--Macaulay hence it is finite free over a regular subalgebra.
	The above paragraph allows us to see that it has a normalized dualizing complex which moreover lives in a single degree (this is a characterization of Cohen--Macaulay rings).
	
	For a commutative ring $A$ with a normalized dualizing complex $\omega_A^\circ$, one defines the Grothendieck--Serre duality as
	\begin{align*}
		D_{GS} &\colon \cD^b(A\lmod)\to (\cD^b(A\lmod))^\op \\
		M & \mapsto \RHom_A(M,\omega_A^\circ).
	\end{align*} 
	This has the nice property of being an involution $D_{GS}^2\simeq \Id$ when restricted to finitely generated modules. It is not obvious how to compute $D_{GS}(M)$ for a particular module $M$. 
	In general, it is a complex of modules even if $M$ is only a module.

        However, a particular case in which  $D_{GS}(M)$  is easy to compute is that of finite
        length modules $M$ (over an arbitrary ring $A$ containing the field $k$!).
	Indeed, we have $D_{GS}(M)=M^*$ for any $M\in A\lmod$ of finite length.
	For convenience of the reader we include (the short) proof of this in \cref{S:dualities on finite length}.

        Going back to the category of smooth representations of $\wG$, we have the Grothendieck--Serre duality with respect to the Bernstein's center (component by component, see \cref{S:dualities on finite length}.)
	\begin{align*}
		D_{GS/\cZ}&\colon\cD^b(\cM(\wG)) \to (\cD^b(\cM(\wG)))^\op\\
		\pi & \mapsto \RHom_\cZ(\pi,\omega_\cZ^\circ)^\sm
	\end{align*}
	where the superscript ``$\sm$'' means taking smooth vectors.
	On finitely generated representations it is an involution, i.e.,  $D_{GS/\cZ}^2\simeq \Id$.
	In order to make sure we extended the contragredient from admissible to all finitely generated,  one needs to check that $D_{GS/\cZ}(\pi)=\pi^\vee$
        for all admissible representations $\pi$ of $G$. 
	This is a folklore result whose (again easy and rather formal) proof we include for the convenience of the reader and for lack of a better reference (see \cref{C:D_GS on admissibles is contrag}).

        However, beyond finite length representations, $D_{GS/\cZ}(\pi)$ seems not to have been
        considered.  As a specific example,
        we may mention that we do not know  what $D_{GS}(M)$ is for
$M = \ind_{K}^{G} \bC$, the space of locally constant
        compactly supported $\bC$-valued functions on $K\backslash G$ for $K$ a compact open subgroup of $G$. Here is a more specific case. Suppose  $G$ is a split group, and $K=\I$ an Iwahori subgroup of $G$, with
        $\HH(\I)=C_c(\I\backslash G/\I)$, the Iwahori Hecke algebra of $G$ and $Z$ the center of $\HH(\I)$. This
        particular Bernstein component of $G$ (called the Iwahori block, consisting of smooth representations of $G$
        generated by their $\I$-fixed vectors)
        is described as the category of modules over $\HH(\I)$ by sending a smooth
        representation $M$ of $G$ 
        (generated by its $\I$-fixed vectors) to $M^\I$ which is a module for $\HH(\I)$. 
        Then for $M = \ind_{\I}^{G} \bC$, $M^\I = \HH(\I)$ with the left regular representation of
        $\HH(\I)$ on itself. In this case, 
        $D_{GS}(M)$
        is the $\HH(\I)$-module corresponding to $\Hom_Z(\HH(\I), Z)$, i.e.
        $\Ext_Z^i(\HH(\I),Z)=0$ for $i>0$.
        This follows first by noting that
by 
\cite[Theorem 5.4.1]{Chriss-Kamal}, $\ind_{\I}^{G} \bC$ is isomorphic to the universal unramified principal series $\ind_{B}^G(\Pi)$ where $\Pi= \ind_{T^o}^T \bC$, $B$ is a Borel subgroup of $G$, $T \subset B$ a maximal torus of $G$ and
$T^o$, the maximal compact subgroup of $T$. In this case, it follows from 
\cite[Theorem 1.9.1.1 and Lemma 1.10.4.1]{Roche-Ottawa} that $M^\I = \HH(\I)$ is a free module over $Z$,
proving that $\Ext_Z^i(\HH(\I),Z)=0$ for $i>0$. However, we do not know what $\HH(\I)$-module we get
for $\Hom_Z(\HH(\I),Z)$
--- and we do not have any guesses to offer except to ask if $\Hom_Z(\HH(\I),Z) \cong \HH(\I)$? The corresponding question for
the group asks about  the structure of the $G$-module 
$D_{GS}(M)$ for $M = \ind_{\I}^{G} \bC$.

	Homological duality allows us to prove part \eqref{subp:intro-D_h on cusp} of \cref{T:homol prop of D_h-intro}.
	Additionally, it permits us to understand precisely under what conditions do we have an isomorphism $[d]\circ D_h\simeq D_{GS/\cZ}$ on a given block.
	For precise details, we refer to \cref{C:D_h=D_GS if and only if} and the condition (FsG) in Definition \ref{FsG}.
	
	\medskip
	
	Returning to homological duality and its Serre functor property, let us state the main theorem proved
        in this work.
	For $\pi\in\cM(\wG)_\fs^\finl$, a finite length representation in a Bernstein block,
        we put $\bD_h(\pi) = H^{d(\pi)}(D_h(\pi))$ to be the unique non-zero cohomology of $D_h(\pi)$ (see \cref{T:homol prop of D_h-intro}\eqref{T:subpoint:vanishing Ext for D_h}).
	
	\begin{theorem}\label{T:Serre functor reps-intro} (see \cref{T:main duality Schneider-Stuhler})
		Let $\pi\in \cM(\wG)_\fs$ be of finite length and $\pi'\in \cM(\wG)$ be arbitrary. 
		Then the following natural pairing is perfect
		\[ \Ext^i_{\wG}(\pi,\pi')\otimes \Ext^{d(\pi)-i}_{\wG}(\pi',\bD_h(\pi)^\vee)\to \Ext^{d(\pi)}_{\wG}(\pi,\bD_h(\pi)^\vee)\to \bC.\]
	\end{theorem}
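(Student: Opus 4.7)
The plan is to recover the theorem as the cohomological shadow of \cref{T:Serre functor on derived cat-intro} (the derived Serre-functor pairing) combined with \cref{T:subpoint:vanishing Ext for D_h} (the single-degree concentration of $D_h(\pi)$). Once these two ingredients are in place, the argument is essentially formal: the concentration converts the derived pairing into an Ext pairing in which $d(\pi)$ appears as a shift, and the derived quasi-isomorphism degenerates to perfectness in each individual Ext degree.

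First, since $\pi\in\cM(\wG)_\fs$ has finite length it is finitely generated (each irreducible constituent is cyclic, and finite extensions of finitely generated representations are finitely generated), so \cref{T:Serre functor on derived cat-intro} applies to the pair $(\pi,\pi')$ and provides the natural pairing
\[
\RHom_\cH(\pi,\pi') \otimes^L_\bC \RHom_\cH(\pi', D_h(\pi)^\vee) \longrightarrow \RHom_\cH(\pi, D_h(\pi)^\vee) \longrightarrow \bC
\]
together with the quasi-isomorphism $\RHom_\cH(\pi,\pi')^* \simeq \RHom_\cH(\pi', D_h(\pi)^\vee)$. Next, by \cref{T:subpoint:vanishing Ext for D_h} the complex $D_h(\pi)$ has its unique non-vanishing cohomology in degree $d(\pi)=d(\fs)$, equal to $\bD_h(\pi)$, so $D_h(\pi)\simeq \bD_h(\pi)[-d(\pi)]$ in $\cD^b(\cM(\wG))$. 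Since the contragredient is an exact functor on $\cM(\wG)$, it preserves this concentration and yields $D_h(\pi)^\vee \simeq \bD_h(\pi)^\vee[-d(\pi)]$. Substituting these identifications into the derived pairing and passing to cohomology collapses everything to the graded pairing
\[
\Ext^i_{\wG}(\pi,\pi') \otimes \Ext^{d(\pi)-i}_{\wG}(\pi', \bD_h(\pi)^\vee) \longrightarrow \Ext^{d(\pi)}_{\wG}(\pi, \bD_h(\pi)^\vee) \longrightarrow \bC,
\]
with the trace at the end being the cohomological incarnation, in the degree dictated by the concentration of $D_h(\pi)^\vee$, of the augmentation $\RHom_\cH(\pi, D_h(\pi)^\vee) \to \bC$ from \cref{T:Serre functor on derived cat-intro}.

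The perfectness in each Ext degree then follows directly from the quasi-isomorphism of \cref{T:Serre functor on derived cat-intro}: taking cohomology degree by degree of both sides of $\RHom_\cH(\pi,\pi')^* \simeq \RHom_\cH(\pi', D_h(\pi)^\vee)$ realises $\Ext^i(\pi,\pi')^*$ as nondegenerately paired with $\Ext^{d(\pi)-i}(\pi', \bD_h(\pi)^\vee)$, which is exactly the claim. The real content is therefore carried by the two inputs, both established earlier in the paper: \cref{T:Serre functor on derived cat-intro} is a formal consequence of adjunctions for idempotented algebras, while \cref{T:subpoint:vanishing Ext for D_h} relies on the Bernstein decomposition, the second adjointness theorem, and the Frobenius-symmetric structure of the algebra governing a cuspidal block. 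Beyond these, I expect no separate obstacle; the main step of the present proof amounts to tracking the degree shift produced by the concentration of $D_h(\pi)$, which is precisely what produces the expected homological-dimension shift $d(\pi)$ on the right-hand factor of the Ext pairing.
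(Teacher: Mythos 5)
Your argument is correct and follows essentially the same route as the paper's own proof: Theorem \ref{T:main duality Schneider-Stuhler} is obtained there precisely by combining the derived Serre pairing of \cref{C:natural pairing RHom is perfect} with the single-degree concentration of \cref{T:homological duality single degree}, exactly the two inputs you use. One bookkeeping slip: since $(-)^\vee$ is contravariant, the concentration of $D_h(\pi)$ in degree $d(\pi)$ gives $D_h(\pi)^\vee\simeq \bD_h(\pi)^\vee[\,d(\pi)\,]$ (equivalently $D_{\Nak}(\pi)=\bD_h(\pi)^\vee[d(\pi)]$, as in the paper), not $\bD_h(\pi)^\vee[-d(\pi)]$ as you wrote; with the correct sign the substitution into the Serre isomorphism produces $\Ext^{d(\pi)-i}_{\wG}(\pi',\bD_h(\pi)^\vee)$ on the nose, which is the degree you in fact state in your conclusion.
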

	The proof is an easy consequence of \cref{T:Serre functor on derived cat-intro} and \cref{T:homol prop of D_h-intro}.
	It is an improvement on the results of \cite{NoriPras} where the theorem was proved only for $\pi$ irreducible.
	
	\begin{remark}\hfill
		\begin{enumerate}
			\item  As in the linear case, it would be desirable to have an identification of $D_h$ with $D_{AZ}\circ (-)^\vee$ on the category of finite length representations of a covering group
                        $\wG$, where $D_{AZ}$ is the Aubert--Zelevinsky duality, similar to \cite{BBK}, \cite[Proposition IV.5.1]{SchStu} (Grothendieck group) or \cite[Theorem 4.2]{BezrPhD}.
		For the moment, it seems we do not know that $D_h(\pi) \cong D_{AZ}(\pi^\vee)$ even for an irreducible representation $\pi$ of $\wG$. We consider this as an important question 
	        which is not at all discussed in this paper. 
			However, one can consult \cite{DubJan-II} for a discussion of Aubert--Zelevinsky duality for finite central extensions of reductive $p$-adic groups (in the Grothendieck group of representations).
			
			\item As already mentioned, the duality \cref{T:Serre functor on derived cat-intro} was first proved in \cite[Duality Theorem]{SchStu} using a more involved argument.
			  Their strategy was to first show the vanishing of Ext groups from
                          \cref{{T:homol prop of D_h-intro}}  (\cref{T:subpoint:vanishing Ext for D_h}) and then proceed to prove the isomorphism from \cref{T:Serre functor on derived cat-intro} using this vanishing. 
			Both steps hinge on nice projective resolutions constructed from the Bruhat-Tits building.
			The advantage of the proof that we present is that by keeping the homological duality functor $D_h$ at the derived level, the duality theorem becomes very easy and requires no technology.
			Once the vanishing of Ext-groups \cref{{T:homol prop of D_h-intro}}(\cref{T:subpoint:vanishing Ext for D_h}) is proved (for which we follow the argument in \cite{BerNotes}) we immediately deduce the required duality theorem at the level of abelian categories.
		\end{enumerate}
	\end{remark}

	\begin{center}{\bf Acknowledgments}\end{center}
	D.F. would like to thank IIT Mumbai, where this work has started,  for their hospitality.
        The second author  thanks  SERB, India for its support
through the JC Bose
Fellowship, JBR/2020/000006. 
His work was also supported by a grant of
the Government of the Russian Federation
for the state support of scientific research carried out
under the  agreement 14.W03.31.0030 dated 15.02.2018.
	We would like to thank Roman Bezrukavnikov for some very useful remarks concerning homological duality and Grothendieck--Serre duality. The authors
	are grateful to the referees for a careful reading and helpful remarks.
	\section{Categorical generalities}
	
	\subsection{Preliminaries on decomposing categories and centers}\label{SS:prelim centers}
	
	Given $I$ a set (possibly infinite) and $\cC_i, i\in I$, a family of abelian categories, we can define the product category $\cC:=\prod_{i\in I}\cC_i$ in the 2-category of abelian categories through the usual universal property.
	Concretely, one constructs it as follows:
	\begin{itemize}
		\item the objects are tuples $(X_i)_{i\in I}$ with $X_i\in \cC_i$ for every $i\in I$;
		\item the morphisms are defined by 
		\[ \Hom_\cC((X_i)_i, (Y_j)_j):=\prod_{i\in I} \Hom_{\cC_i}(X_i,Y_i); \]
		\item the projection functors $\pi_i\colon \cC\to \cC_i$ send a tuple $(X_s)_s$ to $X_i$.
	\end{itemize}
	The universal property of a product is immediately verified.
	
	In addition to the projection functors $\pi_i\colon\cC \to \cC_i$, there are also natural inclusion functors
	\[ \iota_i\colon \cC_i\to \cC \] 
	sending an object $X\in \cC_i$ to the tuple that has $X$ in position $i$ and $0$ everywhere else.
	By construction, the functor $\iota_i$ is fully faithful so we can think of the category $\cC_i$ as a full subcategory of the product category $\cC$. 
	We will drop the functor $\iota_i$ from the notation most of the time.
	Two such subcategories $\cC_i$ and $\cC_j$ for $i\neq j$ are (derived) orthogonal, namely
	\[ \Ext^k_\cC(X_i,X_j) = 0,\text{ for all }k\ge 0.\]
	
	Moreover we notice that the object $(X_i)_i\in\cC$ is the direct sum of the objects $\iota_i(X_i)$ in $\cC$.
	Indeed, let us check the universal property for direct sums: for an arbitrary object $Y=(Y_i)_i\in\cC$ we have
	\begin{align*}
		\Hom_{\cC}((X_i)_i, (Y_i)_i)& = \prod_i \Hom_{\cC_i}(X_i,Y_i)\\
		& = \prod_i \Hom_{\cC}(\iota_i(X_i),(Y_j)_j)\\
		& = \Hom_{\cC}(\oplus_i \iota_i(X_i),(Y_j)_j).
	\end{align*}
	Actually, the same argument shows that $(X_i)_i$ is also the direct product of the objects $\iota_i(X_i)$, $i\in I$, in $\cC$.
	
	Caution: the above does not mean that in $\cC$  we always have $\oplus_i A_i\simeq \prod_i A_i$ for any $A_i\in\cC$.
	See also \cref{E:product of categories}.
	
	As a summary, we have a category $\cC$ with full subcategories $\cC_i$, $i\in I$ that are two by two derived orthogonal and moreover every object of $\cC$ is a direct sum of objects from $\cC_i$ (the subcategories $\cC_i$ split the category $\cC$).
	Conversely, a category $\cC$ with full subcategories $\cC_i$, $i \in I$, with the above properties is a direct product of
	$\cC_i$
	provided we assume a small technical condition on the categories that we work with which in practice is always verified.
	
	\begin{proposition}\label{P:categ is product if and only if} (cf. \cite[\S1.9]{BerDel})
		Let $\cC$ be an abelian category and let $\cC_i\subset \cC$, $i\in I$ be full abelian subcategories of $\cC$. 
		Assume that 
		\begin{enumerate}
			\item\label{i:sums} $\cC$ admits direct sums indexed by $I$,
			\item\label{i:obj dir sum} every object of $\cC$ can be written as a direct sum of objects of $\cC_i$,
			\item\label{i:orth} $\Hom_\cC(X_i,X_j)=0$ for all $X_i\in\cC_i$, $X_j\in\cC_j$ and $i\neq j$.
			\item\label{i:technical cond} If $X_i\in\cC_i$, $i\in I$ and $f\colon Y\to \oplus_i X_i$ is such that $\proj_i\circ f=0$ for all $i\in I$ then $f=0$.\footnote{There are pathological examples where this condition is not satisfied.}
		\end{enumerate}
		Then the natural functor $\prod_i \cC_i \to \cC$ is an equivalence of categories.
	\end{proposition}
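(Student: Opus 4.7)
The plan is to construct the obvious functor $\Phi\colon\prod_i\cC_i\to\cC$, $(X_i)_{i\in I}\mapsto \oplus_i \iota_i(X_i)$ (well defined by (1)), and verify that it is essentially surjective and fully faithful. Essential surjectivity is exactly the content of hypothesis (2), after grouping the summands in the decomposition of any $X\in\cC$ into a single object of each $\cC_i$ (which requires, as part of the interpretation of (2), that $\cC_i$ is stable under the relevant direct sums in $\cC$).

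The heart of the argument is full faithfulness, i.e.\ the computation
\[ \Hom_\cC\bigl(\oplus_i X_i,\,\oplus_j Y_j\bigr) \;\simeq\; \prod_{i\in I}\Hom_{\cC_i}(X_i,Y_i). \]
The universal property of direct sums in the first argument reduces this to establishing
\[ \Hom_\cC(X_i,\,\oplus_j Y_j)\;\simeq\;\Hom_\cC(X_i,Y_i) \]
for every fixed $i$; the right-hand side coincides with $\Hom_{\cC_i}(X_i,Y_i)$ by fullness of the inclusion $\cC_i\subset\cC$.

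Composition with the canonical inclusion $\iota_i\colon Y_i\to \oplus_j Y_j$ gives an injection $\Hom_\cC(X_i,Y_i)\hookrightarrow \Hom_\cC(X_i,\oplus_j Y_j)$, since $\proj_i\circ\iota_i=\id_{Y_i}$. For surjectivity, given $f\colon X_i\to \oplus_j Y_j$, I would put $g:=\iota_i\circ\proj_i\circ f$ and check that $\proj_j(f-g)=0$ for every $j\in I$: for $j=i$ this holds by construction, and for $j\neq i$ one has $\proj_j\circ f=0$ by hypothesis (3) (as $X_i\in\cC_i$ and $Y_j\in\cC_j$) together with $\proj_j\circ\iota_i=0$. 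Applying hypothesis (4) to $f-g$ yields $f=g$, which is in the image of $\iota_i\circ(-)$.

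The main (and essentially only) non-formal point is the invocation of (4). For finite $I$ a morphism into a direct sum is tautologically determined by its components, but for infinite $I$ direct sums and direct products diverge, and a morphism $Y\to\oplus_i X_i$ need not be determined by its projections onto the summands. Condition (4) is precisely the axiomatic antidote to this pathology, so the logical structure of the proof is: hypotheses (1)--(3) set the stage, while (4) supplies the one genuinely needed categorical fact.
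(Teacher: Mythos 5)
Your proposal is correct and follows essentially the same route as the paper's proof: the paper's argument is the chain of identifications $\prod_i\Hom_{\cC_i}(X_i,Y_i)=\prod_i\Hom_\cC(X_i,\oplus_jY_j)=\Hom_\cC(\oplus_iX_i,\oplus_jY_j)$, whose only non-formal link is the isomorphism $\Hom_\cC(X_i,Y_i)\simeq\Hom_\cC(X_i,\oplus_jY_j)$ that you establish in detail via $g=\iota_i\circ\proj_i\circ f$ and hypotheses (3)--(4). You simply make explicit what the paper dispatches with ``taking into account assumptions (3) and (4).''
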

	\begin{proof}
		There is a natural functor $\prod_i \cC_i \to \cC$ given by $(X_i)_i\mapsto \oplus_i X_i$ that is well defined by assumption \eqref{i:sums}.
		By \eqref{i:obj dir sum} it is essentially surjective.
		
		In order to show that it is an equivalence it remains to check that it is fully faithful. 
		
		Given this observation and taking into account assumptions \eqref{i:orth} and \eqref{i:technical cond}, we have
		\begin{align*}
			\Hom_{\prod_i \cC_i}((X_i)_i,(Y_i)_i)  & = \prod_i \Hom_{\cC_i}(X_i,Y_i)\\
			& = \prod_i \Hom_{\cC}(X_i,\oplus_j Y_j)\\
			& = \Hom_\cC (\oplus_i X_i,\oplus_j Y_j)
		\end{align*}
		which proves the fully-faithfulness.	
	\end{proof}

	The following example involving direct sum and direct product might be clarifying. 
	\begin{example}\label{E:product of categories}
		Consider the algebra $A=\bigoplus_{i\in \bZ} \bC = Func_c(\bZ,\bC)$, i.e., complex valued functions with compact support defined on $\bZ$ with pointwise multiplication.
		It is an algebra that has no unit but it has enough idempotents.
		We can consider the category of non-degenerate modules over $A$, call it $\cC:=A\lmod\nd$, where an object of $\cC$ is an $A$-module $M$ such that for any $m\in M$ there exists an idempotent $e\in A$ such that $em=m$.
		By considering the delta functions $\delta_i$, $i\in\bZ$, it is clear that any non-degenerate module $M$ satisfies
		\[M=\bigoplus_i M_i,\text{ where }M_i:=\delta_iM.\]
		Notice also that if $M=M_i$ and $M'=M'_i$ then $\Hom_\cC(M,M') = \Hom_{\mathrm{Vec}_\bC}(M_i,M'_i)$.
		Given two non-degenerate $A$-modules $M$ and $M'$ we see therefore that
		\[\Hom_\cC(M,M') = \Hom_\cC(\oplus_i M_i,\oplus_j M'_j) = \prod_i \Hom_{\mathrm{Vec}_\bC}(M_i,M'_i)\]
		because there are no $A$-module morphisms between $M_i$ and $M'_j$ if $i\neq j$. 
		
		For any $i\in\bZ$ we can consider the full subcategory of $\cC$ consisting of those modules $M$ on which $\delta_i$ acts as identity. In other words, modules $M$ such that $M=\delta_iM$. We denote it by $\cC_i$ and by what we said above we have $\cC_i\simeq\mathrm{Vec}_\bC$ for all $i\in \bZ$.
		From the above Proposition we can deduce an equivalence of categories
		\[\cC\simeq \prod_{i\in\bZ}\mathrm{Vec}_\bC.\]
		Since all the categories $\cC_i$ are equivalent to $\mathrm{Vec}_\bC$ one could confuse the direct sum $\oplus_i^\cC M_i\in \cC$ where each $M_i$ is a vector space, to the direct sum $\oplus_i^{\mathrm{Vect}_\bC}M_i\in \cC$ where the superscripts refer to the category where the direct sums/products are taken. The former is an "external" direct sum whereas the latter is an "internal" direct sum (all the objects are seen as belonging to the \emph{same} $\mathrm{Vec}_\bC$). 
		Moreover, we actually have $\oplus_i^\cC M_i \simeq \prod_i^\cC M_i$. One direct way to see this is to remember that $\cC = A\lmod\nd$ and that the direct product of non-degenerate modules is defined by
		\begin{align*}
		\prod_i{}^{A\lmod\nd} 
		M_i &= 
		\{(m_i)\in\prod_i{}^{\mathrm{Vect}_\bC} M_i
		\mid \exists e\text{ idempotent in }A 
		\text{ such that } em=m\} \\
		&= \{(m_i)\in\prod_i{}^{\mathrm{Vect}_\bC} M_i\mid \text{ only finitely many } m_i \text{ are non-zero}\}.
		\end{align*}	
	\end{example}	
	
	\begin{definition}
		Given an abelian category $\cC$, we define its center $\cZ(\cC)$ to be $\End(\Id_{\cC})$, the endomorphisms of the identity functor.
	\end{definition}
	
	\begin{remark}\label{R:equiv of cats center}
		The center of a category is preserved under categorical equivalence.	
	\end{remark}
	
	Since $\cC$ is additive we see that $\cZ(\cC)$ is a commutative ring with unit.
	Moreover, the category $\cC$ becomes naturally $\cZ(\cC)$-enriched, i.e., the $\Hom$ spaces in $\cC$ have a natural action of $\cZ(\cC)$ making them $\cZ(\cC)$-modules and $\cC$ into a $\cZ(\cC)$-linear category.
	
	\begin{example}\label{Ex:center of A-mod}
		If $\cC = A\lmod$ for an algebra $A$ with unit, then it is not hard to see that $\cZ(\cC)$ is the center $\cZ(A)$ of $A$.
	\end{example}
	
	For $e\in \cZ(\cC)$ an idempotent we denote by $e\cC$ its image in $\cC$: it is the full subcategory consisting of those objects on which $e$ acts by identity.
	
	A similar argument as in \cref{P:categ is product if and only if} proves the first part of
	\begin{proposition}\label{P:center of product of cats}
		If $\cC\simeq \prod_i \cC_i$ then $\cZ(\cC) \simeq \prod_i \cZ(\cC_i)$.
		Conversely, if $\cZ(\cC) \simeq \prod_i Z_i$ then the identity of each $Z_i$ provides an idempotent $e_i\in \cZ(\cC)$ and  we have $\cC\simeq \prod_i e_i\cC$.
	\end{proposition}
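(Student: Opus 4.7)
The plan is to prove the two implications separately, each by reducing to Proposition~\ref{P:categ is product if and only if}.

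For the forward direction, by Remark~\ref{R:equiv of cats center} the center is preserved under equivalence of categories, so it suffices to compute the center of the literal product $\prod_i \cC_i$. An element $\eta \in \cZ(\prod_i \cC_i)$ is a natural endomorphism of the identity functor. For each object $X = (X_i)_i$ one has $\End(X) = \prod_i \End_{\cC_i}(X_i)$, so $\eta_X = (\eta^i_X)_i$. Applying naturality to the inclusions $\iota_j(Z) \hookrightarrow X$ (whose components vanish outside position $j$) forces $\eta^i_X$ to depend only on $X_i$, and naturality within each $\cC_i$ then shows that the assignment $X_i \mapsto \eta^i_{X_i}$ is a natural transformation $\Id_{\cC_i} \to \Id_{\cC_i}$, i.e.\ an element $\eta^i \in \cZ(\cC_i)$. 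The map $\eta \mapsto (\eta^i)_i$ is easily verified to be a ring isomorphism $\cZ(\prod_i \cC_i) \xrightarrow{\sim} \prod_i \cZ(\cC_i)$.

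For the converse, given the central idempotents $e_i$ arising from the decomposition $\cZ(\cC) \simeq \prod_i Z_i$, we apply Proposition~\ref{P:categ is product if and only if} to the subcategories $e_i\cC$. Orthogonality is immediate: for $X \in e_i\cC$, $Y \in e_j\cC$ with $i \neq j$, any $f\colon X \to Y$ satisfies $f = f \circ e_{i,X} = e_{j,Y} \circ f \circ e_{i,X} = 0$, since $e_j e_i = 0$ in the center. For the direct-sum conditions, each object $X \in \cC$ carries the family of orthogonal idempotents $e_{i,X} \in \End(X)$, and the identity $(1_i)_i$ of $\prod_i Z_i$ acts as $\id_X$; this yields sub-objects $e_iX \in e_i\cC$ and the decomposition $X = \bigoplus_i e_iX$ in $\cC$. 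The technical condition of Proposition~\ref{P:categ is product if and only if} is a similar consequence of the orthogonality of the $e_{i,X}$.

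The main obstacle is precisely the decomposition $X = \bigoplus_i e_iX$ when the index set $I$ is infinite, since the formal sum $\sum_i e_i$ does not literally exist as an element of $\prod_i Z_i$. The resolution is to work one object at a time: the action of the product ring on $X$, combined with the hypothesis that $\cC$ admits direct sums indexed by $I$, produces the decomposition pointwise (Example~\ref{E:product of categories} illustrates this transparently, with $A = \bigoplus_i \bC$ acting on modules whose decomposition by the $\delta_i$ may involve infinitely many nonzero components). This is exactly where the assumption that $\cZ(\cC)$ is a \emph{product} of rings, rather than merely containing a family of orthogonal idempotents, is essential.
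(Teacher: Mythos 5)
Your route is the same as the paper's: the forward direction by a direct computation of the center of a product category (which is what the paper alludes to when it says the first part follows by "a similar argument" to \cref{P:categ is product if and only if}), and the converse by feeding the subcategories $e_i\cC$ into \cref{P:categ is product if and only if}. The forward direction and your verification of orthogonality and of the technical condition are fine.

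However, there is a genuine gap in your treatment of the one point you yourself single out as the main obstacle, namely condition \eqref{i:obj dir sum} of \cref{P:categ is product if and only if}: the decomposition $X=\bigoplus_i e_iX$ for infinite $I$. Your claimed resolution --- that the action of the product ring $\prod_i Z_i$ on $X$, together with existence of $I$-indexed direct sums in $\cC$, "produces the decomposition pointwise" --- does not work, and the hypothesis that $\cZ(\cC)$ is a genuine product of rings is \emph{not} sufficient. Concretely, take $A=\prod_{i\in\bN}\bC$ (the full unrestricted product, a unital ring) and $\cC=A\lmod$. Then $\cZ(\cC)=Z(A)=\prod_i\bC$ and each $e_i\cC\simeq\mathrm{Vec}_\bC$, yet the object $A$ itself satisfies $\bigoplus_i e_iA=\bigoplus_i\bC\subsetneq A$, and the quotient $A/\bigoplus_i\bC$ is a nonzero object killed by every $e_i$; so $\cC\not\simeq\prod_i e_i\cC$. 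The point is that $\id_X$ corresponds to the unit $(1_i)_i$ of the product ring, which is not the (nonexistent) sum $\sum_i e_i$, so for each finite $S\subset I$ one only gets $X=\bigl(\bigoplus_{i\in S}e_iX\bigr)\oplus(1-e_S)X$, and passing to the limit yields only an inclusion $\bigoplus_i e_iX\subseteq X$ that may be strict. \cref{E:product of categories}, which you invoke, avoids this precisely by using the non-unital algebra $\bigoplus_i\bC$ and restricting to \emph{non-degenerate} modules --- an extra condition not implied by the shape of the center. To be fair, the paper's own proof is a one-line reduction to \cref{P:categ is product if and only if} and glosses over the same hypothesis; in the applications (e.g.\ the Bernstein decomposition) condition \eqref{i:obj dir sum} is established by separate representation-theoretic arguments. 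But your write-up asserts a justification for this step that is false as stated, so you should either add a hypothesis guaranteeing $X=\bigoplus_i e_iX$ (e.g.\ that every object is "non-degenerate" for the family $\{e_i\}$) or flag that this must be verified by other means.
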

	\begin{proof}
		Only the last part is non-trivial and it follows by applying \cref{P:categ is product if and only if}. 
	\end{proof}
	
	The following is a generalization of \cref{Ex:center of A-mod} for algebras without unit but with enough idempotents.
	An algebra $A$ is said to have enough idempotents if for any element $a\in A$ there exists an idempotent $e\in A$ such that $ae=ea=a$.
	
	A left $A$-module $M$ is said to be non-degenerate if for any $m\in M$ there exists an idempotent $e\in A$ such that $em=m$.
	The category of non-degenerate left $A$-modules is denoted as $A\lmod\nd$.
	
	If $A^o$ is the opposite algebra to $A$,  then $\overline{A}:=\Hom_{A^o}(A,A)$ is the space of right $A$-invariant maps from $A$ to $A$. 
	We can write $\overline{A}= \Hom_{A^0}(\varinjlim eA,A)= \varprojlim Ae$ where the limit is taken over all the idempotents of $A$.
	Clearly $\overline{A}$ is an $A$-bimodule.
	
	\begin{proposition}[{\cite[Lemme 1.5]{BerDel} or \cite[I.1.7]{Renard}}]\label{P:center module idempot algebra}\hfill\\
		The center of the category $A\lmod\nd$ is identified with the center of $\overline{A}$ as an $A$-bimodule (i.e., with the $A$-bimodule endomorphisms of $A$). 
		It can be further identified with $\varprojlim Z(eAe)$ where $e$ goes over all idempotents of $A$.
	\end{proposition}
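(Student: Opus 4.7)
The plan is to construct explicit mutually inverse bijections
\[\cZ(A\lmod\nd) \;\longleftrightarrow\; \End_{(A,A)}(A) \;\longleftrightarrow\; \varprojlim_e Z(eAe),\]
identifying along the way the middle term with the center of $\overline{A}$ as an $A$-bimodule.

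First, since $A$ itself lies in $A\lmod\nd$, any $\eta \in \End(\Id_{A\lmod\nd})$ restricts to a left-$A$-linear endomorphism $\eta_A\colon A\to A$. For each $a\in A$ the right multiplication $r_a\colon x\mapsto xa$ is a morphism in $A\lmod\nd$, so naturality of $\eta$ forces $\eta_A\circ r_a = r_a\circ \eta_A$; hence $\eta_A$ is also right-$A$-linear and lies in $\End_{(A,A)}(A)$. The identification of $\End_{(A,A)}(A)$ with the $A$-central subspace of $\overline{A} = \Hom_{A^o}(A,A)$ is immediate: an element of $\overline{A}$ is right-$A$-linear by definition, and being central for the $A$-bimodule structure on $\overline{A}$ amounts precisely to being also left-$A$-linear.

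For the converse direction, given $\phi\in\End_{(A,A)}(A)$ I would define $\eta^\phi_M(m) := \phi(e)m$ for any idempotent $e\in A$ with $em = m$. Independence from the choice of $e$, naturality in $M$, and left-$A$-linearity of $\eta^\phi_M$ all follow from the bimodule property of $\phi$ together with the \emph{enough idempotents} hypothesis, which allows one to enlarge any finite set of idempotents to a common larger one acting as identity on all of them (e.g.\ for $em=e'm=m$ one picks $e''$ with $e''e=ee''=e$ and $e''e'=e'e''=e'$, and then $\phi(e)m = \phi(e''e)m = \phi(e'')em = \phi(e'')m$ by right-$A$-linearity, and symmetrically on the other side). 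The two assignments $\eta\mapsto \eta_A$ and $\phi\mapsto \eta^\phi$ are mutually inverse: for $\phi$ arising from $\eta$, one recovers $\eta_M(m) = \phi(e)m$ by applying naturality to the left-$A$-linear map $A\to M$, $x\mapsto xm$.

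For the last identification, a direct bimodule calculation shows $\phi(e) \in eAe$ (using $e^2 = e$), and that for $y\in eAe$ both $\phi(e)y$ and $y\phi(e)$ equal $\phi(y)$, so $\phi(e) \in Z(eAe)$; the transition $\phi(e) = e\phi(f)e$ whenever $ef = fe = e$ follows similarly from the bimodule property, so $(\phi(e))_e$ defines a coherent system. Conversely, a compatible family $(z_e)\in\varprojlim_e Z(eAe)$ produces $\phi(a) := z_e a$ for any idempotent $e$ acting as identity on $a$ from both sides, and the two constructions are manifestly mutual inverses. The whole argument carries no deep obstacle: it is a formal exercise once one carefully tracks left- versus right-$A$-linearity and invokes the enough-idempotents hypothesis each time one needs idempotents with compatible action on a prescribed finite collection.
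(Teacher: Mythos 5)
Your argument is correct. Note that the paper itself offers no proof of this proposition — it simply cites \cite[Lemme 1.5]{BerDel} and \cite[I.1.7]{Renard} — so you are supplying the standard argument from those references rather than diverging from the paper: restrict a natural endomorphism of the identity to the object $A$ itself, use naturality against right multiplications to get right $A$-linearity, recover the transformation on a general $M$ via the maps $x\mapsto xm$, and evaluate at idempotents to land in $\varprojlim Z(eAe)$. All the verifications you defer (well-definedness, left-linearity, the transition maps $z\mapsto eze$) do go through exactly as you indicate, using the finite-set form of the enough-idempotents hypothesis that the paper assumes; the only point you leave entirely implicit is that the bijection respects the ring structures, which is immediate from the formula $\eta^{\phi}_M\circ\eta^{\psi}_M(m)=\phi(\psi(e))m$.
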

	
	In our context, the abelian categories that we will encounter will be equivalent to module categories over a unitary ring and as such their center is just the center of the corresponding ring.
	
	We use a standard tool from category theory to detect when an abelian category is equivalent to a module category.
	
	Recall that a functor $F\colon \cC\to\cD$ is called \emph{faithful} (resp., \emph{fully-faithful}) if for any objects $A,B\in\cC$ it induces an injective (resp., bijective) map on Hom spaces:
	\[ \Hom_\cC(A,B)\to \Hom_\cD(F(A),F(B)).\]
	One calls $F$ \emph{essentially surjective} if every object in $\cD$ is isomorphic to the image through $F$ of an object from $\cC$.
	
	A basic theorem in category theory (see \cite[IV.4 Theorem 1]{MacLane}) says that a functor $F\colon \cC\to \cD$ is an equivalence of categories if and only if it is essentially surjective and fully-faithful.
	
	An object $P$ of an abelian category $\cA$ is called 
	\begin{enumerate}
		\item compact (or finite) if $\Hom_\cA(P,-)$ commutes with arbitrary coproducts(=direct sums),
		\item projective if $\Hom_\cA(P,-)$ is exact,
		\item a generator if $\Hom_\cA(P,-)$ is faithful, a notion that  we just recalled above
                and which for $P$ a projective object amounts to the assertion that for $X \not = 0$,
                $\Hom_\cA(P,X) \not = 0$. 
		\item progenerator if it is projective and a generator.
	\end{enumerate}
	
	\begin{proposition}[see {\cite[4.11]{Pareigis-categories}}]\label{P:equiv module category}
		Let $\cA$ be an abelian category admitting coproducts. 
		For a compact progenerator $P$ of $\cA$,  the functor
		\begin{align*}
			\cA & \to \rmod\End_\cA(P)\\
			X&\mapsto \Hom_\cA(P,X)
		\end{align*}
		is an equivalence of categories between $\cA$ and the category of left modules over the ring $\End_\cA(P)$.
	\end{proposition}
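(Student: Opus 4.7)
The plan is the standard construction of a quasi-inverse via a ``tensor with $P$'' functor, together with a verification that all three hypotheses on $P$ (compact, projective, generator) feed into checking the unit and counit of the resulting adjunction are isomorphisms. Set $R := \End_\cA(P)$, and observe that $F := \Hom_\cA(P,-)$ naturally lands in right $R$-modules via precomposition.

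First, I would construct a left adjoint $G\colon \rmod R \to \cA$ to $F$. Since $\cA$ admits coproducts, put $G(R^{(I)}) := P^{(I)}$ on free right $R$-modules; the isomorphism $\Hom_\cA(P^{(I)},X) = \Hom_\cA(P,X)^I = \Hom_R(R^{(I)}, F(X))$ is the universal property of the coproduct and gives the adjunction on free modules. For an arbitrary right $R$-module $M$, choose a free presentation $R^{(J)} \to R^{(I)} \to M \to 0$ and define $G(M)$ to be the cokernel in $\cA$ of the induced map $P^{(J)} \to P^{(I)}$. Writing both $\Hom_R(M,F(X))$ and $\Hom_\cA(G(M),X)$ as equalizers coming from the presentation, and invoking the already-established adjunction on free modules, yields the general adjunction $\Hom_\cA(G(M),X) \cong \Hom_R(M,F(X))$, which also shows $G(M)$ is independent of the chosen presentation.

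The next step is to verify the unit $\eta_M\colon M \to FG(M)$ is an isomorphism for every $M$. For $M = R$ one has $FG(R) = \Hom_\cA(P,P) = R$ by definition of $R$. Because $P$ is compact, $F$ commutes with coproducts, hence $FG(R^{(I)}) = F(P^{(I)}) = R^{(I)}$ and $\eta$ is an iso on all free modules. Because $P$ is projective, $F$ is exact and so preserves cokernels; combined with the right-exactness of the left adjoint $G$, a diagram chase (five-lemma) comparing the presentation of $M$ with the presentation of $FG(M)$ induced by it shows $\eta_M$ is an iso for every $M$.

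The final and most delicate step is to show the counit $\epsilon_X\colon GF(X) \to X$ is an isomorphism, and this is where I expect the main obstacle to lie. Unpacking the construction with the tautological presentation $R^{(F(X))} \twoheadrightarrow F(X)$ shows that $\epsilon_X$ is the factorization through $GF(X)$ of the canonical trace map $\tau\colon P^{(\Hom_\cA(P,X))} \to X$ whose $f$-th component is $f$ itself. For the cokernel $Q$ of $\tau$, every composite $P \xrightarrow{f} X \to Q$ vanishes by construction; since $P$ is projective, any $g\colon P \to Q$ lifts across the epimorphism $X \to Q$ to some $f\colon P \to X$, so $g = 0$, i.e., $F(Q) = 0$. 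The generator hypothesis (faithfulness of $F$) then forces $Q = 0$, so $\tau$ is epi, and hence so is $\epsilon_X$. Applying the exact functor $F$ to $0 \to \ker \epsilon_X \to GF(X) \to X \to 0$ and using that $F(\epsilon_X) \circ \eta_{F(X)} = \id_{F(X)}$ (triangle identity) together with the unit isomorphism gives $F(\ker \epsilon_X) = 0$, whence $\ker \epsilon_X = 0$ by faithfulness of $F$ once more. The delicate point is precisely this simultaneous use of projectivity and the generator property to kill both the cokernel and the kernel of the counit in an abstract abelian category.
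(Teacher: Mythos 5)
Your proof is correct and complete: the construction of the quasi-inverse $-\otimes_R P$ via free presentations, the use of compactness to identify $F$ on coproducts, projectivity for exactness of $F$, and the generator property to kill both cokernel and kernel of the counit is exactly the standard Gabriel--Mitchell/Morita argument. The paper does not prove this proposition but simply cites \cite[4.11]{Pareigis-categories}, and your argument is essentially the one found there, so there is nothing to compare beyond noting that you correctly resolve the left/right module convention (the paper's own statement wavers between $\rmodnd$-style notation and the phrase ``left modules'') by taking $\Hom_\cA(P,X)$ as a right $\End_\cA(P)$-module via precomposition.
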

	
	\subsection{Serre functors}\label{SS:Serre functors}
	The notion of a Serre functor for an additive category was introduced in \cite{BonKapr} in order to capture duality phenomena similar to Serre's duality theorem on smooth projective varieties.
	It is intimately related to questions of representability of certain functors.
	Moreover this turns out to be a very rigid notion.
        	In particular, if a Serre functor exists then it is unique (up to natural isomorphism).
	
	\begin{definition}\cite[Definition 1.28]{Huyb-FM}\cite[Definition 3.1]{BonKapr}
		Let $\cC$ be a $k$-linear category. 
		A Serre functor $\bS$ on $\cC$ is an equivalence of categories $\bS\colon \cC\to \cC$ together with natural isomorphisms
		\[\eta_{A,B}\colon  \Hom_\cC(A,B)^*\to \Hom_\cC(B,\bS(A)), \,\text{ for all } A,B\in\cC.\]
	\end{definition}
	\begin{remark}
		It is observed in \cite[Lemma 1.30]{Huyb-FM} that if one assumes that the categories $\cC_1,\cC_2$,  with Serre functors $\bS_1,\bS_2$, 
				have finite dimensional Hom spaces,  then any functorial isomorphism  between
                the categories  $\cC_1,\cC_2$ commutes with the Serre functors.
	\end{remark}
	
	An example of a Serre functor, actually one that motivated the notion, is the classical Serre duality.
	Namely, let $X$ be a smooth projective variety over $k$ of dimension $d$ and denote by $\omega_X$ its canonical sheaf.
	Then Serre duality stipulates a natural isomorphism
	\[ \Ext^i_{\cO_X}(\cF,\cG)^* \simeq \Ext^{d-i}_{\cO_X}(\cG,\cF\otimes\omega_X), \text{ for all } \cF,\cG\in\cD^b(\Coh(X)).\]
	In other words, the functor
	\[ -\otimes \omega_X[d]\colon \cD^b(\Coh(X))\to \cD^b(\Coh(X)) \]
	is a Serre functor.
	
	A simpler example comes from finite dimensional algebras over a field $k$ and is known as the Nakayama functor, see \cite[\S3.2, Example 3]{BonKapr}.
	Let $A$ be a finite dimensional algebra over $k$ and suppose it has finite homological dimension.
	(Although there are no nontrivial finite dimensional commutative algebras of finite homological dimension over a field $k$, there are many non-commutative ones, for example, there are  constructions of such algebras using Quivers.)
 We consider the derived category $\cC:=\cD^b_{fd}(A\lmod)$ of finite dimensional left $A$-modules.
        	There are two duality functors
	\[ \cD^b_{fd}(A\lmod) \stackrel{\delta}{\to} \cD^b_{fd}(\rmod A) \stackrel{(-)^\vee}\to \cD^b_{fd}(A\lmod),\]
	where $\delta(M) = \RHom_A(M,A)$ and $M^\vee = \RHom_{\cD^b(k)}(M,k)$ for any object $M\in \cD^b(A\lmod)$.
	We put $D_{Nak}:= (-)^\vee\circ \delta$ and call it the Nakayama functor.
	
	The following proposition is very easy to prove from classical adjunctions  and our results in \cref{S:abstract duality theorem} are essentially a detailed version of it for idempotented algebras (see \cref{T:Nakayama functor is Serre}):
	\begin{proposition}
		The Nakayama functor $D_{Nak}\colon \cD^b_{fd}(A\lmod)\to\cD^b_{fd}(A\lmod)$ is a Serre functor.
	\end{proposition}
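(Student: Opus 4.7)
The plan is to verify directly the two conditions defining a Serre functor: that $D_{Nak} = (-)^\vee \circ \delta$ is an equivalence of $\cC = \cD^b_{fd}(A\lmod)$ with itself, and that there is a natural isomorphism $\Hom_\cC(X,Y)^* \simeq \Hom_\cC(Y, D_{Nak}(X))$. Both reduce to classical adjunctions once one exploits that every object of $\cC$ is a perfect complex.

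First I would verify that $D_{Nak}$ is an equivalence. Since $A$ has finite global dimension and every object of $\cC$ has finite-dimensional (hence finitely generated) cohomology, each $X \in \cC$ admits a bounded resolution by finitely generated projective $A$-modules; equivalently, $X$ is a perfect complex. In particular $\delta(X) = \RHom_A(X,A)$ lies in $\cD^b_{fd}(\rmod A)$, because for a finitely generated projective $P = Ae$ one has $\Hom_A(P,A) = eA$, which is finite-dimensional since $A$ is. Biduality $\delta \circ \delta \simeq \Id$ on perfect complexes makes $\delta\colon \cC \to \cD^b_{fd}(\rmod A)^\op$ an equivalence, and the $k$-linear dual is manifestly one, so $D_{Nak}$ is an equivalence.

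For the Serre pairing I would chain two natural isomorphisms. Perfectness of $X$ gives the projection formula
\[ \RHom_A(X,A) \otimes_A^L Y \xrightarrow{\sim} \RHom_A(X,Y), \]
verified on $X = Ae$ and extended by resolution. The $k$-linear tensor-Hom adjunction gives
\[ \RHom_A\bigl(Y, \RHom_k(\delta(X), k)\bigr) \simeq \RHom_k\bigl(\delta(X) \otimes_A^L Y, k\bigr). \]
Combining the two yields $\RHom_A(Y, D_{Nak}(X)) \simeq \RHom_A(X,Y)^\vee$. Taking $H^0$, and using that $H^0(C^\vee) = H^0(C)^*$ for a complex $C$ of $k$-vector spaces with finite-dimensional cohomology, produces the desired natural isomorphism $\Hom_\cC(Y, D_{Nak}(X)) \simeq \Hom_\cC(X,Y)^*$. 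Naturality in both variables is built into the adjunctions used.

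The only point requiring genuine care --- the main ``obstacle,'' though really a matter of bookkeeping --- is tracking the left versus right $A$-module structures through the projection formula and the tensor-Hom adjunction, since $\delta$ interchanges the two sides. This is essentially the computation that \cref{S:abstract duality theorem} of the paper will carry out in detail in the more delicate setting of idempotented algebras, where the extra subtlety is the passage from $A$-modules to \emph{non-degenerate} $A$-modules and the concomitant replacement of $A$ by $\overline{A}$ in the dualizing object.
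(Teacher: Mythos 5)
Your proof is correct and follows essentially the same route the paper takes: the paper proves this proposition only by reference to its detailed idempotented-algebra version (\cref{T:Nakayama functor is Serre}), whose proof is exactly your chain of the projection formula $\RHom_A(X,A)\otimes_A^L Y \simeq \RHom_A(X,Y)$ for perfect $X$ followed by tensor--Hom adjunction and taking $H^0$. Your additional verification that $D_{Nak}$ is an equivalence via biduality on perfect complexes is a reasonable supplement that the paper leaves implicit.
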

	\begin{remark}
		Notice that we have  to restrict ourselves to finite dimensional modules because of  the appearance of the dual vector space which provides an equivalence of categories only for finite dimensional modules.
	\end{remark}
	In order to extend the notion to more general categories, in particular to finitely generated modules over an algebra finite over its center, Bezrukavnikov and Kaledin propose the notion of relative Serre functor (see \cite[\S 2.1]{BezKal}).
	Since we do not prove anything about relative Serre functors we prefer to defer this discussion to a later work.

	\subsection{An abstract duality theorem}\label{S:abstract duality theorem}
	
	The purpose of this section is to prove an abstract duality theorem reminiscent of the Serre functor property of the Nakayama functor $\RHom_A(-,A)^*$ (see \cite{BonKapr} and \cref{SS:Serre functors}).
	This is achieved in \cref{T:Nakayama functor is Serre} and \cref{C:D_h square to id for fin.h.dim}.
	Although the results presented in this section are well-known and we do not claim any originality, for lack of a
	precise reference,  we provide all the details. We do not strive for maximal generality, so sometimes we make hypotheses which might not be necessary but which hold in our applications to $p$-adic groups.

	Let $k$ be a field and let $A$ be an idempotented $k$-algebra, i.e.,  for every $a\in A$ there exists an idempotent $e\in A$ such that $ae=ea=a$. 
	Clearly any unitary algebra is idempotented.
	We suppose moreover that $A$ has a countable filtered set of idempotents.
	A left $A$-module $M$ is said to be \emph{non-degenerate} if $AM=M$, equivalently, if
        for any $m\in M$, there exists an idempotent $e\in A$ such that $em=m$.

\begin{remark}
	For a non-unitary ring $A$, a free $A$-module is not necessarily projective.
	The basic projective left $A$-modules are $Ae$ with $e$ an idempotent.
	Any projective finitely generated non-degenerate $A$-module is a direct summand of $\oplus_i Ae_i$ where $\{e_i\}$ is a finite collection of idempotents.
	This follows as in the unitary case using the fact that the module is non-degenerate and finitely generated.
\end{remark}
	
	We denote by $A\lmod$, the category of all left $A$-modules, and by $A\lmod\nd$ the full subcategory of non-degenerate left $A$-modules. 
	We use a similar notation for right $A$-modules.
	
	Consider the functors
	\begin{align}\label{Eq:functors Hom and tensor product}
		\begin{split}
			\Hom_A(-,-)\colon &(A\lmod\nd)^{op}\times A\lmod\nd \to \bZ\lmod\\
			-\otimes_A- \colon & \rmodnd A\times A\lmod \to \bZ\lmod
		\end{split}
	\end{align}
	
	The category of non-degenerate left $A$-modules, $A\lmod\nd$, has enough projective objects\footnote{We do not know if the category of all left modules over $A$ has enough projective objects, but this plays no role for us.} and so we can  derive the functors \eqref{Eq:functors Hom and tensor product} on the first argument since non-degenerate projective modules are still acyclic:
	\begin{align}\label{Eq:derived functors Hom and tensor product}
		\begin{split}
			\RHom_A(-,-)\colon &(\cD^b(A\lmod\nd))^{op}\times \cD^b(\rmodnd A) \to \cD(\bZ\lmod)\\
			-\otimes^L_A- \colon & \cD^b(\rmodnd A) \times \cD^b(A\lmod) \to \cD(\bZ\lmod)
		\end{split}
	\end{align}
	
	\begin{remark}
		The functor $\Hom_A(-,A)\colon (A\lmod\nd)^{op}\to \rmod A$ does not land inside the subcategory of non-degenerate modules.
		This can already be seen for $A$ itself: $\Hom_A(A,A) \simeq \varprojlim_e eA$ where the  inverse limit is taken over the poset of idempotents $e$ in $A$ (can take a filtered subset).	
		However, if $M$ is a \emph{finitely generated} non-degenerate module then $\Hom_A(M,A)$ is non-degenerate.
	\end{remark}
	
	For two $A$-modules $M,N\in A\lmod\nd$, there is a canonical morphism 
	\begin{align}\label{Eq:dual Hom to Hom natural morphism}
		\can_{N,M}\colon \Hom_A(M,A)\otimes_A N\to \Hom_A(M,N) 
	\end{align} 
	that extends to the derived category $\cD^b(A\lmod\nd)$
	\begin{align}\label{Eq:can Hom in derived cat}
		\can_{M,M}\colon \RHom_A(M,A)\otimes_A^L N\to \RHom_A(M,N). 
	\end{align}

	\begin{definition}
		A non-degenerate module $M$ over $A$ is said to be \emph{perfect} if it has a finite resolution by finitely generated non-degenerate projective $A$-modules.
		An object of $\cD^b(A\lmod\nd)$ is said to be perfect if it is isomorphic to a finite complex of finitely generated non-degenerate projective $A$-modules.
	\end{definition}
	
	The next lemma tells us that $\can_{N,M}$ is an isomorphism when $M$ is perfect:
	\begin{lemma}\label{L:Hom(M N)=Hom(M A)otimesN}
		If $M,N\in\cD^b(A\lmod\nd)$ and $M$ is perfect, the canonical morphism \eqref{Eq:can Hom in derived cat}
		\[ \RHom_A(M,A)\otimes_A^L N\to \RHom_A(M,N) \]
		is an isomorphism.
	\end{lemma}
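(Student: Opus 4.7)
The plan is a standard dévissage along the structure of perfect complexes, using that both sides are exact functors in $M$ and that the claim is trivially true for the basic building blocks $M = Ae$.

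First I would reduce to the case where $M$ is a single finitely generated projective non-degenerate module. Both functors
\[ M \mapsto \RHom_A(M,A)\otimes_A^L N, \qquad M \mapsto \RHom_A(M,N) \]
from $\cD^b(A\lmod\nd)^{\op}$ to $\cD(\bZ\lmod)$ are triangulated, and the canonical map $\can$ is a morphism of triangulated functors. Hence the full subcategory of $\cD^b(A\lmod\nd)$ consisting of those $M$ for which $\can_{N,M}$ is an isomorphism is a triangulated subcategory (apply the five-lemma to the map of distinguished triangles induced by a distinguished triangle in the first variable). Since a perfect object is by definition quasi-isomorphic to a bounded complex of finitely generated non-degenerate projectives, a straightforward induction on the length of such a complex (using the stupid truncation and the resulting distinguished triangles) reduces us to the case where $M = P$ is a single finitely generated non-degenerate projective module, placed in degree zero.

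Next I would reduce further to the case $M = Ae$ for an idempotent $e \in A$. As recalled before the lemma, any finitely generated non-degenerate projective $P$ is a direct summand of a finite sum $\bigoplus_{i} Ae_{i}$. Both sides of \eqref{Eq:can Hom in derived cat} commute with finite direct sums in $M$ (the left side because tensor and $\Hom$ out of a finite coproduct split as finite products, and we are working with finitely many terms so products coincide with sums), and they preserve direct summands as additive functors. So it suffices to check that $\can_{N,Ae}$ is an isomorphism.

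Finally, the case $M = Ae$ is a direct computation. Since $Ae$ is projective, $\RHom_A(Ae,-) = \Hom_A(Ae,-)$ and the functor $\Hom_A(Ae,-)$ is just ``multiplication by $e$ on the left'': for any $X \in A\lmod\nd$, the evaluation-at-$e$ map $\Hom_A(Ae,X) \xrightarrow{\sim} eX$ is an isomorphism of abelian groups, with inverse $x \mapsto (ae \mapsto ax)$. Applying this with $X = A$ gives $\Hom_A(Ae,A) = eA$ as a right $A$-module, so the left-hand side becomes $eA \otimes_A N$, which by the obvious map $ea \otimes n \mapsto ean$ is canonically isomorphic to $eN$. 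The right-hand side is $\Hom_A(Ae,N) = eN$. Chasing definitions, $\can_{N,Ae}$ is precisely the identity $eN = eN$ under these identifications, completing the proof.

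The only mildly delicate point is the first reduction, because $A$ is not unital and one has to be slightly careful that the usual triangulated-subcategory argument works inside $A\lmod\nd$ rather than in the full module category; but the category $A\lmod\nd$ has enough projectives and the objects $Ae$ are non-degenerate projective generators, so no genuine obstacle arises.
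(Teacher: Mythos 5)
Your proposal is correct and follows essentially the same route as the paper: the base case $M=Ae$ computed via $\Hom_A(Ae,X)=eX$, extension to finitely generated projectives by finite direct sums and summands, and then passage to perfect complexes. The only cosmetic difference is that the paper handles the last step by identifying the two totalizations of the double complexes $\Hom_A(P_\bullet,A)\otimes_A N_\bullet$ and $\Hom_A(P_\bullet,N_\bullet)$ directly, whereas you run an induction on stupid truncations using the five lemma; both are standard and equivalent here.
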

	\begin{proof} 
		In this proof all modules are non-degenerate.
		We split the proof into several steps.
		
		First notice that for $M=Ae$ with $e$ an idempotent we have canonical identifications $\RHom_A(Ae,N)=\Hom_A(Ae,N)=eN$ and in particular $\Hom_A(Ae,A)=eA$ is a projective right $A$-module and so one can compute the derived tensor product with it. 
		Hence in this situation we have
		\[ \RHom_A(Ae,A)\otimes^L N = eA\otimes_A N=eN=\Hom_A(Ae,N) \]
		and we are done.
		
		Second, notice that the  morphism $\RHom_A(M,A)\otimes_A^L N\to \RHom_A(M,N)$
                in the statement of the lemma
is compatible with finite direct sums (in both arguments).
		Since a finitely generated projective module is a direct summand of $\oplus_{i=1}^n Ae_i$ for some idempotents $e_i$ we deduce the validity of the lemma for $M$ a finitely generated projective module.
		
		Next, let $M$ be a perfect complex, quasi-isomorphic to $P_\bullet$ where each $P_r$ is a finitely generated non-degenerate projective module.

		For $N=N_\bullet\in\cD^b(A\lmod\nd)$, $\RHom_A(M,N)$ is computed by the totalization of a double complex with entries $\Hom_A(P_r,N_s)$ which by what we said above is isomorphic to $\Hom_A(P_r,A)\otimes_A N_s$ through $\can_{P_r,N_s}$.
		The latter are the entries of the double complex $\Hom_A(P_\bullet,A)\otimes_A N_\bullet$ whose totalization computes $\RHom_A(M,A)\otimes^L_A N$.
		The naturality of \eqref{Eq:dual Hom to Hom natural morphism} ensures that these isomorphisms commute with all the differentials in the double complexes and therefore the total complexes are isomorphic.
		In other words, the natural map $\can_{N,M}\colon \RHom_A(M,A)\otimes^L_A N\to \RHom_A(M,N)$ is an isomorphism.
	\end{proof}

	\begin{definition}
For an $A$-module $P$, we denote by $P\nd$ its non-degenerate submodule, i.e.,
the subspace  of $P$ consisting of elements that are fixed by some idempotent in $A$.		
	\end{definition}
	
	It is clear that a morphism from a non-degenerate $A$-module to an arbitrary $A$-module lands inside its non-degenerate part.
	Said otherwise, we have 
\begin{proposition}
	The right adjoint of the natural inclusion $A\lmod\nd\to A\lmod$ is the functor $(-)\nd$ of taking the non-degenerate part of a module.
	Moreover $(-)\nd$ is exact,  so the adjunction extends trivially to derived categories.
\end{proposition}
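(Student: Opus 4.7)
The plan is to verify the adjunction directly via a universal-property argument and then check the exactness by hand, since both reductions only involve elementary manipulations with idempotents. Before anything else, I would check that $N\nd$ really is an $A$-submodule of $N$. Stability under the $A$-action is immediate from the idempotented property of $A$: given $a\in A$, choose an idempotent $g\in A$ with $ga=a$; then for any $n\in N$ one has $g(an) = an$, so $an\in N\nd$. Stability under addition uses the hypothesis that the set of idempotents is countable filtered: given $n_1,n_2\in N\nd$ with $e_in_i=n_i$, choose a common upper bound $e$ of $e_1,e_2$ (so that $ee_i=e_i$); then $en_i = ee_in_i = e_in_i = n_i$ for $i=1,2$, hence $e(n_1+n_2)=n_1+n_2$.

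For the adjunction, given $M\in A\lmod\nd$ and $N\in A\lmod$, I would show that any $A$-linear map $f\colon M\to N$ factors uniquely through the inclusion $N\nd\hookrightarrow N$. The key observation is one line: for $m\in M$ choose an idempotent $e$ with $em=m$; then $f(m) = f(em) = ef(m)$, so $f(m)\in N\nd$. Because $N\nd\hookrightarrow N$ is a monomorphism, the resulting factorization is automatically unique, yielding a natural bijection
\[ \Hom_{A\lmod\nd}(M, N\nd) \xrightarrow{\sim} \Hom_{A\lmod}(M, N), \]
which is the desired adjunction.

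For the exactness of $(-)\nd$, left exactness is formal: either because $(-)\nd$ is a right adjoint, or directly because it is a subfunctor of the identity and therefore preserves kernels. Right exactness is the only step with any genuine content, and is where I expect whatever difficulty there is to lie. Given a surjection $p\colon N_2\twoheadrightarrow N_3$ in $A\lmod$ and $n_3\in N_3\nd$ fixed by an idempotent $e$, I would lift $n_3$ to some $n_2\in N_2$ and then replace it by $en_2$; by construction $en_2\in N_2\nd$ and $p(en_2) = ep(n_2) = en_3 = n_3$, so the induced map $N_2\nd \to N_3\nd$ is surjective. Finally, since $(-)\nd$ is an exact functor between abelian categories, it extends to the bounded derived categories by term-wise application, and the adjunction with the inclusion lifts to the derived level without needing to take any derived functor. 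The whole argument really hinges on the single idempotent trick $f(m)=f(em)=ef(m)$, which is exactly why the paper flagged the preliminary remark as ``clear''.
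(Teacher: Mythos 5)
Your proof is correct and follows exactly the route the paper intends: the paper's entire justification is the one-line observation that a morphism out of a non-degenerate module lands in the non-degenerate part, which is precisely your idempotent trick $f(m)=f(em)=ef(m)$, and the remaining verifications (that $N\nd$ is a submodule, exactness via lifting and multiplying by an idempotent, termwise extension to derived categories) are the routine details the paper leaves implicit. The only cosmetic remark is that closure of $N\nd$ under addition needs only the idempotented property (a common idempotent dominating $e_1,e_2$), not countability of the filtered system.
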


	\textbf{Contragredient.} Any non-degenerate left or right $A$-module $M$ has a natural structure of $k$-vector space.
	We can therefore construct the contragredient module $M^\vee:=\Hom_k(M,k)^{\nd}$ as the non-degenerate $k$-linear maps from $M$ to $k$.
	As $M\rightarrow M^\vee$ is an exact functor from $A\lmod\nd$ to $\rmodnd A$, it extends naturally  to the bounded derived categories 
	\[ (-)^\vee\colon \cD^b(A\lmod\nd)\to\cD^b(\rmodnd A)  \]
	by applying it degree-wise.

	For a right $A$-module $M$, a left $A$-module $N$, both non-degenerate and a $k$-vector space $V$, we have a natural adjunction morphism
	\[ \tau_{M,N,V}\colon \Hom_k(M\otimes_A N,V)\to \Hom_A(N,\Hom_k(M,V)^{\nd}) \]
	that can be checked (as in the classical situation) to be an isomorphism.
	This isomorphism  extends to bounded derived categories and we get the usual derived tensor-hom adjunction
	\begin{align}\label{Eq:tensor hom adjunction non unitary}
		\tau_{M,N,V}\colon \RHom_k(M\otimes_A^L N,V) \stackrel{\sim}{\to} \RHom_A(N,\RHom_k(M,V)^{\nd}) 
	\end{align}

	For a complex of vector spaces $V\in \cD^b(k)$, we denote by $V^*$, the complex of dual vector spaces. 
	
	Denote by $\cD^b_{\perf}(A\lmod\nd)$, the full subcategory of $\cD^b(A\lmod\nd)$ consisting of perfect complexes.
	
	\begin{theorem}\label{T:Nakayama functor is Serre}
		The following functor
		\begin{align*}
			D_{\Nak}\colon  \cD^b_{\perf}(A\lmod\nd)  & \longrightarrow  \cD^b(A\lmod\nd) \\
			M\quad   & \mapsto  \RHom_A(M,A)^\vee,
		\end{align*}
		satisfies the Serre functor duality, i.e., for any perfect object $M$ and any object $N\in \cD^b(A\lmod\nd)$, we have a natural isomorphism of $k$-vector spaces:
		\[ \Hom_{\cD^b(A)}(M,N)^* \simeq \Hom_{\cD^b(A)}(N,D_\Nak(M)). \]
	\end{theorem}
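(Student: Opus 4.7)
The plan is to assemble three canonical isomorphisms of complexes of $k$-vector spaces and then take $H^0$. First, because $M$ is perfect, \cref{L:Hom(M N)=Hom(M A)otimesN} gives a natural isomorphism
\[ \RHom_A(M,A) \otimes_A^L N \;\xrightarrow{\;\sim\;}\; \RHom_A(M,N) \]
in $\cD(k)$. Second, apply $\RHom_k(-,k)$, which over the field $k$ is exact and agrees degree-wise with $(-)^*$, to obtain
\[ \RHom_A(M,N)^* \;\simeq\; \RHom_k\!\bigl(\RHom_A(M,A) \otimes_A^L N,\; k\bigr). \]
Third, invoke the tensor--hom adjunction \eqref{Eq:tensor hom adjunction non unitary} with $V = k$ to rewrite the right-hand side as
\[ \RHom_A\!\bigl(N,\; \RHom_k(\RHom_A(M,A), k)^{\nd}\bigr) \;=\; \RHom_A\!\bigl(N,\, D_{\Nak}(M)\bigr). \]
Composing these three isomorphisms and taking $H^0$ on both sides yields the claimed natural isomorphism of $k$-vector spaces
\[ \Hom_{\cD^b(A)}(M,N)^* \;\simeq\; \Hom_{\cD^b(A)}(N,\, D_{\Nak}(M)). \]

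The one small technical point to verify in order to apply \eqref{Eq:tensor hom adjunction non unitary} is that $\RHom_A(M,A)$ is non-degenerate as a complex of right $A$-modules. For a basic building block $M = Ae$ with $e$ idempotent, this is just $\Hom_A(Ae,A) = eA$, which is manifestly non-degenerate. The property is preserved under finite direct sums and direct summands (giving the case of an arbitrary finitely generated projective non-degenerate $M$), and then under the totalization of a bounded complex of such, which by definition covers every perfect object.

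Naturality in $M$ and $N$ is immediate from the naturality statements attached to \cref{L:Hom(M N)=Hom(M A)otimesN} and to the tensor--hom adjunction \eqref{Eq:tensor hom adjunction non unitary}, so the composite isomorphism is natural in both arguments. There is no genuinely hard step: the entire content of the theorem is already packaged in \cref{L:Hom(M N)=Hom(M A)otimesN}, whose role is precisely to turn the functor $\RHom_A(M,-)$ into the much more tractable functor $\RHom_A(M,A) \otimes_A^L (-)$ once $M$ is perfect, after which a single application of derived tensor--hom adjunction produces the Serre-type duality. The closest thing to an ``obstacle'' is bookkeeping about non-degeneracy, since the algebra $A$ is only idempotented rather than unital; this is what forces the $(-)^\nd$ to appear in \eqref{Eq:tensor hom adjunction non unitary} and ultimately explains why $D_{\Nak}$ is defined using $\RHom_A(M,A)^\vee$ and not merely $\RHom_A(M,A)^*$.
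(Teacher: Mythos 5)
Your proof is correct and is essentially identical to the paper's: both combine \cref{L:Hom(M N)=Hom(M A)otimesN} with the derived tensor--hom adjunction \eqref{Eq:tensor hom adjunction non unitary} and then take $H^0$. The extra check that $\RHom_A(M,A)$ is non-degenerate as a complex of right $A$-modules is a worthwhile addition that the paper leaves implicit (it follows from the remark that $\Hom_A(M,A)$ is non-degenerate for $M$ finitely generated and non-degenerate).
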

	\begin{remark}
		A Serre functor (see \cref{SS:Serre functors}) is, 
                moreover, required to be an equivalence of categories. In our situation, in order for $D_{\Nak}$ to be an equivalence, we must restrict to finite dimensional modules (because of duality over $k$).
		In the context of representations of $p$-adic groups this means restricting to admissible modules.
		Replacing the contragredient by Grothendieck--Serre duality over the Bernstein center extends $D_{\Nak}$ from admissible modules to finitely generated modules and indeed gives a Serre functor (relative to the center) for the whole category of finitely generated representations.\footnote{We are indebted to Roman Bezrukavnikov for explaining this to us.}
		This statement already appears in \cite{BBK}.
	\end{remark}
	
	\begin{proof} Apply \cref{L:Hom(M N)=Hom(M A)otimesN} and the tensor-hom adjunction \eqref{Eq:tensor hom adjunction non unitary} to get natural quasi-isomorphisms
		\begin{align*}
			\RHom_k( \RHom_{A}(M,N), k) & \simeq    \RHom_k(\RHom_A(M,A) \stackrel{L}{\otimes} _A N, k) \\
			& \simeq    \RHom_A(N, \RHom_k(\RHom_A(M,A), k)^{\nd})\\
			& \simeq \RHom_A(N,\RHom_A(M,A)^\vee)\\
			& \simeq \RHom_A(N,D_\Nak(M)).
		\end{align*}
		Taking $\rH^0$ gives the desired result.
	\end{proof}
	
	Putting $N=M, D_\Nak(M)$ above, we get the following:
	\begin{corollary}\label{C:Hom M into DNak(M) is one dim}
		Suppose $M$ is a non-degenerate irreducible left $A$-module which belongs to $\cD^b_\perf(A\lmod\nd)$ and for which Schur's lemma holds.
		Then, \[\Hom_{\cD^b(A)}(D_\Nak(M),D_\Nak(M) )^*\simeq k.\]
	\end{corollary}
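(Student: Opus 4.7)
The plan is to apply \cref{T:Nakayama functor is Serre} twice in succession, exactly as the hint ``Putting $N=M, D_\Nak(M)$ above'' suggests. First I would instantiate the theorem with $N=M$ to obtain a natural isomorphism
\[ \Hom_{\cD^b(A)}(M,M)^* \simeq \Hom_{\cD^b(A)}(M, D_\Nak(M)). \]
Since $M$ is a module (viewed as a complex concentrated in degree $0$), we have $\Hom_{\cD^b(A)}(M,M) = \End_A(M)$, and Schur's lemma gives $\End_A(M) = k$. Dualising over $k$ then yields $\Hom_{\cD^b(A)}(M, D_\Nak(M)) \simeq k^* \simeq k$.

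Next I would apply \cref{T:Nakayama functor is Serre} with $N = D_\Nak(M)$ (which is legitimate because $M$ is still the perfect object in the first slot). This produces
\[ \Hom_{\cD^b(A)}(M, D_\Nak(M))^* \simeq \Hom_{\cD^b(A)}(D_\Nak(M), D_\Nak(M)). \]
By the computation of the previous paragraph the left-hand side is $k^* \simeq k$, so $\Hom_{\cD^b(A)}(D_\Nak(M), D_\Nak(M)) \simeq k$. Dualising one last time gives the desired
\[ \Hom_{\cD^b(A)}(D_\Nak(M), D_\Nak(M))^* \simeq k. \]

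There is no real obstacle here since both applications of the theorem are permitted: the perfectness hypothesis is required only on the first argument of the Hom, and this argument is $M$ in both applications. The only subtlety worth spelling out is the use of Schur's lemma: strictly speaking we need $\End_{\cD^b(A)}(M) = k$, and this follows from $\End_A(M) = k$ because $M$ is a module and hence has no higher self-Ext contributions to the zeroth derived Hom. (If one wished, one could also observe that the same chain of isomorphisms shows $\Hom_{\cD^b(A)}(D_\Nak(M), D_\Nak(M)) \simeq \End_A(M)$, so $D_\Nak$ preserves the Schur property for objects in its domain.)
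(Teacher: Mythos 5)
Your proof is correct and follows exactly the paper's own argument: two successive applications of \cref{T:Nakayama functor is Serre}, first with $N=M$ (combined with Schur's lemma) and then with $N=D_\Nak(M)$, yielding the same chain of isomorphisms $k\simeq \Hom_{\cD^b(A)}(M,M)^*\simeq \Hom_{\cD^b(A)}(M,D_\Nak(M))\simeq \Hom_{\cD^b(A)}(D_\Nak(M),D_\Nak(M))^*$. Nothing further is needed.
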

	\begin{proof}
	It follows from the following chain of natural isomorphisms
			\[k\simeq \Hom_{\cD^b(A)}(M,M)^* \simeq \Hom_{\cD^b(A)}(M,D_\Nak(M))
                \simeq\Hom_{\cD^b(A)}(D_\Nak(M),D_\Nak(M) )^*.\]
	\end{proof}
	\begin{remark}
		The above corollary shows that for $M$ irreducible, $D_\Nak(M)$ is indecomposable. 
		So if $D_\Nak(M)$ is concentrated in degree $0$, then it is isomorphic to $M$.
		In particular, if $M$ is irreducible and projective, then $D_\Nak(M)\simeq M$.
		If we apply it to a finite dimensional semisimple algebra $A$, then we get the simple fact that $D_\Nak$ leaves stable every irreducible module. 
		See \cref{C:D_h on finlen cusp is contrag} for a statement for $p$-adic groups.
		In general however, it looks like $D_\Nak(M)$ might be an interesting, non-trivial, involution.
	\end{remark}
	
	
	Taking $N=D_\Nak(M)$ in the above theorem we get a canonical map 
	\begin{align}
		\can_M\colon \Hom_{\cD^b(A)}(M,D_\Nak(M))\to k
	\end{align}
	and moreover, unwinding the definitions (see the proof of \cref{T:Nakayama functor is Serre}), we obtain
	\begin{corollary}\label{C:natural pairing RHom is perfect} For $M,N\in\cD^b(A\lmod\nd)$ with $M$ perfect, the natural pairing
		\[ \Hom_{\cD^b(A)}(M,N)\times \Hom_{\cD^b(A)}(N,D_\Nak(M))\to \Hom_{\cD^b(A)}(M,D_\Nak(M))\stackrel{\can_M}{\to} k \]
		is perfect giving rise to the isomorphism
		\[ \Hom_{\cD^b(A)}(M,N)^*\simeq \Hom_{\cD^b(A)}(N,D_\Nak(M)) \]
		from \cref{T:Nakayama functor is Serre}.
	\end{corollary}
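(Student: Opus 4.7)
The plan is to identify the bilinear pairing in the statement with the perfect pairing underlying the natural isomorphism
\[ \Phi_{M,N}\colon \Hom_{\cD^b(A)}(M,N)^* \xrightarrow{\sim} \Hom_{\cD^b(A)}(N,D_\Nak(M)) \]
produced in the proof of \cref{T:Nakayama functor is Serre}. Once this identification is established, perfectness and the compatibility with \cref{T:Nakayama functor is Serre} will be immediate.

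The key observation I will use is that $\Phi_{M,N}$ is natural in $N$: each of the three constituent quasi-isomorphisms entering its construction (the canonical map $\RHom_A(M,A)\otimes^L_A N\to \RHom_A(M,N)$ of \cref{L:Hom(M N)=Hom(M A)otimesN}, the tensor--hom adjunction \eqref{Eq:tensor hom adjunction non unitary}, and the $k$-dual $(-)^\vee$) is manifestly functorial in $N$. By the very definition of $\can_M$, specializing to $N = D_\Nak(M)$ gives $\can_M = \Phi_{M,D_\Nak(M)}^{-1}(\id_{D_\Nak(M)})$.

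Given $\phi\in \Hom_{\cD^b(A)}(N,D_\Nak(M))$, I will then apply naturality of $\Phi$ along $\phi$ to produce a commutative square in which post-composition by $\phi$ on the $\Hom(N,D_\Nak(M))$-side corresponds to the $k$-dual of $\phi_*\colon \Hom(M,N)\to \Hom(M,D_\Nak(M))$ on the other side. Chasing $\id_{D_\Nak(M)}$ through this square identifies $\Phi_{M,N}^{-1}(\phi)\in \Hom_{\cD^b(A)}(M,N)^*$ with the linear functional $f\mapsto \can_M(\phi\circ f)$, which is exactly the form produced by the pairing in the statement. Since $\Phi_{M,N}$ is an isomorphism, the pairing is perfect.

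The only substantive step is the verification of naturality of $\Phi_{M,N}$ in $N$, which I expect to be routine: each of the three ingredients is a standard adjunction or evaluation map, applied functorially at the level of complexes once compatible projective resolutions have been chosen. No conceptual obstacle is anticipated; the entire argument is a diagram chase once the identification of $\can_M$ as the image of the identity is in place.
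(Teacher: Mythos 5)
Your argument is correct and is essentially the paper's own: the paper disposes of this corollary by saying that the pairing arises from "unwinding the definitions" in the proof of \cref{T:Nakayama functor is Serre}, and your identification of $\can_M$ as $\Phi_{M,D_\Nak(M)}^{-1}(\id)$ followed by the naturality-in-$N$ diagram chase is precisely the careful version of that unwinding. No gap.
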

	
	Define the 
        \emph{homological duality} functor as
	\begin{align}\label{Eq:def functor homological duality for alg A}
		D_h\colon \cD^+(A\lmod)&\longrightarrow \cD^-(\rmod A)^\op\\
		M&\mapsto \RHom_A(M,A)\notag
	\end{align}
	The same formula sends a right module into a left module.
	By abuse of notation we will still write $D_h$ for this functor.
	
	Notice that without further hypotheses, the target category of $D_h$ is only bounded from below.

	
	\begin{proposition}\label{P:abstract D_h is an involution}
		The functor $D_h$ preserves perfect complexes and is an involution on $\cD^b_\perf(A\lmod\nd)$.
	\end{proposition}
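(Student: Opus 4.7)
The plan is to reduce the proposition to a computation on the basic projective generators $Ae$ ($e \in A$ an idempotent) and then extend to arbitrary perfect complexes by standard homological manipulations.

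For the preservation of perfect complexes, I would first observe that the basic projectives compute simply: since $Ae$ is projective, $D_h(Ae) = \RHom_A(Ae,A) = \Hom_A(Ae,A) = eA$, which is a finitely generated non-degenerate projective right $A$-module. Now any finitely generated non-degenerate projective left $A$-module is a direct summand of a finite sum $\oplus_i Ae_i$, so applying $D_h$ and using its compatibility with finite direct sums and direct summands produces a finitely generated non-degenerate projective right $A$-module. Finally, for a perfect complex represented by a bounded complex $P_\bullet$ of such projectives, $D_h(P_\bullet)$ is computed termwise without any further resolution, so it is again a bounded complex of finitely generated non-degenerate projective right $A$-modules, i.e. an object of $\cD^b_\perf(\rmodnd A)$. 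The symmetric argument applies with right and left interchanged.

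For the involution property, I would construct the natural biduality morphism $\eta_M \colon M \to D_h(D_h(M))$ by $m \mapsto (\phi \mapsto \phi(m))$, exactly as in the unital case. For $M = Ae$, the chain $D_h(Ae) = eA$ and $D_h(eA) = Ae$ (the latter being the symmetric computation for right modules) identifies $\eta_{Ae}$ with the identity on $Ae$, so it is an isomorphism. Compatibility of $\eta$ with finite direct sums and direct summands then gives that $\eta_P$ is an isomorphism for every finitely generated non-degenerate projective $P$. For a perfect complex $P_\bullet$, applying $\eta$ componentwise yields a quasi-isomorphism $P_\bullet \xrightarrow{\sim} D_h(D_h(P_\bullet))$, proving $D_h^2 \simeq \Id$ on $\cD^b_\perf(A\lmod\nd)$.

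The only mild obstacle is the non-unitality of $A$: one must verify that the various identifications ($\Hom_A(Ae,A) \simeq eA$, compatibility of $D_h$ with direct summands, non-degeneracy of the resulting dual) behave as in the unital setting. These hold because every perfect complex involves only finitely many idempotents, so the finitely generated hypothesis and the existence of a countable filtered set of idempotents in $A$ let us work within a ``large enough'' unital corner $eAe$; this is exactly the role played by the finite generation assumption throughout \cref{S:abstract duality theorem} (in particular in \cref{L:Hom(M N)=Hom(M A)otimesN}).
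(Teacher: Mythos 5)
Your proof is correct and follows essentially the same route as the paper: reduce to the basic projectives $Ae$, where $D_h(Ae)=eA$ and $D_h(eA)=Ae$, use compatibility of the evaluation (biduality) morphism with finite direct sums and summands, and extend termwise to perfect complexes by naturality. The paper's own argument is just a terser version of exactly this.
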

	\begin{proof}
		Notice that if $P$ is a finitely generated projective left $A$-module, then $\Hom_A(P,A)$ is a finitely generated projective right $A$-module.
		It follows that $D_h$ preserves perfect complexes.
		
		Let us now prove that $D_h^2\simeq \Id$ on perfect complexes.
		There is a natural transformation $\ev\colon\Id\to D_h^2$ coming from the evaluation morphism
		\[ \ev_P\colon P\to \Hom_A(\Hom_A(P,A),A). \]
		Note that $\ev_P$ is compatible with finite direct sums, i.e., $\ev_{\oplus P_i} = \oplus_i \ev_{P_i}$.		
		Hence in order to show that $\ev_P$ is an isomorphism for every finitely generated projective, it is enough to show it for $P=Ae$, where $e$ is an idempotent.
		But then $\Hom_A(P,A)=eA$ and $\Hom_A(eA,A) = Ae=P$ and so clearly $\ev_P$ is an isomorphism.
		
		Since $\ev$ is a natural transformation (functorial) it behaves well on complexes and hence we deduce that $\ev_{P^\bullet}$ is also an isomorphism for every perfect complex $P^\bullet\in\cD^b_\perf(A\lmod\nd)$.
	\end{proof}
	
	We denote by $\cD^b_{fg}(A\lmod\nd)$, the bounded derived category of finitely generated, non-degenerate, left $A$-modules.
	(Similarly for right $A$-modules.)
	\begin{corollary}\label{C:D_h square to id for fin.h.dim}
		Suppose $A$ has both left and right finite homological dimension. 
		Then the homological duality $D_h$ gives a functor
		\[ D_h\colon \cD^b_{fg}(A\lmod\nd)\to \cD^b_{fg}(\rmodnd A)^\op \]
		whose square is isomorphic to the identity, i.e., $D_h^2\simeq \Id$.
	\end{corollary}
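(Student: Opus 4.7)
The plan is to reduce the statement to Proposition~\ref{P:abstract D_h is an involution} by showing that, under the finite homological dimension hypothesis, every object of $\cD^b_{fg}(A\lmod\nd)$ is quasi-isomorphic to a perfect complex, i.e.\ that $\cD^b_{fg}(A\lmod\nd)=\cD^b_\perf(A\lmod\nd)$ (and likewise on the right). Once this identification is granted, Proposition~\ref{P:abstract D_h is an involution} immediately gives that $D_h$ sends $\cD^b_{fg}(A\lmod\nd)$ into $\cD^b_{fg}(\rmodnd A)^\op$ and that $D_h^2\simeq \Id$ via the natural evaluation transformation constructed there.

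To obtain $\cD^b_{fg}=\cD^b_\perf$, I would first produce, for any finitely generated non-degenerate left module $M$, a surjection $P_0\twoheadrightarrow M$ where $P_0$ is a finitely generated non-degenerate projective: pick generators $m_1,\dots,m_n$ of $M$, use non-degeneracy to choose idempotents $e_i\in A$ with $e_im_i=m_i$, and take $P_0=\bigoplus_{i=1}^n Ae_i$. Iterating this construction yields a resolution $\cdots\to P_1\to P_0\to M\to 0$ by finitely generated non-degenerate projectives (provided kernels of such surjections remain finitely generated, which is the Noetherian-type property one must import from the context; in the applications to $p$-adic Hecke algebras it holds block-by-block because each block is equivalent to modules over an algebra finitely generated and finite over a Noetherian center, cf.\ Theorem~\ref{T:Bernstein dec-intro}).

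Next, invoking the assumption that $A$ has finite left homological dimension, say bounded by $d$, the $d$-th syzygy $\Omega^d M:=\ker(P_{d-1}\to P_{d-2})$ is projective. Since it is also finitely generated non-degenerate by the previous paragraph, we obtain a \emph{finite} resolution $0\to \Omega^d M\to P_{d-1}\to\cdots\to P_0\to M\to 0$ by finitely generated non-degenerate projectives. This shows $M$ is perfect, and passing to complexes yields $\cD^b_{fg}(A\lmod\nd)=\cD^b_\perf(A\lmod\nd)$. The symmetric argument using right finite homological dimension gives the analogous equality for right modules, so $D_h$ indeed takes values in $\cD^b_{fg}(\rmodnd A)^\op$.

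The main obstacle is the finite generation of the iterated kernels, which is the only place the argument goes beyond the purely formal contents of Proposition~\ref{P:abstract D_h is an involution}. Once this noetherian-type input is in hand, the rest of the proof is formal: apply the isomorphism $\ev_{P^\bullet}\colon P^\bullet\xrightarrow{\sim}D_h^2(P^\bullet)$ from the proof of Proposition~\ref{P:abstract D_h is an involution} to any finite resolution of $M$ by finitely generated non-degenerate projectives to conclude $D_h^2\simeq \Id$ on $\cD^b_{fg}(A\lmod\nd)$.
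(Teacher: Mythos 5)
Your proposal is correct and follows essentially the same route as the paper: the paper's own proof is precisely the one-line reduction ``finite left and right homological dimension implies $\cD^b_{fg}(A\lmod\nd)=\cD^b_{\perf}(A\lmod\nd)$ (and likewise for right modules), then apply \cref{P:abstract D_h is an involution}.'' You have merely made explicit the syzygy argument behind that identification, and you rightly flag the finite generation of kernels as the one non-formal input, which the paper supplies via noetherianity in \cref{SS:finite homological dimension} rather than in the corollary itself.
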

	\begin{proof}
		Since $A$ has finite left and right homological dimension, $\cD^b_{fg}(A\lmod\nd) = \cD^b_{\perf}(A\lmod\nd)$, and similarly for right modules.
		Now, the conclusion follows from \cref{P:abstract D_h is an involution}.
	\end{proof}

	\subsection{Grothendieck--Serre duality and homological duality}\label{SS:general algebra}
		We provide some general discussion of Grothendieck--Serre and homological duality, and then we apply it in \cref{S:dualities on finite length} to the context of smooth representation of $p$-adic groups.

	The tools that we will use are the dualizing (or canonical) complex $\omega_A^\circ\in\cD^b(A\lmod)$ for a commutative ring as well as the exceptional pull-back functor $f^!\colon D^+(A\lmod)\to D^+(B\lmod)$ for any finite map of algebras $f\colon A\to B$.
	We will provide some of the details without striving for optimal generality.
	For a thorough discussion of these matters, one could consult \cite[Ch III]{Hart-RD} or \cite[\href{https://stacks.math.columbia.edu/tag/0A7A}{0A7A}]{SP}.
	Check \cite[\href{https://stacks.math.columbia.edu/tag/0AU3}{0AU3}]{SP} for a summary.

	For an algebra $R$, we denote by $R\lmod$ the abelian category of left modules over $R$.
	We put $\cD^+(R\lmod)$ for the bounded below derived category of $R\lmod$ and $\cD^b(R\lmod)$ for the \emph{bounded} derived category.
	For right modules we use the notation $\rmod R$. 
	The full subcategory of $R\lmod$ consisting of finite length modules is denoted by $R\lmod^\finl$ and similarly for right modules.
	
	To simplify the discussion of dualizing complexes and upper-shriek functoriality, we start with the following definition:
	\begin{definition}\label{D:dualizing complex for A^n}
		For the polynomial ring $A=k[X_1,\dots,X_d]$, we put $\omega_A^\circ = A[d]$ as an object in $\cD^b(A\lmod)$ and call it the normalized dualizing complex.
	\end{definition}
	\begin{remark}
		Notice that in general one speaks of dualizing complex\emph{es} over a ring $A$ (or more generally over a scheme $X$), which, however,  are not unique in general. 
		In order to make it unique, one normalizes it in such a way to make it compatible with exceptional pullback functors $f^!$, i.e.,  such that $f^!\omega_Y^\circ = \omega_X^\circ$ for any map of  schemes $f\colon X\to Y$. 
		Since a discussion about $f^!$ for a general map of algebras $f\colon A\to B$ would take us too far afield, we prefer to start by \cref{D:dualizing complex for A^n} and only discuss $f^!$ for finite maps.
	\end{remark}
	As a trivial example, notice that for a field $k$, we have $\omega_k^\circ = k[0]$.
	
	If $f\colon A\to B$ is a map of (commutative) algebras, then the natural restriction functor $B\lmod \to A\lmod$ is exact and extends in an obvious way to derived categories.
	We will denote this functor by $f_*\colon \cD^b(B\lmod)\to \cD^b(A\lmod)$.
	Notice that it can also be described as $f_* = {}_AB_{B}\otimes_B -$ where we have denoted by ${}_AB_B$,  the $A$-$B$-bimodule $B$.
	
	\begin{lemma}\label{L:upper shriek for finite}
		Let $f\colon A\to B$ be a finite map of commutative algebras. 
		Then the restriction functor $f_*\colon \cD^+(B\lmod)\to \cD^+(A\lmod)$ is left adjoint to \[ f^!:=\RHom_A(B,-)\colon \cD^+(A\lmod)\to\cD^+(B\lmod). \]
	\end{lemma}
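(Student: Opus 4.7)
The plan is to deduce this adjunction from the classical (underived) tensor--hom adjunction, applied chain-wise to a suitable injective resolution. Viewing $B$ as an $A$-$B$-bimodule we have $f_* \cong {}_AB_B \otimes_B (-)$, and the standard abelian-level adjunction reads
\[
\Hom_A(f_*N, M) \;\cong\; \Hom_B(N, \Hom_A(B, M))
\]
for $N \in B\lmod$ and $M \in A\lmod$. The functor $f_*$ is exact, so it descends to $\cD^+$ without any resolution. The task is therefore to derive the right-hand side in the second variable in a controlled way.

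To that end, given $M \in \cD^+(A\lmod)$, I would choose a quasi-isomorphism $M \to I^\bullet$ where $I^\bullet$ is a bounded-below complex of injective $A$-modules; such a resolution exists because we are in $\cD^+$. Then $\Hom_A(B, I^\bullet)$ is a bounded-below complex that represents $f^!M = \RHom_A(B,M)$. For any $N \in \cD^+(B\lmod)$, the chain-level adjunction gives an isomorphism of complexes
\[
\Hom_A^\bullet(f_*N, I^\bullet) \;\cong\; \Hom_B^\bullet(N, \Hom_A(B, I^\bullet)).
\]
The left-hand side computes $\RHom_A(f_*N, M)$ because $I^\bullet$ is a complex of $A$-injectives.

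The one step that needs verification, and which I expect to be the only real content, is that $\Hom_B^\bullet(N, \Hom_A(B, I^\bullet))$ computes $\RHom_B(N, f^!M)$. For this it suffices to show that each $\Hom_A(B, I^j)$ is $B$-injective. Applying the same underived adjunction in the other direction, for any $B$-module $X$,
\[
\Hom_B(X, \Hom_A(B, I^j)) \;\cong\; \Hom_A(f_*X, I^j),
\]
and the right-hand side is exact in $X$ because $f_*$ is exact and $I^j$ is $A$-injective. Hence $\Hom_A(B, I^j)$ is $B$-injective, as needed.

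Combining these observations and taking $H^0$ yields the adjunction
\[
\Hom_{\cD^+(A\lmod)}(f_*N, M) \;\cong\; \Hom_{\cD^+(B\lmod)}(N, f^!M),
\]
and the functoriality of all the isomorphisms above makes this natural in $N$ and $M$. Note that the argument does not really use the finiteness hypothesis on $f$; finiteness is a convenience ensuring that $f^!$ is well-behaved (in particular preserves boundedness) and is crucial for the later compatibility with normalized dualizing complexes, but it plays no role in the formal adjunction itself.
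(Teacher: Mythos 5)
Your proposal is correct and follows essentially the same route as the paper, which simply invokes the derived tensor--Hom adjunction $\RHom_A(B\otimes_B N, M) \cong \RHom_B(N, \RHom_A(B,M))$ in one line; you have merely spelled out the standard details of deriving that adjunction (injective resolution of $M$, plus the observation that $\Hom_A(B,-)$ preserves injectives because its left adjoint $f_*$ is exact). Your closing remark that finiteness of $f$ is not needed for the adjunction itself also matches the remark the paper makes immediately after the lemma.
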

	\begin{proof}
		Since $f_* = {}_AB_{B}\otimes -$
		the tensor-Hom adjunction gives for a $B$-module $M$ and an $A$-module $N$ the following natural isomorphism:
		\[ \RHom_A(B\otimes_B M,N) = \RHom_B(M,\RHom_A(B,N)).\]
		In other words, $f^!$ is right adjoint to $f_*$.
	\end{proof}
	\begin{remark}
		The previous proof does not need the map $f$ to be finite. 
If the map $f$ is not finite, the right adjoint $\RHom_A(B,-)$ is not denoted by $f^!$.
		In general, $f^!$ is right adjoint to a functor $f_!$ (restriction with compact support)  that we will not define (see for example \cite[Appendix]{Hart-RD} or \cite[\href{https://stacks.math.columbia.edu/tag/0G4Z}{0G4Z}]{SP}). 
		See \cite[\href{https://stacks.math.columbia.edu/tag/0A9Y}{0A9Y}]{SP} for a construction of $f^!$ in general.
		If $f$ is a finite map, then $f_*=f_!$,  so the right adjoint of $f_*$ deserves the name $f^!$.
	\end{remark}
	
	\begin{definition}\label{D:dualizing from finite maps}
		Suppose we have defined for a commutative $k$-algebra $A$,  a (normalized) dualizing complex $\omega_A^\circ\in\cD^b(A\lmod)$.
		Then for any finite map $f\colon A\to B$ of commutative algebras, we put $\omega_B^\circ:=f^!(\omega_A^\circ)$ and call it the (normalized) dualizing complex of $B$.
	\end{definition}
	
	\begin{remark}\label{R:fin.inj.dim} 
		(1) This definition of $\omega_B^\circ$ seems to depend on $A$ and on the map $f$.
		It turns out that this is not the case, see
                 \cite{SP}.\\
                
		(2) By construction, the object $\omega_A^\circ$ has finite injective dimension for the polynomial algebra $A$ because $A$ is smooth.
		If $A\to B$ is any finite map, also the dualizing complex $\omega_B^\circ$ will have finite injective dimension. 
		This can be seen using adjunction and the fact that $B$ has a finite projective resolution as an $A$-module since $A$ is smooth.\\
                (3) Here is a special case of the above considerations. If $R$ is a local Cohen-Macaulay ring with canonical module $\omega_R$, and $A$ is
                an algebra over $R$ which is Cohen-Macaulay, and is a finitely generated $R$ module, then if
                $c = \dim R - \dim A$,
                \[ \omega_A= \Ext^c_R[A, \omega_R],\] 
is a canonical module for $A$, and other Ext are zero.
\end{remark}
	\begin{example}
		For any ideal $I\le A$, the map $A\to A/I$ is finite, hence $\omega_{A/I}^\circ = \RHom_A(A/I,\omega_A^\circ)$.
		In particular, for $\fm$ a maximal ideal of $A$ with residue field $k$, we recover
		$\RHom_A(k,\omega_A^\circ) = \omega_k^\circ = k[0]$.
	\end{example}
	
	\begin{example}\label{E:dualizing for fdim com alg}
		If $A$ is a commutative finite dimensional $k$-algebra, then $\omega_A^\circ = A^*[0]$.
		Indeed, using the finite map $k\to A$, and the fact that $\omega_k^\circ = k[0]$,
                we deduce immediately the equality
		$\omega_A^\circ = \RHom_k(A,k) = A^*[0]$.
	\end{example}
	
As $\omega_A$ has finite injective dimension (see \cref{R:fin.inj.dim}(2)), the boundedness of the derived categories in the following definition is justified:
	\begin{definition}\label{D:D_GS for commutative k-algebras}
		For a commutative $k$-algebra $A$ with normalized dualizing complex $\omega_A^\circ$, we define the Grothendieck--Serre duality functor as
		\[ D_{GS}:=\RHom_A(-,\omega_A^\circ)\colon \cD^b(A\lmod)\to (\cD^b(A\lmod))^\op.\]
	\end{definition}
	\begin{remark}
		The canonical evaluation morphism $\Id\to D_{GS}^2$ is an isomorphism on finitely generated modules. This follows by the definition of a dualizing complex, see \cite[\href{https://stacks.math.columbia.edu/tag/0A7C}{0A7C}]{SP}.
	\end{remark}
	
	\begin{example}\label{E:D_GS f.dim. com alg}
		For $A$ a finite dimensional commutative $k$-algebra we have that $D_{GS} = (-)^*$. 
		To see this, use  $\omega_A^\circ = A^*[0]$,  discussed in \cref{E:dualizing for fdim com alg},  and the tensor-Hom adjunction:
		\[ \RHom_A(M,\omega_A^\circ) = \RHom_A(M,\Hom_k[A,k]) = \RHom_k(M,k).\]
	\end{example}
	
	\begin{lemma}\label{L:D_GS on finl for commutative A}
		For any commutative algebra $A$ with normalized dualizing complex $\omega_A^\circ$, 
		the functor $D_{GS}$ restricts to a duality 
		\[ D_{GS}\colon A\lmod^\finl  \to (A\lmod^\finl)^\op\]
		which moreover can be identified with the contragredient duality $(-)^* = \Hom_k(-,k)$.
	\end{lemma}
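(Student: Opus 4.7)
The plan is to reduce the computation of $D_{GS}(M)$ to the situation already handled by \cref{E:D_GS f.dim. com alg}, namely to a finite-dimensional $k$-algebra acting on $M$. Given $M\in A\lmod^\finl$, let $I=\operatorname{Ann}_A(M)$ and $B=A/I$. Since every simple subquotient of $M$ is of the form $A/\fm$ for a maximal ideal $\fm$ (and in our applications $A/\fm$ is finite-dimensional over $k$, e.g.\ by the Nullstellensatz when $k=\bC$), the module $M$ is itself finite-dimensional over $k$. The ring $B$ acts faithfully on $M$, so the embedding $B\hookrightarrow \End_k(M)$ forces $B$ to be a finite-dimensional commutative $k$-algebra.

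Let $f\colon A\to B$ be the (surjective, hence finite) quotient map. Because $I$ annihilates $M$, we have $M=f_*M$ as $A$-modules. Applying the adjunction of \cref{L:upper shriek for finite} together with the definition of the normalized dualizing complex for finite maps (\cref{D:dualizing from finite maps}), I would write the chain of natural isomorphisms
\[
D_{GS,A}(M)=\RHom_A(f_*M,\omega_A^\circ)\simeq \RHom_B(M,f^!\omega_A^\circ)=\RHom_B(M,\omega_B^\circ)=D_{GS,B}(M),
\]
which transfers the entire computation from $A$ to $B$.

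Finally, since $B$ is a finite-dimensional commutative $k$-algebra, \cref{E:D_GS f.dim. com alg} gives $D_{GS,B}(M)=M^*[0]$, where $M^*=\Hom_k(M,k)$ endowed with its natural $B$-module (hence $A$-module via $f$) structure. Combining with the previous step yields $D_{GS,A}(M)\simeq M^*[0]$ in $\cD^b(A\lmod)$, and $M^*$ is manifestly of finite length as an $A$-module (its composition factors being the $k$-linear duals of those of $M$, which remain simple), giving both the concentration in degree $0$ and the identification with the contragredient. There is essentially no serious obstacle; the only point that requires care is ensuring that $B$ is finite-dimensional over $k$, which is exactly the role played by the standing assumption that simple $A$-modules are finite over $k$.
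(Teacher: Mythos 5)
Your proof is correct and follows essentially the same route as the paper: factor the action through the finite-dimensional quotient $A/I$, use the adjunction of \cref{L:upper shriek for finite} and the compatibility $\omega_{A/I}^\circ=f^!\omega_A^\circ$, and conclude via \cref{E:D_GS f.dim. com alg}. The only difference is cosmetic — you take $I=\operatorname{Ann}_A(M)$ and justify finite-dimensionality of $A/I$ explicitly, whereas the paper simply asserts the existence of a finite-codimension ideal through which the action factors.
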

	\begin{proof}
		Let $M$ be a finite length $A$-module. 
		Then there exists a finite codimension ideal $I\le A$ such that the $A$-module structure on $M$ factors through the (finite) map of algebras $A\to A/I$.
		Using the right adjoint of restriction (see \cref{L:upper shriek for finite}) together with the definition of the normalized dualizing complex, we get natural isomorphisms of $A$-modules:
		\begin{align*}
			D_{GS}(M) & = \RHom_A(M,\omega_A^\circ)\\
			& = \RHom_{A/I}(M,\RHom_A(A/I,\omega_A^\circ))\\
			& = \RHom_{A/I}(M,\omega_{A/I}^\circ)\\
			& = \RHom_{A/I}(M,(A/I)^*)\\
			& = M^*
		\end{align*}
		where in the last two equalities we have used \cref{E:D_GS f.dim. com alg}.
	\end{proof}
	\begin{remark}
		Part of the content of the above lemma is that $D_{GS}(M)$ is concentrated in a single degree if $M$ is a finite length $A$-module. 
		This is not a priori obvious from the definition of $D_{GS}$ and it is a feature of dualizing complexes. 
		The fact that it lives in degree $0$ has to do with the normalization that we chose.
	\end{remark}

\begin{remark}
In this remark, we recall a part of the  duality theory for Cohen-Macaulay local rings $R$ of dimension $d$ with residue field $k$, 
and with a canonical module $\omega_R$ ($\omega_R=R$ if $R$ is Gorenstein)
which we recall has the following two properties:
\begin{enumerate}
\item $\omega_R$ has finite injective dimension.
\item $\dim_k \Ext_R^d(k, \omega_R)= 1.$
\end{enumerate}
Then if $M$ is a Cohen-Macaulay $R$-module of dimension $t$, we have:
\begin{enumerate}
\item $\Ext^i_R(M,\omega_R)=0$ for $i \not = d-t$.
\item $\Ext_R^{d-t}(M, \omega_R)$ is Cohen-Macaulay of dimension $t$.
\item $\Ext_R^{d-t}(\Ext_R^{d-t}(M, \omega_R), \omega_R) \cong M$.
\end{enumerate}

\end{remark}

	We would like to apply all this to non-commutative algebras.
	From now on, we consider $R$ a (possibly) non-commutative $k$-algebra together with an algebra map $A\to Z(R)$ from a commutative $k$-algebra $A$ to the center of $R$.
	Suppose moreover that $R$ is finite as an $A$-module and that $A$ has a (normalized) dualizing complex $\omega_A^\circ$.
	The category of left $R$-modules becomes linear over $A$ and therefore one can consider the following functor
	\begin{align*}
		D_{GS/A}\colon &\cD^{b}(R\lmod)\to (\cD^{b}(\rmod R))^\op \\
		&M\mapsto \RHom_A(M,\omega_A^\circ)
	\end{align*}
	from the (bounded) derived category of left $R$-modules to the derived category of right $R$-modules.
	We will abuse notation and denote by the same symbol the analogous functor from right modules to left modules.
	
	\begin{definition}
		A \emph{duality} on a $k$-linear category $\cC$ is a $k$-linear functor $D\colon \cC\to \cC^\op$ such that $D^2\simeq \Id$.
	\end{definition}
	Notice that for abelian  and triangulated categories, a duality is necessarily exact.
	
	\begin{corollary}\label{C:D_GS/A on finite length R-mod}
		In the above setting, the functor $D_{GS/A}$ restricts to a duality 
		\[ D_{GS/A}\colon R\lmod^\finl \to (\rmodfinl R)^\op \]
		that can moreover be identified with the contragredient $(-)^*$.
	\end{corollary}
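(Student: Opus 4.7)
The plan is to reduce the claim to the commutative case already handled in Lemma \ref{L:D_GS on finl for commutative A} by showing that for a finite length $R$-module $M$, the $A$-action factors through a finite codimensional quotient of $A$; the rest is then formal.

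The first and main step is to verify that if $M \in R\lmod^{\finl}$, then $I := \ann_A(M) \subset A$ has finite codimension over $k$. Since $R$ is finite over $A$ and $M$ is finitely generated over $R$, it is also finitely generated over $A$. Take a composition series of $M$ with simple factors $S_1,\dots,S_n$; then $\ann_A(M) \supset \prod_i \ann_A(S_i)$, so it suffices to show each $\ann_A(S_i)$ is a maximal ideal of finite codimension in $A$. For a simple $R$-module $S$, the map $A \to \End_R(S)$ factors through $A/\ann_A(S)$ and, by Schur, lands in a division ring. Moreover $R/\ann_R(S)$ acts faithfully and irreducibly on $S$, and since $R$ is a finite $A$-module, this primitive quotient is finite dimensional over $k$, hence $\End_R(S)$ is a finite dimensional division $k$-algebra. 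Therefore the commutative subring $A/\ann_A(S)$ sitting inside it is a finite field extension of $k$, and in particular $\ann_A(S)$ is a maximal ideal of finite codimension. Multiplying finitely many such, $A/I$ is finite dimensional over $k$, and $M$ is a finitely generated module over the finite dimensional $k$-algebra $A/I$, so $M$ itself is finite dimensional over $k$.

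Next, since $A$ maps into $Z(R)$, the ideal $IR \subset R$ is two-sided and annihilates $M$, so the $R$-module structure on $M$ factors through $R/IR$. Using the adjunction of Lemma \ref{L:upper shriek for finite} for the finite map $A \to A/I$ and the definition of the normalized dualizing complex, exactly as in the proof of Lemma \ref{L:D_GS on finl for commutative A}, we compute
\[ D_{GS/A}(M) = \RHom_A(M, \omega_A^\circ) = \RHom_{A/I}(M, \omega_{A/I}^\circ). \]
By Example \ref{E:dualizing for fdim com alg}, $\omega_{A/I}^\circ = (A/I)^*[0]$, and then tensor–hom adjunction together with Example \ref{E:D_GS f.dim. com alg} yields
\[ \RHom_{A/I}(M, (A/I)^*) = \RHom_k(M, k) = M^*, \]
concentrated in degree zero. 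These identifications are functorial in $M$ and respect the $R$-actions involved, since the commuting $R$-action on $M$ induces exactly the right $R$-module structure on $M^* = \Hom_k(M,k)$ by transposition; this is the contragredient.

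Finally, for the involutivity, $(-)^*$ on finite dimensional $k$-vector spaces is tautologically an involution, and the canonical biduality $M \to M^{**}$ is $R$-bilinear, so $D_{GS/A}^2 \simeq \Id$ on $R\lmod^\finl$. The only genuinely non-formal point in the argument is the opening step of showing $\ann_A(M)$ has finite codimension; once that is established, everything else is a direct application of the commutative theory already developed.
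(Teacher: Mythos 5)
Your proposal is correct and follows essentially the same route as the paper: reduce to the commutative case of Lemma \ref{L:D_GS on finl for commutative A} by observing that a finite length $R$-module is controlled by a finite codimension ideal of $A$, and then invoke naturality to carry the identification $D_{GS/A}(M)\simeq M^*$ over to the $R$-module structure. The only difference is that the paper compresses your opening step into the one-line observation that, $R$ being finite over $A$, a finite length $R$-module restricts to a finite length $A$-module, whereas you spell out the underlying Schur/Nullstellensatz argument; this is a welcome elaboration rather than a divergence.
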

	\begin{proof}
		Observe that since $R$ is a finite $A$-module, any finite length $R$-module restricts to a finite length $A$-module.
		We can therefore apply \cref{L:D_GS on finl for commutative A} to deduce that $D_{GS/A}(M)\simeq M^*$ as $A$-modules for any $M$ a finite length left (resp., right) $R$-module.
		The naturality implies the isomorphism $D_{GS/A}(M)\simeq M^*$ holds also as right (resp., left) $R$-modules.
	\end{proof}
	
	\begin{remark}
		Notice that the proof above still applies to $R$-modules that are of finite length as $A$-modules
               even  if we do not assume that $R$ is finite over $A$.
                	\end{remark}
	
	Recall that for a $k$-algebra $R$, the homological duality was defined in \cref{S:abstract duality theorem} as the functor 
	\[ D_h:=\RHom_R(-,R)\colon \cD^\pm(R\lmod)\to \cD^\mp(\rmod R)^\op,\]
	where it was shown that it is a duality on perfect complexes.
	If moreover $R$ has finite global dimension, then we get a duality on the bounded derived category of finitely generated $R$-modules (see \cref{P:abstract D_h is an involution}):
	\[ D_h\colon \cD^b_{fg}(R\lmod)\to \cD^b_{fg}(\rmod R)^\op.\]
	In general, there is no reason for this functor to preserve any abelian subcategories or to have good properties.
	However, in the case of representations of $p$-adic groups it does (see \cref{T:homological duality single degree})  thanks to second-adjointness and the following technical condition that is satisfied on the cuspidal blocks (keeping the assumptions on $R$ and $A$)

\begin{definition} (FsG algebra) \label{FsG}
  Let $R$ be an associative algebra containing a commutative algebra $A$ in its center over which it is finite with $A$ a finitely generated
  $k$-algebra for $k$ a field. Then $R$ is called a FsG algebra,
or a  \emph{Frobenius symmetric and Gorenstein  algebra}, if there is an integer $d$ such that 
			\[ D_{GS/A}(R)\simeq R[d] \tag{FsG}\]
			as  $R${-bimodules}. \end{definition}
	\begin{remark}
The   condition (FsG) in Definition \ref{FsG}  is satisfied for all cuspidal blocks (see \cref{C:Frob sym cond for R_rho}) but it is not satisfied for all Bernstein
        blocks
        for a  $p$-adic group, for example it does not hold for the Iwahori block for $\SL_2(F)$ as we discuss in
        \cref{12.9} below. 
\end{remark}

\begin{remark}
The name FsG 
is inspired by the notions of \emph{Frobenius symmetric} and \emph{Gorenstein} algebras.
Recall that a finite dimensional algebra $A$ over a field $k$ is called \emph{Frobenius} if there exists a nondegenerate
bilinear form $B: A \times A \rightarrow k$ such that $B(ab,c)=B(a,bc)$ for all $a,b,c$ in $A$, and is called Frobenius symmetric if furthermore $B(a,b)=B(b,a)$ for all $a,b$ in $A$. One can see that a finite dimensional $k$ algebra $A$ is Frobenius if and only if there exists a codimension one subspace of $A$ not containing any nonzero right-ideal of $A$. Before proceeding further, recall that if $M$ is a right-$A$ module (resp., left-$A$ module), then
$\Hom_k(M,k)$ is naturally a   left-$A$ module (resp., right-$A$ module).
Now note that
given a bilinear form $B: A \times A \rightarrow k$, the associated map  $A \rightarrow \Hom_k(A,k)$ is equivariant for the right-$A$ module structures if and only if $B(ab,c)=B(a,bc)$ for all $a,b,c$ in $A$, whereas the map
$A \rightarrow \Hom_k(A,k)$ is equivariant for the left-$A$ module structures if and only if $B(ba,c)=B(a,cb)$ for all $a,b,c$ in $A$. Thus a finite dimensional
associative algebra $A$ over $k$ is Frobenius if and only if $\Hom_k(A,k) \cong A$ as right $A$-modules, 
 whereas $\Hom_k(A,k) \cong A$ is a bi-$A$-module isomorphism if and only if the bilinear form $B: A \times A \rightarrow k$ is both Frobenius and symmetric. If $A$ is commutative and local then  $A$ is Frobenius if and only if $A$ is  Gorenstein. Among non-commutative algebras, the best known Frobenius symmetric algebra is the group algebra
 $k[G]$ of a finite group $G$ for $k$ a field of characteristic zero or nonzero.
	\end{remark}

	\begin{proposition}\label{P:D_h=D_GS = (-)^* for finite length R-mod under cond (FsG)}
		Keeping the same assumptions, suppose moreover that $R$ satisfies condition (FsG) in Definition \ref{FsG}.
		Then the (shifted by $d$) homological duality functor $[d]\circ D_h$ is isomorphic to $D_{GS/A}$.
		In particular, it restricts to a duality on finite length $R$-modules 
		\[ [d]\circ D_h\colon R\lmod^\finl \to (\rmodfinl R)^\op\]
		that is moreover isomorphic to the contragredient.
	\end{proposition}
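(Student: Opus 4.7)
The plan is to directly identify $[d]\circ D_h$ with $D_{GS/A}$ as functors on $\cD^b(R\lmod)$ using the tensor-hom adjunction, and then read off the statements about finite length modules from what has already been proven.

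The starting observation is that, although \cref{L:upper shriek for finite} is phrased for a finite map of commutative algebras, the underlying identity is just tensor-hom: for any left $R$-module complex $M$ and any $A$-module complex $N$, the bimodule structure on $R$ (viewed as ${}_R R_A$) yields a natural quasi-isomorphism
\[ \RHom_A(M,N) \;\simeq\; \RHom_R\bigl(M,\RHom_A(R,N)\bigr). \]
Taking $N=\omega_A^\circ$ rewrites this as $D_{GS/A}(M)\simeq \RHom_R(M,D_{GS/A}(R))$, where $D_{GS/A}(R)=\RHom_A(R,\omega_A^\circ)$ is naturally an $R$-bimodule (the two $R$-actions on $R$ give two commuting $R$-actions on the Hom complex). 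By (FsG), $D_{GS/A}(R)\simeq R[d]$ as $R$-bimodules, so substituting gives
\[ D_{GS/A}(M)\;\simeq\;\RHom_R(M,R)[d]\;=\;D_h(M)[d], \]
which is the desired functorial isomorphism $[d]\circ D_h\simeq D_{GS/A}$.

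Having identified the two functors, the remaining assertions are immediate. By \cref{C:D_GS/A on finite length R-mod}, $D_{GS/A}$ restricts to a duality on finite length $R$-modules and there agrees with the contragredient $(-)^*$. Transporting through the isomorphism $[d]\circ D_h\simeq D_{GS/A}$, the shifted homological duality sends $R\lmod^\finl$ into $(\rmodfinl R)^\op$, squares to the identity there, and coincides with $(-)^*$ on this subcategory.

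The only point requiring genuine care is the compatibility of bimodule structures in the step $D_{GS/A}(M)\simeq \RHom_R(M,D_{GS/A}(R))$: the right $R$-action on $\RHom_A(M,\omega_A^\circ)$ (induced from the left $R$-action on $M$) must match the right $R$-action on $\RHom_R(M,\RHom_A(R,\omega_A^\circ))$ coming from the right $R$-action on the coefficients $\RHom_A(R,\omega_A^\circ)$ (induced from the left $R$-action on $R$). This is a direct check: the standard adjunction map $\phi\mapsto\bigl(m\mapsto(r\mapsto \phi(rm))\bigr)$ intertwines the two right $R$-actions. Once this compatibility is in place the (FsG) identification of $\RHom_A(R,\omega_A^\circ)$ with $R[d]$ as a \emph{bimodule} (not just as a left module) is exactly what is needed so that the right $R$-module structure on $D_h(M)[d]$ that appears after substitution is the correct one, and the whole argument is just bookkeeping.
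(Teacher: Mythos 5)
Your argument is correct and is essentially identical to the paper's proof: both rest on the tensor-hom adjunction $\RHom_A(-,\omega_A^\circ)\simeq\RHom_R(-,\RHom_A(R,\omega_A^\circ))$ combined with the (FsG) identification $\RHom_A(R,\omega_A^\circ)\simeq R[d]$ as bimodules, and then quote \cref{C:D_GS/A on finite length R-mod} for the finite-length statements. Your explicit check of the compatibility of right $R$-module structures is a welcome addition but does not change the route.
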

	\begin{proof}
		Using condition (FsG), tensor-hom adjunction and \cref{C:D_GS/A on finite length R-mod} we get natural isomorphisms of functors:	
		\begin{align*}
			[d]\circ D_h & = [d]\circ \RHom_R(-,R) \\
			& = \RHom_R(-,\RHom_A(R,\omega_A^\circ))\\
			& = \RHom_A(-,\omega_A^\circ)\\
			& = D_{GS/A}\\
			& = (-)^* \qquad\text{ for finite length modules}.\qedhere
		\end{align*}
	\end{proof}

	\begin{remark}\label{R:replace bimodule by left/right in condition FsG}
		If in Condition (FsG),  we replace $R$-bimodule by left $R$-module, then we still get that $[d]\circ D_h$ sends $R\lmod^\finl$ to $\rmodfinl R$ but we can not identify it with the contragredient.
	\end{remark}
	The above proof gives us a bit more:
	\begin{corollary}\label{C:D_h=D_GS if and only if FsG}
		Keeping the same assumptions, we have that $[d]\circ D_h \simeq D_{GS/A}$ on all  $R$-modules if and only if condition (FsG) is satisfied.
	\end{corollary}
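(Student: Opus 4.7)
The forward implication is essentially already done in the proof of Proposition~\ref{P:D_h=D_GS = (-)^* for finite length R-mod under cond (FsG)}. Assuming (FsG), for any $M\in\cD^b(R\lmod)$ we write $M = R\otimes_R M$ and use tensor-hom adjunction for the $R$-$A$-bimodule $R$ together with the bimodule isomorphism $D_{GS/A}(R)\simeq R[d]$:
\[
D_{GS/A}(M) = \RHom_A(R\otimes_R M,\omega_A^\circ) \simeq \RHom_R(M,\RHom_A(R,\omega_A^\circ)) \simeq \RHom_R(M,R[d]) = [d]\circ D_h(M).
\]
Crucially, the bimodule (not merely left-module) nature of the isomorphism in (FsG) is what makes the final step identify with $[d]\circ D_h(M)$ as a \emph{right} $R$-module, so that the whole chain is an isomorphism of right $R$-modules functorial in $M$.

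For the converse, assume we are given a natural isomorphism $\eta\colon [d]\circ D_h \xrightarrow{\sim} D_{GS/A}$ of functors $\cD^b(R\lmod)\to \cD^b(\rmod R)^{op}$. Evaluating at the left $R$-module $R$ produces an isomorphism of right $R$-modules
\[
\eta_R\colon R[d] = [d]\circ D_h(R) \xrightarrow{\sim} D_{GS/A}(R).
\]
The task is to promote this to an isomorphism of $R$-bimodules, which is exactly condition (FsG).

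The promotion is a purely formal use of naturality. Recall the standard identification $\End_{R\lmod}(R) \simeq R^{op}$: for $r\in R$, right multiplication $\phi_r\colon R\to R$, $x\mapsto xr$, is a left $R$-module endomorphism, and every such endomorphism has this form. Since $[d]\circ D_h$ and $D_{GS/A}$ are both contravariant functors, applying them to $\{\phi_r\}_{r\in R}$ endows $[d]\circ D_h(R)$ and $D_{GS/A}(R)$ with left $R$-module structures. A routine check shows that on $[d]\circ D_h(R) = \RHom_R(R,R)[d] = R[d]$, precomposition with $\phi_r$ coincides with the standard left multiplication by $r$ on $R[d]$. The naturality square for $\eta$ applied to $\phi_r$ then says that $\eta_R$ commutes with these left actions, so $\eta_R$ is an $R$-bimodule isomorphism, giving (FsG).

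The only potentially delicate point is this bimodule compatibility, and it is a formal consequence of functoriality combined with $\End_{R\lmod}(R)\simeq R^{op}$; no new ideas beyond what already appears in Proposition~\ref{P:D_h=D_GS = (-)^* for finite length R-mod under cond (FsG)} are required. The shift $[d]$ plays no substantive role in either direction beyond bookkeeping.
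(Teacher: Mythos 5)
Your proof is correct and follows essentially the same route as the paper: the paper writes $D_{GS/A}=\RHom_R(-,\RHom_A(R,\omega_A^\circ))$ by adjunction and then invokes Yoneda's lemma, and your forward direction is that same adjunction while your converse (evaluate at $R$, then use naturality against $\End_{R\lmod}(R)\simeq R^{op}$ to upgrade the right-module isomorphism to a bimodule isomorphism) is precisely the unpacking of that Yoneda step.
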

	\begin{proof}
		By the definition of $D_{GS/A}$ and adjunction we have
		\begin{align*}
			D_{GS/A} &= \RHom_A(-,\omega_A^\circ)\\ 
			& = \RHom_R(-,\RHom_A(R,\omega_A^\circ))
		\end{align*}
		and by Yoneda's lemma this last functor is isomorphic to $[d]\circ D_h=\RHom(-,R[d])$ if and only if condition (FsG) is satisfied.	
	\end{proof}

	Let us consider a particular case that will be of importance to us. 
	Suppose that $R$ is a finite projective $A$-algebra where $A=k[X_1^{\pm1},\dots,X_d^{\pm1}]$.
	We have that $\omega_A^\circ = A[d]$ and then condition (FsG) becomes 
	\begin{align}\label{Cond: Hom(R,A)=R as bimodules}
		\Hom_A(R,A) \simeq R \text{ as $R$-bimodules}.
	\end{align}
	\begin{corollary}\label{C:D_h=D_h/A=D_GS=(-)^* on finite length if (FsG) satisfied}
		Keeping the above notation and assuming condition \eqref{Cond: Hom(R,A)=R as bimodules}, we have the following isomorphisms of functors when restricted to finite-length $R$-modules
		\[ \RHom_R(-,R)[d]\simeq \RHom_A(-,A)[d] \simeq \Hom_k(-,k).\]
	\end{corollary}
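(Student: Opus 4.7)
The plan is to obtain the two isomorphisms as immediate consequences of the results that precede this corollary, checking only that the Laurent polynomial ring $A=k[X_1^{\pm1},\dots,X_d^{\pm1}]$ has normalized dualizing complex $\omega_A^\circ=A[d]$, so that $D_{GS/A}$ takes the concrete form $\RHom_A(-,A)[d]$ in this situation. Indeed, $A$ is a localization of the polynomial ring, still smooth of Krull dimension $d$, and the same argument as in \cref{D:dualizing complex for A^n} gives $\omega_A^\circ=A[d]$ (alternatively this can be deduced from the finite-map recipe of \cref{D:dualizing from finite maps} applied to $k[X_1,\dots,X_d]\to A$ or from \cref{R:fin.inj.dim}).

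For the first isomorphism, I would invoke \cref{C:D_h=D_GS if and only if FsG}: the hypothesis \eqref{Cond: Hom(R,A)=R as bimodules} is exactly condition (FsG) in this setting, since $\omega_A^\circ=A[d]$, and therefore
\[ [d]\circ D_h \;=\; \RHom_R(-,R)[d] \;\simeq\; D_{GS/A} \;=\; \RHom_A(-,\omega_A^\circ) \;=\; \RHom_A(-,A)[d]\]
as functors on all of $\cD^b(R\lmod)$, not merely on finite length modules. This yields the first quasi-isomorphism $\RHom_R(-,R)[d]\simeq \RHom_A(-,A)[d]$ unconditionally.

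For the second isomorphism I would specialize to finite length $R$-modules and apply \cref{C:D_GS/A on finite length R-mod} (whose hypotheses are satisfied, since $R$ is finite over $A$): this identifies $D_{GS/A}$ on $R\lmod^\finl$ with the contragredient $(-)^*=\Hom_k(-,k)$. Combined with the previous step this gives the claimed chain of natural isomorphisms on finite length $R$-modules.

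There is essentially no obstacle beyond making sure that the identification $\omega_A^\circ=A[d]$ for the Laurent polynomial ring is correctly invoked; all the homological content has already been packaged in \cref{C:D_h=D_GS if and only if FsG} and \cref{C:D_GS/A on finite length R-mod}, so the corollary amounts to concatenating them.
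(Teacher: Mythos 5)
Your proposal is correct and takes essentially the same route as the paper, which obtains this corollary by specializing the preceding discussion (the chain $[d]\circ D_h\simeq D_{GS/A}=\RHom_A(-,\omega_A^\circ)$ from \cref{C:D_h=D_GS if and only if FsG}, combined with the identification of $D_{GS/A}$ with $\Hom_k(-,k)$ on finite length modules from \cref{C:D_GS/A on finite length R-mod}) to the case $\omega_A^\circ=A[d]$. The only quibble is your parenthetical alternative: $k[X_1,\dots,X_d]\to A$ is a non-finite localization, so the finite-map recipe of \cref{D:dualizing from finite maps} does not apply to it directly; your primary justification of $\omega_A^\circ=A[d]$ via smoothness of the Laurent polynomial ring of Krull dimension $d$ is the right one (and is what the paper implicitly asserts).
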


\section{Conventions on groups and representations}		
		
	\subsection{Central extensions}\label{SS:central ext}
	Let $G=\bG(F)$
        be the locally compact group of $F$-rational
	points of a reductive linear algebraic group $\bG$ over a non-archimedean local field $F$.
	Let $\wG$ be a finite central extension of $G$ with kernel a finite abelian group ${\mu}$:
	\[ 1\to \mu\to\wG\to G\to 1 \]
	that is moreover a topological covering.
	In this situation it is proved, for example in \cite[Lemma 2.2]{DubJan-I}, that $\wG$ admits a
        basis of open sets around the identity element of $\wG$ consisting of compact open subgroups lifted from $G$
        (i.e., for which the restriction of the map $\wG\to G$ is an isomorphism).
	So $\wG$ is a totally disconnected group or an $l$-group in the terminology of \cite{BerNotes}.
	
	\subsection{Representations}
	In this section we recall some notions around the representation theory of locally compact totally disconnected groups $G$. 
	All representations will be on complex vector spaces.
	One can consult \cite{BerZel-GL,BerNotes,Renard} for details.
	We will work with the category $\cM(G)$ of smooth complex representations of such a group $G$.

	Let $H\le G$ be a closed subgroup of $G$. 
	Then restricting a representation from $G$ to $H$ we obtain an exact functor between the categories of smooth representations
	\begin{align*}
		\Res_H^G\colon \cM(G)\to \cM(H).
	\end{align*}
	The restriction functor has a right adjoint given by induction
	\[ \Ind_H^G\colon \cM(H)\to \cM(G)\]
which is defined by, 	\[\Ind_H^G(V):=\{f\colon G\to V\mid f(hg) = hf(g),\text{ for all }h\in H \text{ and }g\in G\}^\sm.\]
	The pair of functors $\Res_H^G\dashv \Ind_H^G$ has
        the adjointness property (for $V$ a representation of $G$, and $W$ of $H$)
        \[\Hom_H[\Res_H^G V, W] = \Hom_G[V, \Ind_H^G(W)],\]
        which goes under the name of Frobenius reciprocity 
	 for which we refer to  \cite{BerNotes} or \cite[III.2.5]{Renard}.
	
	The induction functor admits a subfunctor $\ind_H^G\subset \Ind_H^G$ called \emph{compact induction} consisting of  functions with compact support modulo $H$:
	\[ \ind_H^G(V):=\{f\in \Ind_H^G(V)\mid H\backslash\supp(f) \text{ is compact}\}. \]
	
	In case $G/H$ is compact we clearly have $\ind_H^G=\Ind_H^G$.
	
	\begin{definition}
		If $V$ is a smooth representation of $G$ then the contragredient representation of $V$ is defined to be the smooth part of the linear dual $V^\vee:=(V^*)^\sm$.
	\end{definition}
	
	One can prove (see \cite[III.2.7]{Renard}) that induction and compact induction are related to each other through the contragredient.
	More precisely, for $V$ a smooth representation of $H$ we have
	\[ \Ind_H^G(V^\vee) = \ind_H^G(V\otimes \delta_{H\backslash G})^\vee, \] 
	where $\delta_{H\backslash G}$ is the modular character of $G$ divided by the one of $H$.
	
	Suppose now that $H\le G$ is open. 
	Since $H$ is the complement of the union of all non-trivial left cosets $Hg$, $g\in G$, it is also a closed subgroup.
	\begin{lemma} (see \cite[III.2.6.5]{Renard})\label{L:induction from open adjunction}
		If $H\le G$ is an open subgroup then the restriction functor $\Res_H^G$ is right adjoint to the compact-induction functor $\ind_H^G$.
		In particular, $\ind_H^G$ sends projective objects to projective objects.
	\end{lemma}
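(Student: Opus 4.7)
The plan is to construct the natural bijection
\[
\Hom_G\bigl(\ind_H^G W,\,V\bigr)\;\stackrel{\sim}{\longrightarrow}\;\Hom_H\bigl(W,\,\Res_H^G V\bigr)
\]
for all $W\in\cM(H)$ and $V\in\cM(G)$, and then deduce the preservation of projectives by a formal categorical argument. The point where openness of $H$ enters is in building the unit of the adjunction.

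First, I would construct the unit $\eta_W\colon W\to \Res_H^G \ind_H^G W$. Given $w\in W$, define $f_w\colon G\to W$ by $f_w(h)=h\cdot w$ for $h\in H$ and $f_w(g)=0$ for $g\notin H$. Since $H$ is open in $G$, its characteristic function is locally constant on $G$, which is what makes $f_w$ a smooth function; moreover $H\backslash\supp(f_w)$ is a single point, hence compact, so $f_w\in\ind_H^G W$. A routine check shows $w\mapsto f_w$ is $H$-equivariant (using the left-translation $H$-action on $\ind_H^G W$ inside the right-regular $G$-action). This is the only step where the openness hypothesis is essential, and I view it as the main (though not really difficult) obstacle: without $H$ open, extension by zero destroys smoothness and the whole adjunction breaks.

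Next, given a $G$-map $\phi\colon \ind_H^G W\to V$, set $\alpha(\phi):=\Res_H^G(\phi)\circ \eta_W\colon W\to V$, an $H$-map. Conversely, given an $H$-map $\psi\colon W\to V$, define $\beta(\psi)\colon \ind_H^G W\to V$ by
\[
\beta(\psi)(f)\;=\;\sum_{Hg\in H\backslash\supp(f)} g^{-1}\cdot\psi\bigl(f(g)\bigr),
\]
the sum being finite since $H\backslash\supp(f)$ is compact and discrete in any reasonable covering, and being independent of coset representatives because $\psi$ is $H$-equivariant and $f(hg)=h\cdot f(g)$. I would then verify that $\beta(\psi)$ is $G$-equivariant under right translation, and check that $\alpha$ and $\beta$ are mutually inverse by a direct computation: the relation $\beta(\alpha(\phi))=\phi$ follows by observing that every $f\in\ind_H^G W$ is a finite sum of $G$-translates of elements of the form $f_w$, and $\alpha(\beta(\psi))=\psi$ is immediate from evaluating at the identity coset. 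Naturality in both variables is straightforward.

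For the final clause, this is a standard consequence of having an exact right adjoint. Explicitly, $\Res_H^G$ is exact (restriction of smooth representations along a closed embedding is exact). If $P\in\cM(H)$ is projective, then $\Hom_G(\ind_H^G P,-)\cong \Hom_H(P,\Res_H^G(-))$ is the composition of an exact functor with an exact functor, hence exact, so $\ind_H^G P$ is projective in $\cM(G)$. No further work is required.
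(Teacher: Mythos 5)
Your proof is correct and is the standard argument for this fact; the paper itself gives no proof, simply citing \cite[III.2.6.5]{Renard}, where essentially this explicit unit/counit construction appears. The only cosmetic points are that the finiteness of the sum is just because $H\backslash G$ is discrete (as $H$ is open) so a compact subset is finite, and that $\Res_H^G$ is exact for any closed subgroup — both of which you have in substance.
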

	
	\subsection{Projectives and injectives} 
	We continue with $G$ being a locally compact, totally disconnected group.
	Let $\cM(G)$ be the abelian category of smooth representations of $G$.
	Let $\Hecke(G)$ be the Hecke algebra of locally constant compactly supported  functions on $G$ endowed with convolution (one needs to choose a left invariant Haar measure on $G$).
	The algebra $\Hecke(G)$ is an algebra without unit but with a rich supply of idempotents\footnote{It is
        what is called an \emph{idempotented} algebra: for every finite set of element $a_i$, there is an idempotent $e$ such that $ea_i=a_ie=a_i$ for each $a_i$.} because $G$ has a basis of neighborhoods of identity consisting of open compact subgroups. 
	A representation $V$ of $\Hecke(G)$ is said to be  \emph{non-degenerate}  if it has the  property that $\Hecke(G)V=V$.
	There is the well-known equivalence of the category of smooth representations of $G$ and the category of non-degenerate representations of $\Hecke(G)$:
	\[ \cM(G)\simeq \cH(G)\lmod\nd. \]
	
	\begin{remark}\label{R:H(G) is projective}
		Suppose that $G$ admits a countable basis of neighborhoods of identity consisting of compact open subgroups.
		Then the Hecke algebra $\cH(G)$ is a projective object in $\cM(G)$.
		Indeed, one writes $\cH(G) = \bigcup_{i=1}^\infty \cH(G)e_i$ where $e_i=e_{K_i}$ are idempotents corresponding to a countable basis of compact open subgroups of $G$.
Since $ \Hom_G(\cH(G)e_i,V) = V^{K_i}$, each of the $\cH(G)e_i$ is a projective object in $\cM(G)$,
and then one notices that
                $\Hom_G(\cH(G), -) = \lim_i \Hom_G(\cH(G)e_i,-)$
                and this latter inverse limit is exact because the transition functions are surjective (and hence the Mittag-Leffler condition is automatically satisfied).
	\end{remark}
	
	The abelian category $\cM(G)$ has a good supply of injective and projective objects, for example
	for any open compact  subgroup $K$ of $G$, $\ind_K^G (\bC)$ is a projective object (see \cref{L:induction from open adjunction}), and its smooth dual $\Ind_K^G (\bC)$ is an injective object. 
	
	We use $\Ext^i_{G}(V,V')$ to denote Ext groups in $\cM(G)$. 
	\subsection{Homological dimension}\label{SS:finite homological dimension}
	Here $G=\bG(F)$ and $\wG$ is a central extension of $G$ as defined in \cref{SS:central ext}.
	
	It is shown in \cite[Theorem 29]{BerNotes} that the category of smooth representations of $G$ has finite homological dimension.
	The argument uses the building of $G$ to give a resolution
	of the trivial module by projective modules which are sums of representations of $G$ of the form  $\ind_K^G (\bC)$ (where $K$ are compact open subgroups of $G$):
	\[0\rightarrow P_d \rightarrow  \cdots \rightarrow P_1 \rightarrow P_0 \rightarrow \bC \rightarrow 0,\] 
	where $d$ is  the split rank of $G$.
	Tensoring this resolution with any $G$-module $V$, and observing that  \[\ind_K^G (\bC) \otimes V = \ind_K^G(V|_K),\]
	we find a projective resolution of length $\leq d$ for  any $G$-module $V$.  This argument with
	the resolution of the trivial module $\bC$ for $G$ works just as well for covering groups
        $\wG\to G$ since $\ind_K^G (\bC)$ treated as a $\wG$-module is $\ind_\wK^\wG (\bC)$, and
        $\ind_\wK^\wG (\bC) \otimes V = \ind_\wK^\wG(V|_\wK)$ is a projective $\wG$-module.
	Therefore,  $\Ext^i_{\wG}(V,V')=0$ for any representations
	$V,V'$ of $\wG$,  if $i>d$.

	Once we have the Bernstein decomposition for covering groups one can prove exactly as for linear group that the category of smooth representations $\cM(\wG)$ is noetherian (\cref{T:M(G) is noetherian}).
      Since it is also of finite homological dimension, any finitely generated module admits a finite resolution by finitely generated projective modules. (If $d$ is the projective dimension of a finitely generated representation $V$, resolve $V$ by finitely generated projective modules  $ P_{d-1} \rightarrow  \cdots \rightarrow P_1 \rightarrow P_0 \rightarrow V \rightarrow 0,$ and then observe as in the proof of Hilbert Syzygy that the kernel of the map $P_{d-1}\rightarrow P_{d-2}$ must be projective.)
	This will be useful when we apply the abstract results from \cref{S:abstract duality theorem} to representation theory in \cref{S:homological duality} and \cref{S:duality SchSt} as all finitely generated modules will be perfect.

\section{Splitting the category of representations}\label{S:splitting reps}
	\subsection{Compact representations}\label{SS:compact split}
	In this section $G$ denotes an arbitrary  locally compact td-group which is
        countable at infinity.
	The most important result we need is that compact representations split the category of smooth representations of $G$.
	
	\begin{definition}
		A smooth representation $(\pi,V)$ of $G$ is said to be compact if all its matrix coefficients have compact support.
	\end{definition}
	\begin{remark} The existence of a compact irreducible representation
		implies that $G$ has compact center.
	\end{remark}

	The first important theorem about compact representations is
        that any compact representation which appears as a subquotient in any smooth representation appears as a direct summand by  {\cite[Theorem 2.44]{BerZel-GL}}. The following theorem is an easy consequence of it:

	\begin{theorem}\label{T:compact rep as GxG}
		Let $\rho$ be a compact irreducible representation of $G$. 
		Then matrix coefficients of $\rho$ provide us with an injective map of $G\times G$-modules 
		\[\rho\boxtimes \rho^\vee \subset\cH(G),\]
		and  this is the only subquotient of $\cH(G)$ isomorphic to $\rho\boxtimes\rho^\vee$.
		
		Moreover, if $\rho'$ is another irreducible compact representation of $G$, non-isomorphic to $\rho$, then the $G\times G$ representation $\rho\boxtimes\rho'^\vee$ does not appear as a subquotient of $\cH(G)$.
	\end{theorem}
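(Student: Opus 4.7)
The strategy is to build the matrix coefficient map, use irreducibility to get injectivity, and then count multiplicities via Schur's lemma together with the quoted consequence of \cite[Theorem~2.44]{BerZel-GL} that a compact irreducible subquotient always splits off as a direct summand. In this way, ``subquotient'' reduces to ``direct summand'', which reduces to computing $\dim\Hom_{G\times G}(\rho\boxtimes\rho'^\vee,\cH(G))$.

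First, define $m_\rho\colon \rho\otimes\rho^\vee\to\cH(G)$ by $m_\rho(v\otimes\lambda)(g)=\langle\lambda,\rho(g)v\rangle$. Compactness of $\rho$ guarantees the image lies in $\cH(G)$, and a routine check shows $m_\rho$ is $G\times G$-equivariant (the left copy acting by left translation, the right copy by right translation, up to the standard inverse convention). Matrix coefficients of a nonzero smooth representation cannot vanish identically, so $m_\rho\neq 0$. Since $\rho$ is compact irreducible it is admissible, and consequently $\rho\boxtimes\rho^\vee$ is irreducible as a $G\times G$-representation; thus $m_\rho$ is injective, giving the claimed embedding.

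For the uniqueness claim, I would first remark that $\rho\boxtimes\rho^\vee$ is itself a compact irreducible $G\times G$-representation, because its matrix coefficients factor as products of compactly supported matrix coefficients of $\rho$ and of $\rho^\vee$. Hence by the quoted consequence of \cite[Theorem~2.44]{BerZel-GL}, every subquotient of $\cH(G)$ isomorphic to $\rho\boxtimes\rho^\vee$ is in fact a direct summand, so it is enough to show
\[ \dim_{\bC}\Hom_{G\times G}\bigl(\rho\boxtimes\rho^\vee,\,\cH(G)\bigr)\;=\;1.\]
Any such $\phi$ is determined by the linear functional $\eta_\phi(v\otimes\lambda):=\phi(v\otimes\lambda)(e)$, since $G\times G$-equivariance recovers $\phi(v\otimes\lambda)(g)$ from the values at $e$. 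Because the Dirac mass at $e\in G$ is invariant under $\Delta(G)\subset G\times G$, the functional $\eta_\phi$ is forced to be $\Delta(G)$-invariant; conversely, any such invariant functional yields a valid $\phi$ (landing in $\cH(G)$, essentially because it becomes a scalar multiple of $m_\rho$). A $\Delta(G)$-invariant linear functional on $\rho\otimes\rho^\vee$ corresponds to a $G$-equivariant map $\rho\to(\rho^\vee)^\vee\cong\rho$, and by Schur's lemma this space is one-dimensional. Hence $\rho\boxtimes\rho^\vee$ occurs in $\cH(G)$ with multiplicity exactly one, and the unique copy is the image of $m_\rho$.

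For the last statement, the same calculation gives
\[ \Hom_{G\times G}\bigl(\rho\boxtimes\rho'^\vee,\,\cH(G)\bigr)\;\cong\;\Hom_G(\rho,\rho')\;=\;0\]
when $\rho\not\cong\rho'$, so $\rho\boxtimes\rho'^\vee$ cannot appear even as a direct summand of $\cH(G)$; combined once more with the Bernstein--Zelevinsky splitting principle for compact irreducible subquotients, this rules out its appearance as a subquotient. The main technical point is verifying the bijection between $\Delta(G)$-invariant functionals on $\rho\otimes\rho'^\vee$ and $G$-equivariant maps $\rho\to\rho'$, which depends on the admissibility of the compact irreducible representations $\rho,\rho'$ so that the canonical map into the double contragredient is an isomorphism; everything else is bookkeeping with conventions.
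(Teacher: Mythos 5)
Your proof is correct and follows essentially the same route as the paper's: the matrix--coefficient embedding, the Bernstein--Zelevinsky splitting of compact irreducible subquotients into direct summands, and a multiplicity count reducing to $\Hom_G(\rho,\rho')$ and Schur's lemma. The only cosmetic difference is that you unwind Frobenius reciprocity by hand via evaluation at the identity (and compute $\Hom$ into $\cH(G)$ rather than out of it), where the paper simply quotes the adjunction for $\ind_{\Delta G}^{G\times G}(\bC)$.
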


\begin{proof}
There is a natural map from $\rho\boxtimes \rho^\vee$ to functions on $G$  given by \[v \otimes \ell \in \rho\boxtimes \rho^\vee  \mapsto F_{v \otimes \ell}(g) = \ell(gv),\] which if $\rho$ is a compact smooth representation of $G$ lands inside
compactly supported smooth functions on $G$, i.e., in $\cH(G)$. If $\rho$ is furthermore irreducible, the resulting map
$\rho\boxtimes \rho^\vee \subset\cH(G)$ is injective.

As any compact representation which appears as a subquotient in any smooth representation appears as a direct summand by  {\cite[Theorem 2.44]{BerZel-GL}}, the rest of the theorem is a direct consequence of
the Frobenius reciprocity:
\[ \Hom_{G \times G}[\ind_{\Delta G}^{ G \times G} (\bC), \rho\boxtimes \rho'^\vee] = \Hom_{\Delta G}[   \rho\boxtimes \rho'^\vee,  \bC] .\qedhere\] 
\end{proof}

	We denote by $\cM(G)_c$ the full subcategory of $\cM(G)$ consisting of those smooth
        representations of $G$ all whose irreducible subquotients are compact representations.
	It is clearly a subcategory closed under subquotients, direct sums and extensions.
	
	We denote by $\cM(G)_{nc}$ the full subcategory of $\cM(G)$ formed of representations that have no compact irreducible subquotient.

	For $\cS$, a collection of irreducible representations of $G$, we denote by $\cM(G)_{[\cS]}$ the subcategory of $\cM(G)$ formed of representations such that all their irreducible subquotients are isomorphic to an object in $\cS$. 
	Denote by $\cM(G)_{[\text{out }\cS]}$ the subcategory of representations such that none of their irreducible subquotients are isomorphic to an object in $\cS$.

	The following theorem summarizes the main properties of compact representations, cf. 
        {\cite[Theorem 2.44]{BerZel-GL}}:
	\begin{theorem}
        \label{T:main on cpct reps}	
		For $\cS$ a finite collection of compact irreducible representations of $G$,
		we have:
		\begin{enumerate}
			\item The category $\cM(G)_{[\cS]}$
                        is semisimple: all the objects in $\cM(G)_{[\cS]}$ are isomorphic to a direct sum of objects in $\cS$.
			\item There is  an equivalence of categories 
			\begin{align}\label{Eq:decompose subquotients in A and outside A}
				\cM(G) \simeq \cM(G)_{[\cS]}\times \cM(G)_{[\text {out } \cS]}.
			\end{align}
		\end{enumerate}
		Moreover, the category $\cM(G)_c$ admits a decomposition
		\begin{align}
			\cM(G)_c = \prod_{\tau} \cM(G)_{[\tau]}		
		\end{align}
		where the product runs over all isomorphism classes of compact irreducible representations of $G$.
		In particular, $\cM(G)_c$ is a semisimple category.
	\end{theorem}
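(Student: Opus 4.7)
The plan is to reduce the entire theorem to the \emph{splitting principle} already invoked in the proof of Theorem \ref{T:compact rep as GxG}: by \cite[Theorem 2.44]{BerZel-GL}, for any compact irreducible smooth representation $\rho$ of $G$ and any smooth representation $V$, there is a canonical direct sum decomposition $V = V_\rho \oplus V^\rho$, where $V_\rho$ is the $\rho$-isotypic component (a possibly infinite direct sum of copies of $\rho$) and $V^\rho$ has no subquotient isomorphic to $\rho$. Combined with the categorical criterion of Proposition \ref{P:categ is product if and only if}, this input should yield all the statements in an essentially formal manner; the main bookkeeping concerns the passage from the finite set of types $\cS$ to the infinite collection of all compact irreducibles.

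For parts (1)(a) and (1)(b), I would iterate the splitting principle over the finite set $\cS = \{\rho_1, \dots, \rho_n\}$. Applying the splitting successively to $V$, then to $V^{\rho_1}$, then to $(V^{\rho_1})^{\rho_2}$, and so on, one assembles after $n$ steps a decomposition
\[ V \;=\; V_{\rho_1} \oplus V_{\rho_2} \oplus \cdots \oplus V_{\rho_n} \oplus V^{\cS}, \]
where $V^{\cS}$ has no subquotient isomorphic to any $\rho \in \cS$ (each successive splitting preserves this vanishing on the complement). When $V \in \cM(G)_{[\cS]}$, by definition $V^{\cS} = 0$, and since each $V_{\rho_i}$ is semisimple of type $\rho_i$, this gives (1)(a). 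Setting $V_{[\cS]} := V_{\rho_1} \oplus \cdots \oplus V_{\rho_n}$ and $V_{[\text{out } \cS]} := V^{\cS}$ for an arbitrary $V \in \cM(G)$ then produces the decomposition required by Proposition \ref{P:categ is product if and only if}; the orthogonality hypothesis is automatic because any morphism from a semisimple representation of type in $\cS$ to one without subquotients in $\cS$ must vanish, and the technical direct-sum-detection condition is immediate in a concrete category of representations.

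For the decomposition $\cM(G)_c \simeq \prod_\tau \cM(G)_{[\tau]}$ over the (infinite) set of isomorphism classes of compact irreducibles, I would again invoke Proposition \ref{P:categ is product if and only if}. The only nontrivial verification is that every $V \in \cM(G)_c$ equals the internal direct sum $\bigoplus_\tau V_\tau$ of its isotypic components. The sum is direct because $V_\tau \subseteq V^{\tau'}$ whenever $\tau \neq \tau'$ (as $V_\tau$ has all subquotients isomorphic to $\tau$, while $V^{\tau'}$ has none isomorphic to $\tau'$), so $V_\tau \cap V_{\tau'} = 0$; set $W := \bigoplus_\tau V_\tau$. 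If $V/W$ were nonzero, pick a nonzero $\bar v \in V/W$; the cyclic subrepresentation $\cH(G)\bar v$ admits an irreducible quotient $\sigma$ by a Zorn's lemma argument applied to subrepresentations avoiding $\bar v$, and $\sigma$ is compact because $V \in \cM(G)_c$. The splitting principle applied to $\cH(G)\bar v$ then embeds $\sigma$ as a subrepresentation of $V/W$. On the other hand, $V_\sigma \subseteq W$ implies that $V/W$ is a quotient of $V/V_\sigma \cong V^\sigma$, which by construction contains no subquotient isomorphic to $\sigma$, a contradiction. Hence $V = W$, and the equivalence follows. I do not anticipate any serious obstacle beyond this final stitching argument; the heavy lifting is done by \cite[Theorem 2.44]{BerZel-GL}, and the remainder is categorical bookkeeping.
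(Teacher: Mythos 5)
Your proposal is correct and is essentially the argument the paper intends: the paper gives no proof of \cref{T:main on cpct reps} beyond citing \cite[Theorem 2.44]{BerZel-GL} for the splitting principle and relying on \cref{P:categ is product if and only if}, which is exactly the combination you use. The only points worth tightening are cosmetic: for the infinite product over $\tau$, directness of $\bigoplus_\tau V_\tau$ should be justified by $\sum_{\tau'\neq\tau}V_{\tau'}\subseteq V^{\tau}$ (pairwise trivial intersections alone do not suffice), and the vanishing of $V^{\cS}$ for $V\in\cM(G)_{[\cS]}$ uses that every nonzero smooth representation has an irreducible subquotient.
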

	
	It is natural to ask if we can always decompose a representation into a direct sum of compact and non-compact.
	This is not completely automatic from \cref{T:main on cpct reps},
and one needs a further finiteness condition:
	
	Consider the following condition (see \cite[IV.1.7 (KF)]{Renard}) called "compact finite":
	\begin{align}\label{Eq:condition (KF)}
		\tag{KF}\quad \begin{minipage}{300pt}
			for any compact open subgroup $K\le G$ there is only a finite number of isomorphism classes of compact irreducible representations of $G$ having a non-zero $K$-fixed vector.
		\end{minipage}
	\end{align}
	\begin{theorem}\label{T:dec compact times non-compact}
		If the group $G$ satisfies the above condition (KF), then we have a decomposition of categories
		\begin{align}\label{Eq:dec compact times non-compact}
			\cM(G) = \cM(G)_c\times \cM(G)_{nc}.
		\end{align}
	\end{theorem}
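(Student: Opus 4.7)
\medskip

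\textbf{Proof plan.} The plan is to apply \cref{P:categ is product if and only if} with $\cC = \cM(G)$ and $\cC_1 = \cM(G)_c$, $\cC_2 = \cM(G)_{nc}$. The conditions (i), (iii), (iv) are either obvious or formal (note that a morphism $V \to W$ between $V \in \cM(G)_c$ and $W \in \cM(G)_{nc}$ has image a common subquotient, hence zero). The real content is condition (ii): for every smooth representation $V$, one must exhibit a decomposition $V = V_c \oplus V_{nc}$ with $V_c \in \cM(G)_c$ and $V_{nc} \in \cM(G)_{nc}$.

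First I would exhaust $G$ by a decreasing chain of compact open subgroups $K_1 \supset K_2 \supset \cdots$ (possible since $G$ is countable at infinity), and set $\cS_n \subset \Irr(\cM(G)_c)$ to be the finite set of compact irreducibles with non-zero $K_n$-fixed vectors, finite by \eqref{Eq:condition (KF)}. Since $K_n \supset K_{n+1}$, we have $\cS_n \subset \cS_{n+1}$, and every compact irreducible lies in some $\cS_n$ by smoothness. Apply \cref{T:main on cpct reps}(2) with $\cS = \cS_n$ to obtain the canonical decomposition $V = V_n \oplus V_n^{\mathrm{out}}$, where $V_n \in \cM(G)_{[\cS_n]}$ is semisimple and $V_n^{\mathrm{out}} \in \cM(G)_{[\text{out }\cS_n]}$. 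These decompositions are compatible: $V_n \subset V_{n+1}$ and $V_{n+1}^{\mathrm{out}} \subset V_n^{\mathrm{out}}$. Define
\[ V_c := \bigcup_n V_n, \qquad V_{nc} := \bigcap_n V_n^{\mathrm{out}}. \]

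The key observation, which I expect to be where the hypothesis (KF) really bites, is that if $\tau \in \cS_m \setminus \cS_n$ for $m > n$ then $\tau$ has no $K_n$-fixed vectors by the very definition of $\cS_n$; hence the $\tau$-isotypic component of $V$ contributes trivially to $V^{K_n}$. Taking $K_n$-fixed vectors in $V = V_m \oplus V_m^{\mathrm{out}}$ and using this observation yields the crucial identity $V_m^{K_n} = V_n^{K_n}$ for all $m \geq n$. Now for any $v \in V^{K_n}$, decomposing $v = v_c^{(m)} + v_{nc}^{(m)}$ according to $V = V_m \oplus V_m^{\mathrm{out}}$ gives $v_c^{(m)} \in V_m^{K_n} = V_n^{K_n} \subset V_n$, and by uniqueness of the decomposition $V = V_n \oplus V_n^{\mathrm{out}}$ one concludes $v_c^{(m)} = v_c^{(n)}$ for all $m \geq n$. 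This stability forces $v_{nc}^{(n)} = v - v_c^{(n)}$ to lie in every $V_m^{\mathrm{out}}$, hence in $V_{nc}$, giving $V = V_c + V_{nc}$. Conversely, $V_c \cap V_{nc} = 0$ since any $\tau$-isotypic summand of $V_c$ is contained in $V_n$ for all $n$ with $\tau \in \cS_n$, which meets $V_n^{\mathrm{out}}$ in zero.

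It then remains to verify the membership $V_c \in \cM(G)_c$ and $V_{nc} \in \cM(G)_{nc}$. The first holds because $V_c$ is a filtered union of semisimple objects $V_n$, each with composition factors in $\cS_n$, so every irreducible subquotient of $V_c$ is compact (any such subquotient is generated by a single element, which lies in some $V_n$). For the second: if $\tau$ were a compact irreducible subquotient of $V_{nc}$, then by \cite[Theorem~2.44]{BerZel-GL} invoked in \cref{T:main on cpct reps}, $\tau$ would embed as a subrepresentation of $V_{nc}$, hence of $V$. But every compact irreducible subrepresentation of $V$ lies in $V_c$ (it is its own $\tau$-isotypic contribution to some $V_n$ with $\tau \in \cS_n$), and $V_c \cap V_{nc} = 0$ forces a contradiction. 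With (ii) established, \cref{P:categ is product if and only if} completes the proof.
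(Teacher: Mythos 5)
The paper states \cref{T:dec compact times non-compact} without proof — it is part of the expository recollection of \cite{BerZel-GL} and \cite[IV.1.7]{Renard} — so there is no in-paper argument to compare against. Your proof is correct and is essentially the standard limiting argument from the literature: use (KF) to get finite sets $\cS_n$ of compact irreducibles detected by a shrinking family of compact open subgroups, apply the finite-set splitting of \cref{T:main on cpct reps} for each $n$, and pass to the limit.

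Two small points are worth making explicit when you write this up. First, what you actually need from the topology is a \emph{countable neighborhood basis of the identity} consisting of compact open subgroups (so that every smooth vector is fixed by some $K_n$ and every compact irreducible lies in some $\cS_n$); this is not literally ``countable at infinity,'' though both hold for the groups the paper cares about. Second, the two facts you assert in passing — that $V_n\subset V_{n+1}$ and $V_{n+1}^{\mathrm{out}}\subset V_n^{\mathrm{out}}$, and that $V_m^{K_n}=V_n^{K_n}$ — each need the orthogonality of $\cM(G)_{[\cS]}$ and $\cM(G)_{[\text{out }\cS]}$: for the first, project $V_n$ into $V_{n+1}^{\mathrm{out}}$ and note the map vanishes; for the second, write $V_m=V_n\oplus(V_m\cap V_n^{\mathrm{out}})$ and observe that $V_m\cap V_n^{\mathrm{out}}$ is semisimple with all factors in $\cS_m\setminus\cS_n$, hence has no nonzero $K_n$-fixed vectors. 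With those verifications spelled out, the argument is complete.
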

	
	\begin{remark}
		Notice that in light of the theorem \cref{T:dec compact times non-compact}, every compact representation is projective-injective in $\cM(G)$. 
		This is a remarkably strong homological property that will be useful in the sequel.
	\end{remark}
	
	In our situation, the condition (KF) is satisfied thanks to the uniform admissibility theorem, see \cref{T:uniform adm}.

	\subsection{Compact modulo center}\label{SS:compact mod center}
	We have seen that compact representations behave as nicely as one could hope but for many interesting groups there are no such representations.
	It turns out that the issue comes from the non-compactness of the center.
	In this subsection we present an analogue of the decomposition of categories \cref{T:main on cpct reps} and \cref{T:dec compact times non-compact}.
	
	\begin{definition}
		A representation of $G$ is called \emph{compact modulo center}
		if its matrix coefficients have compact support modulo $Z(G)$.
	\end{definition}
	
Denote by $G^\circ$ the subgroup of $G = \bG(F)$,
       for $\bG$ a reductive group over $F$, defined by:
\[ G^\circ = \cap\,  \{ {\rm ker}  |\chi|\colon G \rightarrow \bR^+\} ,\]
where $\chi\colon G \rightarrow F^\times$ are the algebraic characters of $G$ defined over $F$.  
The subgroup $G^\circ$  of $G$ has the property that $G/G^\circ\simeq \bZ^d$ for some $d\ge 0$. 
Further, $G^\circ Z(G)(F)$ is a normal subgroup of finite index in $G(F)$. 
The subgroup $G^\circ \subset G(F)$ contains all compact subgroups of $G(F)$ and, in fact, $G^\circ$
is generated by all compact subgroups of $G$. Since we could not find a convenient reference, we provide a proof
for which we thank Sandeep Varma.

\begin{lemma}
	The subgroup $G^\circ$ is generated by all compact subgroups of $G$.
\end{lemma}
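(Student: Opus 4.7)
The plan is to prove the two inclusions $G^1 \subseteq \Go$ and $\Go \subseteq G^1$ separately, where $G^1$ denotes the subgroup of $G$ generated by all compact subgroups of $G$. The inclusion $G^1 \subseteq \Go$ is immediate: for any compact subgroup $K \le G$ and any $F$-rational algebraic character $\chi$ of $\bG$, the image $|\chi|(K)$ is a compact subgroup of $\bR^+$, hence is trivial, so $K \subseteq \ker|\chi|$; intersecting over all such $\chi$ gives $K \subseteq \Go$, and taking the subgroup generated yields $G^1 \subseteq \Go$.

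For the reverse inclusion $\Go \subseteq G^1$, I would fix a maximal $F$-split torus $A$ of $\bG$ and a good maximal compact open subgroup $K$ of $G$, and invoke the Cartan decomposition $G = K \cdot A(F) \cdot K$. Given $g \in \Go$, writing $g = k_1 a k_2$ with $k_i \in K$ and $a \in A(F)$, the inclusion $K \subseteq G^1 \subseteq \Go$ forces $a \in \Go$, so it suffices to show $A(F) \cap \Go \subseteq G^1$. The valuation map identifies $A(F)/A(\cO_F) \simeq X_*(A)$ with the cocharacter lattice, and the condition $|\chi(a)| = 1$ for all $\chi \in X^*(\bG)_F$ places the class of $a$ in the annihilator of $X^*(\bG)_F \subseteq X^*(A)$. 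Using the exact sequence $1 \to \bG_{\mathrm{der}} \to \bG \to \bG^{\mathrm{ab}} \to 1$ and the identification $X^*(\bG)_F = X^*(\bG^{\mathrm{ab}})_F$, this annihilator is exactly the sublattice $X_*(A \cap \bG_{\mathrm{der}})$. Hence, modulo the compact group $A(\cO_F) \subseteq K \subseteq G^1$, the element $a$ lies in $(A \cap \bG_{\mathrm{der}})(F) \subseteq \bG_{\mathrm{der}}(F)$, reducing the problem to showing $\bG_{\mathrm{der}}(F) \subseteq G^1$.

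For this final step, $\bG_{\mathrm{der}}$ is semisimple and is generated, as an algebraic group, by its unipotent root subgroups $U_\alpha$ relative to the root system $\Phi(\bG, A)$. Each $U_\alpha(F)$ is a locally compact unipotent group that is the increasing union of its compact open subgroups (the filtration by valuation), so $U_\alpha(F) \subseteq G^1$. The subgroup $\bG_{\mathrm{der}}(F)^+$ of $\bG_{\mathrm{der}}(F)$ generated by these $U_\alpha(F)$ is therefore contained in $G^1$. The main obstacle, and the only non-formal point, is bridging the potential gap between $\bG_{\mathrm{der}}(F)^+$ and $\bG_{\mathrm{der}}(F)$: for non-simply-connected semisimple groups this could in principle be proper. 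However, over a non-archimedean local field this gap vanishes by Platonov's resolution of the Kneser--Tits conjecture, giving $\bG_{\mathrm{der}}(F) = \bG_{\mathrm{der}}(F)^+ \subseteq G^1$ and completing the argument.
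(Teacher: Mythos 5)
Your first inclusion ($G^1\subseteq G^\circ$) and the reduction via the torus are fine in spirit, but the final step contains a genuine error. The Kneser--Tits conjecture (Platonov's theorem over local fields) asserts $H(F)=H(F)^+$ only for \emph{simply connected} absolutely almost simple isotropic groups $H$. For a non-simply-connected semisimple group the gap you acknowledge does \emph{not} vanish: for example $\PGL_n(F)^+=\PSL_n(F)$ has index $|F^\times/(F^\times)^n|>1$ in $\PGL_n(F)$. So the identity $\bG_{\mathrm{der}}(F)=\bG_{\mathrm{der}}(F)^+$ you invoke is false in general, and the subgroup generated by the $U_\alpha(F)$ genuinely fails to exhaust $\bG_{\mathrm{der}}(F)$ --- indeed it need not even contain the split torus elements $a\in A_{\mathrm{der}}(F)$ that your reduction produced (in $\PGL_2$, $\mathrm{diag}(t,1)$ lies in $\PSL_2(F)$ only when $t$ is a square). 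The lemma is still true in these cases, but for a different reason: e.g.\ in $\PGL_2(F)$ the element $\mathrm{diag}(1,\varpi)$ is a product of two torsion elements, each lying in a compact subgroup. Bridging the gap between $\bG_{\mathrm{der}}(F)^+$ and $G^\circ$ is precisely the non-formal content of the lemma, and your proof does not supply it.

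This is exactly where the paper's argument does real work: it passes to the simply connected cover $j\colon G^s\to G^d$ of the derived group (where Kneser--Tits, or the affine Bruhat decomposition $G^s(F)=\I\,W^{\mathrm{aff}}\,\I$ with compact simple reflections, legitimately applies), and then handles the quotient $G^\circ/j(G^s(F))$ by showing that the stabilizer $N(\I)$ in $G^\circ$ of a fixed Iwahori subgroup of $G^s(F)$ is a \emph{compact} subgroup of $G$ that surjects onto this quotient. Some version of that second step --- producing compact subgroups of $G^\circ$ covering $G^\circ/\bG_{\mathrm{der}}(F)^+$ --- is what your proof is missing. A secondary, repairable imprecision: the Cartan decomposition for a general reductive $G$ reads $G=K\,M^+\,K$ with $M=Z_G(A)$ a minimal Levi, not $G=K\,A(F)\,K$; since $M^\circ$ is compact and $A(F)$ has only finite index image in $M/M^\circ$, your reduction to torus elements needs a small additional argument there as well.
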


\begin{proof}
Let $G^d=[G,G]$ be the derived subgroup of $G$, and $j: G^s \rightarrow G^d $, the simply connected cover of $G^d$.
The group $G(F)$ operates by conjugation on $G^d$, and this action lifts to $G^s$, thus
$G(F)$ acts on the set of Iwahori subgroups of $G^s(F)$ which are permuted transitively by $G^s(F)$. Let $\I$ be a fixed Iwahori subgroup of $G^s(F)$.
We know that the only elements of $G^s(F)$ that normalize $\I$ belong to  $\I$, and
the stabilizer of $\I$ in $G^\circ$, call it  $N(\I)$, is a compact subgroup of $G(F)$ (this uses the assertion contained in the Lemma for tori which is equivalent to the assertion that anisotropic tori are the same as compact tori). Hence
$N(\I)$ surjects onto $G^\circ/ j(G^s(F))$. It follows that if we can prove the assertion of the Lemma
for the simply connected group $G^s$, then the Lemma follows for the group $G$.

The assertion of the Lemma for the simply connected group $G^s$, assuming that it is isotropic (else $G^s(F)$ is compact
and there is nothing to prove),
follows from the Bruhat decomposition $G^s(F)=\I W^{\rm aff} \I$ where $W^{\rm aff}$ is a Coxeter group which comes equipped
with a set of generators $\{ s \in S\}$ of order 2 where each $s$ treated as an element of $G^s(F)$ is compact
(since $\I \cup \I s \I$ is a compact parahoric subgroup of $G^s(F)$).
\end{proof}

	We put $\cX(G):=\Hom_\gr(G/\Go, \bC^\times)$ and call it the group of \emph{unramified characters} of $G$.
	
	\begin{remark}
		A representation $\pi$ of $G$ is compact modulo center if and only if its restriction $\pi|_{G^\circ}$ is compact.
	\end{remark}

	The following well-known proposition, although simple, is the main ingredient allowing one
        to pass from $\Go$ to $G$ (since $\Go\cdot Z(G)$ is of finite index in $G$, the proof of this proposition
        is the same as the Clifford theory in finite groups):

	\begin{proposition}[{\cite[Lemmas 3.26 and 3.29]{BerZel-GL}}]
        \label{P:irred res to Go and inertia classes}\hfill
		\begin{enumerate}
			\item  Let $(V,\pi)$ be an irreducible representation of $G$. 
			The irreducible representations of $\Go$ appearing in $\Res_\Go^G(\pi)$ are all conjugate under $G$.
			Moreover, the representation $\Res_\Go^G(\pi)$ is semisimple and of finite length.
			\item Let $(V_1,\pi_1)$ and $(V_2,\pi_2)$ be two irreducible representations of $G$.
			Then the  following are equivalent.
			\begin{enumerate}
				\item $\Res_\Go^G(\pi_1)\simeq \Res_\Go^G(\pi_2)$,
				\item there exists an unramified character $\chi\in\cX(G)$ such that
				\[ \pi_1\otimes\chi\simeq \pi_2, \]
				\item $\Hom_{\Go}(\Res_\Go^G(\pi_1),\Res_\Go^G(\pi_2))\neq 0$.
			\end{enumerate}
		\end{enumerate}
	\end{proposition}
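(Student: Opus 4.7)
The plan is to use Clifford theory, exploiting that $\Go$ is an open normal subgroup of $G$ (open because $G\to G/\Go\simeq \bZ^d$ is continuous with discrete target, so $\Go=\ker$ is open; normal because $\Go$ is intrinsically defined), together with the fact that $\Go\cdot Z(G)$ has finite index in $G$ and that $G/\Go$ is abelian. The existence of a covering $\wG\to G$ changes nothing substantial here because $\wGo$ sits inside $\wG$ with the same formal properties.

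For part (1), the plan is first to produce an irreducible $\Go$-subrepresentation $\sigma$ of $\Res_\Go^G(\pi)$. Since irreducible smooth representations of $G$ are admissible (Jacquet), for any sufficiently small compact open $K\subset \Go$ the space $\pi^K$ is a nonzero finite-dimensional module over the Hecke algebra $\Hecke(\Go,K)$; any irreducible submodule generates an irreducible $\Go$-subrepresentation $\sigma\hookrightarrow \Res_\Go^G\pi$. Because $\Go\trianglelefteq G$, the group $G$ acts on isomorphism classes of irreducible $\Go$-subrepresentations of $\pi$ by conjugation; the sum of the $G$-translates of the $\sigma$-isotypic component is a nonzero $G$-stable subspace of $V$ and therefore equals $V$, which proves that all irreducible $\Go$-constituents of $\pi|_\Go$ lie in a single $G$-orbit. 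For semisimplicity and finite length, the plan is to reduce modulo the central character $\omega_\pi$: since $G^\circ Z(G)/Z(G)$ has \emph{finite} index in $G/Z(G)$, standard finite-index Clifford theory applies and the restriction $\pi|_{\Go Z(G)/Z(G)}$ is a finite direct sum of conjugates of any irreducible subrepresentation.

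For part (2), the implications (b)$\Rightarrow$(a) and (a)$\Rightarrow$(c) are immediate: any $\chi\in \cX(G)$ is trivial on $\Go$, so $\Res(\pi\otimes\chi)=\Res\pi$, and if the two restrictions are isomorphic then $\Hom_\Go$ contains the identity. The substantive implication is (c)$\Rightarrow$(b). Given a common irreducible $\Go$-constituent $\sigma$ of $\Res\pi_1$ and $\Res\pi_2$, Frobenius reciprocity (legitimate because $\Go$ is open) gives nonzero maps $\ind_\Go^G\sigma \twoheadrightarrow \pi_i$ for $i=1,2$. By part (1) the restrictions $\Res\pi_i$ are semisimple, so $\pi_i$ is in fact a direct summand of $\ind_\Go^G\sigma$ considered as a $G$-representation. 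Twisting by $\chi\in\cX(G)$ gives a $G$-isomorphism $\ind_\Go^G\sigma\otimes\chi\simeq \ind_\Go^G(\sigma\otimes\chi|_\Go)=\ind_\Go^G\sigma$, so $\cX(G)$ acts on the set of irreducible quotients of $\ind_\Go^G\sigma$. A Mackey computation \[ \End_G(\ind_\Go^G\sigma)=\Hom_\Go(\sigma,\Res_\Go^G\ind_\Go^G\sigma)=\bigoplus_{g\in G/\Go}\Hom_\Go(\sigma,{}^g\sigma) \] identifies this endomorphism algebra with a twisted form of the group algebra of the stabilizer of $[\sigma]$ in $G/\Go$, from which one reads off that the irreducible quotients of $\ind_\Go^G\sigma$ form a single orbit under $\cX(G)=\widehat{G/\Go}$; hence $\pi_2\simeq \pi_1\otimes\chi$ for some $\chi\in\cX(G)$.

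The main obstacle is the last transitivity statement: showing that the twisting action of $\cX(G)$ is transitive on the irreducible quotients of $\ind_\Go^G\sigma$ requires the Mackey-theoretic identification of the endomorphism algebra together with Pontryagin duality between $G/\Go$ and its character group $\cX(G)$. Everything else reduces either to the openness/normality of $\Go$ or to the finite-index Clifford theory for $\Go Z(G)\subset G$, both of which are formal once those structural facts are in hand.
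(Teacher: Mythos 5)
Your overall strategy (Clifford theory for the open normal subgroup $\Go$, using that $\Go Z(G)$ has finite index in $G$ and that $G/\Go$ is abelian) is exactly the route the paper indicates; the paper itself gives no proof, citing Bernstein--Zelevinsky and remarking that the argument is the same as Clifford theory for finite groups. Part (1) and the implications (b)$\Rightarrow$(a)$\Rightarrow$(c) of part (2) are essentially fine (the only soft spot is the production of an irreducible $\Go$-subrepresentation via the Hecke algebra: an irreducible $\Hecke(\Go,K)$-submodule of $V^K$ need not \emph{generate} an irreducible $\Go$-subrepresentation; it is cleaner to first restrict to the finite-index subgroup $Z(G)\Go$, where the standard intersection-of-conjugates-of-a-maximal-submodule argument gives semisimplicity and finite length directly, and then descend to $\Go$ using the central character).

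The genuine gap is in (c)$\Rightarrow$(b). First, the claim that $\pi_i$ is a \emph{direct summand} of $\ind_\Go^G\sigma$ is false: already for $G=\GL_1(F)$, $\Go=\cO_F^\times$, one has $\ind_{\Go}^{G}\sigma\simeq\tilde\sigma\otimes\bC[\bZ]$ with $\bC[\bZ]\simeq\bC[t,t^{-1}]$ the regular module, which has every character of $\bZ$ as a quotient but admits no finite-dimensional submodule; semisimplicity of $\Res_\Go^G\pi_i$ gives no splitting of the surjection $\ind_\Go^G\sigma\twoheadrightarrow\pi_i$. Second, and more seriously, the step where you ``read off'' from $\End_G(\ind_\Go^G\sigma)$ that the irreducible quotients of $\ind_\Go^G\sigma$ form a single $\cX(G)$-orbit is precisely the statement to be proved (by Frobenius reciprocity the irreducible quotients of $\ind_\Go^G\sigma$ are exactly the irreducible $\pi$ with $\sigma\subset\pi|_\Go$), and it does not follow from the Mackey computation alone: since $\sigma$ is not assumed compact, $\ind_\Go^G\sigma$ is not projective, so there is no a priori correspondence between its irreducible quotients and the simple modules of its endomorphism algebra. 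The standard (and short) fix is the eigenvector argument the paper itself uses elsewhere: by part (1), $\Hom_{\Go}(\Res_\Go^G\pi_1,\Res_\Go^G\pi_2)$ is a nonzero finite-dimensional space carrying an action of the abelian group $G/\Go$ via $f\mapsto\pi_2(g)\circ f\circ\pi_1(g)^{-1}$; a simultaneous eigenvector with eigencharacter $\chi\in\cX(G)$ is a nonzero $G$-intertwiner $\pi_1\otimes\chi\to\pi_2$, hence an isomorphism by irreducibility.
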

	
	We denote by $\cM(G)_\cs$, the full subcategory of $\cM(G)$ formed of those representations all whose subquotients are compact modulo center.
	The set of irreducible objects of $\cM(G)_\cs$ is denoted by $\Irr(G)_\cs$.
	
	\begin{definition}
		We say that two irreducible representations $\rho_1, \rho_2\in\Irr(G)_\cs$ are in the same \emph{inertia class} if there exists a character $\chi\in\cX(G)$ such that $\rho_1\simeq \chi\rho_2$.
		We write the corresponding equivalence relation as  $\rho_1\sim \rho_2$, and denote the inertia class containing $\rho \in \Irr(G)_\cs$ by the square bracket $[\rho]$.
                We denote the set of  inertia classes in $\Irr(G)_\cs$ by $[\Irr(G)_\cs]$.
	\end{definition}
	
	Given $\pi\in\Irr(G)_\cs$, we denote by $\cM(G)_{[\pi]}$, the full subcategory of $\cM(G)$ consisting of those
        smooth representations of $G$ whose restriction to $\Go$ have all the irreducible subquotients only among those of $\pi|_\Go$ (a finite set of
        compact representations of $\Go$).
	
	\begin{remark}
		\cref{P:irred res to Go and inertia classes} says that two irreducible representations of $G$ are in the same inertia class if and only if their restriction to $\Go$ are isomorphic.
		Therefore the objects of the category $\cM(G)_{[\pi]}$ are those smooth representations of $G$
                all of whose irreducible subquotients are in the same inertia class as $\pi$.
	\end{remark}
	
	Using the above proposition and \cref{T:main on cpct reps}, one immediately deduces:
	\begin{theorem}[{\cite{BerDel}}]\label{T:dec of comp mod center}
		For an irreducible compact modulo center representation $\pi$ of $G$,
		we have a decomposition of the category $\cM(G)$ as
		\[ \cM(G)\simeq \cM(G)_{[\pi]}\times \cM(G)_{[\text{out }\pi]}. \]
		Moreover, the category $\cM(G)_\cs$ decomposes as
		\[\cM(G)_\cs =\prod_{[\pi]\in\Irr(G)_\cs} \cM(G)_{[\pi]}.\]
	\end{theorem}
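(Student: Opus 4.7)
The plan is to reduce everything to \cref{T:main on cpct reps} and \cref{T:dec compact times non-compact} applied to the subgroup $\Go$, using \cref{P:irred res to Go and inertia classes} as the bridge. The crux is that restriction to $\Go$ turns a ``compact modulo center'' representation of $G$ into a genuinely compact representation of $\Go$, and conjugation by $G$ permutes the set of irreducible $\Go$-constituents in a controlled way so that the $\Go$-splitting descends to a $G$-splitting.

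First, fix an irreducible $\pi\in\Irr(G)_\cs$ and let $\cS_\pi$ denote the (finite) set of isomorphism classes of irreducible constituents of $\pi|_\Go$; by \cref{P:irred res to Go and inertia classes} these are all irreducible compact representations of $\Go$ and form a single $G$-conjugacy class. Applying \cref{T:main on cpct reps} to $\Go$ gives an equivalence
\[ \cM(\Go) \simeq \cM(\Go)_{[\cS_\pi]} \times \cM(\Go)_{[\text{out }\cS_\pi]}. \]
For $V\in\cM(G)$, write $\Res_\Go^G V = V_1\oplus V_2$ according to this splitting. Since $\cS_\pi$ is stable under $G$-conjugation and conjugation by $g\in G$ permutes the isotypic components of $V|_\Go$ accordingly, the subspaces $V_1$ and $V_2$ are $G$-stable. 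This produces a functorial direct-sum decomposition $V = V_1\oplus V_2$ in $\cM(G)$.

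Next, I would check that $V_1\in\cM(G)_{[\pi]}$ and $V_2\in\cM(G)_{[\text{out }\pi]}$. For any irreducible $G$-subquotient $\sigma$ of $V_1$, the restriction $\sigma|_\Go$ is semisimple (\cref{P:irred res to Go and inertia classes}) and every irreducible summand of it appears in $V_1|_\Go$, hence lies in $\cS_\pi$, so $\sigma$ is in the inertia class $[\pi]$ by the same proposition; symmetrically for $V_2$. This yields the first displayed equivalence
\[ \cM(G)\simeq \cM(G)_{[\pi]}\times \cM(G)_{[\text{out }\pi]}. \]

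For the second assertion, I would invoke \cref{P:categ is product if and only if} with the full subcategories $\cM(G)_{[\pi]}$ indexed by $[\pi]\in[\Irr(G)_\cs]$. Axiom \eqref{i:sums} (existence of direct sums) is automatic. Axiom \eqref{i:obj dir sum} (every object is a direct sum of pieces) follows from the $\Go$-decomposition of \cref{T:main on cpct reps} applied to $\cM(\Go)_c$ and the descent argument above: for $V\in\cM(G)_\cs$, the $\Go$-module $V|_\Go$ splits as $\bigoplus_\tau V^\tau$ over irreducible compact $\tau$ of $\Go$, and $G$-conjugacy classes of such $\tau$ correspond bijectively to inertia classes in $\Irr(G)_\cs$, so grouping the summands into $G$-orbits produces a $G$-stable decomposition $V=\bigoplus_{[\pi]} V_{[\pi]}$. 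Axiom \eqref{i:orth} (Hom-orthogonality) follows from the inclusion $\Hom_G(V,W)\subset \Hom_\Go(V|_\Go,W|_\Go)$ together with the orthogonality part of \cref{T:main on cpct reps} for $\Go$. The mild technical axiom \eqref{i:technical cond} is verified directly from the fact that inside $V|_\Go$ the projection onto each isotypic component is just the usual idempotent projection.

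The main obstacle I expect is a bookkeeping one rather than a conceptual one: namely checking that the $\Go$-canonical decomposition of $V|_\Go$ really is $G$-equivariant, i.e., that for $g\in G$ the action sends $V_1$ to $V_1$ (and not across to $V_2$). This is where one genuinely uses that $\cS_\pi$ is a single $G$-orbit of irreducible $\Go$-representations, rather than merely a $G$-stable set, and it is exactly the content of the first part of \cref{P:irred res to Go and inertia classes}. Once this equivariance is in hand, all of \cref{T:dec of comp mod center} is a formal consequence of the analogous theorem for $\Go$.
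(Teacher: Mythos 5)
Your argument is correct and is precisely the deduction the paper intends: it states this theorem with no written proof beyond ``using \cref{P:irred res to Go and inertia classes} and \cref{T:main on cpct reps}, one immediately deduces,'' and your restriction-to-$\Go$, descent-via-$G$-stability-of-the-orbit argument, followed by the verification of the axioms of \cref{P:categ is product if and only if}, is exactly that deduction spelled out. (Note only that \cref{T:dec compact times non-compact} and condition (KF) are not actually needed here; they enter for \cref{T:dec com mod center x others}.)
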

	
	The full subcategory of $\cM(G)$ formed of those representations that have no irreducible subquotient that is compact modulo center is denoted by $\cM(G)_\indu$.
	Using \cref{P:irred res to Go and inertia classes} and \cref{T:dec compact times non-compact} one deduces easily:
	\begin{theorem}[{\cite{BerDel}}]\label{T:dec com mod center x others}
		If the group $\Go$ satisfies condition \eqref{Eq:condition (KF)} then we have a decomposition of categories
		\[ \cM(G) = \cM(G)_\cs\times \cM(G)_\indu.\]
	\end{theorem}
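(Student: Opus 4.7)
\medskip

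\textbf{Proof proposal for Theorem \ref{T:dec com mod center x others}.}
The plan is to verify the hypotheses of Proposition \ref{P:categ is product if and only if} for the two full subcategories $\cM(G)_\cs$ and $\cM(G)_\indu$ of $\cM(G)$. The orthogonality and the decomposition into two summands will both be obtained by restricting to $\Go$ and transporting the analogous decomposition
\[ \cM(\Go) = \cM(\Go)_c \times \cM(\Go)_{nc} \]
from Theorem \ref{T:dec compact times non-compact} (which applies since $\Go$ satisfies (KF)) back up to $G$. The technical condition \eqref{i:technical cond} is automatic for a product of two subcategories.

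First I would establish the crucial $G$-equivariance property. Since $\Go$ is a characteristic subgroup of $G$ (defined by the kernel of all $|\chi|$), conjugation by any $g\in G$ induces an automorphism of $\Go$, and pulling back a $\Go$-representation $\sigma$ along this automorphism produces the conjugate ${}^g\sigma$. A matrix coefficient of ${}^g\sigma$ is obtained from a matrix coefficient of $\sigma$ by translation via conjugation, hence has compact support iff the original does. It follows that both $\cM(\Go)_c$ and $\cM(\Go)_{nc}$ are stable under the conjugation action of $G$.

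Now for any $\pi\in\cM(G)$ we apply Theorem \ref{T:dec compact times non-compact} to $\pi|_{\Go}$ and obtain a canonical decomposition $\pi|_{\Go} = \pi_c \oplus \pi_{nc}$ of $\Go$-modules. Because the two summands are the unique maximal subobjects of $\pi|_\Go$ lying in $\cM(\Go)_c$ and $\cM(\Go)_{nc}$ respectively, and because both subcategories are $G$-stable by the previous paragraph, each summand is preserved by $G$, i.e.\ $\pi_c$ and $\pi_{nc}$ are $G$-subrepresentations of $\pi$. To identify them as objects of $\cM(G)_\cs$ and $\cM(G)_\indu$, one uses Proposition \ref{P:irred res to Go and inertia classes}: any irreducible subquotient $\tau$ of $\pi_c$ restricts to a semisimple finite length $\Go$-module all of whose irreducible constituents lie in $\cM(\Go)_c$, so $\tau|_{\Go}$ is compact, which means $\tau$ is compact modulo center; symmetrically for $\pi_{nc}$.

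For orthogonality, suppose $\pi\in\cM(G)_\cs$ and $\pi'\in\cM(G)_\indu$ and let $f\colon \pi\to\pi'$ be a $G$-map. By the same application of Proposition \ref{P:irred res to Go and inertia classes} to subquotients, $\pi|_{\Go}\in\cM(\Go)_c$ and $\pi'|_{\Go}\in\cM(\Go)_{nc}$, so $f$, viewed as a $\Go$-map, must vanish by the orthogonality in Theorem \ref{T:dec compact times non-compact}. Combining this with the decomposition constructed above, all the hypotheses of Proposition \ref{P:categ is product if and only if} are verified, giving $\cM(G) \simeq \cM(G)_\cs \times \cM(G)_\indu$.

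The only point that needs a little care, and which I would consider the main (mild) obstacle, is the claim that the Bernstein--Zelevinsky decomposition at the level of $\Go$ is canonical enough to descend along the finite-index inclusion $\Go \cdot Z(G) \hookrightarrow G$ --- in particular that the two summands $\pi_c$ and $\pi_{nc}$ are $G$-stable and not merely $\Go$-stable. This reduces, as sketched above, to the invariance of compactness of matrix coefficients under conjugation of $\Go$ by $G$, combined with the uniqueness of the decomposition in $\cM(\Go)$. Once this is in hand, everything else is formal.
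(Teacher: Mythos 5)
Your proposal is correct and follows essentially the route the paper intends: the paper gives no written-out proof but states that the result is deduced from \cref{P:irred res to Go and inertia classes} and \cref{T:dec compact times non-compact}, which are exactly the two ingredients you combine (together with the categorical criterion of \cref{P:categ is product if and only if}). Your careful treatment of the $G$-stability of the two canonical $\Go$-summands via maximality is precisely the "easy deduction" the paper leaves to the reader.
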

	
	\subsection{The simplest Bernstein component}
	The goal of this subsection is to formulate and prove an analogue of \cref{T:compact rep as GxG}
	
	Fix $\pi\in\cM(G)$ an irreducible, compact modulo center representation of $G$.
	Then $\pi\boxtimes \pi^\vee\in\cM(G\times G)$ is irreducible and compact modulo center.
	We consider the Hecke algebra $\cH(G) = \ind_{\Delta G}^{G\times G}\bC$ as a $G\times G$-module and we try to understand the part of $\cH(G)$, denoted by $\cH(G)[\pi\boxtimes\pi^\vee]$, that lives in $\cM(G\times G)_{[\pi\boxtimes\pi^\vee]}$ (see \cref{T:dec of comp mod center}). 
	
	Let us fix $\pi_0\subset \pi\mid_\Go$, an irreducible representation of $\Go$.
	Put $G_1 = \{g\in G\mid \pi_0\simeq {}^g\pi_0\}\le G$. 
	It is a normal subgroup of $G$  containing  $\Go Z(G)$
     which is of  finite index in $G$, hence $G_1$ is of finite index in $G$. 
	Denote by $\Sigma$ the group $G/G_1$ and its order by $f$.
	
	Put $H:=\Delta(G) (\dGo)\le G\times G$.
	The following is the main result of this subsection:
	\begin{proposition}\label{P:cusp component of H(G) as GxG module}
		For $\pi\in\cM(G)$ an irreducible, compact modulo center representation of $G$, 
		we have an isomorphism of $G\times G$ representations
		\begin{align}\label{Equation 1}
			\cH(G)[\dpi]\simeq \ind_{\Delta(G_1)(\dGo)}^{G\times G}(\pi_0\boxtimes \pi_0^\vee).
		\end{align}
		Further,
		\begin{align}\label{Equation 2}
			\begin{array}{rl}(\dpi)\otimes \ind_H^{G\times G}\bC &\simeq \Hom_{\Go}(\pi|_{\Go},  \pi|_{\Go}) \otimes  \cH(G)[\dpi]\\
				&\simeq e^2f \cH(G)[\dpi], 
			\end{array}
		\end{align}
		where $ \Hom_{\Go}(\pi|_{\Go},  \pi|_{\Go})$ is a finite dimensional representation of $G/\Go$, treated as a representation
		of $G\times G$ trivial on $H=\Delta(G) (\dGo)\le G\times G$.
	\end{proposition}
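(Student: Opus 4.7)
The plan is to treat the two displayed isomorphisms in turn: equation \eqref{Equation 1} is the main content, and \eqref{Equation 2} follows from it by the projection formula. For equation \eqref{Equation 1}, the strategy is to construct a canonical $G \times G$-equivariant map
\[ \Psi\colon \ind_{\Delta(G_1)(\dGo)}^{G \times G}(\dpio) \longrightarrow \cH(G) \]
and to identify its image with $\cH(G)[\dpi]$. The matrix coefficients of the irreducible compact $\Go$-representation $\pi_0$ define, by \cref{T:compact rep as GxG} applied to $\Go$, a $\dGo$-equivariant embedding $\dpio \hookrightarrow \cH(\Go) \hookrightarrow \cH(G)$, the last arrow being extension by zero from the open subgroup $\Go$. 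To promote this to a $\Delta(G_1)(\dGo)$-equivariant embedding one picks, for each $g \in G_1$, an intertwiner $T_g\colon \pi_0 \xrightarrow{\sim} {}^g\pi_0$; Schur's lemma makes $T_g$ unique up to a scalar, but the operator $T_g \otimes (T_g^{-1})^\vee$ on $\dpio$ is scalar-independent, yielding a genuine $\Delta(G_1)$-action extending the $\dGo$-action, under which the embedding into $\cH(G)$ is still equivariant. Frobenius reciprocity for the open subgroup $\Delta(G_1)(\dGo) \subset G \times G$ (\cref{L:induction from open adjunction}) then produces $\Psi$.

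That the image of $\Psi$ lies in the block $\cH(G)[\dpi]$ is a Mackey computation: the restriction of the induction to $\dGo$ decomposes as a sum of $G \times G$-conjugates of $\dpio$, which are precisely the $\dGo$-subquotients of $\dpi$. To see $\Psi$ is an isomorphism onto the block, I would compare the representable functors on both sides. For $V \in \cM(G \times G)_{[\dpi]}$, Frobenius reciprocity rewrites the source of $\Psi$ as $\Hom_{\Delta(G_1)(\dGo)}(\dpio, V)$, while the projectivity of $\cH(G)$ in $\cM(G\times G)$ (\cref{R:H(G) is projective}) together with the identification $\Hom_{G\times G}(\cH(G), V) = V^{\Delta G}$ identifies the target of $\Psi$ with $V^{\Delta G}$. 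Using the Clifford decomposition $\pi|_\Go = \bigoplus_{\sigma \in G/G_1}({}^\sigma \pi_0)^{\oplus e}$, where $e$ is the multiplicity of $\pi_0$ in $\pi|_\Go$, one identifies these two functors naturally, with $\Psi$ realizing the Yoneda isomorphism.

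For equation \eqref{Equation 2}, I would apply the projection formula $\dpi \otimes \ind_H^{G \times G} \bC \simeq \ind_H^{G \times G}(\dpi|_H)$ and then analyze $\dpi|_H$ directly. The same Clifford decomposition yields $\dim \Hom_\Go(\pi|_\Go, \pi|_\Go) = e^2 f$, and tracking the $H$-action identifies $\dpi|_H$ with $e^2 f$ copies of $\ind_{\Delta(G_1)(\dGo)}^H(\dpio)$; inducing from $H$ up to $G \times G$ and invoking \eqref{Equation 1} then gives both isomorphisms of \eqref{Equation 2} at once. The main obstacle throughout is the natural identification of the two representable functors in Part 1: one must keep careful track of the Clifford multiplicities $e$ and $f = [G:G_1]$, as well as the (possibly projective) action of $G_1/\Go$ on the $\pi_0$-isotypic subspace of $\pi$, which controls the $\Delta(G_1)$-invariants; everything else is formal manipulation with Frobenius reciprocity, Mackey decomposition, and the projection formula.
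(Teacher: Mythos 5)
Your construction of the map $\Psi$ is sound and uses the same ingredients as the paper (matrix coefficients of $\pi_0$ via \cref{T:compact rep as GxG}, the scalar-independence of $T_g\otimes(T_g^{-1})^\vee$ to extend $\dpio$ to $\Delta(G_1)(\dGo)$, adjunction for the open subgroup). The gap is in the step meant to show $\Psi$ is an isomorphism onto the block. First, the identification $\Hom_{G\times G}(\cH(G),V)=V^{\Delta G}$ is not correct: $\Delta G$ is closed but not open in $G\times G$, so compact induction from it is not left adjoint to restriction; what one actually gets is $\Hom_{G\times G}(\cH(G),V)\simeq \overline{V}^{\,\Delta G}$, the $\Delta G$-invariants of the completion $\overline{V}=\varprojlim_K V^K$ (cf.\ \cref{SS:completion}), which agrees with $V^{\Delta G}$ only for admissible $V$ --- and objects of $\cM(G\times G)_{[\dpi]}$ need not be admissible. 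Second, and more seriously, even with the corrected target, the natural isomorphism $\Hom_{\Delta(G_1)(\dGo)}(\dpio,V)\simeq \overline{V}^{\,\Delta G}$ for all $V$ in the block --- the input your Yoneda argument needs --- is precisely where the content of \eqref{Equation 1} sits, and you assert it ("one identifies these two functors naturally") rather than prove it; doing so honestly requires analyzing $V|_{\dGo}$ via \cref{T:main on cpct reps} and is no shorter than the statement itself. The paper avoids this circularity by a direct computation: induction in stages gives $\cH(G)=\ind_H^{G\times G}\cH(\Go)$ as $G\times G$-modules, the $[\cS]$-part of $\cH(\Go)$ is identified with $\bigoplus_{\sigma}\pi_\sigma\boxtimes\pi_\sigma^\vee=\ind_{\Delta(G_1)(\dGo)}^{H}(\dpio)$ by the multiplicity-one statement of \cref{T:compact rep as GxG}, and \eqref{Equation 1} follows by inducing this identification up to $G\times G$. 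If you wish to keep the functorial framework, you would in effect have to reprove that multiplicity-one statement in that language.

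A smaller point on \eqref{Equation 2}: writing $\tau=\ind_{\Delta(G_1)(\dGo)}^{H}(\dpio)$, the restriction $\dpi|_H$ is $\bigoplus_{\sigma\in\Sigma}e^2\,{}^{(\sigma,1)}\tau$, and the twists ${}^{(\sigma,1)}\tau$ are pairwise non-isomorphic as $H$-modules (they have different $\dGo$-constituents), so $\dpi|_H$ is \emph{not} $e^2f$ copies of $\tau$. The $f$ twists only become interchangeable after inducing to $G\times G$, using that $H$ is normal, which is how the paper gets the factor $e^2f$. Your phrasing suggests you intend the identification after induction, but as written the claim at the level of $H$-modules is false.
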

	\begin{proof}
		Put $\cS = \{\pi' \mid \pi'\subset \pi\boxtimes\pi^\vee |_{G^\circ\times G^\circ} \text{ irreducible}\}$.
		The set $\cS$  is a finite set of compact irreducible representations of $\dGo$ and given \cref{T:main on cpct reps} we can write
		\[ \cH(G)|_{\dGo} = \cH(G)_{[\cS]}\oplus \cH(G)_{[out\,\cS]}\]
		as $\dGo$-representations.
		Moreover, since $\cS$ is stable under conjugation by $G\times G$, the above decomposition holds as $G\times G$-modules.
		Let us write $\cH(G){[\pi\boxtimes \pi^\vee]}$ for the $G\times G$ representation $\cH(G)_{[\cS]}$.

		For $\sigma\in\Sigma$, let $\pi_\sigma:=\sigma\pi_0\subset \pi$, an irreducible
		sub $\Go$-representation of $\pi|_\Go$. 
		Notice that $\pi_\sigma$ is isomorphic to the conjugated representation ${}^\sigma\pi_0$ where the action on $\pi_0$
		is conjugated by $\sigma$.
		The irreducible summands of $\pi|_\Go$ are isomorphic to $\pi_\sigma$ for some $\sigma\in\Sigma$ and $\pi_\sigma\not\simeq\pi_{\sigma'}$ if $\sigma\neq\sigma'$.
		Let's write the restriction of $\pi$ to $\Go$ as (for an integer $e\geq 1$):
		\[ \pi|_\Go = \bigoplus_{\sigma\in\Sigma} e \pi_\sigma. \]
		
		For an irreducible representation $\pi$ of $G$, the set of unramified characters $\chi: G/\Go\to \bC^\times$ with $\pi \otimes \chi \simeq \pi$ is a finite abelian group, call it $X(\pi)$.  Then, the space $\Hom_{\Go}(\pi|_{\Go},  \pi|_{\Go}) $ comes equipped with an action of the abelian group $G/\Go$, diagonalizing which
		gives a canonical basis (up to scalars) which are nothing but the intertwining operators $\pi \otimes \chi \simeq \pi$, thus one sees that  $\dim \Hom_{\Go}(\pi|_{\Go},  \pi|_{\Go}) = e^2f$
		is the number of  the intertwining operators $\pi \otimes \chi \simeq \pi$, and 
		$\Hom_{\Go}(\pi|_{\Go},  \pi|_{\Go})$ comes equipped with a basis $e_\chi$ such that $g\cdot e_\chi = \chi(g)e_\chi$.

		The $\Go$-representation $\pi_0$ does not extend to a representation of $G_1$ but the $(\dGo)$-representation $\dpio$ does extend to a representation of $\Delta(G_1) (\dGo)$: this is most easily seen by noticing that $\pi_0$ extends to a projective
		representation of $G_1$, and therefore $\pi_0\boxtimes \pi_0^\vee$ is canonically a representation of $\Delta(G_1)$,
		hence of  $\Delta(G_1) (\dGo)$.
		The representations $\pi_\sigma\boxtimes \pi_{\sigma'}^\vee$ are also representations of $ \Delta(G_1) (\dGo) $ as
		they can be written as $(\sigma,\sigma')\dpio$.
		
		In what follows, let $\tau$ be the irreducible representation of $H=  \Delta(G)(\dGo),$
		\[\tau =   \bigoplus_{\sigma \in \Sigma} \pi_\sigma \boxtimes \pi_{\sigma}^\vee = \ind_{ \Delta(G_1)(\dGo)}^{H}  (\pi_0\boxtimes \pi_{0}^\vee) .\]

		By induction in stages: $G \times G \supset {\Delta(G)}(\Go \times \Go) \supset {\Delta(G)}$,
		we can write:
		\[ \cH(G) = \ind_{\Delta(G)}^{G\times G} \bC = \ind_H^{G\times G}\cH(\Go), \]
		as a representation of $G\times G$, where we recall that $H=\Delta(G) (\dGo)\le G\times G$
		is a normal subgroup of $G\times G$ with a natural action on $\cH(\Go)$
		(where $\Delta(G)$ acts on $\Go$ by the conjugation action, and $\dGo$ acts on $\Go$ by left and right translations). 
		Since the set $\cS$ is stable under conjugation by $G\times G$, we deduce the following
		\[ \cH(G)[\dpi] = \ind_H^{G\times G}(\cH(\Go)_{[\cS]}).\]

		\cref{T:compact rep as GxG} tells us that in $\cM(\dGo)$, we have a natural isomorphism 
		\[\cH(\Go)_{[\cS]} \simeq \bigoplus_{\sigma\in\Sigma} \dpis \simeq
		\ind_{\Delta(G_1)(\dGo)}^{ \Delta(G)(\dGo) }(  \pi_0\boxtimes \pi_{0}^\vee        ),\]
		which is moreover an isomorphism of $H$-modules, proving the isomorphism \eqref{Equation 1}.
		
		All the irreducible subrepresentations of $\dpi|_H$ are of the form $(\sigma,1)\tau\simeq {}^{(\sigma,1)}\tau$, hence
		\begin{align}\label{Eq:dpi restricted to H}
			\dpi|_H = \bigoplus_{\sigma\in\Sigma}e^2 (\sigma,1)\tau. 
		\end{align}
		Put these together and use the Mackey's relation $V\otimes \ind_K^{G} W = \ind_K^G(V|_{K}\otimes W)$ to get
		\begin{align*}
			(\dpi)\otimes \ind_H^{G\times G}(\bC)  & \simeq \ind_H^{G\times G}(\dpi|_H)\\  & \simeq
			e^2f\ind_H^{G\times G}(\tau)  \\ & \simeq e^2f \ind_{ \Delta(G_1)(\dGo)}^{ G \times G }  (\pi_0\boxtimes \pi_{0}^\vee) \\
			& \simeq
			\Hom_{\Go}(\pi|_{\Go},  \pi|_{\Go})
			\otimes  \ind_{ \Delta(G_1)(\dGo)}^{ G \times G }  (\pi_0\boxtimes \pi_{0}^\vee) \\
			& \simeq
			\Hom_{\Go}(\pi|_{\Go},  \pi|_{\Go}) \otimes \cH(G)[\dpi], 
		\end{align*}
		where in the 2nd isomorphism above, we have used \eqref{Eq:dpi restricted to H}
		together with  the fact that $H$ is  a normal subgroup of $G \times G$, and  $\ind_H^{G\times G}(\tau) =  \ind_H^{G\times G}(\tau^{(\sigma, 1)})$
		for all $\sigma \in \Sigma$; the last isomorphism is the  isomorphism \eqref{Equation 1}, and the isomorphism
		previous to that follows   
		since $\Hom_{\Go}(\pi|_{\Go},  \pi|_{\Go})$ consists of characters of $G/\Go$, treated as a
		character of $G\times G$ trivial on $\Delta(G)(\Go \times \Go)$. Thus we have  proved the isomorphism \eqref{Equation 2}.
	\end{proof}

	\begin{remark}
		The isomorphism \eqref{Equation 2} of
		the above proposition  can be interpreted as saying that for the cuspidal representation $\pi \boxtimes \pi^\vee$ of $G\times G$,
		the corresponding Bernstein component in $\cH(G)$ ``contains'' all representations
		$(\pi\alpha)  \boxtimes (\pi \alpha)^\vee$ where $\alpha$ runs over all the unramified characters of $G/\Go$.
		Since  $\pi \otimes \chi \simeq \pi$ for $e^2f$ many characters, this number shows up as multiplicity in the right hand side of \eqref{Equation 2}. 
		This means that  for each irreducible cuspidal representation $\pi$ of $G$,
		$\pi \boxtimes \pi^\vee$ ``appears'' in $\cH(G)$ with multiplicity 1, where the precise meaning of  ``appears'' is to be understood in the derived sense, see \cref{dual}.
	\end{remark}
	
	\begin{remark} Suppose the group $G$ is either a reductive $p$-adic group, or a finite cover of it. 
		Then for a standard parabolic $P=LN$ inside $G$, and any cuspidal pair $[L, \rho]$ 
		defining a Bernstein block $\fs$ (see \cref{S:Bernstein dec}), similar to \cref{P:cusp component of H(G) as GxG module}, it is natural to propose the following isomorphism in $\cM(G\times G)$
		\[\ind^{ G \times G}_{P \times P^-}(\rho \boxtimes \rho^\vee \otimes \ind_{\Delta(L) (L^o \times L^o)}^{ L\times L} (\bC)) \simeq e^2fw \cH(G)[\fs \times \fs^\vee],\] 
		where $e,f$ are defined as before
		for the cuspidal representation $\rho$ of $L$, and $w$ is the order of $N_G(L, [\rho])/L$ where  $N_G(L, [\rho])$ is the normalizer
		of $L$ preserving $\rho$ up to an unramified character. This will be a kind of Plancherel decomposition in the smooth category,
		see \cite{Hei-Planch} for some results in this direction.
	\end{remark}

	The following corollary is a consequence of the proof of the above proposition.
	\begin{cor}
		The component of $\cH(G)$ in $\cM(G)_{[\pi_1\boxtimes\pi_2^\vee]}$ is zero unless $\pi_2\simeq\pi_1\chi$ for some unramified character $\chi$ of $G$.
	\end{cor}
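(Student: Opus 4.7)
The plan is to reduce the question to the analogous statement on the level subgroup $G^\circ \times G^\circ$, where \cref{T:compact rep as GxG} gives us a very clean answer: only the ``diagonal'' pieces $\rho\boxtimes\rho^\vee$ can occur as subquotients of $\cH(G^\circ)$.

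First I would mimic the first steps of the proof of \cref{P:cusp component of H(G) as GxG module}. By induction in stages
\[ \cH(G) \;=\; \ind_{\Delta(G)}^{G\times G}\bC \;=\; \ind_{H}^{G\times G}\cH(G^\circ), \]
where $H = \Delta(G)(G^\circ\times G^\circ)$, and this identification is as $G\times G$-representations. Since parabolic Bernstein components are preserved under (finite-index) induction from a normal subgroup (here $H$ is normal in $G\times G$), it suffices to understand the part of $\cH(G^\circ)$ sitting in $\cM(G^\circ\times G^\circ)_{[\cS]}$, where $\cS$ is the (finite) set of $G^\circ\times G^\circ$-isomorphism classes of irreducible constituents of $(\pi_1\boxtimes\pi_2^\vee)|_{G^\circ\times G^\circ}$. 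By \cref{P:irred res to Go and inertia classes}, each element of $\cS$ has the form $\pi_{1,0}\boxtimes\pi_{2,0}^\vee$ where $\pi_{i,0}\subset\pi_i|_{G^\circ}$ is an irreducible compact representation of $G^\circ$.

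Next I would apply \cref{T:compact rep as GxG} to $G^\circ$: the only compact subquotients $\rho\boxtimes\rho'^\vee$ of $\cH(G^\circ)$ are those with $\rho\simeq\rho'$. Therefore, if $\cH(G)[\pi_1\boxtimes\pi_2^\vee]$ is nonzero, at least one pair $(\pi_{1,0},\pi_{2,0})$ with $\pi_{i,0}\subset\pi_i|_{G^\circ}$ must satisfy $\pi_{1,0}\simeq\pi_{2,0}$ as $G^\circ$-representations. In particular, $\Hom_{G^\circ}(\pi_1|_{G^\circ},\pi_2|_{G^\circ})\neq 0$.

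Finally, I invoke \cref{P:irred res to Go and inertia classes}(2), specifically the equivalence (b)$\Leftrightarrow$(c): the non-vanishing of $\Hom_{G^\circ}(\pi_1|_{G^\circ},\pi_2|_{G^\circ})$ forces the existence of an unramified character $\chi$ of $G$ with $\pi_2\simeq \pi_1\chi$. This gives the contrapositive of the desired statement. There is no serious obstacle here; the only subtlety is keeping track of the fact that ``appearing in the Bernstein component $[\pi_1\boxtimes\pi_2^\vee]$'' is tested, after restriction to $G^\circ\times G^\circ$, by the inertia class of the restriction, which is exactly what \cref{T:compact rep as GxG} and \cref{P:irred res to Go and inertia classes} are designed to handle.
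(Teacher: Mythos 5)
Your proposal is correct and follows essentially the same route as the paper, which simply observes that the corollary is a consequence of the proof of \cref{P:cusp component of H(G) as GxG module}: there one writes $\cH(G)=\ind_H^{G\times G}\cH(G^\circ)$, identifies the relevant component via the restriction to $G^\circ\times G^\circ$, and \cref{T:compact rep as GxG} together with \cref{P:irred res to Go and inertia classes} forces $\pi_2\simeq\pi_1\chi$. (Only a cosmetic remark: the components in question are cuspidal, not ``parabolic,'' Bernstein components.)
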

	\begin{remark}
		In the language of Bernstein decomposition reviewed in \cref{S:Bernstein dec}, this is equivalent to saying that the Bernstein component of $\cH(G)$ corresponding to $\pi_1\boxtimes\pi_2^\vee$ is zero unless $\pi_2\simeq\pi_1\chi$ for some unramified character $\chi$ of $G$.
	\end{remark}
	
	We note the following corollary that will be useful to us in \cref{S:Blocks as module cats} when studying contragredients.
	It also appears in \cite[Lemma B.5]{Aiz-Say}
	\begin{cor}\label{C:full cuspidal homological dual of it}
		For any irreducible representation $\pi\in\cM(G)$, compact modulo center, we have an isomorphism of $G$-modules
		\[ \Hom_G(\ind_{\Go}^G(\pi|_{\Go}),\cH(G))\simeq \ind_{\Go}^G(\pi^\vee|_{\Go}),\]
	where in the left hand side, one considers $\Hom_G(-, \cH(G))$
        for the left $G$-module
        structure on $\cH(G)$, and as $\cH(G)$ is both left and right $G$-module,
        $\Hom_G(-, \cH(G))$ continues to be a right $G$-module.
\end{cor}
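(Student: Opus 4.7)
The plan is to construct the desired isomorphism explicitly via Frobenius reciprocity followed by an evaluation pairing. Since $\Go$ is an open (and in fact normal) subgroup of $G$, compact induction $\ind_{\Go}^G$ is left adjoint to restriction (\cref{L:induction from open adjunction}). Applying this adjunction with respect to the left $G$-action on $\cH(G)$, the problem reduces to exhibiting a natural isomorphism
\[
\Hom_\Go(\pi|_\Go,\cH(G))\simeq \ind_\Go^G(\pi^\vee|_\Go)
\]
of right $G$-modules, where both sides inherit their right $G$-action from the right action of $G$ on $\cH(G)$ (which commutes with the left $\Go$-action used to define the Hom).

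I would then define a map $\Phi\colon \ind_\Go^G(\pi^\vee|_\Go)\to \Hom_\Go(\pi|_\Go,\cH(G))$ by the explicit formula
\[
\Phi(f)(v)(x):= f(x)(v),
\]
where $f\colon G\to \pi^\vee|_\Go$ is smooth, compactly supported modulo $\Go$, and satisfies $f(hg)=h\cdot f(g)$ for $h\in\Go$, while $v\in\pi|_\Go$ and $x\in G$. A direct check using the contragredient definition of the $\Go$-action on $\pi^\vee|_\Go$ shows that $\Phi(f)(v)$ is smooth and compactly supported as a function on $G$, that $\Phi(f)$ is $\Go$-equivariant with respect to the left $\Go$-action on $\cH(G)$, and that $\Phi$ is equivariant for the right $G$-action on both sides.

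Injectivity of $\Phi$ is immediate: if $\Phi(f)(v)(x)=0$ for all $v$ and $x$, then $f=0$. For surjectivity, given $\phi\in\Hom_\Go(\pi|_\Go,\cH(G))$, I would set $f(x)(v):=\phi(v)(x)$ and verify that $f$ lies in $\ind_\Go^G(\pi^\vee|_\Go)$.

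The main obstacle, and the only place where the hypotheses on $\pi$ genuinely enter, is the verification that $f$ is compactly supported modulo $\Go$. For this I would use two facts: (a) $\pi|_\Go$ is finitely generated as a $\Go$-module, since by \cref{P:irred res to Go and inertia classes} it decomposes as a finite direct sum of irreducible compact $\Go$-representations; (b) $\Go$ is a normal subgroup of $G$, as the intersection of the kernels of the unramified absolute values $|\chi|$ on $G$. Picking generators $v_1,\dots,v_n$ of $\pi|_\Go$, each $\phi(v_i)\in\cH(G)$ is supported on a finite union of cosets $\Go g$. By normality, each such coset is preserved by the left $\Go$-action, so the $\Go$-submodule generated by $\phi(v_i)$ remains supported on the same cosets. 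Taking the union over $i$ then yields the desired finite support, completing the construction of the inverse and hence the proof.
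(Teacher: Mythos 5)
Your route --- Frobenius reciprocity for $\ind_{\Go}^G\dashv\Res_{\Go}^G$ followed by an explicit evaluation pairing, handled coset by coset --- is genuinely different from the paper's, which deduces the corollary from the $G\times G$-module structure of the cuspidal component of $\cH(G)$ (\cref{P:cusp component of H(G) as GxG module}, isomorphism \eqref{Equation 2}) together with Mackey theory and a multiplicity count. Your reduction to $\Hom_{\Go}(\pi|_{\Go},\cH(G))$, the equivariance checks for $\Phi$, and the argument for the support of $f$ modulo $\Go$ (finite generation of $\pi|_{\Go}$ over $\Go$ plus $h\cdot\Go g=\Go g$ for $h\in\Go$) are all fine. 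But there is a first gap already in the well-definedness of $\Phi$: on a coset $\Go g$ one has $\Phi(f)(v)(hg)=f(g)(h^{-1}v)$, which as a function of $h\in\Go$ is a matrix coefficient of $\pi|_{\Go}$; since $\Go$ is not compact, this is compactly supported precisely because $\pi|_{\Go}$ is a compact representation of $\Go$ (Harish--Chandra). So the hypothesis on $\pi$ enters here, not only in the surjectivity step as you claim --- for the trivial representation of a noncompact $\Go$ your $\Phi$ would land outside $\cH(G)$.

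The more serious gap is in surjectivity: you must verify that $f(x)\colon v\mapsto\phi(v)(x)$ is a \emph{smooth} functional, i.e.\ lies in $\pi^\vee$ rather than merely in $\pi^*$, and this is neither addressed nor formal. Evaluation at a point is not a smooth functional on $\cH(G)$ for the left $\Go$-action (no compact open subgroup fixes it), so smoothness of $\text{ev}_x\circ\phi$ cannot be obtained by a pullback argument valid for arbitrary smooth representations. One has to use that $\phi(\pi|_{\Go})$ lands in the isotypic part of each $\cH(\Go g)\cong\cH(\Go)$ corresponding to the compact irreducible constituents of $\pi|_{\Go}$, and that by \cref{T:compact rep as GxG} this isotypic part consists exactly of matrix coefficients, on which evaluation at $x$ is pairing against a smooth vector. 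In other words, the surjectivity of $\Phi$ is essentially the multiplicity-one statement of \cref{T:compact rep as GxG} applied coset by coset, and that input must be invoked explicitly. With these two points repaired your argument does go through and gives an attractive, more hands-on alternative to the paper's proof.
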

	\begin{proof}
		By the Mackey theory, since $(\Go \times G) H = G \times G$, it follows that
		the restriction of $\ind_H^{G\times G}(\bC)$ to    $\Go \times G$ is $\bC \otimes   \ind_{\Go}^G\bC$.
		Hence by 
		the isomorphism \eqref{Equation 2}, and Frobenius reciprocity:
		\begin{eqnarray*} 
			e^2f\Hom_G(\ind_{\Go}^G(\pi|_{\Go}),\cH(G)) & \simeq &
			e^2f\Hom_G(\ind_{\Go}^G(\pi|_{\Go}), \cH(G)[\dpi]) \\
			& \simeq & \Hom_G(    \ind_{\Go}^G(\pi|_{\Go}),   \pi \otimes \pi^\vee \otimes  \ind_H^{G\times G}(\bC)    ) \\
			& \simeq &
			\Hom_{\Go}(\pi|_{\Go},  \pi|_{\Go} \otimes (\pi^\vee \otimes  \ind_{\Go}^{G}\bC) \\
			& \simeq & \Hom_{\Go}(\pi|_{\Go},  \pi|_{\Go}) \otimes (\pi^\vee \otimes  \ind_{\Go}^{G}\bC) \\
			&\simeq & e^2f \ind_{\Go}^G(\pi^\vee|_{\Go}),
		\end{eqnarray*}
		where in the 4th isomorphism we have used the following lemma. 
		\begin{lemma}
			Let $H_1$ (resp., $H_2$) be a totally disconnected group, and $\pi_1$ (resp., $\pi_2$) any
			smooth representation of $H_1$ (resp., $H_2$). 
			If $\pi_1$ has finite length, then treating the representation $\pi_1 \otimes \pi_2$ of $H_1 \times H_2$
                        as a representation of $H_1 \subset H_1 \times H_2$, we have,
			\[\Hom_{H_1}(\pi_1,\pi_1 \otimes \pi_2) \simeq \Hom_{H_1}(\pi_1,\pi_1) \otimes \pi_2\]
			as representations of $H_2$.\qedhere
		\end{lemma}	\end{proof}
The following lemma will be used in the next proposition.	
		\begin{lemma} \label{Hom-Ext} Let $G$ be a totally disconnected group, $G^o$ a normal subgroup of $G$ such that $G/G^o$ is a discrete group. Let $M, N$ be two smooth representations of $G$. Assume that $M$ is an irreducible
                representation of $G$ which restricted to $G^o$ is a finite sum of irreducible representations 
which are                both injective and projective as a smooth representation of $G^o$. Then,
                \[  \Ext^i_{G}[M,N] =  H^i(G/\Go, \Hom_{\Go}[M,N]).\]
		\end{lemma}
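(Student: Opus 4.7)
The plan is to construct a convenient projective resolution of $M$ in $\cM(G)$ built from a projective resolution of the trivial $\Gamma$-module, where $\Gamma = G/G^o$. Since $G^o$ is open in $G$ (as $\Gamma$ is discrete), compact induction $\ind_{G^o}^G$ is left adjoint to the exact functor $\Res_{G^o}^G$ (\cref{L:induction from open adjunction}), so $\ind_{G^o}^G$ preserves projectives. By the hypothesis on $M$, the restriction $M|_{G^o}$ is a finite sum of projective objects in $\cM(G^o)$, hence projective. Therefore $\ind_{G^o}^G(M|_{G^o})$ is projective in $\cM(G)$.

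The key computational identity to establish is the projection formula
\[ M \otimes_\bC \bC[\Gamma] \;\simeq\; \ind_{G^o}^G(M|_{G^o}) \quad \text{in } \cM(G),\]
where $\bC[\Gamma] = \ind_{G^o}^G \bC$ is viewed as a $G$-module via inflation, and $G$ acts diagonally on the left hand side. This is a routine check using explicit coset representatives of $\Gamma$ in $G$. Consequently, for any free $\Gamma$-module $F = \bigoplus \bC[\Gamma]$, the $G$-module $M \otimes_\bC F$ is a direct sum of copies of $\ind_{G^o}^G(M|_{G^o})$, hence projective in $\cM(G)$.

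Now fix a free resolution $F_\bullet \to \bC$ of the trivial $\Gamma$-module (so that $H^i(\Gamma, V) = H^i(\Hom_\Gamma(F_\bullet, V))$ for any $\Gamma$-module $V$), inflate each $F_n$ to a $G$-module, and form the complex $M \otimes_\bC F_\bullet$. Since $\bC$ is a field, tensoring with $M$ is exact, so $M \otimes_\bC F_\bullet \to M$ is a projective resolution of $M$ in $\cM(G)$ by the preceding step. The second identity to check is the adjunction
\[ \Hom_G(M \otimes_\bC F_n, N) \;\simeq\; \Hom_\Gamma\bigl(F_n, \Hom_{G^o}(M, N)\bigr),\]
functorial in $n$, where the $\Gamma$-module structure on $\Hom_{G^o}(M,N)$ comes from the normality of $G^o$. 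This is the standard tensor-Hom adjunction combined with the observation that $G^o$-invariance of $\Phi\colon F_n \to \Hom_\bC(M,N)$ cuts down the target to $\Hom_{G^o}(M,N)$. Splicing both identities yields
\[ \Ext^i_G(M, N) \;=\; H^i\bigl(\Hom_G(M \otimes F_\bullet, N)\bigr) \;=\; H^i\bigl(\Hom_\Gamma(F_\bullet, \Hom_{G^o}(M, N))\bigr) \;=\; H^i\bigl(\Gamma, \Hom_{G^o}(M, N)\bigr). \]

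The main obstacle is verifying the projection formula and the adjunction carefully enough that the identifications are $G$-equivariant (resp.\ $\Gamma$-equivariant) in the right way; both are essentially classical but need the ``open subgroup'' hypothesis to avoid issues with induction versus compact induction. The injectivity hypothesis on the irreducible $G^o$-summands of $M|_{G^o}$ does not appear to be used in this argument, suggesting that it is included either for symmetry or for a dual formulation via injective resolutions (one could alternatively resolve $N$ injectively and use that $\Hom_{G^o}(M,-)$ is exact to descend to a $\Gamma$-module computation).
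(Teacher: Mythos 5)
Your proof is correct, but it runs in the opposite direction from the paper's. The paper starts from an \emph{arbitrary} projective resolution $P_\bullet\to M$ in $\cM(G)$ (arranged, using the injectivity and projectivity of the $G^o$-constituents of $M$, so that each $P_i|_{G^o}$ is a sum of those constituents), applies $\Hom_{G^o}(-,N)$, and shows via the adjunction $\Hom_G[Q,\Hom_{G^o}(P,N)]=\Hom_G[Q\otimes P,N]$ that each $\Hom_{G^o}(P_i,N)$ is an \emph{injective} $\Gamma$-module, so that the resulting complex is an injective resolution of $\Hom_{G^o}(M,N)$ over $\Gamma=G/G^o$ whose $\Gamma$-invariants compute $\Ext^i_G(M,N)$. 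You instead build one \emph{explicit} projective resolution of $M$, namely $M\otimes F_\bullet$ for $F_\bullet\to\bC$ a free $\Gamma$-resolution, using the projection formula $M\otimes\ind_{G^o}^G\bC\simeq\ind_{G^o}^G(M|_{G^o})$ and the fact that $\ind_{G^o}^G$ preserves projectives ($G^o$ being open), and then apply the same tensor--Hom adjunction on the other side. Both arguments pivot on the identical adjunction; yours trades the paper's claim about arranging $P_\bullet$ (the one place where injectivity of $M|_{G^o}$ enters the paper's write-up) for the projection formula. Your closing observation is accurate and worth having: your route uses only that $G^o$ is open and that $M|_{G^o}$ is projective --- neither irreducibility of $M$, nor finiteness of the decomposition, nor injectivity of the summands is needed --- so your version of the lemma is slightly more general than the stated one.
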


\begin{proof}
Let,
\[\hspace {5cm}\cdots  \rightarrow P_1  \rightarrow P_0  \rightarrow  M  \rightarrow 0, \hspace {2.9cm} (1)\]
be a projective resolution of $M$ as a $G$-module. Since $M$ restricted to $G^o$
is both an injective and a projective representation of $G^o$, assume that $P_i$ restricted to $G^0$ is a direct sum of
copies of the irreducible components of $M$ restricted to $G^o$.

The exact sequence (1) gives rise to an exact sequence of $G/G^o$-modules (the exactness follows since we are assuming as discussed in the last paragraph that $P_i$ are projective modules restricted to $G^o$):  
\[ 0\rightarrow \Hom_{G^o} [M, N] \rightarrow \Hom_{G^o} [P_0, N]  \rightarrow \Hom_{G^o} [P_1, N]  \rightarrow  \Hom_{G^o}[ P_2, N]
\rightarrow \cdots \hspace{.1cm}(2). \]

To prove the lemma is suffices to prove that for $P$ a projective module for $G^0$, and $N$ arbitrary smooth module for
$G$, $\Hom_{G^0}[P,N]$ is an injective module for $G/G_o$ (as then the exact sequence (2) will be an injective resolution of $\Hom_{G^o} [M, N]$ as $G/G^o$-modules).  Injectivity of $\Hom_{G^o} [P, N]$ as a $G/G^o$-module
is a simple consequence of the adjunction (for $Q$ any module for $G/G^o$):
\[ \Hom_G[Q, \Hom_{G^o}(P, N)] = \Hom_G[Q \otimes P, N] = \Hom_G [P, \Hom(Q, N)]. \qedhere \]
\end{proof}

	The next result, based on \cref{P:cusp component of H(G) as GxG module}, seems worth including as its proof is fairly elementary.
	A generalization of this proposition to finite length representations plus functoriality is proved in \cref{C:D_h on finlen cusp is contrag}.
	Recall that $G/\Go\simeq \bZ^d$ for some integer $d\ge 0$.
	\begin{proposition} \label{dual}
		Let $\pi$ be an irreducible, compact modulo center representation of $G$.
		Then $\RHom_{G}^{\bullet} (\pi, \cH(G)),$ lives only in degree $d\geq 0$ (where $d$ is the integer for which $G/\Go\simeq \bZ^d$), where it is isomorphic to $\pi^\vee$.
	\end{proposition}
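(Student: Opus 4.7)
The plan is to construct an explicit projective resolution of $\pi$ of length $d$ using a Koszul-type complex and then apply $\Hom_G(-,\cH(G))$, invoking \cref{C:full cuspidal homological dual of it} to identify the terms. I start with the Koszul resolution of the trivial $\bC[\bZ^d]$-module $\bC$, which has length exactly $d$ with differentials $T_j-1$. Viewing this as a resolution of the trivial $G$-module $\bC$ via the quotient $G\to G/\Go=\bZ^d$, and noting the $G$-equivariant isomorphism $\bC[G/\Go]\simeq \ind_{\Go}^G(\bC)$, I tensor over $\bC$ with $\pi$ (using the diagonal $G$-action) to obtain a complex $P_\bullet\to \pi$ whose $i$-th term, by the projection formula, is a direct sum of $\binom{d}{i}$ copies of $\ind_{\Go}^G(\pi|_{\Go})$. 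By \cref{P:irred res to Go and inertia classes}, $\pi|_{\Go}$ is a finite direct sum of irreducible compact $\Go$-representations, which are projective in $\cM(\Go)$ by \cref{T:dec compact times non-compact} (applied to $\Go$, which satisfies condition \eqref{Eq:condition (KF)} via uniform admissibility); hence each $\ind_{\Go}^G(\pi|_{\Go})$ is projective in $\cM(G)$ by \cref{L:induction from open adjunction}, and $P_\bullet\to\pi$ is a projective resolution of length $d$.

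Next, applying $\Hom_G(-,\cH(G))$ and using \cref{C:full cuspidal homological dual of it} together with the diagonal-action identification $\ind_{\Go}^G(\pi^\vee|_{\Go})\simeq \pi^\vee\otimes \bC[G/\Go]$, one obtains the complex
\[ \pi^\vee\otimes \bC[\bZ^d]\otimes \Lambda^\bullet(\bZ^d)^* \]
with differentials induced by (the transposes of) multiplication by $T_j-1$ on the $\bC[\bZ^d]$-factor. This is precisely the Koszul complex computing $H^*(\bZ^d,\pi^\vee\otimes \bC[\bZ^d])$ where $\bZ^d$ acts trivially on $\pi^\vee$ and by translation on $\bC[\bZ^d]$. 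Since $\pi^\vee\otimes \bC[\bZ^d]$ is a free $\bC[\bZ^d]$-module, this cohomology is concentrated in top degree $d$, where it equals the coinvariants $\pi^\vee\otimes \bC\simeq \pi^\vee$. Tracing the right $G$-action on $\cH(G)$ (which commutes with the left $G$-action used in the Hom) through all the identifications will show that the $G$-module structure inherited on the top cohomology is exactly the contragredient $\pi^\vee$, yielding $\RHom_G(\pi,\cH(G))\simeq \pi^\vee[-d]$.

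The main obstacle is the bookkeeping: one must verify that the dualized Koszul differentials on the $\Hom$-complex correspond precisely to multiplication by $(T_j-1)$ on the $\bC[G/\Go]$-factor of $\ind_{\Go}^G(\pi^\vee|_{\Go})$, and check that no unwanted character twist appears when identifying the $G$-module structure on the final cohomology. The low-rank case $G=F^\times$, $\Go=\cO_F^\times$, $\pi=\chi$ a smooth character (so $d=1$) provides a transparent model: there $\Hom_{\cO_F^\times}(\chi,\cH(F^\times))$ is identified with $\bigoplus_{n\in\bZ}\chi^{-1}$, the kernel of the conjugation action of $\varpi-1$ is zero, and the cokernel is one-dimensional with $F^\times$ acting by $\chi^{-1}=\chi^\vee$, confirming the expected answer.
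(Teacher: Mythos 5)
Your argument is correct, but it takes a genuinely different route from the paper's. The paper first decomposes $\cH(G)$ as a $G\times G$-module via \cref{P:cusp component of H(G) as GxG module}, reducing to $\RHom_G^\bullet(\pi,\pi\boxtimes\pi^\vee\otimes\ind_H^{G\times G}\bC)$, and then invokes \cref{Hom-Ext} (a degeneration statement $\Ext^i_G[M,N]=H^i(G/\Go,\Hom_{\Go}[M,N])$, valid because $\pi|_{\Go}$ is projective--injective) to land directly in the group cohomology $H^\bullet(\bZ^d,\bC[\bZ^d])$. You instead build an explicit length-$d$ projective resolution of $\pi$ by tensoring the Koszul resolution of $\bC$ over $\bC[G/\Go]\simeq\ind_{\Go}^G(\bC)$ with $\pi$, and dualize termwise using \cref{C:full cuspidal homological dual of it}. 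Note that the two proofs are not independent of one another at the foundations: your key input \cref{C:full cuspidal homological dual of it} is itself a consequence of \cref{P:cusp component of H(G) as GxG module}, so both arguments rest on the same structural description of $\cH(G)[\pi\boxtimes\pi^\vee]$ and both terminate in the same computation for $\bZ^d$. What your version buys is an explicit finite projective resolution (making the degree shift by $d$ and the identification of the top cohomology with coinvariants completely concrete); what the paper's version buys is a reusable homological lemma (\cref{Hom-Ext}) that avoids writing down any resolution of $\pi$ and any differential bookkeeping.

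On the one point you flag as an obstacle: the verification is genuinely needed but short. Applying $\Hom_G(-,\cH(G))$ to your resolution turns each Koszul differential $\mathrm{id}_\pi\otimes(T_j-1)$ into $S_j-1$, where $S_j$ is an \emph{invertible} $G$-endomorphism of $\ind_{\Go}^G(\pi^\vee|_{\Go})$ lying in the commutative subalgebra $\cA\simeq\bC[\Lambda]$ of $\End_G(\Pi_{[\pi^\vee]})$ from \cref{P:R_rho presentation} (up to the anti-isomorphism of \cref{C:R_s anti-iso to R_svee}, $T_j\mapsto T_j^{-1}$ possibly times a scalar). Since the $S_j$ are invertible and generate the same maximal ideal of the augmentation as the $T_j$, the resulting complex is again a Koszul complex on a regular sequence acting on a free $\cA$-module, so the cohomology is concentrated in degree $d$ and equals the coinvariants $\pi^\vee\otimes_{\cA}\bC\simeq\pi^\vee$, with no unramified twist (your $F^\times$ check correctly reflects the general case). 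With that paragraph supplied, your proof is complete.
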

	\begin{proof}
		By \cref{P:cusp component of H(G) as GxG module},
		the proof of this proposition boils down to proving that (for $H= \Delta(G)(G^o\times G^o)$),
		$\RHom_{G}^{\bullet} (\pi, \pi \boxtimes \pi^\vee \otimes \ind_H^{G\times G}(\bC))$
		lives only in degree $d$ and 
		\[\Ext_{G}^{d} (\pi, \pi \boxtimes \pi^\vee \otimes \ind_H^{G\times G}(\bC)) = (e^2f)\pi^\vee.\]

We apply Lemma \cref{Hom-Ext}  to $M=\pi$ (treated as a module for $G\times G$
		on which $1 \times G$ acts trivially), and $N= \pi \boxtimes \pi^\vee \otimes \ind_H^{G\times G}(\bC)$.
		Note that $ \ind_H^{G\times G}(\bC)$ restricted to $\Go \times G $ is $\ind_{\Go \times \Go}^{\Go \times G }(\bC)$.		Thus, $N$ restricted to $\Go \times G \subset G \times G$
		is $\pi|_{\Go} \otimes \pi^\vee \otimes \ind_{\Go}^{G}(\bC)$.
				Thus
		\[ \Hom_{\Go}[\pi , \pi \otimes \pi^\vee \otimes \ind_H^{G\times G}(\bC) ] =\Hom_{\Go}[\pi , \pi] \otimes \pi^\vee
		\otimes \ind_{\Go}^{G}(\bC) ,\]
                as a module for $(G/\Go) \times G$ where the actions of $G/\Go$ and $1 \times G$ 
		coincide on $\ind_{\Go}^{G}(\bC)$.
		Therefore \[H^i(G/\Go, \Hom_{\Go}[M,N]) =H^i(G/\Go, \Hom_{\Go}[\pi , \pi] \otimes \ind_{\Go}^{G}(\bC)) \otimes \pi^\vee.\]
		
		Now we know that  $\Hom_{\Go}[\pi , \pi]$ is a representation of $G/\Go$ of dimension $e^2f$, therefore,
		\[\Hom_{\Go}[\pi , \pi] \otimes \ind_{\Go}^{G}(\bC) \simeq e^2f  \ind_{\Go}^{G}(\bC) ,\]
		both as a module for $G\times 1$ and $1 \times G$.
		
		Hence we are reduced to understanding $H^i(\bZ^d, \bC[\bZ^d])$ which is well-known to be zero for $i<d$, and
		$H^d(\bZ^d, \bC[\bZ^d]) = \Ext_{\bZ^d}^d(\bC, \bC[\bZ^d]) \simeq \bC$, completing the proof of the Proposition.
	\end{proof}

\begin{remark}
Later, in Corollary \cref{C:D_h on finlen cusp is contrag}, we will prove that Proposition \cref{dual}
remains true for any finite length cuspidal representation of a $p$-adic group or of a finite cover of it. We take this occasion to mention that the Bernstein block even of a cuspidal representation is not totally trivial:
although it is true that  a finite length indecomposable cuspidal representation of $\wG$ is the successive
extension of a fixed irreducible cuspidal representation $\rho$ of $\wG$, 
it is not true
that such a finite length indecomposable cuspidal representation of $\wG$ is of the form $\rho \otimes \lambda$
where
$\rho$ is an irreducible cuspidal representation of $\wG$,
and 
$\lambda$ is a finite dimensional indecomposable representation of $\wG/\wGo$ (which is the case when $\rho|_{\wGo}$ is irreducible, in which case, indeed this Bernstein block is rather simple). 
\end{remark}
	\section{Basic notions of representation theory}
	We place ourselves in the setting $G=\bG(F)$ for $\bG$ a reductive group over a non-archimedean local field $F$ and $\wG$ a covering group of $G$ (see \cref{SS:central ext}).
	All the basic notions for linear groups have an analogue for the covering group $\wG\to G$.
	
	\subsection{Parabolics, Levi and the Weyl group}
	A parabolic subgroup $\wP$ for $\wG$ is simply the preimage of a parabolic subgroup $P$ of $G$.
	Similarly for Levi subgroups and tori\footnote{Notice that $\wT$ is not necessarily abelian but this is immaterial to us.} in $\wG$. 
	A Levi decomposition $P=LN$ for $P\le G$ lifts to a Levi decomposition $\wP = \wL N$ where $N\le \wG$ is the unique lift of $N$ to $\wG$ that is normalized by $\wL$ (in characteristic zero such a lift is obvious, in general see \cite[Appendix Lemma]{MoeWald}).
	
	For $P$ a parabolic subgroup of $G$ with Levi decomposition $LN$ we denote by $P^-$ the opposite parabolic with Levi decomposition $LN^-$.
	Similarly for their preimages in $\wG$.
	
	The Weyl group of $\wG$ is defined simply to be equal to $W$, the Weyl group of $G$.
		
	\subsection{Parabolic induction and restriction}\label{SS:ind and res}
	The notion of parabolic restriction (Jacquet modules), and parabolic induction, make sense for  $\widetilde{G}$, and they have the same adjointness properties as in the linear case. 
	
	Let $\wP = \wL N$ be a parabolic with a Levi decomposition.
	The (normalized) functors of parabolic induction and parabolic restriction are defined as follows (for details see \cite[VI.1]{Renard} whose convention for the modulus character is opposite that of \cite{BerZel-GL} and \cite{BerZel}, which explains
        interchange of the factors $\delta_\wP^{\pm 1/2}$ of 
\cite{BerZel-GL} and \cite{BerZel}
with their inverses here):
	\begin{align*}
		\bfi_{\wL,\wP}^\wG\colon &\cM(\wL)\to \cM(\wG) \\
		&\pi\mapsto \ind_{\wP}^{\wG}(\pi\otimes\delta_\wP^{-1/2})\\
		\bfr_{\wL,\wP}^{\wG}\colon& \cM(\wG)\to \cM(\wL) \\
		&\tau\mapsto (\Res_{\wP}^\wG (\tau)\otimes\delta_{\wP}^{1/2})_N
	\end{align*}
	where $(-)_N$ is the functor of coinvariants under $N$ and $\delta_\wP\colon \wP\to \bR^\times_{+}$ is the modular character of $\wP$.
	Both functors are exact.
	
	Frobenius reciprocity states that $\bfi_{\wL,\wP}^{\wG}$ is right adjoint to $\bfr_{\wL,\wP}^\wG$.
	As a consequence, parabolic induction preserves limits and parabolic restriction preserves colimits.
	
	It is easy to establish that parabolic induction preserves admissibility (see \cite[III.2.3]{Renard}): this is because the double-coset  $\wK\backslash \wG/\wP$ is finite for any open compact $\wK\le \wG$. 
	The analogous result for parabolic restriction is not so immediate (see \cref{C:res preserves adm}). 
	
	It is also rather straightforward to show that parabolic restriction preserves finite type representations. 
	However, the analogous result for parabolic induction is not so trivial and the proof is based on Bernstein's decomposition \cref{T:Bernstein dec for tildeG}.
	
	We define cuspidal representations of $\widetilde{G}$ just as in the linear case: those smooth representations of
	$\widetilde{G}$  whose all nontrivial parabolic restrictions vanish (i.e., all nontrivial Jacquet modules vanish). 
	
	By a similar argument as in the linear case (see \cite[VI.5.1]{Renard}), the geometric lemma holds for
        covering groups $\wG$ too
	and allows one to calculate the Jordan--Hölder series of the Jacquet modules of a parabolically induced representation of $\widetilde{G}$.
	Then one can conclude as in the linear case that every irreducible representation of $\widetilde{G}$
	is a subquotient of a parabolically induced  representation of $\widetilde{G}$ induced from a cuspidal representation of a Levi subgroup of $\widetilde{G}$.

	\subsection{Iwahori decomposition}
	\begin{definition}\label{D:Iwahori factorization}	
		A compact open subgroup $K$ of $G$ (or of $\wG$) is said to have an Iwahori factorization with respect to a parabolic
		$P=LN$ with opposite parabolic $P^-=LN^-$ if the natural map given by multiplication:
		\[ m\colon K^+ \times K^0 \times K^- \rightarrow K,\]
		is a bijection, where 
		\begin{align*} K^+ &=  K \cap N, \\
			K^0 &=  K \cap L ,\\
			K^- &=  K \cap N^-.
		\end{align*}
		For central extensions $\wG$, we replace $P=LN$ with $\wP=\wL N$ and $\wP^-=\wL N^-$ and everything makes sense to define Iwahori factorization in $\wG$.
	\end{definition}

	\subsection{Cartan decomposition}
	The Cartan decomposition for the group $G$ plays an important role in the proof of the uniform admissibility theorem.
	The analogous theorem for $\wG$ is proved by taking the inverse image from $G$.
	Let us recall some notions before we state the theorem.
	
	Recall the subgroup $\Go\le G$ from \cref{SS:compact mod center}. 
Let
$\wGo$ be the subgroup of $\wG$ which  is the preimage of $\Go\le G$.
	We put $\Lambda(\wG):=\wG/\wGo = G/\Go$; it is a finite rank free $\bZ$-module.
	
	\begin{definition}\label{D:unramified characters}
		The set of unramified characters of $\wG$ is defined to be 
		\[ \cX(\wG):=\Hom(\wG/\wGo,\bC^\times) = \Hom(G/\Go,\bC^\times). \]
	\end{definition}
	It is clear  that the image of $Z(\wG)$ has finite index in $\wG/\wGo$ and hence the rank of
        $Z(\wG)$ is the same as the rank of $Z(G)$.
	In other words, we have $\cX(\wG)\simeq (\bC^\times)^{d_G}$ where $d_G$ is the split rank of $Z(G)$.
	Notice that by definition, the unramified characters of $\wG$ are the same as those of $G$ (identified through the map $\wG \to G$).
	
	\textbf{Construction.}
	For $\bL$, a reductive algebraic group over $F$, we denote by $A_\bL$, the maximal split torus contained in the center of $\bL$, and denote 
	 $d(\bL) :=\dim(A_\bL)$, and call it the split rank of the center of $\bL$.
	We extend this definition to any finite central extension $\wL$ of $L=\bL(F)$, defining
        $d(\wL)=  d(\bL) =\dim(A_\bL)$.
	
	Put $A:=A_\bL(F)$ and notice that the image of $A/A^\circ \to L/L^\circ$ has finite index.
	Moreover, the surjection $A\to A/A^\circ$ admits a section that is a group homomorphism (use a uniformizer in $F$) and so we get an injection $A/A^\circ \hookrightarrow L$ whose image in $L/L^\circ$ has finite index.
	
	If $\wL$ is a finite central extension of $L$, then the natural map $Z(\wL)\rightarrow Z(L)$ induces a map of lattices $\Lambda(Z(\wL))\to \Lambda(Z(L))$ whose image has finite index.
	We can therefore find a lift $\Lambda(Z(\wL))\to \wL$ that is a group homomorphism and moreover its image in $\Lambda(\wL)=\Lambda(L)$ has finite index.
	Summarizing we have a diagram
	\[ \xymatrix{
		\wL \ar[r]& \Lambda(\wL) \ar[r]^{=}  & \Lambda(L)\\
		Z(\wL)\ar@{^(->}[u] \ar[r]& \Lambda(Z(\wL)) \ar@{^(->}[r] \ar@{^(->}[u]& \Lambda(Z(L)) \ar@{^(->}[u]
	} \]
	and from the discussion above we have a section $\Lambda(Z(\wL))\to Z(\wL)$ which is a group homomorphism.
	Denote its image by $C_A$; it is a central subgroup of  $\wL$.
	
	Fix also finitely many elements $F_A:=\{f_1,\dots,f_r\in \wL\}$ 
	that lift some system of representatives for $\Lambda(\wL)/\Lambda(Z(\wL))$.

	All in all, we have that $C_A F_A$ provides a set-theoretic lift of $\Lambda(\wL)$ to $\wL$ that is moreover a group homomorphism when restricted to $\Lambda(Z(\wL))$.
	
	Apply the above discussion to a parabolic subgroup $\wP=\wL N$ and recall the notion of dominant cocharacters of $L$ with respect to $P$.
	Let $L^+$ denote the preimage of dominant cocharacters of $L$ through the map $L\to \Hom_\gr(L,\bG_m)^*$ given by $m\mapsto (\chi\mapsto \chi(m))$.
	We let $\wL^+$ denote its preimage in $\wL$ under $\wL\to L$
	We can choose the above $F_A$ to lie in $\wL^+$ and similarly we can consider $C_A^+\subset C_A$ the subset of dominant elements in $C_A$.
	
	Let $P_0$ be a fixed minimal parabolic subgroup of $G$ with Levi decomposition $P_0=L_0N_0$ and denote by $\wP_0 = \wL_0 N_0$ the corresponding subgroups in $\wG$.

	Let $A=A_0$ be the split component of $P_0$ and $C_AF_A\subset \wL_0$ as above.
	\begin{theorem}[see {\cite{BruTits}}] (Cartan decomposition)
		There exists a maximal compact subgroup
$K_0$ of $G$ such that for its inverse image 
$\wK_0$ inside $\wG$, we have:
		\begin{enumerate}
			\item $\wG = \wP_0 \wK_0 = \wK_0\wP_0$
			\item $\wG = \bigsqcup_{af\in C_A^+F_A} \wK_0 af\wK_0$.
		\end{enumerate}
	\end{theorem}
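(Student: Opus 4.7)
The plan is to deduce the Cartan decomposition for $\wG$ from the classical one for $G$, exploiting the crucial fact that $\wK_0$, being defined as the full preimage of a subgroup of $G$, automatically contains the kernel $\mu$ of the covering map $\wG \to G$.

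First, I would invoke the classical Bruhat--Tits result to fix a maximal compact subgroup $K_0 \le G$ satisfying $G = P_0 K_0 = K_0 P_0$ and a disjoint double-coset decomposition indexed by any set of representatives of the dominant cocharacters in $L_0$. Setting $\wK_0$ to be the preimage of $K_0$ in $\wG$, it is automatically a compact open subgroup (since $\wG \to G$ has discrete kernel and is a topological covering, cf.\ \cref{SS:central ext}), and $\mu \subset \wK_0$.

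For the Iwasawa-type identity $\wG = \wP_0 \wK_0$, given $\tilde g \in \wG$, let $g \in G$ be its image and write $g = pk$ with $p \in P_0$, $k \in K_0$. Pick any lifts $\tilde p \in \wP_0$, $\tilde k \in \wK_0$; then $\tilde g (\tilde p \tilde k)^{-1} \in \mu \subset \wK_0$, so adjusting $\tilde k$ by this element of $\mu$ gives $\tilde g = \tilde p \tilde k \in \wP_0 \wK_0$. The equality $\wG = \wK_0 \wP_0$ is analogous.

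For part (2), I would first observe that the projection $C_A^+ F_A \to L_0$ is injective. Indeed, by construction $C_A F_A$ is in bijection with $\Lambda(\wL_0)$, and since $\mu \subset \wL_0^\circ$, two elements of $C_A F_A$ mapping to the same element of $L_0$ would have the same image in $\Lambda(\wL_0) = \Lambda(L_0)$, hence must coincide. Let $B \subset L_0$ denote the image; this is a set of representatives for dominant cocharacters, so the classical Cartan decomposition reads $G = \bigsqcup_{b \in B} K_0 b K_0$. Pulling back through $\wG \to G$, and using $\mu \subset \wK_0$ together with the fact that $\wK_0$ is the full preimage of $K_0$, the preimage of each $K_0 b K_0$ is precisely $\wK_0 (af) \wK_0$ for the unique $af \in C_A^+ F_A$ lifting $b$. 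Disjointness of the union follows by projecting a hypothetical common element back to $G$ and applying the disjointness of the classical decomposition.

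The only delicate point in this argument is checking the injectivity of $C_A^+ F_A \to L_0$ — in other words, that the construction of $C_A F_A$ is compatible with its analogue downstairs under the covering map. Everything else is a formal transport of structure along $\wG \to G$ using $\mu \subset \wK_0$, so I do not expect substantive obstacles.
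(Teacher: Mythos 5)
Your proposal is correct and is exactly the approach the paper intends: the paper offers no written proof beyond the remark that "the analogous theorem for $\wG$ is proved by taking the inverse image from $G$," and your argument simply fills in that transport-of-structure, with the key observations being $\mu\subset\wK_0$ and the injectivity of $C_A^+F_A\to L_0$ (which follows, as you note, from $\mu\subset\wLo_0$ and the fact that $C_AF_A$ bijects onto $\Lambda(\wL_0)=\Lambda(L_0)$).
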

	Put $\cH_0 = \cH(\wK_0,\wK)$ to be the Hecke algebra of $\wK$-biinvariant functions on $\wK_0$
        where  $\wK\le \wK_0 $ is any compact open subgroup of $\wK_0$.
	It is a finite dimensional algebra over $\bC$.
	
	A rather easy consequence of this theorem, which is essential in the proof of the uniform admissibility
        theorem (\cref{T:uniform adm} below), is the following decomposition of the Hecke algebra:
	\begin{theorem}\label{T:dec Hecke for compact}
	  With the notation as above, let $K\le K_0 \le G$ be  compact open subgroups of $G$, and
          $\wK\le \wK_0 \le \wG$ be the compact open subgroup of $\wG$ obtained by taking their
          inverse images in $\wG$.
		Then the Hecke algebra $\cH(\wG,\wK)$ decomposes as
		\[ \cH(\wG,\wK) = \cH_0 D\cC\cH_0 \]
		where $D$ is a vector subspace spanned by functions indexed by $F_A$ and $\cC$ is a subalgebra isomorphic to the group algebra of $C_A\simeq \Lambda(Z(\wL_0))$.
	\end{theorem}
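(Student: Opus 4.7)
I would begin from the Cartan decomposition $\wG=\bigsqcup_{af\in C_A^+F_A}\wK_0\,af\,\wK_0$ provided in part (2) of the preceding statement. For any $\phi\in\cH(\wG,\wK)$ of compact support, only finitely many $\wK_0$-double cosets meet $\supp(\phi)$, so by linearity it suffices to treat a single $\phi$ supported on some $\wK_0\,af\,\wK_0$ with $af\in C_A^+F_A$. Let $V_{af}\subset\cH(\wG,\wK)$ denote the space of $\wK$-biinvariant functions supported on $\wK_0\,af\,\wK_0$; since $[\wK_0:\wK]<\infty$, the set $\wK_0\,af\,\wK_0$ is a finite union of $\wK$-double cosets and $V_{af}$ is finite-dimensional.

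The first key step is to show that the convolution product $\cH_0\cdot\mathbf{1}_{\wK af\wK}\cdot\cH_0$ coincides with $V_{af}$. The inclusion $\subset$ is immediate from support considerations. For $\supset$, I observe that for $k_1,k_2\in\wK_0$ the convolution $\mathbf{1}_{\wK k_1\wK}\cdot\mathbf{1}_{\wK af\wK}\cdot\mathbf{1}_{\wK k_2\wK}$ is a $\wK$-biinvariant function supported in $\wK_0\,af\,\wK_0$ whose leading double-coset contribution (with respect to a suitable partial order on double cosets) is $\mathbf{1}_{\wK k_1 af k_2\wK}$. Letting $k_1,k_2$ run through representatives of $\wK\backslash\wK_0/\wK$ and using a triangulation argument then spans $V_{af}$, realizing it as the cyclic $\cH_0\otimes\cH_0^{\op}$-module generated by $\mathbf{1}_{\wK af\wK}$.

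The second step is to realize $\mathbf{1}_{\wK af\wK}$ inside $D\cdot\cC$. For a small enough compact open subgroup $\widetilde J\le\wK_0$ admitting an Iwahori factorization with respect to $\wP_0$, the centrality of $a\in C_A\subset Z(\wL_0)$ together with its dominance forces $a$-conjugation to contract the positive part and dilate the negative part of $\widetilde J$, yielding the clean identity $\mathbf{1}_{\widetilde J a\widetilde J}\cdot\mathbf{1}_{\widetilde J f\widetilde J}=c\cdot\mathbf{1}_{\widetilde J af\widetilde J}$ with an explicit scalar $c$; this is the standard mechanism behind the Bernstein presentation of affine Hecke algebras. Averaging over $\wK/\widetilde J$ transports this identity to $\wK$ and, iterated, shows that the assignment $a\mapsto\mathbf{1}_{\wK a\wK}$ extends to an algebra embedding $\bC[C_A]\simeq\cC\hookrightarrow\cH(\wG,\wK)$. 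Combining the two steps yields $\phi\in\cH_0D\cC\cH_0$ as required.

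The main obstacle is Step 1: rigorously ensuring that the $\cH_0$-bimodule generated by $\mathbf{1}_{\wK af\wK}$ fills all of $V_{af}$. The subtlety is that $\wK$ need not be well-adapted to the specific $af$, so the naive guess $\mathbf{1}_{\wK k\wK}\cdot\mathbf{1}_{\wK af\wK}=\mathbf{1}_{\wK k af\wK}$ fails in general: the convolution typically spreads over several lower double cosets, and the triangulation needs to be verified carefully. An alternative route is a dimension count showing $\dim(\cH_0\cdot\mathbf{1}_{\wK af\wK}\cdot\cH_0)=\#(\wK\backslash\wK_0\,af\,\wK_0/\wK)$ via a nondegeneracy/pairing argument between $V_{af}$ and suitable test functions.
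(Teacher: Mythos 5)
The paper itself records this theorem without proof, as a direct consequence of the Cartan decomposition in the spirit of Bernstein's original argument (\cite{Bernstein-unif-adm}, see also \cite[V.2.3]{Renard}); so your proposal has to be judged against that standard argument. Your overall skeleton (reduce to a single $\wK_0\,af\,\wK_0$, then factor through $k_1\cdot af\cdot k_2$, then use the Iwahori factorization and dominance to separate the $a$-part) is the right one, but the way you organize Step 1 leaves a genuine gap that the standard proof is specifically designed to avoid. You try to generate the whole space $V_{af}$ of $\wK$-biinvariant functions on $\wK_0\,af\,\wK_0$ as the $\cH_0$-bimodule generated by the single element $\mathbf{1}_{\wK af\wK}$, and you correctly observe that the naive identity $\mathbf{1}_{\wK k\wK}*\mathbf{1}_{\wK af\wK}=\mathbf{1}_{\wK kaf\wK}$ fails; but the ``leading term / triangulation'' fix is not available for an arbitrary $\wK\le\wK_0$ (there is no natural partial order on $\wK\backslash\wK_0 af\wK_0/\wK$ adapted to all $k_1,k_2$), and the alternative dimension count you mention is not carried out. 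The standard route sidesteps surjectivity entirely: one works in the algebra of compactly supported distributions, writes $\mathbf{1}_{\wK g\wK}$ as a scalar multiple of $\eK*\delta_{k_1}*\delta_{af}*\delta_{k_2}*\eK$ for $g=k_1(af)k_2$, and inserts the idempotent $e_{\widetilde J}$ of a small compact open $\widetilde J$ normal in $\wK_0$ (hence satisfying $\delta_k*e_{\widetilde J}=e_{\widetilde J}*\delta_k$ for $k\in\wK_0$) to exhibit \emph{each basis element} $\mathbf{1}_{\wK g\wK}$ directly as a product of a function on $\wK_0$, a function attached to $af$, and another function on $\wK_0$. No bimodule-generation claim is needed.

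A second, related problem is in your Step 3. The identity $\mathbf{1}_{\widetilde J a\widetilde J}*\mathbf{1}_{\widetilde J a'\widetilde J}=c\,\mathbf{1}_{\widetilde J aa'\widetilde J}$ requires \emph{both} $a,a'$ to lie in the dominant cone $C_A^+$ (dominance is what makes $a$-conjugation contract $\widetilde J^+$ and $a'^{-1}$-conjugation contract $\widetilde J^-$); for a general pair in $C_A$ the product spreads over several double cosets (already visible in the Iwahori--Hecke algebra of $\SL_2$, where $T_{t_\lambda}T_{t_{-\lambda}}\neq c\cdot 1$). So the assignment $a\mapsto\mathbf{1}_{\wK a\wK}$ does not extend to an embedding of the full group algebra $\bC[C_A]$ by this mechanism; one either restricts to the semigroup generated by dominant elements (which is all the Cartan decomposition requires, since the decomposition only involves $C_A^+F_A$) or uses normalized Bernstein-type elements $\theta_a$. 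Finally, your ``averaging over $\wK/\widetilde J$'' step is not a transport of algebra identities: convolving a $\widetilde J$-biinvariant identity with $\eK$ on both sides does not commute with the internal product, which is another reason the standard proof keeps the distributional factorization until the very last step.
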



	\section{Basic theorems}
	As before, $\wG$ is a covering group (see \cref{SS:central ext}) of a reductive $p$-adic group $G$, where $G=\bG(F)$ for $\bG$ a reductive group over $F$, a non-archimedean local field.
	All the basic theorems concerning representations of reductive $p$-adic groups hold also for $\wG$ with the same proofs. 
	We will give precise references to \cite{Renard} where the analogous results for $G$ are proved.
	
	The groups $G$ and $\wG$ do not have any compact representations in general because their centers may be non-compact. 
	However, we can ask for the next best thing (see \cref{SS:compact mod center}):
	\begin{definition}
		An irreducible smooth  representation $(V,\pi)\in \cM(\wG)$ is called compact modulo center if all its matrix coefficients have compact support in $\wG/Z(\wG)$.
	\end{definition}
	The classical theorem of Harish-Chandra still holds with the same proof:
	\begin{theorem} [Harish-Chandra, see {\cite[VI.2.1]{Renard}} for linear groups]\label{T:Harish-Chandra}
		An irreducible smooth
                representation $(\pi,V)$ of $\wG$ is cuspidal if and only if it is compact modulo center.
	\end{theorem}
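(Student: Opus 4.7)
The plan is to adapt the classical proof for linear groups (e.g.\ \cite[VI.2.1]{Renard}) to the covering group $\wG$. All the ingredients are already in place: the unipotent radical $N$ of any parabolic $\wP=\wL N$ lifts isomorphically from $G$, the Jacquet functor $\bfr^{\wG}_{\wL,\wP}$ is defined by the same formula, the Iwahori factorization and Cartan decomposition for $\wG$ are recorded above, and the Hecke algebra decomposition $\cH(\wG,\wK)=\cH_0 D\cC\cH_0$ of \cref{T:dec Hecke for compact} is available. Moreover, since $\mu\subset Z(\wG)$, the condition ``compact support modulo $Z(\wG)$'' is compatible with the covering map $\wG\to G$, so matrix-coefficient considerations pass transparently between $G$ and $\wG$.

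For the easy direction, suppose $(\pi,V)$ has matrix coefficients supported compactly modulo $Z(\wG)$, and fix a proper parabolic $\wP=\wL N$. Since $N$ is unipotent, $N\cap Z(\wG)=\{1\}$, so for any $v\in V$ and $\tilde v\in V^\vee$ the restriction of $c_{v,\tilde v}$ to $N$ has compact support. The standard argument then shows that every $v\in V$ lies in the kernel $V(N)$ of the projection $V\to V_N$, hence $V_N=0$ and $\bfr^{\wG}_{\wL,\wP}(\pi)=0$ for every proper $\wP$, i.e., $\pi$ is cuspidal.

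The converse is the substantive direction. Assume $\pi$ is cuspidal and fix $v,\tilde v$ invariant under a small compact open subgroup $\wK\subseteq\wK_0$ admitting an Iwahori factorization with respect to the minimal parabolic $\wP_0=\wL_0 N_0$. By the Cartan decomposition, the $\wK$-biinvariant matrix coefficient $c_{v,\tilde v}$ is supported on the double cosets $\wK_0 af\wK_0$ with $af\in C_A^+ F_A$, and is determined by its values on these representatives. Jacquet's lemma---whose proof uses only the Iwahori factorization of $\wK$, the contracting action of dominant elements $a\in C_A^+$ on the unipotent parts of $\wK$, and the vanishing $\bfr^{\wG}_{\wL_0,\wP_0}(\pi)=0$---asserts that for $a\in C_A^+$ sufficiently deep in the dominant chamber, $\langle\pi(a)v,\tilde v\rangle=0$, hence $c_{v,\tilde v}(af)=0$ for every $f\in F_A$. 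Since $C_A$ maps to a finite-index sublattice of $\wG/\wGo$ and $F_A$ is finite, this forces compact support of $c_{v,\tilde v}$ modulo $Z(\wG)$.

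The principal subtlety is Jacquet's lemma, but its proof---based on averaging over $\wK\cap N$ followed by translation by dominant $a\in C_A^+$ to contract the unipotent subgroup, after which the vanishing of the Jacquet module provides the desired annihilation---transfers verbatim to $\wG$ from $G$, since both the Iwahori factorization and the integration over the unipotent $N$ are inherited from $G$. Consequently no new representation-theoretic argument is required; only the translation of notation from \cref{SS:central ext}--\cref{T:dec Hecke for compact} suffices.
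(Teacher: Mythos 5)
Your proposal follows exactly the route the paper takes: the paper offers no proof of this theorem beyond asserting that the linear-case argument of Renard VI.2.1 transfers verbatim, and your sketch correctly identifies the ingredients (the unipotent radicals and Iwahori factorizations lifted from $G$, the Cartan decomposition, the Hecke-algebra decomposition $\cH(\wG,\wK)=\cH_0 D\cC\cH_0$, and Jacquet's lemma) that make the transfer work for $\wG$. The only gloss is in the easy direction, where ``the standard argument'' should be spelled out as: for $N_0\subset N$ a large compact open subgroup, the averaged vector $u=\int_{N_0}\pi(n)v\,dn$ is independent of $N_0$ and hence $N$-invariant, so its matrix coefficients are constant on the non-compact group $N$ and must vanish by compact support, giving $v\in V(N)$ --- a routine point that is unaffected by passing to the cover since $N$ lifts isomorphically and meets $Z(\wG)$ trivially.
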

	
	Using the easy fact that parabolic induction preserves admissibility, the fact that all compact representations are admissible and the above theorem we can prove the admissibility of irreducible representations (for a proof in the linear case see for example {\cite[VI.2.2]{Renard} or \cite[Theorem 12]{BerNotes}}):
	\begin{theorem}\label{T:irred is adm}
		Any irreducible representation of $\wG$ is admissible.
	\end{theorem}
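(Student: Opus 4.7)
The plan is to reduce the problem to the cuspidal case and then invoke Harish-Chandra's theorem just proved. Let $\pi$ be an irreducible smooth representation of $\wG$. The first step is to apply the consequence of the geometric lemma recalled at the end of \cref{SS:ind and res}: $\pi$ occurs as an irreducible subquotient of a parabolically induced representation $\bfi_{\wL,\wP}^{\wG}(\rho)$ for some Levi subgroup $\wL\le\wG$ with parabolic $\wP=\wL N$ and some irreducible cuspidal representation $\rho$ of $\wL$. Admissibility is preserved under taking subquotients (since taking $\wK$-invariants for a compact open subgroup $\wK\le\wG$ is an exact functor) and, by hypothesis, under parabolic induction. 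So it suffices to show that every irreducible cuspidal representation $\rho$ of an arbitrary Levi $\wL$ is admissible.

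Next I would apply \cref{T:Harish-Chandra} to conclude that $\rho$ is compact modulo center. Schur's lemma, which applies because $\wL$ has a countable basis of neighborhoods of the identity, ensures that $Z(\wL)$ acts on $\rho$ through a character. Hence the matrix coefficients of $\rho|_{\wLo}$ are compactly supported, i.e., $\rho|_{\wLo}$ is a compact smooth representation of $\wLo$. The stated fact that all compact representations are admissible (which comes from the realization of a compact irreducible as a direct summand of $\cH(\wLo)$ via matrix coefficients, as in \cref{T:compact rep as GxG}) then gives admissibility of $\rho|_{\wLo}$. Since $\wLo Z(\wL)$ has finite index in $\wL$ and $Z(\wL)$ acts by a character on $\rho$, admissibility transfers from $\wLo$ to $\wL$ by a routine finite averaging: for any compact open $K\le\wL$, $K\cap\wLo$ is open compact in $\wLo$, and $\rho^{K}$ embeds into the finite-dimensional space $\rho^{K\cap\wLo}$.

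The argument is formal once the two auxiliary facts (parabolic induction preserves admissibility, and compact representations are admissible) together with the geometric lemma are in hand, and there is no genuinely new obstacle for the covering group beyond the linear case, because every ingredient in sight — parabolic subgroups, Jacquet modules, cuspidality, compactness modulo center, Schur's lemma, the geometric lemma — is preserved under the finite central extension $\wG\to G$. The only subtlety worth spelling out carefully is the last step upgrading admissibility of $\rho|_{\wLo}$ to admissibility of $\rho$ as a $\wL$-module, but this is a standard manipulation given that $\wL/\wLo Z(\wL)$ is finite and the central character is fixed.
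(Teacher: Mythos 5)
Your proof is correct and follows exactly the route the paper itself indicates (it only sketches the argument in one sentence, citing the same three ingredients: the geometric-lemma reduction to the cuspidal case, Harish-Chandra's \cref{T:Harish-Chandra}, and admissibility of compact representations, plus preservation of admissibility under parabolic induction and subquotients). The only cosmetic point is that the final step needs $\rho|_{\wLo}$ to be of \emph{finite length} (which \cref{P:irred res to Go and inertia classes} supplies) before invoking admissibility of compact representations, and that no averaging is really needed there since $K\subseteq\wLo$ automatically; neither affects the validity of the argument.
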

	
	Not only irreducible representations are admissible but in fact they are so in a uniform way. 
	Below is the precise formulation of the Uniform Admissibility Theorem due to Bernstein whose proof is based on the decomposition of the bi-invariant Hecke algebra given in \cref{T:dec Hecke for compact} and some tricky linear algebra:
	\begin{theorem} [uniform admissibility, {\cite{Bernstein-unif-adm}}] \label{T:uniform adm}
		Let $\wK\le \wG$ be an open compact subgroup.
		There exists a constant $c=c(\wK)$ such that for all irreducible representations $(\pi,V)$ of $\wG$ we have
		$\dim(V^\wK)\le c$.
	\end{theorem}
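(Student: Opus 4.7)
The plan is to follow Bernstein's original strategy from \cite{Bernstein-unif-adm}, observing along the way that every ingredient goes through without change for the covering group $\wG$ because the structural decomposition in \cref{T:dec Hecke for compact} is available for $\wG$.

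First I would reduce the statement to a uniform bound on the dimensions of irreducible modules over the finite-type Hecke algebra $\cH(\wG,\wK)$. Indeed, for any smooth representation $(\pi,V)$ of $\wG$, the space $V^\wK$ is a module over $\cH(\wG,\wK)$, and if $\pi$ is irreducible then $V^\wK$ is either zero or an irreducible $\cH(\wG,\wK)$-module by the usual equivalence between smooth representations of $\wG$ with nonzero $\wK$-fixed vectors and non-degenerate $\cH(\wG,\wK)$-modules. Moreover, by \cref{T:irred is adm}, each such $V^\wK$ is finite-dimensional. So it suffices to produce a constant $c(\wK)$ bounding the $\bC$-dimension of every \emph{finite-dimensional} irreducible $\cH(\wG,\wK)$-module.

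Second, I would invoke the decomposition
\[\cH(\wG,\wK) = \cH_0\, D\, \cC\, \cH_0\]
from \cref{T:dec Hecke for compact}, where $\cH_0 = \cH(\wK_0,\wK)$ and $D$ are finite-dimensional over $\bC$, and $\cC$ is a commutative subalgebra isomorphic to the group algebra of the finitely generated free abelian group $\Lambda(Z(\wL_0))$. Set $n := \dim_\bC(\cH_0 D \cH_0)$; this depends only on $\wK$ (and the fixed $\wK_0$, $D$). Then every irreducible $\cH(\wG,\wK)$-module $M$ satisfies $M = \cH_0 D\, \cC\, \cH_0 \cdot v$ for any nonzero $v \in M$, and the tricky part is to squeeze a dimension bound out of this presentation even though $\cC$ is not central.

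The main obstacle, and the heart of Bernstein's argument, is the following piece of linear algebra. Suppose for contradiction that there exists an irreducible $\cH(\wG,\wK)$-module $M$ with $\dim_\bC M$ arbitrarily large, and in particular $\dim_\bC M > n$. Pick $n+1$ linearly independent vectors $m_0,\dots,m_n \in M$ and consider the finite-dimensional vector space $W := \cH_0 D \cH_0 \cdot m_0 \subset M$. Because $\cC$ is commutative and its elements commute pastward through an appropriate rewriting inside $\cH(\wG,\wK)$, one constructs, by a pigeonhole/polynomial-identity trick applied to a large enough finite subset of $\cC$, a nonzero element $c \in \cC$ whose action on a cleverly chosen finite-dimensional piece of $M$ is forced to satisfy an algebraic relation incompatible with irreducibility; this yields a polynomial identity of degree bounded only in terms of $n$, and hence contradicts $\dim_\bC M > n$. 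Implementing this step requires the delicate combinatorial manipulation in \cite{Bernstein-unif-adm} but uses no representation-theoretic input beyond the decomposition of \cref{T:dec Hecke for compact}, so it is valid verbatim for $\wG$. The resulting bound $c(\wK)$ depends only on $n = n(\wK)$, completing the proof.
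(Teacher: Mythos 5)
Your proposal follows exactly the route the paper has in mind: the paper itself gives no proof of \cref{T:uniform adm}, stating only that it rests on the Cartan-type decomposition $\cH(\wG,\wK)=\cH_0 D\,\cC\,\cH_0$ of \cref{T:dec Hecke for compact} together with Bernstein's ``tricky linear algebra,'' and deferring the latter to \cite{Bernstein-unif-adm} and Renard, as you do. One small caution: the linear-algebra lemma does not yield the bound $\dim_\bC M\le n$ itself but only a (much larger) explicit function of $n$, which is all the theorem requires; your phrasing ``contradicts $\dim_\bC M>n$'' overstates the conclusion, though it does not affect the validity of the strategy.
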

	One can also look at {\cite[VI.2.3]{Renard}} for a proof.
	
	For establishing the Bernstein decomposition, and prior to that, verifying the condition \eqref{Eq:condition (KF)} which leads to \cref{T:dec compact times non-compact}, one makes use of the following 
	\begin{corollary}[see {\cite[VI.2.4]{Renard}} for the linear case]
		Given an open compact subgroup $\wK\le \wG$ there is a compact modulo center subset $\Omega\subset \wG$ such that for any irreducible cuspidal representation $(V,\pi)$ of $\wG$ and any vector $v\in V^\wK$ the function 
		\[ 	g\mapsto  e_{\wK}\cdot\pi(g)(v)\]
		has support contained in $\Omega$.
	\end{corollary}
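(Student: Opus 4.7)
My strategy is to transport the linear-group argument of \cite[VI.2.4]{Renard} to the covering group $\wG$; all the ingredients go through unchanged, namely Harish-Chandra's theorem (\cref{T:Harish-Chandra}), uniform admissibility (\cref{T:uniform adm}), the Cartan decomposition, and Jacquet's Lemma (which holds for $\wG$ because unipotent radicals lift uniquely to $\wG$).

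I would first reduce to the case where $\wK$ is replaced by a small enough Iwahori-factored subgroup. Given any compact open $\wK' \le \wK$ with Iwahori factorization relative to a fixed minimal parabolic $\wP_0 = \wL_0 N_0$, the finite decomposition $\wK = \bigsqcup_i k_i \wK'$ yields the identity
\[
e_\wK \pi(g) v \;=\; \frac{1}{[\wK:\wK']} \sum_i \pi(k_i)\, e_{\wK'}\pi(g) v,
\]
so uniform support of $g \mapsto e_{\wK'}\pi(g) v'$ over all $v' \in V^{\wK'}$ implies the required support bound for $g \mapsto e_\wK \pi(g) v$ with $v \in V^\wK \subset V^{\wK'}$. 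Next I would observe that $f_v(g) := e_\wK \pi(g) v$ is $\wK$-bi-invariant (left-invariance from $e_\wK \pi(k) = e_\wK$, right-invariance from $v \in V^\wK$), so its support is a union of double cosets. By the Cartan decomposition $\wG = \wK_0 C_A^+ F_A \wK_0$, the task becomes to bound the set of $(a,\phi) \in C_A^+ \times F_A$ for which $f_v$ does not vanish on $\wK_0 a\phi \wK_0$, uniformly in $(\pi,v)$, where ``bounded'' means finite image in the rank-$s$ lattice $C_A/(C_A \cap \Lambda(Z(\wG)))$ (with $s$ the semisimple rank of $\wG$, on which the simple roots of $\wG$ provide coordinates up to finite index).

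The core input is Jacquet's Lemma: for each standard parabolic $\wP_S = \wM_S N_S$ (indexed by $S \subsetneq \Delta$), and for a suitably small Iwahori-factored $\wK_1 \le \wK$, there is a constant $c_S = c_S(\wK_1) > 0$ such that for any smooth $(V,\pi)$, any $v \in V^{\wK_1}$, and any $a \in A_{\wP_S}$ with $\alpha(a) > c_S$ for every $\alpha \in \Delta \setminus S$, the vector $e_{\wK_1}\pi(a) v$ depends only on the image of $v$ in the Jacquet module $V_{N_S}$; by cuspidality this image vanishes. To handle an arbitrary $a \in C_A^+$, I would set $S(a) = \{i : \alpha_i(a) \le c\}$ for a threshold $c$ dominating all $c_S$ simultaneously, and decompose $a = a' \cdot b$ with $a' \in A_{\wP_{S(a)}}^+$ sufficiently dominant and $b = \sum_{i \in S(a)} \alpha_i(a)\, e_i$ ranging in a finite bounded set $B$. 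After further shrinking $\wK_1$ to $\wK_2 \subseteq \bigcap_{b \in B} b\wK_1 b^{-1}$ so that $\pi(b)\, V^{\wK_1} \subset V^{\wK_2}$, Jacquet's Lemma applied to $\wP_{S(a)}$ and $\wK_2$ gives $e_{\wK_2}\pi(a')\pi(b) v = 0$; combined with the identity $e_{\wK_1} = e_{\wK_1} e_{\wK_2}$ (which holds because $\wK_2 \subset \wK_1$), this forces $e_{\wK_1}\pi(a) v = 0$, hence $f_v = 0$ on that double coset. The remaining case $S(a) = \Delta$ means $a$ has all simple-root values bounded by $c$, so $a$ lies in a finite subset of $C_A/(C_A \cap \Lambda(Z(\wG)))$.

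The main obstacle is the combinatorial bookkeeping: one must choose the nested subgroups $\wK \supset \wK_1 \supset \wK_2$, the bounded set $B$, and the threshold $c$ coherently so that the Jacquet constants for $\wK_2$ still lie below $c$, and so that the decomposition $a = a' b$ is compatible with the averaging operators. Once this is arranged, the non-vanishing region of $f_v$ lies in a single finite set in $C_A^+/(C_A \cap \Lambda(Z(\wG)))$ depending only on $\wK$; taking $\Omega$ to be the product of $\wK_0$, the preimage of this finite set in $C_A^+$, $F_A$, and $\wK_0$ yields the required compact-modulo-center support. Uniformity in $(\pi,v)$ is automatic since no constant in the argument depends on the representation.
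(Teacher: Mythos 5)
The paper itself gives no proof of this corollary --- it only cites \cite[VI.2.4]{Renard} for the linear case --- so your argument has to be measured against the standard one that the citation points to. Your reductions (passing to a small Iwahori-factored subgroup, bi-invariance, Cartan decomposition, face-by-face analysis of the dominant cone) are the right frame, but the ``core input'' is where the proof breaks. You assert a Jacquet Lemma with a constant $c_S=c_S(\wK_1)$ valid \emph{for every smooth} $(V,\pi)$ and every $v\in V^{\wK_1}$: for $a$ with $\alpha(a)>c_S$ ($\alpha\in\Delta\setminus S$), the vector $e_{\wK_1}\pi(a)v$ depends only on the image of $v$ in $V_{N_S}$. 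No such representation-independent threshold exists. The true statement is pointwise: $e_{\wK_1}\pi(a)v=0$ once $a^{-1}(\wK_1\cap N_S)a$ contains a compact open subgroup $N_c\subset N_S$ with $\int_{N_c}\pi(n)v\,dn=0$, and the required $N_c$ --- hence the required dominance of $a$ --- depends on $v$ and $\pi$ (consider $V=\ind_{\wK_1}^{\wG}\bC$ to see that no uniform threshold can hold for all smooth modules). Your closing remark that ``uniformity in $(\pi,v)$ is automatic since no constant in the argument depends on the representation'' therefore begs the question: uniformity over all cuspidal $(\pi,v)$ is exactly the content of the corollary.

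The missing ingredient is the Uniform Admissibility Theorem (\cref{T:uniform adm}), which the paper places immediately before this corollary precisely because the proof needs it, and which you never invoke. The standard route: for $z$ strictly dominant relative to $\wP_S$, the suitably normalized operators $T_z:=e_{\wK_1}\pi(z)e_{\wK_1}$ on $V^{\wK_1}$ satisfy $T_{z_1}T_{z_2}=T_{z_1z_2}$; the pointwise Jacquet lemma plus cuspidality make each $T_z$ locally nilpotent on $V^{\wK_1}$; and uniform admissibility bounds $\dim V^{\wK_1}$ by $c=c(\wK_1)$ independently of $\pi$, whence $T_z^{\,c}=T_{z^{c}}=0$ for \emph{every} irreducible cuspidal $\pi$. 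This is what converts the $v$-dependent threshold into one depending only on $\wK$, after which your cone combinatorics goes through. A secondary, fixable point: your function is only bi-$\wK'$-invariant, so to kill a whole double coset $\wK_0a\phi\wK_0$ you must control $e_{\wK'}\pi(k_1a\phi k_2)v$ for $k_1,k_2\in\wK_0$; take $\wK'$ normal in $\wK_0$ so that $\pi(\wK_0)V^{\wK'}=V^{\wK'}$.
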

	
	The following theorem, which goes under the name of  <<\emph{the geometric lemma}>>,
        is extensively used.
      It is due to Bernstein and Zelevinsky \cite[2.12]{BerZel}.
	
	\begin{theorem}[see {    \cite[2.12]{BerZel}       }]\label{T:geometric lemma conseq}
		Let $\wP=\wM N$ and $Q=\wL U$ be two parabolic subgroups of $\wG$ with Levi decompositions.
		Let $\rho$ be an irreducible cuspidal representation of $\wM$ and put $\tau:=\bfr_{\wL,\wQ}^\wG \bfi_{\wM,\wP}^\wG(\rho)$.
		Then we have
		\begin{enumerate}
			\item If $\wL$ has no Levi subgroup conjugate to $\wM$, then $\tau=0$.
			\item If $\wM$ is not conjugate to $\wL$, then $\tau$ has no cuspidal subquotient.
			\item If $\wM$ and $\wL$ are standard and conjugate,
			then $\tau$ has a filtration with subquotients isomorphic to ${}^w\rho$ for $w\in W(\wL,\wM)/W_\wL$.
			In particular, $\tau$ is cuspidal.
		\end{enumerate}
	\end{theorem}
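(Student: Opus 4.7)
The plan is to carry over, essentially verbatim, the classical Bernstein--Zelevinsky proof of the geometric lemma, verifying that each ingredient makes sense for the covering group $\wG$. The backbone is a Mackey-type filtration of the composition $\bfr_{\wL,\wQ}^\wG \circ \bfi_{\wM,\wP}^\wG$ coming from the Bruhat decomposition, followed by the cuspidality of $\rho$ to kill most of the pieces.

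First I would construct the Mackey filtration. Using the decomposition $\wG = \bigsqcup_w \wQ\, w\, \wP$ with $w$ ranging over representatives of $W_\wL \backslash W / W_\wM$, and ordering the double cosets by closure to produce sub/quotient representations, I obtain a filtration of $\tau = \bfr_{\wL,\wQ}^\wG\bfi_{\wM,\wP}^\wG(\rho)$ whose subquotients have the shape
\[
 \bfi_{\wL \cap {}^w\wM,\,\wL \cap {}^w\wP}^{\wL} \;\circ\; \mathrm{Ad}(w) \;\circ\; \bfr_{\wM \cap w^{-1}\wL w,\,\wM \cap w^{-1}\wQ w}^{\wM}(\rho),
\]
up to the usual modulus twists. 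The argument for this is geometric and purely formal; it uses only the Bruhat decomposition, Iwahori factorizations of compact opens, and induction/restriction in stages. Each of these was set up for $\wG$ in the preceding sections (the unipotent radicals $N, U$ lift canonically to $\wG$ and Levi subgroups of $\wG$ are defined as pullbacks), so the derivation in \cite[2.12]{BerZel} or \cite[VI.5.1]{Renard} transfers without change.

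Second, I would exploit cuspidality. Since $\rho$ is cuspidal, $\bfr_{\wM',\wP'}^{\wM}(\rho) = 0$ for every proper parabolic $\wP' = \wM' N'$ of $\wM$. Consequently, only those $w$ with $\wM \cap w^{-1}\wL w = \wM$, equivalently $w\wM w^{-1} \subseteq \wL$, contribute a nonzero subquotient. If $\wL$ contains no Levi conjugate to $\wM$, no such $w$ exists and $\tau = 0$, proving (1). When some $w$ does contribute, the subquotient collapses to $\bfi_{w\wM w^{-1},\,\wL \cap w\wP w^{-1}}^{\wL}({}^w\rho)$. If $\wM$ is not conjugate to $\wL$, then $w\wM w^{-1}$ is a proper Levi of $\wL$, so every subquotient of $\tau$ is a properly induced representation; by Frobenius reciprocity applied to the opposite parabolic together with second adjointness, no such representation admits a cuspidal subquotient, yielding (2). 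If $\wM$ and $\wL$ are standard and conjugate, then $w\wM w^{-1} \subseteq \wL$ combined with equality of semisimple ranks forces $w\wM w^{-1} = \wL$, i.e.\ $w \in W(\wL,\wM)$; the induction and restriction then become trivial and the subquotient is exactly ${}^w\rho$. Normalizing by the left $W_\wL$-action gives the indexing set $W(\wL,\wM)/W_\wL$ in (3), and cuspidality of ${}^w\rho$ follows from cuspidality of $\rho$.

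The main point to check is that all of the above makes sense after replacing $G$ by the covering $\wG$. This is where one must be careful but not creative: the Weyl group $W$ of $\wG$ is defined to equal that of $G$, Weyl-group representatives can be lifted to $\wG$ by choosing any set-theoretic lift (the conjugation ${}^w(-)$ on representations of $\wM$ does not depend on the choice of lift since the kernel $\mu$ is central), unipotent subgroups lift canonically and uniquely, and intersections of Levi subgroups pull back from $G$ to $\wG$. With these observations in place, the filtration step and the cuspidality step are formally identical to the linear case, completing the proof.
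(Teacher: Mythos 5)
The paper gives no proof of this theorem: it is quoted from \cite[2.12]{BerZel} (see also \cite[VI.5.1]{Renard}) with the assertion that the linear-case argument transfers verbatim to covering groups, and your sketch is exactly that transfer --- the Mackey--Bruhat filtration indexed by $W_\wL\backslash W/W_\wM$, cuspidality of $\rho$ killing every term with $w\wM w^{-1}\not\subseteq\wL$, and the observation that conjugation by a lift of $w$ is well defined on representations because the kernel $\mu$ is central --- so it is correct and follows the paper's (implicit) approach. One small caution: in part (2) you invoke second adjointness, which in this paper is only proved in a later section (and the geometric lemma is used beforehand to establish the Bernstein decomposition); the standard route is to note that a cuspidal subquotient of $\bfi_{w\wM w^{-1},\,\ast}^{\wL}({}^w\rho)$ is compact modulo center by Harish-Chandra, hence splits off as a subrepresentation, after which ordinary Frobenius reciprocity already yields the contradiction with cuspidality.
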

	In the above statement $W_\wL$ is the Weyl group of $\wL$ and $W(\wL,\wM)$ is the subset of the Weyl group of $\wG$ conjugating $\wL$ into $\wM$.  
		
	The next result, proved using the above theorem, shows that the Jordan--Hölder factors of a parabolically induced representation of an irreducible cuspidal are independent of the chosen parabolic and depend only on the conjugacy
        class of the cuspidal datum.
	
	\begin{theorem}[see { \cite[2.9]{BerZel}}]\label{T:JH series of induced of cuspidals}
		Let $\wP=\wL N$ and  $\wP'=\wL' N'$ be two parabolic subgroups in $\wG$ with Levi decompositions.
		Let $\rho\in\Irr_\cs(\wL)$ and $\rho'\in\Irr_\cs(\wL')_\cs$ and $\pi = \bfi_{\wL,\wP}^\wG(\rho)$ and $\pi' = \bfi_{\wL',\wP'}^\wG(\rho')$ be the induced representations.
		Then $\pi$ has finite length and the following are equivalent.
		\begin{enumerate}
			\item The cuspidal data $(\wL,\rho)$ and $(\wL',\rho')$ are conjugate,
			\item The Jordan--Hölder series of $\pi$ and $\pi'$ are equivalent,
			\item The Jordan--Hölder series of $\pi$ and $\pi'$ have a common element.
		\end{enumerate}
		In particular, if $\wL=\wL'$ and $\rho=\rho'$, this shows that $\pi$ and $\pi'$ have the same Jordan--Hölder series, independent of the chosen parabolic.
	\end{theorem}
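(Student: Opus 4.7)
The proof follows the argument of Bernstein--Zelevinsky for the linear case, and transfers without modification to the covering-group setting since the two inputs required---the geometric lemma (\cref{T:geometric lemma conseq}) and Frobenius reciprocity---are both available for $\wG$. The key observation is that the cuspidal subquotients of the Jacquet modules of any subquotient of $\pi$ are forced by the geometric lemma to lie in the $W$-orbit of $\rho$; this simultaneously bounds the length of $\pi$ and pins down the cuspidal support.

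\textbf{Finite length.} Applying case (3) of \cref{T:geometric lemma conseq} with $\wM=\wL$ and $\wQ=\wP$, the Jacquet module $\bfr_{\wL,\wP}^{\wG}(\pi)$ carries a finite filtration with subquotients ${}^w\rho$ for $w$ in the finite set $W(\wL,\wL)/W_{\wL}$, hence is of finite length. Frobenius reciprocity gives $\Hom_{\wG}(\sigma,\pi)\cong\Hom_{\wL}(\bfr_{\wL,\wP}^{\wG}(\sigma),\rho)$, so every nonzero subrepresentation of $\pi$ has nonzero Jacquet module. Combined with exactness of $\bfr_{\wL,\wP}^{\wG}$, this propagates through successive layers of any ascending chain of subrepresentations of $\pi$ via the standard noetherian argument, bounding the length of $\pi$ by that of its Jacquet module.

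\textbf{(3) $\Rightarrow$ (1).} Let $\sigma$ be an irreducible subquotient common to $\pi$ and $\pi'$, and fix a Levi $\wM$ of $\wG$ minimal with $\bfr_{\wM,\wR}^{\wG}(\sigma)\neq 0$ for some parabolic $\wR=\wM U$; choose an irreducible quotient $\tau$ of this Jacquet module. By minimality of $\wM$, the representation $\tau$ is cuspidal. Exactness of $\bfr_{\wM,\wR}^{\wG}$ and the assumption that $\sigma$ is a subquotient of $\bfi_{\wL,\wP}^{\wG}(\rho)$ place $\tau$ as a subquotient of $\bfr_{\wM,\wR}^{\wG}\bfi_{\wL,\wP}^{\wG}(\rho)$, and cases (1)--(3) of the geometric lemma then force $(\wM,\tau)$ to be $W$-conjugate to $(\wL,\rho)$. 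Repeating the argument with $\pi'$ in place of $\pi$ gives $W$-conjugacy of $(\wM,\tau)$ to $(\wL',\rho')$, whence $(\wL,\rho)$ and $(\wL',\rho')$ are $W$-conjugate. The implication (2) $\Rightarrow$ (3) is immediate from the finite length and the nonvanishing of $\pi$.

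\textbf{(1) $\Rightarrow$ (2).} After conjugating by the Weyl element witnessing $(\wL,\rho)\sim_W(\wL',\rho')$, one reduces to the case $\wL=\wL'$ and $\rho\cong\rho'$, with possibly distinct parabolics $\wP,\wP'$ sharing $\wL$ as Levi. The remaining task---independence of the Jordan--Hölder series of $\bfi_{\wL,\wP}^{\wG}(\rho)$ from the choice of $\wP$---is the main obstacle, since it requires more than just the geometric lemma in its stated form. I would argue this by the standard deformation argument: for $\chi$ ranging over a Zariski-dense open subset of the torus $\cX(\wL)$ of unramified characters, $\bfi_{\wL,\wP}^{\wG}(\rho\chi)$ is irreducible and the normalized intertwining operator yields an isomorphism $\bfi_{\wL,\wP}^{\wG}(\rho\chi)\simeq\bfi_{\wL,\wP'}^{\wG}(\rho\chi)$. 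Specializing at $\chi=1$ in the Grothendieck group of $\cM(\wG)$ then yields the desired equality of Jordan--Hölder series, and the entire construction carries over to covering groups since it depends only on the geometric lemma and standard representation-theoretic manipulations.
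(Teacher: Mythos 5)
The paper itself offers no proof of this theorem: it is quoted from Bernstein--Zelevinsky with the assertion that the linear-case argument transfers, so there is no in-paper proof to compare against. On its own terms, your treatment of (2)$\Rightarrow$(3) and (3)$\Rightarrow$(1) (the minimal-Levi argument combined with the geometric lemma) is the standard and correct route. Two problems remain. The lesser one is in the finite-length step: Frobenius reciprocity gives $\bfr_{\wL,\wP}^{\wG}(\tau)\neq 0$ for a nonzero \emph{subrepresentation} $\tau\subset\pi$, but to make a chain $\tau_i\subsetneq\tau_{i+1}$ inject into a chain in the Jacquet module you need $\bfr_{\wL,\wP}^{\wG}(\tau_{i+1}/\tau_i)\neq 0$, and $\tau_{i+1}/\tau_i$ is only a subquotient of $\pi$, so the adjunction does not apply to it. The fix is the observation you already use in (3)$\Rightarrow$(1): every irreducible subquotient of $\pi$ has nonvanishing Jacquet module with respect to \emph{some} standard parabolic with Levi conjugate to $\wL$, so one runs the chain argument through the exact functor $\bigoplus_{\wQ}\bfr_{\wM_{\wQ},\wQ}^{\wG}$ over the finitely many such parabolics.

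The serious gap is in (1)$\Rightarrow$(2), which is the hard direction. The deformation argument rests on generic irreducibility of $\bfi_{\wL,\wP}^{\wG}(\rho\chi)$, a deep theorem that is not available at this stage of the development (its usual proofs pass through the Bernstein decomposition and the theory of intertwining operators, and commonly use independence of the parabolic as an input, so there is a genuine circularity risk), and the step ``specializing at $\chi=1$ in the Grothendieck group'' is itself an unproved and nontrivial claim about families of admissible representations. The honest way to justify that specialization is to note that $\chi\mapsto\mathrm{tr}\bigl(f\mid\bfi_{\wL,\wP}^{\wG}(\rho\chi)\bigr)$ is a regular function on $\cX(\wL)$ for each $f\in\cH(\wG)$ and then invoke linear independence of characters of inequivalent irreducible admissible representations; but once characters are in play, van Dijk's induced-character formula shows directly that $\mathrm{tr}\bigl(f\mid\bfi_{\wL,\wP}^{\wG}(\rho)\bigr)$ depends only on the conjugacy class of $(\wL,\rho)$ and not on $\wP$, which yields (1)$\Rightarrow$(2) with multiplicities, with no appeal to generic irreducibility or intertwining operators, and by a purely Mackey-theoretic computation that transfers verbatim to covering groups. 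As written, the key implication of your proof rests on inputs that are either unestablished or circular.
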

	
	We can now deduce
	\begin{corollary}
		Parabolic induction preserves finite length representations.
	\end{corollary}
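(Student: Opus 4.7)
The plan is to reduce, via standard devissage, to the known fact that parabolic induction from an irreducible cuspidal has finite length (\cref{T:JH series of induced of cuspidals}). Since parabolic induction $\bfi_{\wL,\wP}^\wG$ is exact (as noted in \cref{SS:ind and res}), a composition series of a finite length representation $\sigma$ of $\wL$ maps to a filtration of $\bfi_{\wL,\wP}^\wG(\sigma)$ whose successive subquotients are $\bfi_{\wL,\wP}^\wG(\sigma_i)$ with $\sigma_i \in \Irr(\wL)$. Thus it is enough to treat the case of $\sigma \in \Irr(\wL)$.

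Next I would invoke the statement recalled at the end of \cref{SS:ind and res}: every irreducible smooth representation of $\wL$ (which is itself a covering group of a reductive $p$-adic group $L$) arises as a subquotient of a parabolic induction $\bfi_{\wM,\wQ}^{\wL}(\rho)$ for some Levi $\wM \le \wL$ and some irreducible cuspidal $\rho$ of $\wM$. Combining this with the exactness of $\bfi_{\wL,\wP}^\wG$, the representation $\bfi_{\wL,\wP}^\wG(\sigma)$ is a subquotient of $\bfi_{\wL,\wP}^\wG \bfi_{\wM,\wQ}^{\wL}(\rho)$. By transitivity of parabolic induction (which follows in the usual way from the uniqueness of the unipotent radical lifts, and the compatibility of the modulus character factor $\delta_{\wP}^{-1/2}$ with stages), we have
\[
\bfi_{\wL,\wP}^\wG \bfi_{\wM,\wQ}^{\wL}(\rho) \;\simeq\; \bfi_{\wM,\wQ'}^{\wG}(\rho)
\]
for $\wQ' = \wQ\cdot N$ a parabolic of $\wG$ with Levi component $\wM$.

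Finally, \cref{T:JH series of induced of cuspidals} tells us precisely that $\bfi_{\wM,\wQ'}^{\wG}(\rho)$ has finite length for any irreducible cuspidal $\rho$ of $\wM$. A subquotient of a finite length representation is of finite length, so $\bfi_{\wL,\wP}^\wG(\sigma)$ is of finite length. Combined with the devissage in Step~1, this gives the result for arbitrary finite length $\sigma$.

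The argument is essentially formal once one has the cuspidal support of irreducible representations together with transitivity of parabolic induction, so I do not anticipate any serious obstacle. The one point worth checking carefully is the bookkeeping with the modulus characters in transitivity, which becomes trivial with the normalized induction convention adopted in \cref{SS:ind and res}: $\delta_{\wQ'}^{1/2}|_{\wM} = \delta_{\wQ}^{1/2}\cdot \delta_{\wP}^{1/2}|_{\wM}$, so the normalized $\bfi$'s compose cleanly.
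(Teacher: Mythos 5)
Your proof is correct and follows essentially the same route as the paper: reduce to the irreducible case by exactness, realize an irreducible as a subquotient (the paper says subrepresentation) of an induction from an irreducible cuspidal, and conclude via transitivity of parabolic induction together with \cref{T:JH series of induced of cuspidals}. The only difference is that you spell out the devissage from finite length to irreducible, which the paper leaves implicit.
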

	\begin{proof}
	If the parabolic induction involves a representation which is irreducible cuspidal then the
        corollary  follows from the above theorem. In general, any irreducible representation is a subrepresentation of a parabolically induced representation
       of an irreducible cuspidal representation. We conclude the above corollary
        by the transitivity of parabolic induction.
	\end{proof}
	
	In order to show the analogous result for parabolic restriction, we need to use the following theorem due to
        Howe whose proof for covering groups  is the same as in the linear case:	
	
	\begin{theorem}[see {\cite[Thm 4.1]{BerZel}} or {\cite[VI.6.3]{Renard}}]\label{T:Howe theorem}
	A smooth representation of $\wG$ is of finite length if and only if it is admissible and finitely generated.
	\end{theorem}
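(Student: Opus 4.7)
The implication ``finite length $\Rightarrow$ admissible and finitely generated'' is routine. Such a representation admits a finite composition series whose factors are irreducible, hence admissible by \cref{T:irred is adm}. Since $\wK$ is compact and we work over $\bC$, the functor $V \mapsto V^\wK$ is exact, so any extension of two representations with finite-dimensional $\wK$-fixed vectors again has finite-dimensional $\wK$-fixed vectors; thus admissibility propagates along the composition series. Finite generation is preserved even more trivially under extensions.

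The non-trivial direction is the converse. Suppose $\pi$ is admissible and finitely generated. Choose a compact open subgroup $\wK\leq\wG$ small enough that $\pi = \cH(\wG)\cdot\pi^\wK$: such $\wK$ exists since a finite set of generators of $\pi$ lies in $\pi^{\wK_0}$ for some compact open $\wK_0$, and we may replace $\wK_0$ by the intersection of its conjugates over the generators. Set $d := \dim_\bC \pi^\wK$, which is finite by admissibility, and let $c = c(\wK)$ be the constant provided by the Uniform Admissibility Theorem~\ref{T:uniform adm}, so that $\dim V^\wK \leq c$ for every irreducible representation $V$ of $\wG$.

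The key elementary observation is that for any proper subrepresentation $\sigma\subsetneq \pi$, one has $\sigma^\wK \subsetneq \pi^\wK$; otherwise $\sigma \supseteq \cH(\wG)\cdot \sigma^\wK = \cH(\wG)\cdot\pi^\wK = \pi$, contradicting properness. Using this, I would show that for any strictly increasing chain $0=\sigma_0\subsetneq\sigma_1\subsetneq\cdots\subsetneq\sigma_n=\pi$ of subrepresentations, the length $n$ is bounded in terms of $c$ and $d$. To do so, pass to a smaller compact open subgroup $\wK'\subseteq\wK$ sufficiently small to detect a non-zero fixed vector in each consecutive quotient $\sigma_{i+1}/\sigma_i$; the chain $\sigma_i^{\wK'}$ is then strictly increasing inside $\pi^{\wK'}$. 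The role of uniform admissibility is to ensure that, as $\wK'$ shrinks, the dimension of $\wK'$-fixed vectors in each irreducible subquotient stays bounded by $c(\wK')$, preventing the resulting bound on $n$ from depending on the chain. Combined with the first observation (applied repeatedly to rule out too many ``visible'' steps), this gives a uniform bound, and hence finite length.

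The main obstacle is that an arbitrary subrepresentation $\sigma\subseteq \pi$ need not be generated by its own $\wK$-fixed vectors, so the assignment $\sigma\mapsto \sigma^\wK$ is not strictly monotone on all chains in $\pi$, and one cannot conclude finiteness of chain length directly from $\dim \pi^\wK<\infty$. One must control the ``invisible'' steps where $\sigma_i^\wK = \sigma_{i+1}^\wK$ by shrinking the open compact subgroup, while keeping the bound uniform in the chain — this is precisely what uniform admissibility is designed to deliver, and is the heart of the argument.
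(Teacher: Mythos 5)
The forward direction of your argument is fine, and your reduction of the converse to the following claim is also correct: if one can find a \emph{single} compact open subgroup $\wK'$ such that every irreducible subquotient of $\pi$ has a non-zero $\wK'$-fixed vector, then exactness of $(-)^{\wK'}$ gives $n\le \dim \pi^{\wK'}<\infty$ for every strictly increasing chain, hence finite length. The gap is that you never establish such a $\wK'$, and the tool you invoke for it cannot do the job. Uniform Admissibility (\cref{T:uniform adm}) provides an \emph{upper} bound $\dim V^{\wK'}\le c(\wK')$ for irreducible $V$; what you need is a \emph{lower} bound $(\sigma_{i+1}/\sigma_i)^{\wK'}\neq 0$ holding for one fixed $\wK'$ uniformly over all subquotients of all chains. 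If instead you let $\wK'$ depend on the chain, your bound becomes $n\le\dim\pi^{\wK'}$, and this quantity is unbounded as $\wK'$ shrinks (already for $\pi$ irreducible infinite-dimensional); knowing in addition that each irreducible subquotient contributes at most $c(\wK')$ to $\dim\pi^{\wK'}$ only yields $\dim\pi^{\wK'}\le n\,c(\wK')$, an inequality in the useless direction. So the ``heart of the argument'' you gesture at is genuinely absent.

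What actually closes this gap in the classical proofs (\cite[Thm 4.1]{BerZel}, \cite[VI.6.3]{Renard}, to which the present paper simply defers, noting the argument carries over verbatim to $\wG$) is control of the \emph{cuspidal supports} of the irreducible subquotients, proved by induction on the parabolic rank. One shows that each Jacquet module $\bfr_{\wL,\wP}^{\wG}(\pi)$ is again finitely generated and, by Jacquet's lemma (\cref{T:classical Jacquet lemma}), admissible; by induction it has finite length, so by \cref{T:geometric lemma conseq} and \cref{T:JH series of induced of cuspidals} only finitely many isomorphism classes of non-cuspidal irreducibles can occur as subquotients of $\pi$. The cuspidal subquotients are handled separately using the splitting of the cuspidal part (\cref{T:dec of M(G) into cusp and indu}): the cuspidal component of $\pi$ is a finitely generated admissible direct summand, hence a finite direct sum of irreducibles. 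Only after this finiteness of the set of irreducible subquotients is established can one choose the single $\wK'$ your argument requires. I would recommend either reproducing that induction or, as the paper does, citing it; as written, your proof of the hard direction does not go through.
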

	
	A crucial tool that goes into the proof of Howe's theorem is Jacquet's lemma. Later, in \cref{T:gen Jacquet}, we will state a generalized version of it since it is necessary for the second adjointness. 
	Until then, let us give the classical statement
	\begin{theorem} [Jacquet's lemma]\label{T:classical Jacquet lemma}
	Let $V$ be an admissible representation of $\wG$, $\wK\le \wG$ an open compact subgroup admitting an Iwahori decomposition with respect to $\wP=\wL N$.
	Then the following canonical map is surjective
	\[V^{\wK}\to (V_N)^{\wK\cap \wL}.\]
	\end{theorem}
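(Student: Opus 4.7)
The plan is to use the classical contracting-element argument of Jacquet and Casselman. Given $\bar v\in(V_N)^{\wK\cap\wL}$, I will construct a $\wK$-fixed preimage of $\bar v$ in $V$.

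First, lift $\bar v$ to some $v_0\in V$ and successively average over $\wK\cap\wL$ (which fixes $\bar v$ and so preserves the image in $V_N$) and over $\wK\cap N$ (which acts trivially on $V_N$ and so also preserves the image). The result $v_1$ lies in $V^{(\wK\cap\wL)(\wK\cap N)}$ and still projects to $\bar v$. By smoothness, $v_1$ is additionally fixed by some (possibly very small) compact open subgroup $\wK''\subseteq\wK\cap N^-$.

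Next, choose a strictly $\wP$-positive central element $a\in Z(\wL)$: conjugation by $a^n$ contracts any fixed compact open subgroup of $N^-$ to $\{1\}$ and, dually, expands any fixed compact open subgroup of $N$ to exhaust $N$. For $n$ sufficiently large, one has $a^n\wK'' a^{-n}\subseteq\wK\cap N^-$ and $a^n(\wK\cap N)a^{-n}\supseteq\wK\cap N$, while $a$ centralizes $\wK\cap\wL$. Therefore $w_n:=\pi(a^n)v_1$ is invariant under $(a^n\wK'' a^{-n})(\wK\cap\wL)(a^n(\wK\cap N)a^{-n})$, which contains $(\wK\cap N^-)(\wK\cap\wL)(\wK\cap N)=\wK$ by the Iwahori factorization. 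Hence $w_n\in V^\wK$, and its image in $V_N$ is $a^n\bar v$.

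The main obstacle is the descent from the statement $a^n\bar v\in I:=\mathrm{Im}(V^\wK\to V_N)$ for all large $n$ to the conclusion $\bar v\in I$ itself. This is where admissibility enters essentially. Because $V^\wK$ is finite-dimensional, $I$ is a finite-dimensional subspace of $(V_N)^{\wK\cap\wL}$; because $a$ is a group element of $\wL$, it acts invertibly on $V_N$. A linear relation among the iterates $\{a^{n_0+i}\bar v\}_{i\ge 0}\subseteq I$ exists by finite-dimensionality of $I$, and applying $a^{-n_0}$ to it shows that the cyclic $\bC[a]$-span $W$ of $\bar v$ is finite-dimensional, $a$-stable, and that $a$ acts invertibly on it. Consequently the $\dim W$ vectors $\{a^n\bar v:n_0\le n<n_0+\dim W\}$ --- being the image of a basis of $W$ under the automorphism $a^{n_0}|_W$ --- themselves form a basis of $W$; since all of them lie in $I$, we conclude $W\subseteq I$ and hence $\bar v\in I$.
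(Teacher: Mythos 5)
Your overall strategy is the classical Jacquet--Casselman argument (lift, average over $\wK\cap\wP$, translate by powers of a central element $a$ of $\wL$, then use admissibility to descend from $a^n\bar v\in I$ to $\bar v\in I$), and your final descent step --- the finite\-/dimensional cyclic $\bC[a]$-module computation --- is correct and is exactly where admissibility enters. The gap is in the middle: $w_n=\pi(a^n)v_1$ is \emph{not} $\wK$-invariant. You establish $a^n\wK''a^{-n}\subseteq\wK\cap N^-$ and then use it as if it read $\supseteq$: the set $(a^n\wK''a^{-n})(\wK\cap\wL)\,(a^n(\wK\cap N)a^{-n})$ contains $\wK$ only if its $N^-$-factor contains $\wK\cap N^-$, whereas with your choice of $a$ the group $a^n\wK''a^{-n}$ shrinks to $\{1\}$. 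Moreover no choice of $a$ repairs this: conjugation by a non-central element of $Z(\wL)$ expands compact open subgroups of one of $N$, $N^-$ while contracting those of the other, so one can never arrange both $a^n\wK''a^{-n}\supseteq\wK\cap N^-$ and $a^n(\wK\cap N)a^{-n}\supseteq\wK\cap N$ simultaneously (and since $\wK''$ may be much smaller than $\wK\cap N^-$, genuine expansion on the $N^-$-side is unavoidable).

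The missing ingredient is the averaging projector. Choose $a$ in the direction that expands $N^-$ under conjugation, so that $a^n\wK''a^{-n}\supseteq\wK\cap N^-$ for $n\gg 0$, at the price of $a^n(\wK\cap N)a^{-n}$ shrinking; then $\pi(a^n)v_1$ is invariant under $\wK\cap N^-$ and $\wK\cap\wL$ but not under $\wK\cap N$. Now set $w_n:=e_{\wK}\,\pi(a^n)v_1$, where $e_{\wK}$ is the Haar average over $\wK$. The Iwahori factorization gives $e_{\wK}=e_{\wK\cap N}\,e_{\wK\cap\wL}\,e_{\wK\cap N^-}$ in the Hecke algebra; the factors $e_{\wK\cap N^-}$ and $e_{\wK\cap\wL}$ act as the identity on $\pi(a^n)v_1$, and the remaining factor is an average over a compact subgroup of $N$, which does not change the image in $V_N$. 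Hence $w_n\in V^{\wK}$ and still maps to $a^n\bar v$ (up to the harmless modulus factor if one uses the normalized Jacquet functor), after which your descent argument applies verbatim. (For the record, the paper states this lemma without proof and refers to the classical sources, so there is no in-text argument to compare against.)
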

	Immediately from Jacquet's lemma we obtain
	\begin{corollary}\label{C:res preserves adm}
	Parabolic restriction preserves admissibility.
	\end{corollary}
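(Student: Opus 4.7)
The plan is to deduce the corollary directly from Jacquet's lemma (\cref{T:classical Jacquet lemma}). Recall that admissibility of $\bfr_{\wL,\wP}^{\wG}(V) = (V \otimes \delta_\wP^{1/2})_N$ as an $\wL$-representation means that $V_N^{\wK'}$ is finite-dimensional for every compact open subgroup $\wK' \le \wL$ (the twist by $\delta_\wP^{1/2}$ is a smooth character of $\wL$, hence irrelevant for admissibility). The strategy is to manufacture, from $\wK'$, a compact open subgroup $\wK \le \wG$ with Iwahori factorization with respect to $\wP = \wL N$ such that the Levi component $\wK \cap \wL$ is contained in $\wK'$, and then bound $\dim V_N^{\wK'}$ in terms of $\dim V^{\wK}$.

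More precisely, I would first invoke the standard fact that $\wG$ admits a neighbourhood basis of the identity consisting of compact open subgroups with Iwahori factorization with respect to $\wP$: for the linear group $G$ these can be taken as sufficiently deep congruence subgroups built from a parahoric adapted to the Levi decomposition $P = LN$, and in the covering case they are obtained by pulling back such subgroups to $\wG$, using that $\wG \to G$ admits splittings over small enough compact opens (\cref{SS:central ext}). Shrinking if necessary, one produces a compact open $\wK \le \wG$ with $\wK = \wK^+ \wK^0 \wK^-$ (Iwahori factorization, \cref{D:Iwahori factorization}) and with $\wK^0 = \wK \cap \wL \subseteq \wK'$.

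Jacquet's lemma then furnishes the surjection $V^{\wK} \twoheadrightarrow (V_N)^{\wK \cap \wL}$, while admissibility of $V$ gives $\dim V^{\wK} < \infty$. Hence $\dim (V_N)^{\wK \cap \wL} < \infty$, and since $\wK \cap \wL \subseteq \wK'$ we have the obvious containment $(V_N)^{\wK'} \subseteq (V_N)^{\wK \cap \wL}$, so $\dim (V_N)^{\wK'} < \infty$ as required. As $\wK'$ was arbitrary, $V_N$ (and therefore $\bfr_{\wL,\wP}^{\wG}(V)$) is admissible.

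I do not expect any real obstacle: the only content beyond quoting Jacquet's lemma is the geometric input that compact opens with Iwahori factorization and arbitrarily small Levi component exist, which is routine for $G$ and transfers immediately to $\wG$ by pullback. The proof is really just bookkeeping around \cref{T:classical Jacquet lemma}.
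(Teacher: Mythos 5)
Your argument is exactly the one the paper intends: the corollary is stated as an immediate consequence of Jacquet's lemma (\cref{T:classical Jacquet lemma}), and your proposal simply supplies the routine details (existence of compact opens with Iwahori factorization and arbitrarily small Levi part, the surjection $V^{\wK}\twoheadrightarrow (V_N)^{\wK\cap\wL}$, and the harmlessness of the $\delta_\wP^{1/2}$ twist). The proof is correct and matches the paper's approach.
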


	Use Howe's theorem, the above corollary and the easy fact that parabolic restriction preserves finite type to deduce
	\begin{corollary}\label{C:res preserves finite length}
	Parabolic restriction preserves finite length representations.
	\end{corollary}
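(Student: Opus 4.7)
The plan is to pass through Howe's theorem \cref{T:Howe theorem}, which characterizes finite length representations as those that are simultaneously admissible and finitely generated. This reduces the corollary to verifying that parabolic restriction preserves each of these two properties separately, both of which are available (or essentially so) in the excerpt.

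First, I would let $\pi\in\cM(\wG)$ be of finite length and apply Howe's theorem to conclude that $\pi$ is admissible and finitely generated as a $\wG$-representation. Next, I would invoke Corollary \cref{C:res preserves adm}, which is an immediate consequence of Jacquet's lemma \cref{T:classical Jacquet lemma}, to see that $\bfr_{\wL,\wP}^{\wG}(\pi)$ is admissible as an $\wL$-representation.

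It then remains to observe the \emph{easy fact} that parabolic restriction preserves finite generation. Unwinding the definition from \cref{SS:ind and res}, $\bfr_{\wL,\wP}^{\wG}(\pi)=(\pi\otimes\delta_{\wP}^{1/2})_N$; the twist by $\delta_{\wP}^{1/2}$ is trivially compatible with finite generation, and coinvariants of a finitely generated module are finitely generated because if $\{v_1,\ldots,v_r\}$ generates $\pi$ as a $\wG$-module, then a fortiori it generates $\pi$ as a $\wP$-module (after multiplying by a suitable element of $\wG$ using, say, the Iwasawa-type decomposition $\wG=\wP\wK$ from the Cartan decomposition, one sees that finitely many translates suffice), and the images of these vectors generate $\pi_N$ as an $\wL$-module. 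Alternatively, one can argue categorically: $\bfr_{\wL,\wP}^{\wG}$ is a left adjoint to $\bfi_{\wL,\wP}^{\wG}$ by Frobenius reciprocity (as recalled in \cref{SS:ind and res}), hence it commutes with arbitrary colimits, which immediately implies preservation of finite generation.

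Putting the pieces together, $\bfr_{\wL,\wP}^{\wG}(\pi)$ is an admissible, finitely generated smooth representation of $\wL$, so applying Howe's theorem \cref{T:Howe theorem} in the other direction yields that $\bfr_{\wL,\wP}^{\wG}(\pi)$ has finite length. There is essentially no obstacle here, as all the ingredients are already in place; the only slightly delicate point is the verification that finite generation passes to Jacquet modules, which is standard and admits the two short arguments just sketched.
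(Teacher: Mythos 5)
Your proof is correct and follows exactly the paper's route: Howe's theorem (\cref{T:Howe theorem}) combined with \cref{C:res preserves adm} and the easy fact that parabolic restriction preserves finite type. One small caveat: your categorical alternative (left adjoints preserve colimits, hence finite generation) is too quick as stated, since preserving colimits does not by itself imply preserving finitely generated objects; but your primary argument via the decomposition $\wG=\wP\wK$ is the standard one and suffices.
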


	\section{The Bernstein decomposition}\label{S:Bernstein dec}
	We give an exposition of a small part of a theory due to Bernstein which 
	allows one to decompose the category $\cM(\wG)$ of smooth complex 
	representations of $\wG$  as a direct product of certain indecomposable full subcategories,
	now called the Bernstein components of $\cM(\wG)$.
	The results are due to \cite{BerDel} where it is also stated that they hold for finite central extensions of reductive $p$-adic groups. 
	Indeed, no essential modifications are needed to adapt the proof from \cite{BerDel} to the case of finite central extensions. 
	However, in the following we follow mostly the exposition of the linear case from \cite[VI.7]{Renard}  giving
        precise references in appropriate places.
	
	The idea is that whereas $\wG$ does not have compact representations, the group $\wGo$ does,  and moreover, from Harish-Chandra's \cref{T:Harish-Chandra}, all irreducible cuspidal representations of $\wG$ restrict to compact representations of $\wGo$.
	So using the results of \cref{SS:compact mod center}, we can decompose $\cM(\wG)$ into a cuspidal part and an induced part.
	Using induction we express the induced part in a similar way.
	
	\subsection{Cuspidals split}
	In this section we sketch the decomposition of $\cM(\wG)$ into a product of cuspidal and induced. 
	
	We define $\cM(\wG)_\cs$ (resp., $\cM(\wG)_\indu$) to be the full subcategory of $\cM(\wG)$ formed of representations all of whose irreducible subquotients are cuspidal (resp., that have no cuspidal irreducible subquotients).
	The set $\Irr_\cs(\wG)$ denotes the set of irreducible cuspidal representations of $\wG$.

	\begin{remark}
		The category $\cM(\wG)_\cs$  (resp., $\cM(\wG)_\indu$) is the pullback of the subcategory of compact representations (resp., non-compact representations) from $\cM(\wGo)$ through the functor $\Res_\wGo^\wG$.
	\end{remark}
	
	%
	
	Let $(V,\pi)\in\Irr_\cs(\wG)$ and denote by $[\pi]$ its inertia class, i.e., its orbit under $\cX(\wG)$.
	From \cref{P:irred res to Go and inertia classes} we have that the restriction of $\pi$ to $\wGo$ depends only on the inertia class and we know moreover that this restriction has finite length and is semisimple.
	
	\begin{definition}
		We define $\cM(\wG)_{[\pi]}$ to be the full subcategory of $\cM(\wG)$ formed of those representations
		all of whose subquotients belong to $[\pi]$.
		Similarly define $\cM(\wG)_{[\text{\rm out }\pi]}$.
	\end{definition}
	
	\begin{remark}
		Here is a different way of thinking about $\cM(\wG)_{[\pi]}$.
		If $\tau_1,\tau_2,\dots,\tau_l$ are the irreducible summands of $\pi$ restricted to $\wGo$, then  the category $\cM(\wG)_{[\pi]}$ consists precisely in the representations of $\wG$ whose restriction to $\wGo$ are direct sums of $\tau_i, i=1,\dots, l$,
		i.e.,  those
		representations of $\wG$ whose restriction to $\wGo$ belongs to $\cM(\wGo)_{[\cA]}$, where $\cA = \{\tau_i\mid i=1,\dots, l\}$. (See \cref{SS:compact split}.)
	\end{remark}
	
	Harish-Chandra's \cref{T:Harish-Chandra} together with the results from \cref{SS:compact mod center} give
	\begin{theorem}\label{T:cuspidal block}
		For an irreducible cuspidal representation $\pi$ of $\wG$,
		we have a decomposition of the category $\cM(\wG)$ as
		\[ \cM(\wG)\simeq \cM(\wG)_{[\pi]}\times \cM(\wG)_{[\text{out }\pi]}. \]
	\end{theorem}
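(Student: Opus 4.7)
The plan is to deduce this theorem directly from the corresponding decomposition for compact-modulo-center representations established in \cref{T:dec of comp mod center}, using Harish-Chandra's theorem to bridge the cuspidal setting with the compact-modulo-center setting.

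First I would invoke Harish-Chandra's \cref{T:Harish-Chandra}, which asserts that an irreducible smooth representation of $\wG$ is cuspidal if and only if it is compact modulo center. In particular, the fixed irreducible cuspidal representation $\pi$ is compact modulo center, so it provides an irreducible object of $\cM(\wG)_\cs$ in the sense of \cref{SS:compact mod center}.

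Next I would reconcile the two different-looking definitions of $\cM(\wG)_{[\pi]}$: the one in this section (via inertia classes of irreducible subquotients under the action of $\cX(\wG)$) and the one in \cref{SS:compact mod center} (via restriction to $\wGo$ of irreducible subquotients). By \cref{P:irred res to Go and inertia classes}, two irreducible representations of $\wG$ lie in the same inertia class if and only if their restrictions to $\wGo$ are isomorphic; moreover such restrictions are semisimple and of finite length. This identifies the two notions of $\cM(\wG)_{[\pi]}$, and dually also identifies the two notions of $\cM(\wG)_{[\text{out }\pi]}$.

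With these identifications in place, the desired decomposition is now precisely the statement of \cref{T:dec of comp mod center} applied to the irreducible compact-modulo-center representation $\pi$, and the proof is complete. There is no real obstacle here: all the substantive work has already been carried out in \cref{SS:compact mod center} (which reduced the compact-modulo-center case to the compact case for $\wGo$ via the normal subgroup $\wGo$ of $\wG$), and the only thing this theorem adds is the observation that cuspidality of $\pi$ in $\wG$ matches compactness-modulo-center via Harish-Chandra. The one point that must be handled carefully is ensuring that the inertia-class description of $\cM(\wG)_{[\pi]}$ coincides with the $\wGo$-restriction description, but this is exactly the content of \cref{P:irred res to Go and inertia classes}.
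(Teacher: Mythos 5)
Your proposal is correct and follows exactly the route the paper indicates: the paper itself derives \cref{T:cuspidal block} by combining Harish-Chandra's \cref{T:Harish-Chandra} with \cref{T:dec of comp mod center}, and the reconciliation of the inertia-class and $\wGo$-restriction descriptions of $\cM(\wG)_{[\pi]}$ via \cref{P:irred res to Go and inertia classes} is the same point the paper records in the remarks surrounding its definition.
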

	
	One uses the uniform admissibility \cref{T:uniform adm} to check the condition \eqref{Eq:condition (KF)} in \cref{SS:compact split} and then applies \cref{T:dec of comp mod center} and \cref{T:dec com mod center x others} to get the following decomposition
	\begin{theorem}[see {\cite[VI.3.5]{Renard}}]\label{T:dec of M(G) into cusp and indu}
		The subcategories $\cM(\wG)_\cs$ and $\cM(\wG)_\indu$ split the category $\cM(\wG)$:
		\begin{align}\label{Eq:dec cuspidal x induced}
			\cM(\wG) = \cM(\wG)_\cs\times \cM(\wG)_\indu 
		\end{align}
		and moreover the cuspidal part decomposes as
		\begin{align}\label{Eq:dec cuspidals}
			\cM(\wG)_\cs = \prod_{[\pi]\in[\Irr(\wG)_\cs]} \cM(\wG)_{[\pi]}.
		\end{align}
	\end{theorem}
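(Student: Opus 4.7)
The plan is to deduce both assertions from the categorical decomposition results of Section~3 once we identify cuspidal representations of $\wG$ with compact-modulo-center ones via Harish-Chandra's \cref{T:Harish-Chandra}. Concretely I would apply \cref{T:dec com mod center x others} to obtain the splitting $\cM(\wG) = \cM(\wG)_\cs\times \cM(\wG)_\indu$, and \cref{T:dec of comp mod center} to refine the cuspidal factor into a product indexed by inertia classes; the latter indexing makes sense because by \cref{P:irred res to Go and inertia classes} the inertia class $[\pi]$ of an irreducible cuspidal $\pi$ is determined by the isomorphism class of its $\wGo$-restriction.

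The substantive step is to verify the compact-finiteness condition \eqref{Eq:condition (KF)} for $\wGo$: for every open compact subgroup $K\le \wGo$, only finitely many isomorphism classes of compact irreducible representations of $\wGo$ admit a non-zero $K$-fixed vector. (Note that since $Z(\wGo)$ is compact, compact-modulo-center on $\wGo$ coincides with compact.) My plan for this is to combine \cref{T:compact rep as GxG} with \cref{T:uniform adm} and the support corollary stated right after \cref{T:uniform adm}. By \cref{T:compact rep as GxG}, for each compact irreducible $V$ of $\wGo$ matrix coefficients produce an embedding $V\boxtimes V^\vee \hookrightarrow \cH(\wGo)$ as $\wGo\times\wGo$-modules, and distinct isomorphism classes give independent summands; taking $K\times K$-invariants yields
\[ \bigoplus_{V}\, V^K\otimes (V^\vee)^K \;\hookrightarrow\; \cH(\wGo)^{K\times K}. \]
\cref{T:uniform adm} ensures that each summand with $V^K\neq 0$ contributes dimension at least one. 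Meanwhile the support corollary provides a fixed compact subset $\Omega\subset\wGo$, independent of $V$, such that for every $v\in V^K$ the function $g\mapsto e_K\pi(g)v$ is supported in $\Omega$; consequently every matrix coefficient built from $K$-fixed vectors is supported in $\Omega$, so the image of the above injection lies in the finite-dimensional subspace of $K$-biinvariant functions supported in $\Omega$ (finitely many $K$-double cosets meet the compact $\Omega$). Finiteness of the number of summands follows.

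With \eqref{Eq:condition (KF)} established for $\wGo$, \cref{T:dec com mod center x others} immediately delivers $\cM(\wG) = \cM(\wG)_\cs\times \cM(\wG)_\indu$, and \cref{T:dec of comp mod center} applied to the cuspidal factor, together with the bijection between inertia classes of cuspidals and isomorphism classes of their $\wGo$-restrictions furnished by \cref{P:irred res to Go and inertia classes}, yields the refined decomposition \eqref{Eq:dec cuspidals}. The main obstacle is the \eqref{Eq:condition (KF)} verification; once it is in place, the theorem is a direct transcription of the general framework of Section~3 to the pair $\wGo\subset\wG$.
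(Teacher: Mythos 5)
Your proposal is correct and follows essentially the same route as the paper: identify cuspidal with compact-modulo-center via Harish-Chandra's theorem, verify condition \eqref{Eq:condition (KF)} for $\wGo$ by means of the uniform admissibility theorem (via the support corollary), and then invoke \cref{T:dec of comp mod center} and \cref{T:dec com mod center x others}. Your fleshed-out verification of \eqref{Eq:condition (KF)} — bounding the $K$-biinvariant matrix coefficients inside the finite-dimensional space of functions supported on a fixed compact set — is exactly the standard argument the paper is alluding to.
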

	
	Since the center of a product of categories is the product of their centers (see \cref{P:center of product of cats}), we have
	\begin{corollary}
		\begin{align*}
			\cZ(\cM(\wG)) = \prod_{[\pi]\in[\Irr_\cs(\wG)]} \cZ_{[\pi]}\times \cZ(\cM(\wG)_\indu).
		\end{align*}
	\end{corollary}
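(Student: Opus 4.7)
The plan is to invoke \cref{P:center of product of cats} (the first, easy direction) applied to the two decompositions provided by \cref{T:dec of M(G) into cusp and indu}. Recall that proposition asserts that if $\cC \simeq \prod_i \cC_i$ then $\cZ(\cC) \simeq \prod_i \cZ(\cC_i)$, a statement which follows formally from the universal property of the product of abelian categories (morphisms split into a product of morphisms component by component, hence so do natural endomorphisms of the identity functor, which is precisely what defines the center).

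First, I would apply \cref{P:center of product of cats} to the equivalence
\[ \cM(\wG) = \cM(\wG)_\cs\times \cM(\wG)_\indu \]
from \eqref{Eq:dec cuspidal x induced} in \cref{T:dec of M(G) into cusp and indu}. This yields
\[ \cZ(\cM(\wG)) \simeq \cZ(\cM(\wG)_\cs)\times \cZ(\cM(\wG)_\indu). \]
Next, I would apply \cref{P:center of product of cats} again to the equivalence
\[ \cM(\wG)_\cs = \prod_{[\pi]\in[\Irr_\cs(\wG)]} \cM(\wG)_{[\pi]} \]
from \eqref{Eq:dec cuspidals}, now an \emph{infinite} product indexed by the (usually infinite) set of inertia classes of irreducible cuspidal representations. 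This gives
\[ \cZ(\cM(\wG)_\cs) \simeq \prod_{[\pi]\in[\Irr_\cs(\wG)]} \cZ(\cM(\wG)_{[\pi]}) = \prod_{[\pi]\in[\Irr_\cs(\wG)]} \cZ_{[\pi]}, \]
the last equality being the definition of $\cZ_{[\pi]}$. Substituting into the first displayed equation gives the desired identity.

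There is no real obstacle here: everything is formal once one has \cref{P:center of product of cats} and \cref{T:dec of M(G) into cusp and indu}. The only minor caveat worth a sentence in the written proof is that \cref{P:center of product of cats} is formulated for products in the $2$-category of abelian categories and applies equally well to infinite products (as built in \cref{SS:prelim centers}), so the passage from the finite product (cuspidal times induced) to the infinite product indexed by inertia classes is legitimate.
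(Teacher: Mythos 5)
Your proposal is correct and is exactly the paper's argument: the corollary is deduced by applying \cref{P:center of product of cats} to the two decompositions in \cref{T:dec of M(G) into cusp and indu} (the cuspidal-times-induced splitting and the product over inertia classes), with $\cZ_{[\pi]}$ defined as $\cZ(\cM(\wG)_{[\pi]})$. Nothing further is needed.
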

	
	It turns out that for an irreducible cuspidal representation $\pi$ the center $\cZ_{[\pi]}$ of $\cM(\wG)_{[\pi]}$ is not very hard to determine.
	Recall the notation $\Lambda(\wG) = \wG/\wGo$ and $\cX(\wG) = \Hom_\gr(\Lambda(\wG),\bC^\times)$ and notice that the group algebra $\bC[\Lambda(\wG)]$ is the algebra of regular functions on the algebraic variety (a torus) $\cX(\wG)$.
	
	We first need a lemma whose easy proof follows by  considerations of  the central character.
	\begin{lemma}\cite[V.2.7]{Renard}
		Given $\pi\in\Irr_\cs(\wG)$, 
		its stabilizer in $\cX(\wG)$,
 \[ \cG_\pi:=\Stab_{\cX(\wG)}(\pi) = \left \{\chi \in \cX(\wG)\, \,  \left | \pi \otimes \chi \cong \pi \right .  \right \},\]
is finite.
	\end{lemma}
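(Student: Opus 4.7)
The plan is to compare central characters on the two sides of the isomorphism $\pi\otimes\chi\cong\pi$ and then to exploit the fact, recorded earlier in the paper, that $Z(\wG)$ has finite index image in $\Lambda(\wG)=\wG/\wGo$.

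First, since $\pi$ is irreducible, it is admissible by \cref{T:irred is adm}, and Schur's lemma applies: the center $Z(\wG)$ acts on the underlying space of $\pi$ through a character $\omega_\pi\colon Z(\wG)\to\bC^\times$. If $\chi\in\cX(\wG)$ satisfies $\pi\otimes\chi\cong\pi$, then taking central characters of both sides yields $\omega_\pi\cdot(\chi|_{Z(\wG)})=\omega_\pi$, hence $\chi|_{Z(\wG)}=1$. Thus $\cG_\pi$ is contained in the subgroup
\[ \cX(\wG)^{Z(\wG)}:=\{\chi\in\cX(\wG)\mid \chi|_{Z(\wG)}=1\}\subset\cX(\wG).\]

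Next, I would use that, as already noted in the text preceding \cref{D:unramified characters}, the image of $Z(\wG)$ in $\Lambda(\wG)=\wG/\wGo$ has finite index. Equivalently, $Z(\wG)\wGo/\wGo$ is a subgroup of finite index in the finitely generated free abelian group $\Lambda(\wG)$. Any character $\chi\in\cX(\wG)=\Hom_\gr(\Lambda(\wG),\bC^\times)$ which is trivial on $Z(\wG)$ must therefore factor through the finite quotient $\Lambda(\wG)/(Z(\wG)\wGo/\wGo)$. The character group of a finite abelian group is finite, so $\cX(\wG)^{Z(\wG)}$ is finite, and a fortiori so is $\cG_\pi$.

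The argument is essentially formal once Schur's lemma is available; the only ingredient beyond that is the index assertion on $Z(\wG)$ in $\Lambda(\wG)$, which holds verbatim for covering groups because $\wG\to G$ is finite and central. There is no real obstacle; the subtlety to watch for is simply that Schur's lemma requires admissibility, which is why the appeal to \cref{T:irred is adm} is needed.
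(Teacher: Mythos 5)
Your proof is correct and is exactly the argument the paper intends: the text introduces the lemma as one "whose easy proof follows by considerations of the central character," and your two steps (an unramified character fixing $\pi$ must be trivial on $Z(\wG)$ by comparing central characters via Schur's lemma, and characters of $\Lambda(\wG)$ trivial on the finite-index image of $Z(\wG)$ form a finite group) are precisely that argument. No gaps.
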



	\begin{theorem}[see {\cite[VI.10]{Renard}} or {\cite[1.12-1.14]{BerDel} for a slick proof}]\label{T:center for a cuspidal component}
		 Given $\pi\in\Irr_\cs(\wG)$, we have a canonical isomorphism 
		\begin{align*}
			\cZ(\cM(\wG)_{[\pi]})\simeq \bC[\Lambda(\wG)]^{\cG_\pi} = \cO(\cX(\wG)/\cG_\pi).
		\end{align*}
		In particular, $\cZ(\cM(\wG)_{[\pi]})$ is isomorphic to a ring of Laurent polynomials and hence is smooth of Krull dimension equal to the the rank of $\Lambda(\wG)$.
	\end{theorem}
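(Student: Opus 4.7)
The plan is to realize $\cM(\wG)_{[\pi]}$ as the module category over an explicit algebra and then compute its center. Fix an irreducible constituent $\pi_0 \subset \pi|_\wGo$; by \cref{T:Harish-Chandra} combined with the fact that $Z(\wG)\cap\wGo$ is compact, $\pi|_\wGo$ is a compact representation of $\wGo$, so $\pi_0$ is compact and hence projective in $\cM(\wGo)$ by \cref{T:main on cpct reps}. Set $P := \ind_\wGo^\wG(\pi_0)$. A Mackey decomposition gives $P|_\wGo = \bigoplus_{\sigma \in \wG/\wGo} {}^\sigma \pi_0$, so $P$ lies in $\cM(\wG)_{[\pi]}$. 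Since $\ind_\wGo^\wG$ is the left adjoint of the exact functor $\Res_\wGo^\wG$ by \cref{L:induction from open adjunction}, $P$ is projective; it is compact because $\pi_0$ is finitely generated, and it is a generator of $\cM(\wG)_{[\pi]}$ since for any nonzero $V$ in the block, $V|_\wGo$ is semisimple and contains $\pi_0$ as a summand (as every irreducible constituent of $V$ restricts to $\pi|_\wGo \supset \pi_0$), giving $\Hom_\wG(P,V) = \Hom_\wGo(\pi_0,V|_\wGo) \neq 0$. By \cref{P:equiv module category} we then obtain an equivalence $\cM(\wG)_{[\pi]} \simeq R\lmod$ with $R := \End_\wG(P)^\op$, and by \cref{Ex:center of A-mod} the desired center satisfies $\cZ(\cM(\wG)_{[\pi]}) \simeq \cZ(R)$.

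Next, I would compute $R$. By Frobenius reciprocity and Schur's lemma,
\[ R \simeq \bigoplus_{\sigma \in \wG/\wGo} \Hom_\wGo(\pi_0, {}^\sigma \pi_0) = \bigoplus_{\sigma \in \wG_1/\wGo} \bC \cdot T_\sigma, \]
where $\wG_1 := \{g \in \wG : {}^g\pi_0 \simeq \pi_0\}$ and $T_\sigma$ is a chosen generator of the corresponding one-dimensional space. The multiplication takes the form $T_\sigma T_{\sigma'} = c(\sigma, \sigma') T_{\sigma\sigma'}$ for a $2$-cocycle $c \in Z^2(\wG_1/\wGo, \bC^\times)$, making $R$ a twisted group algebra $\bC^c[\wG_1/\wGo]$. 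Since $\Lambda(\wG) = \wG/\wGo$ is abelian and acts transitively on the finite set $\Sigma = \wG/\wG_1$, the subgroup $\wG_1/\wGo$ is the kernel of this action, hence a sublattice of finite index in $\Lambda(\wG)$.

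Finally, the center of a twisted group algebra $\bC^c[A]$ of an abelian group $A$ is spanned by those $T_a$ with $c(a,b)c(b,a)^{-1} = 1$ for every $b \in A$; this defines a subgroup $A_c \le A$, and the restriction $c|_{A_c \times A_c}$ is then a symmetric $2$-cocycle on an abelian group, hence a coboundary, yielding $\cZ(\bC^c[A]) \simeq \bC[A_c]$. The crux of the proof---and its main obstacle---is to identify $A_c \le \wG_1/\wGo$ with the sublattice $\Lambda(\wG)^{\cG_\pi} := \{\lambda \in \Lambda(\wG) : \chi(\lambda) = 1 \text{ for all } \chi \in \cG_\pi\}$. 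This is carried out via Clifford--Mackey analysis of the normal subgroup $\wGo \lhd \wG$: the commutator pairing $(a,b) \mapsto c(a,b)c(b,a)^{-1}$ on $\wG_1/\wGo$ measures the obstruction for an unramified twist to define an automorphism of the induced representation, and tracking this obstruction matches it precisely with the character pairing provided by $\cG_\pi$. Combining with the elementary identity $\bC[\Lambda(\wG)^{\cG_\pi}] = \bC[\Lambda(\wG)]^{\cG_\pi} = \cO(\cX(\wG)/\cG_\pi)$ (coming from the duality between the lattice $\Lambda(\wG)$ and the torus $\cX(\wG)$) completes the argument. The delicate cocycle bookkeeping is handled slickly in \cite[1.12--1.14]{BerDel}, whose strategy can be followed verbatim in the covering-group setting.
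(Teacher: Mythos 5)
Your overall strategy --- exhibit a compact projective generator of the block, identify the resulting endomorphism algebra, and read off its center --- is sound and is in fact the route this paper itself takes (the theorem is stated here with only citations, but \cref{P:R_rho presentation} and \cref{P:R_rho is Azumaya and Z Laur pol} later prove a stronger statement that implies it). The preliminary steps of your argument are all correct: $\pi_0$ is compact hence projective over $\wGo$, $P=\ind_{\wGo}^{\wG}(\pi_0)$ is a finitely generated projective generator of $\cM(\wG)_{[\pi]}$, and Frobenius reciprocity plus Schur's lemma identify $\End_\wG(P)$ with a twisted group algebra $\bC^c[\wG_1/\wGo]$. The difference from the paper is the choice of progenerator: the paper induces the \emph{full} restriction $\pi|_{\wGo}$, obtaining the Morita-equivalent crossed product $\cR_{[\pi]}=\bC[\Lambda]\otimes\bC[\cG_\pi,c]$ with $fb_\chi=b_\chi\,{}^\chi f$. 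With that presentation the commutative algebra $\bC[\Lambda]$ sits inside $\cR_{[\pi]}$ as an explicit subalgebra carrying the tautological translation action of $\cG_\pi$, and the center is computed in two lines: commuting with $\bC[\Lambda]$ kills all $b_\chi$-components with $\chi\neq 1$, and commuting with the $b_\chi$ forces the remaining component into $\bC[\Lambda]^{\cG_\pi}$. Your smaller algebra buys nothing here and costs you the transparency of that computation.

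The genuine gap is exactly the step you flag as ``the crux'': the identification of the radical $A_c$ of the commutator pairing of $c$ on $\wG_1/\wGo$ with the sublattice $\{\lambda\in\Lambda:\chi(\lambda)=1\ \forall\chi\in\cG_\pi\}$. As written this is an assertion, not an argument --- and it is precisely the nontrivial content of the theorem, since everything before it (Mackey decomposition, Schur, the structure of centers of twisted group algebras of free abelian groups) is routine. An index count makes the claim plausible ($[\Lambda:A_c]=f\cdot e^2=|\cG_\pi|=[\Lambda:\cG_\pi^{\perp}]$, using $\dim\Hom_{\wGo}(\pi|_{\wGo},\pi|_{\wGo})=e^2f$ from \cref{P:cusp component of H(G) as GxG module}), but equality of the two sublattices still has to be proved, e.g.\ by tracking how an unramified character $\chi$ acts on $P$ and on the basis $T_\sigma$ and checking that $\chi\in\cG_\pi$ exactly when the induced automorphism is inner. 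If you want to avoid that bookkeeping entirely, switch to the progenerator $\ind_{\wGo}^{\wG}(\pi|_{\wGo})$ and argue as in \cref{P:R_rho presentation}.
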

	
	Actually, one can do a bit better and find an equivalence of categories $\cM(\wG)_{[\pi]}\simeq \rmod\cR_{[\pi]}$ with $\cR_{[\pi]} := \End_{\wG}(\Pi_{[\pi]})$ and $\Pi_{[\pi]} := \ind_{\wGo}^\wG (\Res_{\wGo}^\wG(\pi))$.
	This will be discussed in \cref{S:Blocks as module cats}.
	
	\subsection{Induced representations}
	In this section we look at the category $\cM(\wG)_\indu$ and decompose it into blocks.
	The main input is the geometric lemma, see  (\cite[2.12]{BerZel}),   \cref{T:geometric lemma conseq} and \cref{T:JH series of induced of cuspidals} above, 
	 as well as
        \cite[VI.5.1]{Renard} for a proof that works also for $\wG$.
	
	\begin{definition}\hfill
		\begin{enumerate}
			\item 	A cuspidal datum is a couple $(\wL,\rho)$ where $\wL$ is a Levi subgroup of $\wG$ and $\rho\in\Irr(\wL)_\cs$ is an irreducible cuspidal representation of $\wL$.
			\item We say that two cuspidal data $(\wL,\rho), (\wM,\tau)$ are conjugate (or associate) if there exists $g\in\wG$ such that 
			\begin{align*}
				\wL = {}^g\wM  \text{ and } \rho = {}^g\tau,
			\end{align*}
			\item We say that two cuspidal data $(\wL,\rho), (\wM,\tau)$ define the same inertial support if there exists $g\in\wG$ and $\chi\in\cX(\wL)$ such that 
			\begin{align*}
				\wL = {}^g\wM  \text{ and } \rho = {}^g\tau \chi.
			\end{align*}
		\end{enumerate}
	\end{definition}
	
	We denote by $\Omega(\wG)$ the set of cuspidal data up to conjugation and by $\cB(\wG)$ the cuspidal data up to conjugation and inertia.
	
\cref{T:JH series of induced of cuspidals} guarantees that the following notion is well defined.
	\begin{definition}\label{D:cuspidal support}
		Let $\pi\in\cM(\wG)$ be an irreducible representation. 
		We define the cuspidal support of $\pi$ to be a cuspidal datum $(\wL,\rho)\in\Omega(\wG)$ (well-defined
                up to conjugation in $\wG$)  
                such that $\pi$ appears in the Jordan--Hölder series of the parabolic induction $\bfi_{\wL,\wP}^\wG(\rho)$ for some parabolic subgroup $\wP$ with Levi decomposition $\wP=\wL N$.
	\end{definition}

	\begin{remark}
		In order to decompose $\cM(\wG)_\indu$ we would like to pull back, for each cuspidal datum up to conjugation $(\wL,\rho)\in\Omega(\wG)$, the decomposition from \eqref{Eq:dec cuspidal x induced} for $\wL$ to $\wG$ for each irreducible cuspidal representation of $\wL$.
		However, we need to take into account also the inertia because $\cM(\wL)_{[\rho]}$ is an indecomposable subcategory.
	\end{remark}
	
	Recall the set $\cB(\wG)$ of cuspidal data up to conjugation and inertia.
	For $(\wL,\rho)$ a cuspidal datum we denote by $[\wL,\rho]_\wG$ its class in $\cB(\wG)$.
	If no confusion can arise, we drop the subscript $\wG$.
	
	Given $[\wL,\rho]\in\cB(\wG)$, \cref{T:JH series of induced of cuspidals} allows us to define unambiguously the subcategory of representations of $\wG$ all whose irreducible subquotients have cuspidal support in $[\wL,\rho]$:
	\begin{align}
		\cM(\wG)_{[\wL,\rho]} = \left\{\pi\in \cM(\wG)\bigg| \begin{array}{l}\text{ all irreducible subquotients of $\pi$ as a $\wG$-module}\\
			\text{ have cuspidal support  in } [\wL,\rho]\end{array}\right\}.
	\end{align}
	The next lemma gives another characterization of the category $\cM(\wG)_{[\wL,\rho]}$.
	\begin{lemma}\label{L:block as generated by induction from cusp component}
		Let $[\wL,\rho]\in\cB(\wG)$ and $\wP=\wL N$ be a parabolic subgroup with Levi $\wL$.
		Then the subcategory $\cM(\wG)_{[\wL,\rho]}$ is the smallest full subcategory of $\cM(\wG)$ closed under subquotients and containing $\bfi_{\wL,\wP}^\wG(\cM(\wL)_{[\rho]})$.
	\end{lemma}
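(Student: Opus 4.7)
The plan is to verify the two inclusions separately. For the easy direction, I would first check that $\cM(\wG)_{[\wL,\rho]}$ is itself closed under subquotients (immediate from its definition in terms of Jordan--Hölder factors) and that it contains $\bfi_{\wL,\wP}^{\wG}(\cM(\wL)_{[\rho]})$. For the latter, take any $\sigma\in\cM(\wL)_{[\rho]}$: by exactness of $\bfi_{\wL,\wP}^{\wG}$, every irreducible subquotient of $\bfi_{\wL,\wP}^{\wG}(\sigma)$ is an irreducible subquotient of $\bfi_{\wL,\wP}^{\wG}(\rho\chi)$ for some $\chi\in\cX(\wL)$, and such an irreducible subquotient has cuspidal support the $\wG$-conjugacy class of $(\wL,\rho\chi)$ by \cref{T:JH series of induced of cuspidals}, which lies in the inertia class $[\wL,\rho]\in\cB(\wG)$.

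For the minimality, let $\cC$ be any full subcategory of $\cM(\wG)$ that is closed under subquotients and contains $\bfi_{\wL,\wP}^{\wG}(\cM(\wL)_{[\rho]})$. The main step is to show that every irreducible $\pi\in\cM(\wG)_{[\wL,\rho]}$ lies in $\cC$. By \cref{D:cuspidal support}, there exist a cuspidal datum $(\wM,\tau)$ representing the inertial class $[\wL,\rho]_\wG$ and a parabolic $\wQ$ with Levi $\wM$ such that $\pi$ appears as a Jordan--Hölder factor of $\bfi_{\wM,\wQ}^{\wG}(\tau)$. Unpacking the equivalence relation defining $\cB(\wG)$, one writes $(\wM,\tau)=({}^{g}\wL,{}^{g}(\rho\chi))$ for some $g\in\wG$ and $\chi\in\cX(\wL)$; conjugating by $g$ identifies $\bfi_{\wM,\wQ}^{\wG}(\tau)$ with $\bfi_{\wL,\wP'}^{\wG}(\rho\chi)$ for a suitable parabolic $\wP'$ with Levi $\wL$. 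A further application of \cref{T:JH series of induced of cuspidals} shows that this has the same Jordan--Hölder series as $\bfi_{\wL,\wP}^{\wG}(\rho\chi)$, which belongs to $\cC$ by hypothesis (as $\rho\chi\in\cM(\wL)_{[\rho]}$). Hence $\pi\in\cC$.

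The step I expect to be the main obstacle is extending this from irreducible representations to arbitrary objects of $\cM(\wG)_{[\wL,\rho]}$, since closure under subquotients alone does not automatically propagate the membership from Jordan--Hölder factors to an object built from them. Here I would reduce to finitely generated $\pi$ (every representation being the union of its cyclic, hence finitely generated, subrepresentations) and then exhibit a surjection $\bfi_{\wL,\wP}^{\wG}(\sigma)\twoheadrightarrow\pi$ with $\sigma\in\cM(\wL)_{[\rho]}$; the natural candidate is $\sigma=\bfr_{\wL,\wP^{-}}^{\wG}(\pi)$, with the surjectivity of the counit furnished by Bernstein's second adjointness (see \cref{S:second adj}), and the fact that $\sigma$ lies in $\cM(\wL)_{[\rho]}$ ensured by a geometric-lemma analysis of the Jacquet functor on irreducible constituents. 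This implicitly uses that a full subcategory closed under subquotients is in practice stable under the filtered colimits with which one presents an arbitrary smooth representation as a union of finitely generated ones.
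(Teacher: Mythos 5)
Your first inclusion and your treatment of irreducible objects are fine, and they are exactly what the paper's one-line proof (``follows easily from \cref{T:geometric lemma conseq} and \cref{T:JH series of induced of cuspidals}'') has in mind. The genuine gap is in your resolution of the step you yourself flag as the obstacle. You propose $\sigma=\bfr_{\wL,\wP^-}^{\wG}(\pi)$ and assert that ``a geometric-lemma analysis'' shows $\sigma\in\cM(\wL)_{[\rho]}$. This is false in general: the geometric lemma (\cref{T:geometric lemma conseq}(3)) says the cuspidal constituents of $\bfr_{\wL,\wP^-}^{\wG}\bfi_{\wL,\wP}^{\wG}(\rho\chi)$ are the Weyl conjugates ${}^{w}(\rho\chi)$ for $w\in W(\wL,\wL)/W_\wL$, and these need not be inertially equivalent to $\rho$ \emph{as representations of $\wL$}. (Take $G=\GL_4$, $L=\GL_2\times\GL_2$, $\rho=\rho_1\boxtimes\rho_2$ with $\rho_1,\rho_2$ inertially inequivalent; then $\rho_2\boxtimes\rho_1$ occurs in the Jacquet module but lies in a different inertia class of $L$.) So your $\bfi_{\wL,\wP}^{\wG}(\sigma)$ is not an object of $\bfi_{\wL,\wP}^{\wG}(\cM(\wL)_{[\rho]})$, and since $\bfi_{\wL,\wP}^{\wG}({}^{w}\rho\chi)$ is in general not isomorphic to (nor an obvious subquotient of) $\bfi_{\wL,\wP}^{\wG}(\rho\chi')$ — they merely share a Jordan--H\"older series — you cannot conclude it lies in $\cC$.

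The repair is to replace $\sigma$ by the $[\rho]$-component $\bfr_{\wL,\wP^-}^{\wG}(\pi)_{[\rho]}$ of the Jacquet module, using the cuspidal decomposition of $\cM(\wL)$ (\cref{T:dec of M(G) into cusp and indu} for $\wL$, which is available at this stage). The adjoint map $\bfi_{\wL,\wP}^{\wG}(\sigma)\to\pi$ then has cokernel $C$ with $\bfr_{\wL,\wP^-}^{\wG}(C)_{[\rho]}=0$, and to derive $C=0$ you need the nontrivial input that \emph{every} irreducible in $\cM(\wG)_{[\wL,\rho]}$ has nonzero $[\rho]$-isotypic part in its Jacquet module along the \emph{fixed} parabolic $\wP^-$ — this is precisely the assertion the paper invokes (``by definition of the block'') in the proof of \cref{L:projective generator of M(G)_s}, and it is not a formal consequence of closure under subquotients. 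With that input your counit argument works for arbitrary $\pi$ (no reduction to finitely generated objects is needed, which is just as well, since a full subcategory closed under subquotients need not be closed under the filtered unions you invoke). So: right architecture, but the key intermediate claim as stated is wrong, and the corrected version rests on the same second-adjointness fact that the paper only establishes later, in \cref{S:Blocks as module cats}.
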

	\begin{proof}
		It follows easily from \cref{T:geometric lemma conseq} and \cref{T:JH series of induced of cuspidals}.
	\end{proof}
	
	Use Frobenius reciprocity, \cref{T:geometric lemma conseq} and exactness of parabolic induction and restriction to deduce the following proposition.
	\begin{proposition}\label{P:derived orthogonal of components of M(G)}
		If $[\wL,\rho], [\wL',\rho']$ are distinct elements of $\cB(\wG)$,  then the subcategories $\cM(\wG)_{[\wL,\rho]}$ and $\cM(\wG)_{[\wL',\rho']}$ are (derived) orthogonal, i.e., there are no non-zero Ext groups between them.
	\end{proposition}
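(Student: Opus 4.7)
The plan is in two stages: first handle $i=0$ directly, then use Frobenius reciprocity together with the geometric lemma and induction on $\dim \wG$ for $i \ge 1$.

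For the $\Hom$ case, any morphism $f \in \Hom_\wG(V, V')$ with $V \in \cM(\wG)_{[\wL,\rho]}$ and $V' \in \cM(\wG)_{[\wL',\rho']}$ has image $\Image(f)$ that is simultaneously a quotient of $V$ and a subrepresentation of $V'$. Every irreducible subquotient of $\Image(f)$ would therefore have cuspidal support, which is well-defined on irreducibles by \cref{T:JH series of induced of cuspidals}, in both $[\wL,\rho]_\wG$ and $[\wL',\rho']_\wG$. As these classes differ in $\cB(\wG)$, there can be no such irreducible subquotient, so $\Image(f) = 0$ and $\Hom$-orthogonality follows.

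For higher $\Ext$, I would argue by induction on $\dim \wG$. The base case is when $\wL = \wG$ or $\wL' = \wG$, which covers cuspidal blocks, and here \cref{T:dec of M(G) into cusp and indu} directly provides a product decomposition of the cuspidal part of $\cM(\wG)$, hence derived orthogonality. For the inductive step, using \cref{L:block as generated by induction from cusp component} together with dimension shifting in the $\Ext$ long exact sequences, it suffices to handle $V' = \bfi^\wG_{\wL',\wP'}(\tau)$ for $\tau \in \cM(\wL')_{[\rho']}$. Since $\bfi^\wG_{\wL',\wP'}$ is the right adjoint to the exact functor $\bfr^\wG_{\wL',\wP'}$, it preserves injectives, and Frobenius reciprocity upgrades to the derived identity
\[
\Ext^i_\wG\bigl(V, \bfi^\wG_{\wL',\wP'}(\tau)\bigr) \simeq \Ext^i_{\wL'}\bigl(\bfr^\wG_{\wL',\wP'}(V), \tau\bigr).
\]
By the geometric lemma (\cref{T:geometric lemma conseq}), combined with exactness of $\bfr^\wG_{\wL',\wP'}$ and transitivity of cuspidal support, every irreducible subquotient of $\bfr^\wG_{\wL',\wP'}(V)$ has $\wL'$-cuspidal support of the form $[\wM,\sigma]_{\wL'}$, where $\wM$ is a Levi of $\wL'$ that is $\wG$-conjugate into $\wL$ and $[\wM,\sigma]_\wG = [\wL,\rho]_\wG$. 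If some such $\wL'$-inertial class coincided with $[\wL',\rho']_{\wL'}$, passing to $\wG$-inertial classes would force $[\wL,\rho]_\wG = [\wL',\rho']_\wG$, contrary to hypothesis. Invoking the inductive Bernstein decomposition and derived orthogonality for the proper Levi $\wL'$, the right-hand side then vanishes.

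The main obstacle I expect is the reduction in the inductive step from an arbitrary $V' \in \cM(\wG)_{[\wL',\rho']}$ to the parabolically induced case, which must avoid circularity. A clean workaround is to use the inductive Bernstein decomposition for $\wL'$, which supplies enough projectives in $\cM(\wL')_{[\rho']}$; parabolically inducing these yields a supply of objects through which arbitrary $V'$ can be resolved inside $\cM(\wG)_{[\wL',\rho']}$, allowing the long exact sequence in $\Ext$ to transfer the vanishing from the induced case to general $V'$. A secondary check, straightforward from the definitions, is that the implication \emph{``$[\wM,\sigma]_{\wL'} = [\wL',\rho']_{\wL'}$ forces $[\wL,\rho]_\wG = [\wL',\rho']_\wG$''} genuinely holds, as this is the arithmetic input that converts the geometric lemma into the desired vanishing.
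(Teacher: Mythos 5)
Your outline rests on exactly the three tools the paper's (very terse) proof prescribes — Frobenius reciprocity, the geometric lemma \cref{T:geometric lemma conseq}, and exactness of $\bfi$ and $\bfr$ — and the degree-zero argument, the derived adjunction $\Ext^i_\wG(V,\bfi^\wG_{\wL',\wP'}(\tau))\simeq\Ext^i_{\wL'}(\bfr^\wG_{\wL',\wP'}(V),\tau)$, and the ``arithmetic input'' converting the geometric lemma into vanishing are all correct. Two points deserve comment. First, the induction on $\dim\wG$ and the appeal to the full inductive Bernstein decomposition for $\wL'$ are heavier than necessary: since $\rho'$ is cuspidal on $\wL'$, the target block $\cM(\wL')_{[\rho']}$ is a \emph{cuspidal} block, which already splits off as a direct factor of $\cM(\wL')$ by \cref{T:cuspidal block} (available for any group of this type, no induction required). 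The geometric lemma shows $\bfr^\wG_{\wL',\wP'}(V)$ has no irreducible subquotient in $[\rho']_{\wL'}$, hence has zero component in that direct factor, and the Ext groups against $\tau$ vanish outright.

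The genuine soft spot is the one you flagged: the reduction from arbitrary $V'\in\cM(\wG)_{[\wL',\rho']}$ to the induced case. Your workaround — parabolically inducing projectives of $\cM(\wL')_{[\rho']}$ and ``resolving'' $V'$ by them — silently uses that such induced objects admit surjections onto every object of the block, i.e. the generator property of $\Pi_\fs$ from \cref{L:projective generator of M(G)_s}. That property is \emph{not} a consequence of Frobenius reciprocity (which computes $\Hom$ \emph{into} $\bfi(\tau)$, not out of it); it requires the second adjointness theorem, which in the paper's logical order is established later and is a much deeper input. Moreover a left resolution in the second variable only yields embeddings $\Ext^j(V,V')\hookrightarrow\Ext^{j+1}(V,K_0)\hookrightarrow\cdots$, so you would additionally need finite global dimension to terminate. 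The standard repair, and the one consistent with the paper's proof of \cref{T:Bernstein dec for tildeG}, is to run the shift in the opposite direction: embed $V'$ into a sum of parabolically induced representations via the unit of Frobenius reciprocity (this is the ``second Lemme'' of \cite[VI.7.2]{Renard} cited there), project to the block, and dimension-shift \emph{downward} $\Ext^j(V,V')\simeq\Ext^{j-1}(V,C_0)\simeq\cdots\simeq\Hom(V,C_{j-1})=0$, landing on your degree-zero case. With that substitution the argument is complete and uses nothing beyond what the paper has available at this point.
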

	
	The above proposition together with \cref{P:categ is product if and only if} imply that we have a fully faithful functor
	\[ \prod_{\fs\in\cB(\wG)}\cM(\wG)_\fs \hookrightarrow \cM(\wG).\]
	Proving that this functor is essentially surjective\footnote{A functor $ F\colon \cC\to \cD$ is called essentially surjective if for any $X\in \cD$ there exists $A\in\cC$ such that $F(A)\simeq X$.} gives the Bernstein decomposition for $\cM(\wG)$:
	\begin{theorem}[see {\cite[VI.7.2]{Renard}}]\label{T:Bernstein dec for tildeG}
		The category of smooth representations of $\wG$ decomposes into blocks indexed by $\cB(\wG)$:
		\begin{equation}\label{Eq:Bernstein decomposition}
			\cM(\wG) = \prod_{{\fs} \in \mathcal{B}(\wG)} \cM(\wG)_\fs.
		\end{equation} 	
	\end{theorem}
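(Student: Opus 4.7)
The plan is to verify the hypotheses of \cref{P:categ is product if and only if} for the family of subcategories $\{\cM(\wG)_\fs\}_{\fs\in\cB(\wG)}$. Orthogonality (condition (iii)) has been established in \cref{P:derived orthogonal of components of M(G)}; conditions (i) and (iv) hold formally for $\cM(\wG)$. The substantive content is condition (ii): every smooth representation of $\wG$ must decompose as a direct sum of objects each lying in some block $\cM(\wG)_\fs$. Since every $\pi\in\cM(\wG)$ is the filtered colimit of its finitely generated subrepresentations and each $\cM(\wG)_\fs$ is closed under such colimits, it suffices to show that every finitely generated $\pi$ lies in a finite direct sum of blocks.

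I would proceed by induction on the semisimple $F$-rank of $\bG$. When this rank is zero, $\bG$ has no proper $F$-parabolic subgroups, all irreducible smooth representations of $\wG$ are cuspidal, and the result reduces to \eqref{Eq:dec cuspidals} of \cref{T:dec of M(G) into cusp and indu}. For the inductive step with $\pi\in\cM(\wG)$ finitely generated, split off the cuspidal summand using \cref{T:dec of M(G) into cusp and indu} (that summand decomposes by \eqref{Eq:dec cuspidals}), so that I may assume $\pi\in\cM(\wG)_\indu$.

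Using Bruhat--Tits theory applied to $G$ and pulled back to $\wG$, I would choose an open compact subgroup $\wK\le\wG$ admitting an Iwahori factorization with respect to some proper parabolic $\wP=\wL N$ such that $\pi$ is generated by $\pi^\wK$. Then $\pi$ is a quotient of a direct sum of copies of $\ind_\wK^\wG(\bC)$. The Iwahori factorization of $\wK$, combined with a Mackey-type decomposition and Jacquet's lemma \cref{T:classical Jacquet lemma}, lets one relate $\ind_\wK^\wG(\bC)$ to a parabolic induction $\bfi_{\wL,\wP}^\wG(\sigma)$ for some finitely generated $\sigma\in\cM(\wL)$. By the inductive hypothesis applied to the proper Levi $\wL$, the representation $\sigma$ lies in finitely many blocks of $\cM(\wL)$; and by the geometric lemma \cref{T:geometric lemma conseq} together with \cref{T:JH series of induced of cuspidals}, the Jordan--Hölder factors of $\bfi_{\wL,\wP}^\wG$ applied to a single cuspidal Bernstein block of $\wL$ all lie in a single Bernstein block of $\wG$. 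Consequently $\pi$ meets only finitely many blocks of $\cM(\wG)$.

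Once block-finiteness is established, for a finitely generated $\pi$ meeting the finite set of blocks $\fs_1,\dots,\fs_k$, let $\pi_{\fs_i}$ be the maximal subrepresentation of $\pi$ in $\cM(\wG)_{\fs_i}$; this exists because $\cM(\wG)_{\fs_i}$ is closed under sums and subobjects. Orthogonality forces $\bigoplus_i \pi_{\fs_i}\subset\pi$ to be a direct sum, and the quotient $\pi/\bigoplus_i \pi_{\fs_i}$ has no non-zero irreducible subquotient (any such would have cuspidal support in some $\fs_i$, contradicting the maximality of $\pi_{\fs_i}$), so the quotient vanishes. The main obstacle I expect is the Iwahori-factorization transfer in the third paragraph, which is the heart of the linear case argument of \cite[VI.7]{Renard}; fortunately every ingredient used---parabolic induction and restriction, the geometric lemma, Iwahori factorizations of compact open subgroups, and Jacquet's lemma---has been established for covering groups in the preceding sections, so the linear case argument transfers without essential modification.
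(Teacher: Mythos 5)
Your overall skeleton (reduce to finitely generated representations, peel off the cuspidal part via \cref{T:dec of M(G) into cusp and indu}, and push the induced part down to proper Levi subgroups by induction on rank) is reasonable, and conditions (i), (iii), (iv) of \cref{P:categ is product if and only if} are handled correctly. But the two steps carrying the actual weight both have gaps. First, the transfer step: you present a finitely generated $\pi$ as a quotient of copies of $\ind_\wK^\wG(\bC)$ and claim that Iwahori factorization plus Jacquet's lemma ``relates'' $\ind_\wK^\wG(\bC)$ to a single parabolic induction $\bfi_{\wL,\wP}^\wG(\sigma)$. No relation of the kind you need can hold: $\ind_\wK^\wG(\bC)$ has irreducible cuspidal quotients (every cuspidal $\rho$ with $\rho^\wK\neq 0$), so it is not a subquotient of, nor does it share its blocks with, any proper parabolic induction; you have discarded the cuspidal part of $\pi$, not of $\ind_\wK^\wG(\bC)$. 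What Jacquet's lemma controls is the Jacquet module of $\ind_\wK^\wG(\bC)$, and to pass from Jacquet modules back to $\pi$ one needs exactly the mechanism the paper (following Renard's second Lemme in VI.7.2) uses and you omit: the unit of Frobenius reciprocity gives a map $\pi\to\bigoplus_{\wP}\bfi_{\wL,\wP}^\wG\bfr_{\wL,\wP}^\wG(\pi)$ over the proper standard parabolics, whose kernel has all proper Jacquet modules zero, hence is cuspidal, hence vanishes for $\pi\in\cM(\wG)_\indu$. That embedding, combined with the tautological decomposition of parabolic inductions from cuspidal blocks (\cref{L:block as generated by induction from cusp component}) and the Goursat-type lemma that a subobject of a decomposable object is decomposable, is the whole proof; no analysis of $\ind_\wK^\wG(\bC)$ is required.

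Second, your closing argument is invalid as stated. Having (let us grant) block-finiteness, you take the maximal subrepresentation $\pi_{\fs_i}$ of $\pi$ in each block and assert that an irreducible subquotient of $Q=\pi/\bigoplus_i\pi_{\fs_i}$ lying in $\fs_i$ ``contradicts the maximality of $\pi_{\fs_i}$''. It does not: an irreducible subquotient of $Q$ is not a subobject of $Q$, and only a nonzero \emph{subobject} of $Q$ belonging to $\cM(\wG)_{\fs_i}$ can be lifted --- using the $\Ext$-vanishing of \cref{P:derived orthogonal of components of M(G)} and closure of blocks under extensions --- to a subrepresentation of $\pi$ strictly containing $\pi_{\fs_i}$. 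A nonzero finitely generated $Q$ is guaranteed an irreducible quotient, not an irreducible subobject, so the contradiction never materializes. (Compare $\bZ$ as a module over itself: it has $\bZ/p$ as a subquotient but no $p$-torsion submodule; the implication from ``irreducible subquotient in a class'' to ``nonzero subobject in that class'' is exactly what fails.) This is the trap that the embedding-into-decomposables route is designed to avoid, which is why both the paper and Renard take it.
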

	\begin{proof}
		Using \cref{T:dec of M(G) into cusp and indu} what is left to show is that every representation $\pi\in\cM(\wG)_\indu$ can be written as a direct sum of representations $\pi_\fs\in\cM(\wG)_\fs$:
		\begin{align}\label{Eq:decompose rep along subcategories indexed by B(G)}
			\pi \simeq \oplus_{\fs\in\cB(\wG)}\pi_\fs \quad \text{ for some  } \pi_\fs\in\cM(\wG)_\fs.
		\end{align}
		First one observes that \eqref{Eq:decompose rep along subcategories indexed by B(G)} holds  for an induced representation of a cuspidal representation from a Levi.
		This is tautological using \cref{L:block as generated by induction from cusp component}.
		
		Next one notices that if $\pi'\subset \pi$ is a subrepresentation and $\pi$ satisfies \eqref{Eq:decompose rep along subcategories indexed by B(G)} then the same is true of $\pi'$.
		Namely, consider $\pi'_\fs=  \pi_\fs \cap \pi' $, then	$\pi'= \oplus_{\fs\in\cB(\wG)}\pi'_\fs$ as follows easily only using the fact that no nonzero subquotients of  $\pi_\fs$ for different $\fs$ are isomorphic (analogous assertion in group theory is called Goursat's Lemma).

		Finally, one needs to show that every representation $\pi\in\cM(\wG)_\indu$ can be embedded into a sum of induced representations.
		This is done using a pair of adjoint functors that are direct sums of all parabolic induction functors (resp., parabolic restriction) indexed by all the standard parabolic subgroups.
		For details, see \cite[VI.7.2 second Lemme]{Renard}.
	\end{proof}
	
	As a consequence one can show that $\cM(\wG)$ is a noetherian category and this allows one to show that parabolic induction preserves finite type representations.
	Below we sketch the strategy.

	Recall that in an abelian category $\cA$ an object $X$ is said to be 
	\begin{itemize}
	\item noetherian if all increasing sequences of subobjects of $X$ are stationary. 
	\item of finite type if whenever $X=\cup_i X_i$ with $X_i\subset X$,  there exist finitely many indices $i_1,\dots,i_n$ such that $X=\cup_{k=1}^n X_{i_k}$.
	\end{itemize}
	An abelian category is called noetherian if all its finite type objects are noetherian.
	
	Thanks to Bernstein's decomposition theorem and the fact that each cuspidal block is equivalent to modules over a noetherian algebra (see \cref{P:R_rho presentation}) one can show:
	\begin{theorem}[see {\cite[\S 4.19]{BerZel-GL}}, {\cite[Rem 3.12]{BerDel}}, {\cite[VI.7.5]{Renard}}]\label{T:M(G) is noetherian}
	The category of smooth representation $\cM(\wG)$ is noetherian.
	\end{theorem}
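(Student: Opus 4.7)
The plan is to use Bernstein's decomposition to reduce the question to a single block, and then to invoke the equivalence of each block with modules over a noetherian algebra.

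First, by \cref{T:Bernstein dec for tildeG} we have $\cM(\wG) \simeq \prod_{\fs \in \cB(\wG)} \cM(\wG)_\fs$, so any representation $V$ decomposes uniquely as $V = \bigoplus_\fs V_\fs$ with $V_\fs \in \cM(\wG)_\fs$. I claim that if $V$ is of finite type, then $V_\fs = 0$ for all but finitely many $\fs$. Indeed, a generator $v \in V$ is a single element and decomposes uniquely as a \emph{finite} sum $v = \sum_{\fs \in T_v} v_\fs$ along the central idempotents of the Bernstein center (the internal direct sum has, by definition, only finite-support elements); hence the block-support of $V$ is contained in the finite union $\bigcup_v T_v$ taken over a finite generating set. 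The same argument also shows that each nonzero $V_\fs$ is itself of finite type in $\cM(\wG)_\fs$, since restricting a finite generating set of $V$ to its $\fs$-components yields a finite generating set of $V_\fs$. Since a finite direct sum of noetherian objects is noetherian, it now suffices to prove that every finite type object of $\cM(\wG)_\fs$ is noetherian, for each fixed $\fs \in \cB(\wG)$.

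For this reduced statement, I invoke \cref{P:R_rho presentation} (to be established later in the paper) to obtain an equivalence $\cM(\wG)_\fs \simeq \rmod \cR_\fs$, where $\cR_\fs$ is a module-finite extension of a finitely generated commutative $\bC$-subalgebra $\cZ_\fs$. Hilbert's basis theorem gives that $\cZ_\fs$ is noetherian; a ring that is finite as a module over a commutative noetherian subring is automatically two-sided noetherian; and finitely generated modules over a noetherian ring are noetherian. Translating back through the equivalence, every finite type object of $\cM(\wG)_\fs$ is noetherian, which completes the proof.

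The proof is essentially formal once the two main inputs are in place (Bernstein's decomposition and the module-theoretic presentation of each block). The real technical content sits in those inputs: the decomposition relies in turn on the uniform admissibility theorem (\cref{T:uniform adm}) and on Harish-Chandra's characterization of cuspidals, while the description of $\cR_\fs$ as module-finite over $\cZ_\fs$ relies on the explicit identification of the Bernstein center of a cuspidal block (cf.\ \cref{T:center for a cuspidal component}) together with second adjointness. These are the steps where real work is required; the noetherianity statement itself is then an almost formal corollary and presents no serious obstacle.
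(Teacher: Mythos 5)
Your reduction via \cref{T:Bernstein dec for tildeG} --- a finite type object has only finitely many nonzero Bernstein components, each again of finite type, so it suffices to treat a single block --- is correct and is exactly how the paper's argument begins. The problem is in the second half. For a general (in particular non-cuspidal) block $\fs$ you invoke ``the equivalence $\cM(\wG)_\fs\simeq\rmod\cR_\fs$ with $\cR_\fs$ module-finite over a finitely generated commutative subalgebra.'' First, \cref{P:R_rho presentation} does not say this: it gives the presentation of $\cR_{[\rho]}=\End_{\wL}(\Pi_{[\rho]})$ for a \emph{cuspidal} block of a Levi subgroup. What you actually need is \cref{P:block equiv to modules over algebra} together with \cref{T:center of M(G)_s as invariants}, and in the paper's logical development these sit \emph{downstream} of \cref{T:M(G) is noetherian}: the fact that $\Pi_\fs$ is a finite type projective generator (\cref{L:projective generator of M(G)_s}) relies on parabolic induction preserving finite type (\cref{T:ind preserves finite type}), which is itself deduced from noetherianity. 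As routed through the paper's results, your argument is therefore circular.

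The intended proof uses only the cuspidal input. After reducing to a finitely generated $V$ in a single block $\fs=[\wL,\rho]$, one applies the exact functor $\bfr_{\wL,\wP}^\wG$ followed by projection to the cuspidal part of $\cM(\wL)$. A strictly increasing chain of subobjects of $V$ is sent to a strictly increasing chain, because a nonzero subquotient lying in the block has, by the geometric lemma and exactness, a nonzero cuspidal Jacquet module in the relevant inertia classes; and the target $\bfr_{\wL,\wP}^\wG(V)$ is finitely generated since parabolic \emph{restriction} preserving finite type is the easy direction. Noetherianity of the cuspidal part of $\cM(\wL)$ is then precisely what \cref{P:R_rho presentation} supplies: $\cR_{[\rho]}$ is finite over the Laurent polynomial algebra $\cA$, so your Hilbert-basis-theorem endgame applies there verbatim. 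In short, the commutative-algebra part of your proposal is fine; what is missing is the transfer of noetherianity from the cuspidal blocks of the Levi subgroups to the non-cuspidal blocks of $\wG$, which must go through the Jacquet functor rather than through a module-category description of $\cM(\wG)_\fs$ that is not yet available at this stage.
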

	
	Using the above theorem and the simple fact that parabolic restriction preserves finite type (see the discussion in \cref{SS:ind and res}) one shows:	\begin{theorem}[see {\cite[VI.7.5]{Renard}}]\label{T:ind preserves finite type}
	Parabolic induction preserves finite type representations.
	\end{theorem}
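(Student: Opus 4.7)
The plan is to prove the theorem by induction on $\dim\wG$, the base case $\wL=\wG$ being trivial. For the inductive step, fix a proper parabolic $\wP=\wL N$ of $\wG$ and a finite type $\sigma\in\cM(\wL)$, and set $\pi=\bfi_{\wL,\wP}^{\wG}(\sigma)$. Since any finitely generated representation has its generators supported on only finitely many Bernstein components, the Bernstein decomposition (\cref{T:Bernstein dec for tildeG}) applied to $\wL$ allows me to reduce to the case $\sigma\in\cM(\wL)_{\ft}$ for a single block $\ft=[\wM,\rho]_{\wL}$; by \cref{T:JH series of induced of cuspidals} this places $\pi$ in the single block $\cM(\wG)_{\fs}$ with $\fs=[\wM,\rho]_{\wG}$, and moreover $\wM\subseteq\wL$.

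I would then express $\pi=\bigcup_\alpha \pi_\alpha$ as the directed union of its finite type subrepresentations and apply $\bfr:=\bfr_{\wL,\wP}^{\wG}$. Since $\bfr$ is exact and is left adjoint to $\bfi$ by Frobenius reciprocity, it preserves filtered colimits, giving $\bfr\pi=\bigcup_\alpha \bfr\pi_\alpha$. The crux is then to show that $\bfr\pi=\bfr\bfi\sigma$ is itself of finite type in $\cM(\wL)$. For this I would invoke the Bernstein--Zelevinsky geometric lemma, which provides a finite filtration on $\bfr\bfi\sigma$ whose subquotients have the form $\bfi_{\wL^w}^{\wL}\bfr_{\wL_w}^{\wL}({}^w\sigma)$ for Levi subgroups $\wL^w,\wL_w\subseteq\wL$ indexed by $w$ ranging over a finite set. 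Parabolic restriction preserves finite type by the discussion in \cref{SS:ind and res}, and each $\bfi_{\wL^w}^{\wL}$ preserves finite type either trivially (when $\wL^w=\wL$) or by the inductive hypothesis applied to $\wL$ in place of $\wG$, since $\dim\wL<\dim\wG$.

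Given that $\bfr\pi$ is finite type in the noetherian category $\cM(\wL)$ (\cref{T:M(G) is noetherian} applied to $\wL$), the directed union $\bigcup_\alpha \bfr\pi_\alpha$ must stabilize: $\bfr\pi=\bfr\pi_{\alpha_0}$ for some $\alpha_0$, whence $\bfr(\pi/\pi_{\alpha_0})=0$ by exactness. To conclude $\pi=\pi_{\alpha_0}$, and thereby that $\pi$ is finite type, I would verify that $\bfr_{\wL,\wP}^{\wG}$ is conservative on $\cM(\wG)_{\fs}$: every non-zero irreducible in $\cM(\wG)_{\fs}$ embeds into some $\bfi_{\wM,\wQ}^{\wG}(\rho')=\bfi_{\wL,\wP}^{\wG}\circ\bfi_{\wM,\wQ\cap\wL}^{\wL}(\rho')$ by transitivity of parabolic induction (since $\wM\subseteq\wL$), and Frobenius reciprocity then forces its $\wL$-Jacquet module to be non-zero.

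The main difficulty I anticipate is the bootstrap structure of the argument: the full (non-cuspidal) version of the geometric lemma produces $\bfi_{\wL^w}^{\wL}$ terms for proper sub-Levis $\wL^w\subsetneq\wL$, and it is precisely the induction on $\dim\wG$ that is designed to absorb these contributions from parabolic induction at a strictly smaller dimension.
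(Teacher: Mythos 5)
Your proof is correct and is essentially the argument the paper is pointing to: the paper supplies no proof of this statement beyond the citation to Renard VI.7.5 and the remark that the ingredients are the noetherianity of the category and the preservation of finite type under parabolic restriction, and your proposal (reduction to a single Bernstein block, writing $\pi$ as a directed union of finite type subrepresentations, proving $\bfr\bfi\sigma$ is finite type via the geometric lemma together with an induction on the group to absorb the induced pieces, stabilizing via noetherianity of $\cM(\wL)$, and concluding by conservativity of the Jacquet functor on the relevant blocks) is precisely how that reference proceeds. The one point to tighten is the conservativity step: the embedding $\tau\hookrightarrow \bfi_{\wM,\wQ}^{\wG}(\rho')$ of an irreducible into an induced-from-cuspidal is produced for \emph{some} parabolic $\wQ$ with Levi $\wM$, which need not lie inside the fixed $\wP$, so you should either invoke the standard fact that non-vanishing of the Jacquet module of an irreducible representation depends only on the Levi and not on the chosen parabolic, or simply run the stabilization argument with $\bigoplus_{\wP'}\bfr_{\wL,\wP'}^{\wG}$ over all parabolics with Levi $\wL$ (each summand applied to $\bfi_{\wL,\wP}^{\wG}\sigma$ is still finite type by the geometric lemma, and then non-vanishing for a single parabolic suffices).
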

	This theorem is used in \cref{L:projective generator of M(G)_s} to show that the generator $\Pi_{\fs}$ of a non-cuspidal block is of finite type.
	
	\section{The second adjointness theorem}\label{S:second adj}
	Recall that the Frobenius reciprocity theorem asserts that for $\wP$ a parabolic subgroup of $\wG$ with Levi subgroup
        $\wL$, the functor $\bfr_{\wL,\wP}^\wG$ is left adjoint to $\bfi_{\wL,\wP}^\wG$.
	It is natural to look for an adjoint in the other direction.
	
	Although showing that $\bfi_{\wL,\wP}^\wG$ admits a right adjoint is not difficult (see below), identifying
        this adjoint precisely is quite hard and is the content of Bernstein's second adjointness theorem: $\bfi_{\wL,\wP}^\wG$ is left adjoint to $\bfr_{\wL,\wP^-}^\wG$, where $\wP^-$ is the opposite parabolic of $\wP$.
	
	In this section we will sketch the main steps in the proof following the exposition in \cite[VI.9]{Renard} where all the details are spelled out for linear groups.
	In loc.cit., the argument is streamlined using the notion of \emph{completion} of a representations of $\wG$ which we recall below.
	Other references for the second adjointness theorem for the linear case include \cite[Theorem 19]{BerNotes},
        \cite{Bush-loc}; a geometric proof is given in \cite{BezKazh2nd}. 
	\subsection{Completion}\label{SS:completion}
	We recall the notion of completion for smooth $G$-modules where $G$ is a locally compact, totally disconnected group admitting a countable basis of neighborhoods of 1 given by compact open subgroups.
	For $V$ a smooth $G$-module its completion $\overline{V}$ is a $G$-module (not smooth anymore)
	sitting between $V$ and $(V^{\vee})^*$.
	
	This notion will play a simplifying role in the proof of the second adjointness theorem and it also helps clarify the relationship between $N$-invariants and $N$-coinvariants.
	
	Notice that for a representation $V$ of $G$ and compact open subgroups $L\subset K$ with corresponding idempotents $e_K,e_L\in\cH(G)$,  we have a natural map $e_L V\to e_K V$ sending $w$ to $e_Kw$.
	
	\begin{definition}
		For $V$, a smooth representation of $G$, we define the functor 
		\[ \overline{(\,\,)}\colon \cM(G)\to \Rep_\bC(G) \text{ by }\]
		\[ \overline{V}:=\varprojlim (e_K V) = \varprojlim (V^K)\]
		where the inverse limit is taken over all compact open subgroups of $G$.
	\end{definition}
A few remarks are in order.
	\begin{remark}	\hfill
		\begin{enumerate}
			\item 		Since the set of compact open subgroups of $G$ is invariant under conjugation by $G$, it follows that  $\overline{V}$is a $G$-module and that $\overline{V}^K=V^K$.
		Clearly $V\subset  \overline{V}$ and the smooth part of  $\overline{V}$ is $V$.	
		\item 	The completion functor is exact because the Mittag-Leffler condition is automatically satisfied.
		\item	Another way of writing the completion functor is to note that 
		 $\Hom_G(\cH(G),V)$
		acquires a $G$-module structure since $\cH(G)$ is a $G \times G$-module,
                and since $\cH(G)=\varinjlim \cH(G)e_K$, it follows that
\[ \overline{V} = \Hom_G(\cH(G),V). \]
		Note that the exactness of the functor $V \rightarrow \overline{V}$
		is equivalent to $\cH(G)$ being a projective $G$-module.

		\end{enumerate}
	\end{remark}

\begin{proposition}\label{P:completion of contragredient} For $V$ a smooth representation of $G$, we have
		\[ \overline{V^\vee} = V^*.\]
\end{proposition}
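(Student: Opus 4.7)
The plan is to establish the identification term-by-term in the defining inverse limit for $\overline{V^\vee}$, and then recognize the result as $V^*$. The whole argument hinges on two elementary facts: that smooth representations are directed unions of their $K$-invariants, and that $K$-invariant linear functionals are the same thing as linear functionals on $V^K$.

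\emph{Step 1: compute $(V^\vee)^K$.} Fix a compact open subgroup $K \le G$, and let $e_K \in \cH(G)$ be the idempotent attached to (a Haar measure on) $K$. I would first observe that $(V^\vee)^K = (V^*)^K$, since any $K$-fixed linear functional is automatically smooth (as $K$ is open). Because multiplication by $e_K$ yields a splitting $V = V^K \oplus V(K)$ where $V(K) = \sum_{k \in K}(k-1)V$, a linear functional $\ell \in V^*$ is $K$-fixed precisely when it vanishes on $V(K)$, equivalently when it factors through $V^K$. Restriction thus provides a natural isomorphism
\[
(V^\vee)^K \;\xrightarrow{\sim}\; (V^K)^*, \qquad \ell \longmapsto \ell|_{V^K}.
\]

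\emph{Step 2: pass to the limit.} Since $V$ is smooth, $V = \varinjlim_K V^K$ where $K$ ranges over a countable filtered basis of compact open subgroups of $G$ at the identity, and the transition maps are the inclusions $V^K \hookrightarrow V^L$ for $L \subset K$. Applying $\Hom_\bC(-,\bC)$ turns this directed colimit into an inverse limit, yielding a canonical isomorphism
\[
V^* \;=\; \Hom_\bC\!\bigl(\varinjlim_K V^K,\;\bC\bigr) \;\xrightarrow{\sim}\; \varprojlim_K (V^K)^*.
\]
Combining this with Step~1 gives a natural isomorphism $V^* \xrightarrow{\sim} \varprojlim_K (V^\vee)^K = \overline{V^\vee}$.

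\emph{Step 3: $G$-equivariance.} The last thing to check is that this chain of identifications intertwines the natural $G$-actions on $V^*$ (by $(g\ell)(v) = \ell(g^{-1}v)$) and on $\overline{V^\vee}$ (inherited from the conjugation action on the poset of compact open subgroups, or equivalently from $\overline{V^\vee} = \Hom_G(\cH(G), V^\vee)$). This is a routine unwinding: for $g \in G$ and $\ell \in V^*$, the component of $g\cdot \ell$ at the level $K$ is the restriction of $g \cdot \ell$ to $V^K$, which under the isomorphism of Step~1 corresponds to the element of $(V^\vee)^{gKg^{-1}}$ obtained by transporting $\ell|_{V^{g^{-1}Kg}}$ by $g$, and this is precisely the action on the inverse system defining $\overline{V^\vee}$.

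I do not anticipate any substantive obstacle; the only points requiring care are the use of the splitting $V = V^K \oplus V(K)$ (which rests on the averaging idempotent $e_K$, available because we work over $\bC$ and $K$ is compact) and the bookkeeping of $G$-equivariance when the indexing system of subgroups is permuted by $G$.
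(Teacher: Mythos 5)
Your proof is correct and follows essentially the same route as the paper's: both identify $(V^\vee)^K = (V^*)^K \cong (V^K)^*$ and then pass to the inverse limit, turning $\varprojlim_K (V^K)^*$ into $(\varinjlim_K V^K)^* = V^*$. You merely spell out the key identity $(V^*)^K \cong (V^K)^*$ via the splitting $V = V^K \oplus V(K)$ and check $G$-equivariance explicitly, both of which the paper leaves implicit.
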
	
\begin{proof}
	
	Suppose given $B\colon V\times W\to \bC$ a pairing of smooth $G$-representations inducing an injective map of representation $V\hookrightarrow W^*$.
	Since $(W^*)^K = (W^K)^*$ for any smooth representation $W$ of $G$ and any compact open subgroup $K \subset G$, we obtain
	\[ \overline{V} = \varprojlim (V^K) \subset \varprojlim(W^K)^* = (\varinjlim W^K)^* = W^*.\]
	
	In particular, taking $B\colon V^\vee\times V\to \bC$ to be 
	the natural pairing and using that $(V^\vee)^K = (V^*)^K$ for all open compact subgroups $K$,  we obtain the claimed equality.
\end{proof}
Applying it to the standard bilinear form $B\colon \cH(G) \times \cH(G) \to \bC,$ we find 
\begin{corollary}
	The completion $\overline{\cH}(G)$ of $\cH(G)$ is the space of distributions $D$ on $G$ such that $e_K*D$ is a compactly supported distribution on $G$ for all idempotents $e_K$. 
\end{corollary}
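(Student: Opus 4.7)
\medskip

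\noindent\emph{Proof plan.} The strategy is to unwind what it means to be a compatible family in the inverse system $\varprojlim_K \cH(G)^K$, once we have embedded this limit as a subspace of $\cH(G)^*$ using the preceding proposition.

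First, I recall that by definition a distribution on $G$ is a linear functional on $\cC^\infty_c(G) = \cH(G)$, so that the space of distributions on $G$ is nothing other than $\cH(G)^*$. The ``standard bilinear form'' in the statement is $B(f_1,f_2) = \int_G f_1(x)f_2(x^{-1})\,dx$, which is a $G$-invariant pairing of smooth representations (one factor by left, the other by right translation) and induces an injection $\cH(G)\hookrightarrow \cH(G)^*$. Applying the preliminary observation in the proof of \cref{P:completion of contragredient} with $V=W=\cH(G)$ yields the chain of inclusions
\[ \overline{\cH(G)} \;=\;\varprojlim_K \cH(G)^K \;\hookrightarrow\;\varprojlim_K (\cH(G)^K)^* \;=\;\cH(G)^* \]
exhibiting $\overline{\cH(G)}$ as a subspace of the space of distributions on $G$.

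Second, I would identify which distributions lie in the image. An element of $\overline{\cH(G)}$ is, by construction, a compatible system $(f_K)_K$ with $f_K\in\cH(G)^K$ (a compactly supported locally constant function, $K$-invariant on the appropriate side) and $e_K\ast f_L = f_K$ whenever $L\subset K$. Under the embedding into $\cH(G)^*$, such a family corresponds to a distribution $D$ whose ``$K$-average'' $e_K\ast D$ equals $f_K$ for every compact open $K$. Conversely, starting from a distribution $D$ satisfying $e_K\ast D\in\cH(G)$ (that is, compactly supported) for every idempotent $e_K$, set $f_K := e_K\ast D$; the compatibility is automatic from $e_K\ast e_L = e_K$ for $L\subset K$, and each $f_K$ is tautologically $K$-invariant. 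This produces an inverse map.

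Finally, putting these two steps together identifies $\overline{\cH(G)}$ with the subspace of $\cH(G)^*$ characterized by the compactness-of-$e_K\ast D$ condition, which is exactly the statement of the corollary. The main subtlety, and the place where I would take most care, is the bookkeeping around left- versus right-actions (to be sure that ``$K$-invariants in $\cH(G)$'' on the one hand and ``$e_K\ast D$'' on the other are being taken on the same side, so that the compatibilities really match); once that is pinned down, the rest is a transparent matter of rewriting an inverse limit of compactly supported $K$-averages as a distribution with good $K$-averages.
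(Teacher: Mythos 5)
Your proposal is correct and follows essentially the same route as the paper, which simply applies (the argument in the proof of) \cref{P:completion of contragredient} to the standard bilinear form $B\colon \cH(G)\times\cH(G)\to\bC$; you merely make explicit the identification of the image inside $\cH(G)^*$ via the compatible families $f_K=e_K\ast D$, which the paper leaves implicit.
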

	In particular, coupled with \cref{P:center module idempot algebra},  we deduce:
\begin{corollary}
 The center of $\cM(G)$ can be described as $\overline{\cH}(G)^G$, i.e.,  the space of invariant distributions $D$ on $G$ such that  $e_K*D$ is a compactly supported distribution on $G$ for all idempotents $e_K$.
\end{corollary}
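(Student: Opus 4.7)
The plan is to combine Proposition~\ref{P:center module idempot algebra} applied to $A=\cH(G)$ with the preceding corollary, which identifies $\overline{\cH}(G)$ with the space of distributions $D$ on $G$ such that $e_K*D$ is compactly supported for every idempotent $e_K\in\cH(G)$. Since $\cM(G)\simeq\cH(G)\lmod\nd$, Proposition~\ref{P:center module idempot algebra} identifies $\cZ(\cM(G))$ with $\varprojlim_K Z(e_K\cH(G)e_K)$, or equivalently with the algebra of $\cH(G)$-bimodule endomorphisms of $\cH(G)$. First I would observe that any such bimodule endomorphism $\phi$ is of the form $\phi(f)=D*f=f*D$ for a unique distribution $D\in\overline{\cH}(G)$, recovered from the compatible system $\phi(e_K)=e_K*D*e_K\in e_K\cH(G)e_K$, and that the bimodule conditions $\phi(af)=a\phi(f)$ and $\phi(fa)=\phi(f)a$ for $a,f\in\cH(G)$ collapse into the single centrality relation $f*D=D*f$ for every $f\in\cH(G)$.

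It then suffices to show that a distribution $D\in\overline{\cH}(G)$ satisfies $f*D=D*f$ for all $f\in\cH(G)$ if and only if $D$ lies in $\overline{\cH}(G)^G$, the fixed points of the conjugation action of $G$. The easy direction: if $D$ is conjugation-invariant then $\delta_g*D=D*\delta_g$ for every $g\in G$, and integrating against any $f\in\cH(G)$ yields the desired commutation. For the converse, given $D$ centralizing $\cH(G)$ and $g\in G$, I would approximate $\delta_g$ by $f_n=\mathrm{vol}(K_n)^{-1}\chi_{gK_n}\in\cH(G)$, where $K_n$ is a shrinking basis of compact open subgroups of the identity in $G$; the compact-support condition $e_L*D\in\cH(G)$ ensures that $f_n*D$ and $D*f_n$ are honest compactly supported locally constant functions, and passing to the pointwise limit as $n\to\infty$ gives $\delta_g*D=D*\delta_g$, whence $G$-invariance of $D$.

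The main obstacle is precisely this last approximation step: justifying rigorously that the identity $f_n*D=D*f_n$ passes to the limit as $f_n\to\delta_g$. This is where the compact-support property built into $\overline{\cH}(G)$ by the preceding corollary becomes essential: it localizes the convolution with $D$ to a compact subset of $G$ and reduces the computation to convolution with an ordinary compactly supported smooth function, where the Dirac-sequence argument is straightforward. Once this is settled, the equality $\cZ(\cM(G))=\overline{\cH}(G)^G$ follows directly from the chain of identifications above.
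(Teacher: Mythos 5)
Your proposal is correct and follows exactly the route the paper intends: the paper deduces this corollary in one line from Proposition~2.8 (center of $A\lmod\nd$ as bimodule-central elements of $\overline{A}$) together with the preceding identification of $\overline{\cH}(G)$, and you supply the missing verification that centrality in the bimodule sense is equivalent to conjugation-invariance. Your worry about the limiting step is easily dispatched in this totally disconnected setting: for a locally constant test function $\phi$, the convolutions $\breve{f}_n*\phi$ stabilize once $K_n$ is small enough, so the Dirac-sequence identity $\delta_g*D=D*\delta_g$ holds with no analytic passage to the limit.
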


\begin{remark}
	It is known that for smooth representations taking invariants with respect to a unipotent subgroup $N$ is not a well behaved functor (for example, it is not exact) and as such it is almost never used. 
	On the other hand, taking coinvariants under $N$ leads to nice exact functors (Jacquet functors) which are extremely important in the theory. 
	The completion allows us to reconcile invariants and coinvariants for $N$ as shown below.
\end{remark}	

\begin{proposition}
	The functor $(-)^N\circ \overline{(-)} \circ (-)^\vee\colon \cM(G)\to\Vect_\bC$ is exact.
\end{proposition}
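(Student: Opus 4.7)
The plan is to collapse the three functors into a single well-understood one, using Proposition \ref{P:completion of contragredient}. Specifically, for any smooth representation $V \in \cM(G)$, the proposition gives a canonical isomorphism $\overline{V^\vee} = V^*$, where $V^*$ is the full (not-necessarily-smooth) linear dual. Consequently the composite functor in question is naturally isomorphic to $V \mapsto (V^*)^N$, the space of $N$-invariant linear functionals on $V$.

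Next, I would identify $(V^*)^N$ with $(V_N)^*$. Indeed, a linear functional $\ell \colon V \to \bC$ is $N$-invariant precisely when $\ell(nv) = \ell(v)$ for all $n \in N$ and $v \in V$, i.e., when $\ell$ vanishes on the subspace $V(N)$ generated by the vectors $nv - v$. By the universal property of coinvariants, such $\ell$ correspond bijectively to linear functionals on $V_N = V / V(N)$, giving a canonical isomorphism
\[ (V^*)^N \cong (V_N)^*, \]
which is functorial in $V$. Hence the functor in question is naturally isomorphic to the composition $(-)^* \circ (-)_N$.

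It remains to observe that both constituent functors are exact. The linear dual $(-)^* \colon \Vect_\bC \to \Vect_\bC^\op$ is exact because $\bC$ is an injective $\bC$-module (any short exact sequence of vector spaces splits). The Jacquet/coinvariant functor $(-)_N \colon \cM(G) \to \Vect_\bC$ is exact as well: this is a standard fact, since $N$ is a union of its compact open subgroups $N_0$, and any element of $V(N)$ is killed by averaging with respect to a sufficiently large compact open $N_0 \subseteq N$ (so $V(N) = \bigcup_{N_0} \ker(e_{N_0}\colon V \to V^{N_0})$, from which exactness follows by a direct limit argument). Composing two exact functors yields an exact functor, which proves the proposition.

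The main conceptual point, rather than any technical obstacle, is the recognition that completion is exactly the device that turns the pathological invariants functor $(-)^N$ on a smooth representation into the well-behaved coinvariants functor $(-)_N$ on the contragredient; once Proposition \ref{P:completion of contragredient} is in hand, everything reduces to the known exactness of $(-)_N$ and $(-)^*$.
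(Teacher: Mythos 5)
Your proof is correct and follows essentially the same route as the paper: identify $\overline{V^\vee}=V^*$ via the completion-of-contragredient proposition, then use the canonical isomorphism $(V^*)^N\cong (V_N)^*$ to rewrite the composite as $(-)^*\circ(-)_N$, both factors of which are exact. The only difference is that you spell out the universal-property argument for $(V^*)^N\cong(V_N)^*$ and the averaging argument for exactness of $(-)_N$, which the paper leaves implicit.
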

\begin{proof}	
	Notice that we have a natural isomorphism $(V^*)^N = (V_N)^*$ for any representation $V$ of $G$.
	In particular, for a smooth representation $V$ of $G$, using \cref{P:completion of contragredient} we obtain
	\[  \overline{V^\vee}^N = (V^*)^N = (V_N)^*.\]
	Taking $N$-coinvariants is an exact functor because $N$ is an increasing union of compact open subgroups. 
	So the functor $(-)^N\circ \overline{(-)} \circ (-)^\vee$ identifies with the composition of two exact functors $(-)^*\circ (-)_N$ which concludes the proof.
\end{proof}
In other words, taking $N$-invariants on completions of contragredient $G$-modules is an exact functor.
In particular, we deduce
\begin{corollary}
	The functor $(-)^N\circ \overline{(-)}$ is exact on admissible modules.
\end{corollary}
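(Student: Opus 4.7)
The plan is to deduce this corollary directly from the preceding proposition by exploiting the fact that the contragredient functor $(-)^\vee$ is an involutive equivalence on the category of admissible representations. Concretely, for any admissible smooth representation $V$ of $G$, the canonical evaluation map $V \to (V^\vee)^\vee$ is an isomorphism, since at each level of $K$-fixed vectors (for $K \le G$ compact open) we are taking the double dual of a finite-dimensional vector space.

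Given a short exact sequence $0 \to V_1 \to V_2 \to V_3 \to 0$ of admissible representations of $G$, I would first take contragredients to obtain a short exact sequence $0 \to V_3^\vee \to V_2^\vee \to V_1^\vee \to 0$ of (admissible) smooth representations; exactness is preserved because the contragredient of a smooth representation is an exact functor (one checks exactness on $K$-fixed vectors). Then I would apply the preceding proposition to this new short exact sequence, which tells us that the functor $(-)^N \circ \overline{(-)} \circ (-)^\vee$ applied term by term yields an exact sequence. Finally, using the admissibility-based identification $(V_i^\vee)^\vee \simeq V_i$ for $i = 1,2,3$, I rewrite this exact sequence as
\[
0 \to \overline{V_1}^{\,N} \to \overline{V_2}^{\,N} \to \overline{V_3}^{\,N} \to 0,
\]
which is the desired exactness statement.

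There is no real obstacle here: the corollary is essentially a formal consequence of the preceding proposition once one notes that the contragredient restricts to an exact self-equivalence on admissible representations. The only small point worth checking carefully is that $V_i^\vee$ is again admissible (which follows from $(V^\vee)^K = (V^K)^*$ being finite-dimensional whenever $V^K$ is) so that the identification $(V_i^\vee)^\vee \simeq V_i$ is legitimate.
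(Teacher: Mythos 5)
Your proposal is correct and is precisely the deduction the paper intends (the paper states the corollary as an immediate consequence of the preceding proposition without writing out the details): on admissible modules the double-contragredient identification $V\simeq (V^\vee)^\vee$, together with exactness of $(-)^\vee$, reduces exactness of $(-)^N\circ\overline{(-)}$ to exactness of $(-)^N\circ\overline{(-)}\circ(-)^\vee$. Your side remark that $V^\vee$ is again admissible, via $(V^\vee)^K=(V^K)^*$, is exactly the right point to check.
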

	
\subsection{Adjoints}
	From now on, $G$ is a reductive $p$-adic group and $\wG$ is a finite covering group of it (see \cref{SS:central ext}).
	We fix $\wP = \wL N$, a parabolic subgroup with a Levi decomposition in $\wG$, and we denote by $\wP^-$, the opposite parabolic subgroup with Levi decomposition $\wL N^-$.
	We denote by $\delta_\wP$ the modulus character of $\wP$.
	
	The induction functor $\ind_\wP^\wG \colon \cM(\wP)\to \cM(\wG)$ has a different, more algebraic, expression in terms of the Hecke algebras.
	First, recall that we have an equivalence of categories
	\[ \cM(\wG)\simeq \cH(\wG)\lmod\nd \]
	between smooth representations of $\wG$ and non-degenerate $\cH(\wG)$-modules and that this holds also for $\wP$ and $\wL$.
	
	The following result gives a different incarnation of the induction from $\wP$ to $\wG$:
	\begin{proposition}[{\cite[Théorème III.2.6]{Renard}}]\label{P:ind_P^G as tensor product}
		There is a natural isomorphism of functors 
		\[ \cH(\wG)\otimes_{\cH(\wP)} - \simeq \ind_\wP^\wG \circ (-\otimes_\bC \delta_\wP). \]
	\end{proposition}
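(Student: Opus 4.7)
The plan is to construct an explicit natural transformation and verify it is an isomorphism. First I equip $\cH(\wG)$ with a right $\cH(\wP)$-module structure. The subtlety is that $\wP$ is closed of measure zero in $\wG$, so an element $\psi \in \cH(\wP)$ is not literally a function on $\wG$; instead I view $\psi$ as a distribution on $\wG$ supported on $\wP$ (obtained by pairing with the Haar measure on $\wP$), and let it act on $\phi \in \cH(\wG)$ by right convolution, e.g.\ $(\phi \cdot \psi)(g) = \int_{\wP} \phi(g p^{-1}) \psi(p) \, dp$. A routine check shows this makes $\cH(\wG)$ a right $\cH(\wP)$-module (associativity, and for each $\phi$ there is an idempotent $e \in \cH(\wP)$ with $\phi \cdot e = \phi$), so the tensor product $\cH(\wG) \otimes_{\cH(\wP)} V$ is well-defined for any non-degenerate $\cH(\wP)$-module $V$.

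Next I define a candidate map $T \colon \cH(\wG) \otimes_{\bC} V \to \ind_{\wP}^{\wG}(V \otimes \delta_\wP)$ by an averaging formula over $\wP$, essentially
\[ T(\phi \otimes v)(g) = \int_{\wP} \phi(p^{-1} g) \, \pi(p) v \, dp, \]
with the Haar measure on $\wP$ chosen so that the resulting function transforms on the left under $\wP$ by the character $\pi \otimes \delta_\wP$ (the modular character showing up from the change of variables between left and right Haar measure on $\wP$). Three things need to be checked: (i) $T(\phi \otimes v)$ is smooth, left $(\wP, \pi\otimes\delta_\wP)$-equivariant, and of compact support modulo $\wP$ on the left, hence lies in $\ind_\wP^\wG(V \otimes \delta_\wP)$; (ii) $T$ is $\wG$-equivariant with respect to left translation of $\wG$ on $\cH(\wG)$; and (iii) $T(\phi \cdot \psi \otimes v) = T(\phi \otimes \pi(\psi) v)$ for $\psi \in \cH(\wP)$, so that $T$ descends to the tensor product $\cH(\wG) \otimes_{\cH(\wP)} V$. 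All three are direct computations with the integration formula and Fubini.

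To prove that the resulting natural map is an isomorphism, I use the Iwasawa decomposition $\wG = \wP \wK$ for a good open compact subgroup $\wK$ of $\wG$. Any $\phi \in \cH(\wG)$ can be decomposed using a partition of unity subordinate to a finite covering of its support by translates $\wP \cdot \wK_i$; this reduces the surjectivity to constructing, for each $v \in V$ and each compact open $\wK' \subset \wK$, a preimage of the explicit function $g \mapsto \mathbf{1}_{\wP \wK'}(g) \cdot \pi(p(g)) \delta_\wP(p(g)) v$, which one can take to be the characteristic function of $\wK'$ tensored with $v$ (possibly after averaging over a compact open subgroup of $\wP$ fixing $v$). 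Injectivity follows by observing that both sides, viewed as functors, are compatible with the natural filtration by $\wK$-fixed vectors, and checking the claim at the level of $\wK$-fixed vectors using the double coset decomposition $\wP \backslash \wG / \wK$ (which is finite on each compact piece by the Iwasawa decomposition). Naturality in $V$ is built into the construction.

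The main obstacle is bookkeeping with modular characters and the choice of left versus right Haar measure on $\wP$ and $\wN$: different conventions lead to the appearance of $\delta_\wP$ either on the $V$ side (as stated in the proposition) or on the $\cH(\wG)$ side, and one must fix conventions at the outset and remain consistent. Once this is done the argument is essentially formal, using only Fubini, the Iwasawa decomposition, and the non-degeneracy of $V$ as an $\cH(\wP)$-module; no deep input beyond the structure of $\wG$ is required, which is why the proof goes through identically for the covering $\wG \to G$.
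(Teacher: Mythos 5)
The paper does not actually prove this proposition; it is quoted from \cite[Th\'eor\`eme III.2.6]{Renard}, and your explicit-integral approach is exactly the standard one used there (convolution bimodule structure, an averaging map, partition of unity for surjectivity, reduction to $\wK$-fixed vectors and the finite set $\wP\backslash\wG/\wK$ for injectivity). So the strategy is sound, and you are right that nothing beyond the structure theory of $\wG$ is needed, which is why it transfers verbatim to coverings.

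One concrete point needs repair, and it is precisely of the type your final paragraph warns about: your two displayed formulas contract \emph{different} $\wP$-translation variables, so step (iii) fails as written. You define the right $\cH(\wP)$-action by right convolution, $(\phi\cdot\psi)(g)=\int_{\wP}\phi(gp^{-1})\psi(p)\,dp$, i.e.\ the $\cH(\wP)$-variable being used up in $\cH(\wG)\otimes_{\cH(\wP)}V$ is the \emph{right}-translation variable of $\cH(\wG)$; but your map $T(\phi\otimes v)(g)=\int_{\wP}\phi(p^{-1}g)\,\pi(p)v\,dp$ integrates against \emph{left} translation. Computing both sides of $T(\phi\cdot\psi\otimes v)=T(\phi\otimes\pi(\psi)v)$ one finds $\int\!\!\int\phi(p^{-1}gq^{-1})\psi(q)\pi(p)v$ versus $\int\!\!\int\phi(qp^{-1}g)\psi(q)\pi(p)v$ (after substitution), which differ for non-central $\psi$, so $T$ does not descend to the balanced tensor product; moreover with $\phi(p^{-1}g)$ the map is equivariant only for the right-translation $\wG$-action on $\cH(\wG)$, not the left convolution action that gives $\cH(\wG)\otimes_{\cH(\wP)}V$ its $\wG$-module structure. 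The fix is to use $T(\phi\otimes v)(g)=\int_{\wP}\phi(g^{-1}p)\,\pi(p)v\,dp$: the inversion $g\mapsto g^{-1}$ turns the right-translation variable into the left slot, and then left $(\wP,\pi\otimes\delta_\wP)$-equivariance, $\wG$-equivariance for left convolution, and the balancing relation (with $\delta_\wP$ emerging from the change of Haar measure) all check out. The rest of your argument (surjectivity via a finite cover of the support modulo $\wP$, which is automatic since $\wP\backslash\wG$ is compact, and injectivity on $\wK$-invariants) then goes through.
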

	Now consider the functor of taking $\wG$-smooth vectors (or the non-degenerate submodule for the Hecke algebra)
	\[ (-)_{nd\mhyphen\wG}\colon \cH(\wG)\lmod \to \cH(\wG)\lmod\nd\]
	which to a module associates its non-degenerate submodule.
		Similarly for $\cH(\wP)$-modules.
\begin{remark}\label{R:non-deg is right adjoint}
The functor $(-)_{nd\mhyphen\wG}$ is right adjoint to the inclusion functor   \[\cH(\wG)\lmod\nd\hookrightarrow \cH(\wG)\lmod. \]
\end{remark}

	\begin{proposition}\label{P:right adjoint to ind_P^G}
		The functor 
		\[ \ind_\wP^\wG \circ (-\otimes_\bC \delta_\wP)\colon \cM(\wP)\to\cM(\wG) \]
		is left adjoint to
		\[ (-)_{nd-\wP}\circ \Res_\wP^\wG\circ \overline{(-)}\colon\cM(\wG)\to \cM(\wP).\]
	\end{proposition}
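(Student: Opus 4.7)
The strategy is to apply the classical tensor--Hom adjunction after reinterpreting both functors in purely algebraic terms via the Hecke algebras, and then to use the characterizations of completion and of the non-degenerate submodule established earlier in \S\ref{SS:completion}.

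First, by \cref{P:ind_P^G as tensor product}, the left-hand functor is naturally isomorphic to
\[ \cH(\wG)\otimes_{\cH(\wP)} (-)\colon \cH(\wP)\lmod\nd \to \cH(\wG)\lmod\nd,\]
where we view $\cH(\wG)$ as a $(\cH(\wG),\cH(\wP))$-bimodule. I would then invoke the standard tensor--Hom adjunction for (possibly degenerate) modules over idempotented algebras: for $M\in \cH(\wP)\lmod$ and $N\in\cH(\wG)\lmod$ there is a natural isomorphism
\[ \Hom_{\cH(\wG)}\bigl(\cH(\wG)\otimes_{\cH(\wP)}M,\, N\bigr)\;\simeq\;\Hom_{\cH(\wP)}\bigl(M,\,\Hom_{\cH(\wG)}(\cH(\wG),N)\bigr).\]

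Next, I would identify the inner Hom. By the description of completion in \S\ref{SS:completion}, we have $\overline{N}=\Hom_{\wG}(\cH(\wG),N)=\Hom_{\cH(\wG)}(\cH(\wG),N)$ as $\wG$-modules, where the $\wG$-action comes from the right $\wG$-action on $\cH(\wG)$. Restricting this right action to $\wP\subset\wG$ gives the $\cH(\wP)$-module structure used in the adjunction above, so as $\cH(\wP)$-modules
\[ \Hom_{\cH(\wG)}(\cH(\wG),N)\;\simeq\;\Res_\wP^\wG\overline{N}.\]
This is the step that requires the most care, since one must check that the $\cH(\wP)$-action induced by regarding $\cH(\wG)$ as a right $\cH(\wP)$-module matches the action obtained by restricting the $\wG$-action on $\overline{N}$ along $\wP\hookrightarrow\wG$. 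I expect this to be the main technical obstacle; once one unwinds the definitions it is essentially a bookkeeping check using the description of $\overline{N}$ as an inverse limit over compact open subgroups compatible under the actions of $\wP$.

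Finally, since $M$ is a non-degenerate $\cH(\wP)$-module, every $\cH(\wP)$-linear map from $M$ into any $\cH(\wP)$-module factors through its non-degenerate submodule. Equivalently, by \cref{R:non-deg is right adjoint}, $(-)_{nd\mhyphen\wP}$ is right adjoint to the inclusion $\cH(\wP)\lmod\nd\hookrightarrow\cH(\wP)\lmod$. Composing this with the previous adjunction yields
\[ \Hom_{\cM(\wG)}\bigl(\ind_\wP^\wG(M\otimes\delta_\wP),\, N\bigr)\;\simeq\;\Hom_{\cM(\wP)}\bigl(M,\,(\Res_\wP^\wG\overline{N})_{nd\mhyphen\wP}\bigr),\]
for $M\in\cM(\wP)$ and $N\in\cM(\wG)$, which is exactly the claimed adjunction. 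The naturality in both arguments is automatic from the naturality of each step.
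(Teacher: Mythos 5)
Your proposal is correct and follows essentially the same route as the paper: rewrite the induction functor as $\cH(\wG)\otimes_{\cH(\wP)}(-)$ via \cref{P:ind_P^G as tensor product}, apply the tensor--Hom adjunction for idempotented algebras, identify $\Hom_{\cH(\wG)}(\cH(\wG),-)$ with $\Res_\wP^\wG\circ\overline{(-)}$, and pass to the non-degenerate submodule. The bookkeeping check you flag (matching the right $\cH(\wP)$-module structure on $\cH(\wG)$ with the restriction of the $\wG$-action on the completion) is exactly the step the paper also treats as immediate.
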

	\begin{proof}
	The usual tensor--Hom adjunction has the following analogue here: 
	the functor 
	\[ \cH(\wG)\otimes_{\cH(\wP)} -\colon \cM(\wP)\to\cM(\wG) \]
	is left adjoint to
	\[ (-)_{nd\mhyphen \wP}\circ \Hom_{\cH(\wG)}(\cH(\wG),-)\colon \cM(\wG)\to\cM(\wP) \]
	where $\cH(\wG)$ is viewed as a left $\cH(\wG)$-module and a right $\cH(\wP)$-module.
	
	Therefore the functor $\Hom_{\cH(\wG)}(\cH(\wG),-)\colon \cM(\wG)\to \cH(\wP)\lmod$ is identified with $\Res_\wP^\wG \circ \overline{(-)}$.
	
	Now we conclude using the fact that the image of a non-degenerate module is a non-degenerate module.
	\end{proof}
We immediately get some expression of the right adjoint of parabolic induction:
	\begin{corollary}\label{C:right adjoint of parab ind}
		The functor $\bfi_{\wL,\wP}^\wG\colon \cM(\wL)\to \cM(\wG)$ is left adjoint to 
		 \[ (-)^N\circ (-)_{nd-\wP}\circ \Res_\wP^\wG\circ \overline{(-)}.\]
	\end{corollary}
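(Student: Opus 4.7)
The plan is to derive this corollary by combining the adjunction from Proposition \ref{P:right adjoint to ind_P^G} with the standard inflation--invariants adjunction for smooth representations. Concretely, I would first rewrite the normalized parabolic induction as the composite $\bfi_{\wL,\wP}^\wG = \ind_\wP^\wG \circ (-\otimes \delta_\wP^{-1/2}) \circ \infl_\wL^\wP$, where $\infl_\wL^\wP\colon \cM(\wL)\to \cM(\wP)$ is the inflation functor that sends a $\wL$-module to itself viewed as a $\wP$-module with $N$ acting trivially.

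Second, I would invoke the elementary adjunction $\infl_\wL^\wP \dashv (-)^N$: any $\wP$-equivariant morphism from $\infl_\wL^\wP(\sigma)$ into a $\wP$-module $M$ must factor through $M^N$, because $N$ acts trivially on the source. This yields the natural identification $\Hom_\wP(\infl_\wL^\wP(\sigma), M) \simeq \Hom_\wL(\sigma, M^N)$ for $\sigma\in\cM(\wL)$ and $M\in\cM(\wP)$, together with the observation that $(-)^N$ is a genuine functor to $\cM(\wL)$ because taking $N$-invariants of a $\wP$-module produces an $\wL = \wP/N$-module.

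Finally, I would chain these adjunctions together with Proposition \ref{P:right adjoint to ind_P^G}: for $\sigma\in\cM(\wL)$ and $W\in \cM(\wG)$, I would compute $\Hom_\wG(\bfi_{\wL,\wP}^\wG(\sigma), W)$ by first applying Proposition \ref{P:right adjoint to ind_P^G} (after aligning its twist $-\otimes \delta_\wP$ with the $-\otimes \delta_\wP^{-1/2}$ appearing in $\bfi$) to transport the Hom into $\cM(\wP)$ as $\Hom_\wP(\infl_\wL^\wP(\sigma), (\Res_\wP^\wG \overline{W})_{nd\mhyphen\wP})$, and then descending to $\cM(\wL)$ via the inflation--invariants adjunction. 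The result is the claimed right adjoint $(-)^N \circ (-)_{nd\mhyphen\wP} \circ \Res_\wP^\wG \circ \overline{(-)}$.

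The only subtle point, and the main obstacle, is the bookkeeping of the modular character factors $\delta_\wP^{\pm 1/2}$ that come from the normalization of $\bfi_{\wL,\wP}^\wG$, versus the twist by $\delta_\wP$ built into Proposition \ref{P:right adjoint to ind_P^G}. Since $\delta_\wP$ is a character of $\wL$ (hence trivial on $N$), twisting by $\delta_\wP$ commutes with $(-)^N$ and with the restriction--completion operations, so one can shuffle the twists freely between sides of the Hom; one must verify that these cancel to yield the clean formula stated (or, equivalently, are absorbed into the convention used for the tensor-product description of $\ind_\wP^\wG$ in Proposition \ref{P:ind_P^G as tensor product}). Beyond this formal bookkeeping, no essentially new ingredient is required.
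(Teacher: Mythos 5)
Your proof is correct and follows exactly the paper's route: the paper's (one-line) proof likewise composes Proposition \ref{P:right adjoint to ind_P^G} with the inflation--invariants adjunction ($\Res_\wP^\wL \dashv (-)^N$) and the fact that $(-)_{nd\mhyphen\wP}$ is right adjoint to the inclusion of non-degenerate modules. Your explicit flagging of the $\delta_\wP^{\pm 1/2}$ bookkeeping is appropriate, since the paper itself is not entirely consistent about these twists between the definition of $\bfi_{\wL,\wP}^\wG$ and its use in the proof of \cref{T:second adjointness}.
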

\begin{proof}
	Use \cref{R:non-deg is right adjoint} and that the right adjoint of the inflation functor $\Res_\wP^\wL\colon \cM(\wL)\to \cM(\wP)$ is $(-)^N$.
\end{proof}

	The key in the proof of Bernstein's second adjointness is the generalized Jacquet Lemma which was proved first by Casselman for admissible representations and later generalized by Bernstein to all smooth representations. 
	See \cite[p.65 Jacquet's Lemma]{BerNotes}, \cite{Bush-loc}, and \cite[VI.9.1]{Renard}.
	The same proof works for finite central extensions:
	\begin{theorem}\label{T:gen Jacquet} 
		Let $\wK\le \wG$ be an open compact admitting an Iwahori decomposition with respect to $\wP=\wL N$.
		Then for any representation $V\in\cM(\wG)$,  the projection
		\[V^\wK\to (V_N)^{\wK\cap \wL} \] 
		is surjective and has a natural (functorial in $V$) section, call it
		$s_{\wK}:  (V_N)^{\wK\cap \wL} \to V^\wK$. 
		Further, for $\wK' \subset \wK$ open compact, both admitting
		Iwahori decompositions with respect to  $\wP=\wL N$,
		we have the commutative diagram (see \cite[VI.9.6.6]{Renard}) :
		\[ 	\begin{tikzcd}
			(V_N)^{\wK' \cap \wL}	\ar[r,"s_{\wK'}"] \ar[d," e_{\wK \cap \wL}  "'] & 	V^{\wK'}  \arrow[d,"e_{\wK}"]\\
			(V_N)^{\wK \cap \wL}	 \ar[r,"s_{\wK}"] &  	V^\wK.
		\end{tikzcd}\]
	\end{theorem}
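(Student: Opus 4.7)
The plan is to extend Casselman's classical Jacquet lemma (the admissible case, stated in the text as \cref{T:classical Jacquet lemma}) to arbitrary smooth $V \in \cM(\wG)$ by means of a contracting-flow argument due to Bernstein. Fix once and for all a strictly positive element $a$ in the split center $A_{\wL}$ of $\wL$, i.e.\ an element with $a K^- a^{-1} \subsetneq K^-$ and $\bigcap_{n\geq 0} a^n K^- a^{-n} = \{1\}$, and dually $a^{-1} K^+ a \subsetneq K^+$ with $\bigcap_{n\geq 0} a^{-n} K^+ a^n = \{1\}$. From the Iwahori decomposition $\wK = K^+\cdot K^0 \cdot K^-$ one has $e_\wK = e_{K^+} * e_{K^0} * e_{K^-}$ up to normalisation, and a change of variables (using unimodularity of $\wG$) gives the key operator identity $e_\wK \cdot a^n = a^n \cdot e_{a^{-n}\wK a^n}$ on $V$, where $a^{-n}\wK a^n$ has $K^+$-component shrinking with $n$ and $K^-$-component growing with $n$.

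For surjectivity, given $\bar v \in (V_N)^{K^0}$ I would choose an arbitrary lift $v_0 \in V$, then replace it by $e_{K^0} v_0$ to obtain a $K^0$-fixed lift of $\bar v$ (still lifting $\bar v$ since $\bar v$ is $K^0$-fixed). By smoothness $v_0$ is stabilised by some small $\wK_0 \subset \wK$ with Iwahori decomposition. I would then form $w_n := a^n \cdot e_\wK \cdot a^{-n}\, v_0$ for $n$ large. For sufficiently large $n$, the $K^+$-component of $a^{-n}\wK a^n$ contains $K_0^+$ and its $K^-$-component has shrunk below $K_0^-$, so the averaging on the inner $e_{a^{-n}\wK a^n}v_0$ becomes well-controlled; $\wK$-invariance of $w_n$ is immediate from the outer $e_\wK$. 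A direct computation in $V_N$, using that $K^+\subset N$ acts trivially on coinvariants and that $a$ acts invertibly through the $\wL$-structure on $V_N$, shows that the image of $w_n$ in $V_N$ is exactly $\bar v$, yielding the desired $\wK$-fixed preimage.

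To construct the functorial section, I would set $s_\wK(\bar v) := w_n$ for $n$ sufficiently large. Functoriality in $V$ is then automatic from the construction, provided well-definedness is established. The compatibility diagram for $\wK' \subset \wK$ will then follow formally by inserting $e_\wK = e_\wK \cdot e_{\wK'}$ on the upstairs side and $e_{K^0} = e_{K^0} \cdot e_{K'^0}$ on the Jacquet side into the defining formula, and comparing the two limits. The main obstacle, as always in the generalisation from the admissible to the general smooth setting, is the independence of $s_\wK(\bar v)$ on the lift $v_0$: two $K^0$-fixed lifts differ by an element of $NV \cap V^{K^0}$, which can be written as a finite sum of terms $(1 - n_i) u_i$ with $n_i \in N$ and $u_i \in V$ smooth; the contracting conjugation by $a^{-n}$ pushes each $n_i$ into an arbitrarily small neighbourhood of the identity, and combined with the outer $e_\wK$-averaging this should force every such difference into the kernel of the whole operator. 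The heart of the argument is to make this stabilisation quantitative and uniform, bounding how large $n$ must be taken in terms of the stabilisers of the finitely many $u_i$, so that the eventual value $w_n$ genuinely depends only on $\bar v$.
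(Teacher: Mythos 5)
Your overall strategy --- a strictly $\wP$-dominant element $a$ of the centre of $\wL$, the factorization $e_\wK = e_{K^+}*e_{K^0}*e_{K^-}$, the conjugation identity $e_\wK\,\pi(a^n)=\pi(a^n)\,e_{a^{-n}\wK a^n}$, and a stabilization-in-$n$ argument --- is exactly the Casselman--Bernstein argument of the references the paper cites (the paper itself does not reprove the lemma; it points to \cite[VI.9.1, VI.9.6]{Renard} and \cite{Bush-loc}). But your central formula does not do what you claim. With your conventions, $w_n:=\pi(a^n)e_\wK\pi(a^{-n})v_0=e_{a^n\wK a^{-n}}v_0$: the outermost operator is $\pi(a^n)$, not $e_\wK$, so $w_n$ is fixed by $a^n\wK a^{-n}$ and \emph{not} by $\wK$ (it is fixed by $K^0$, by $a^nK^+a^{-n}\supset K^+$, but only by the proper subgroup $a^nK^-a^{-n}$ of $K^-$). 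Its image in $V_N$ is indeed $\bar v$, but it is not the required $\wK$-fixed lift. If instead you place $e_\wK$ outside, then $e_\wK\pi(a^{-n})v_0$ is $\wK$-fixed but its image is $\delta_\wP(a^{-n})^{1/2}\pi_N(a^{-n})\bar v$, not $\bar v$. The missing idea is the compensating twist on the Jacquet side: take a $K^0K^+$-fixed lift $v_n$ of $\delta_\wP(a^{n})^{-1/2}\pi_N(a^{n})\bar v$ and set $s_\wK(\bar v)=e_\wK\pi(a^{-n})v_n$ for $n\gg0$; then $e_{K^-}\pi(a^{-n})v_n=\pi(a^{-n})e_{a^nK^-a^{-n}}v_n=\pi(a^{-n})v_n$ once $a^nK^-a^{-n}\subset\Stab(v_n)$, and the image in $V_N$ is $\bar v$. (Note also the internal contradiction in your text: you correctly say that the $K^+$-part of $a^{-n}\wK a^n$ shrinks and its $K^-$-part grows, and two sentences later assert the opposite.)

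The well-definedness mechanism you propose at the end is also not the right one. If two lifts differ by $w\in V(N)$ written as $\sum_i(1-\pi(n_i))u_i$, then applying $\pi(a^{-n})$ gives $\sum_i(1-\pi(a^{-n}n_ia^n))\pi(a^{-n})u_i$; although $a^{-n}n_ia^n\to1$, the stabilizer of $\pi(a^{-n})u_i$ is $a^{-n}\Stab(u_i)a^n$ and contracts at exactly the same rate, so these terms do not vanish: contracting the $n_i$ buys nothing. What actually kills $V(N)$ is the \emph{expansion} of the $N$-part of the conjugated group: $p(w)=0$ means $e_{N_c}w=0$ for some compact open $N_c\subset N$, and in $e_\wK\pi(a^{-n})w=\pi(a^{-n})e_{a^{n}\wK a^{-n}}w$ the subgroup $a^{n}K^+a^{-n}$ eventually contains $N_c$, forcing the expression to vanish. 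This lemma (``$w$ fixed by $K^0K^+$ with $p(w)=0$ implies $e_\wK\pi(a^{-n})w=0$ for $n\gg0$'') is the crux of the whole construction: it yields at once independence of the choice of lift, independence of $n$, surjectivity, and, applied to $\wK'\subset\wK$, the commutativity of the square in the statement. As written, your proposal identifies the right difficulty but supplies a mechanism that fails, so the construction of $s_\wK$ is not complete.
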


	Recall that for a $\wG$-module $V$,  we defined its completion to be 
	\[ \overline{V}:=\Hom_\wG(\cH(\wG),V) = \lim_\wK V^\wK \]
	where the limit is taken over all the open compact subgroups $\wK$ of $\wG$.
	The previous theorem is actually equivalent to the following very nice looking statement that is proved for linear groups in \cite[VI.9.7]{Renard}.
	The same proof works for finite central extensions. (The commutativity of the diagram in Theorem
	\cref{T:gen Jacquet} allows one to construct a map  $\overline{V_{N}} \to \overline{V}$ which is then proved to land inside
	the space of $N^-$-invariants.)
	\begin{corollary}\label{C:invar and coinvar are isomorphic}
		If $\wP=\wL N$ is a parabolic with opposite $\wP^- = \wL N^-$ and $V$ is a smooth  representation of $\wG$ then the natural map 
		\[ \overline{V}^{N^-}\to \overline{V_{N}} \] 
		is an isomorphism of $\wL$-representations.
	\end{corollary}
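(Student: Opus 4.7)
The plan is to use the generalized Jacquet Lemma (\cref{T:gen Jacquet}) to build explicit mutually inverse maps between $\overline{V}^{N^-}$ and $\overline{V_N}$, and then check $\wL$-equivariance. First, fix a cofinal family $\mathcal{I}$ of compact open subgroups $\wK \le \wG$ admitting an Iwahori factorization with respect to $\wP = \wL N$ (such a family is cofinal because any compact open subgroup contains a smaller one with an Iwahori factorization). For $\wK \in \mathcal{I}$, set $\wK^0 = \wK \cap \wL$ and $\wK^{\pm} = \wK \cap N^{\pm}$. Computing the limit defining $\overline{V}$ along $\mathcal{I}$ gives $\overline{V} = \varprojlim_{\wK \in \mathcal{I}} V^{\wK}$; likewise $\overline{V_N} = \varprojlim_{\wK \in \mathcal{I}} (V_N)^{\wK^0}$.

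Next, assemble the forward map $\Phi\colon \overline{V_N} \to \overline{V}$ from the functorial sections $s_\wK\colon (V_N)^{\wK^0} \to V^\wK$ supplied by \cref{T:gen Jacquet}. The compatibility diagram in that theorem says precisely that for $\wK' \subset \wK$ in $\mathcal{I}$ one has $e_\wK \circ s_{\wK'} = s_\wK \circ e_{\wK^0}$, so the family $(s_\wK)_{\wK \in \mathcal{I}}$ is compatible with the transition maps in the inverse systems; hence it defines $\Phi$. To check that $\Phi$ lands inside $\overline{V}^{N^-}$, observe that $s_\wK(w) \in V^\wK \subset V^{\wK^-}$ for every $\wK \in \mathcal{I}$, and $N^- = \bigcup_{\wK \in \mathcal{I}} \wK^-$ is the increasing union of its compact open subgroups; an element of $\overline{V}$ is $N^-$-fixed exactly when each of its components is $\wK^-$-fixed for arbitrarily large $\wK$, which holds tautologically here.

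For the backward direction, use Jacquet's lemma to obtain surjections $p_\wK\colon V^\wK \twoheadrightarrow (V_N)^{\wK^0}$ induced by the quotient $V \to V_N$. These are automatically compatible with the transition maps $e_\wK$ (since $V \to V_N$ is $\wG$-equivariant), so they assemble into a map $\Psi\colon \overline{V} \to \overline{V_N}$, and we restrict it to $\Psi\colon \overline{V}^{N^-} \to \overline{V_N}$. The composite $\Psi \circ \Phi$ is the identity on $\overline{V_N}$ componentwise because $p_\wK \circ s_\wK = \mathrm{id}$ on $(V_N)^{\wK^0}$ (the content of ``section''). The composite $\Phi \circ \Psi$ being the identity on $\overline{V}^{N^-}$ is the substantive point: for an $N^-$-invariant $x = (v_\wK) \in \overline{V}^{N^-}$, we must verify $s_\wK(p_\wK(v_\wK)) = v_\wK$ for each $\wK \in \mathcal{I}$. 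This is an assertion purely inside the finite-dimensional space $V^\wK$: both $s_\wK p_\wK$ and the identity agree after further projection $p_\wK$, so their difference lies in $\Ker(p_\wK) \cap V^\wK$, which (using the Iwahori factorization) is spanned by vectors of the form $(n - 1) v$ with $n \in \wK^-$ and the appropriate smoothness; on $\wK^-$-fixed vectors this kernel vanishes, and $v_\wK$ is $\wK^-$-fixed because $x$ is $N^-$-invariant.

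The main obstacle is the last verification, namely that on $\wK^-$-invariant vectors inside $V^\wK$ the Jacquet projection $p_\wK$ is injective. This is the essential algebraic consequence of the Iwahori factorization combined with \cref{T:gen Jacquet}: the composition $s_\wK \circ p_\wK$ is an idempotent on $V^\wK$ whose image lies in $V^{\wK \wK^- } = V^{\wK}$ but whose kernel is generated by vectors of the form $v - n v$, $n \in \wK^-$, which automatically vanish on $\wK^-$-fixed vectors. Finally, $\wL$-equivariance of both $\Phi$ and $\Psi$ is immediate because all of $p_\wK$, $s_\wK$, and the transition maps intertwine the $\wK^0$-level $\wL$-actions, and the natural $\wL$-action on the inverse limits is assembled from these.
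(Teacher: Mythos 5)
Your overall architecture --- a map $\Phi\colon \overline{V_N}\to\overline{V}$ glued from the sections $s_{\wK}$, with an inverse glued from the Jacquet projections --- is the right shape, and it is the one followed in the reference the paper cites. But the two steps you dismiss as easy are precisely where the theorem lives, and as written both fail. The claim that $\Phi$ lands in $\overline{V}^{N^-}$ is not tautological: every element $(v_{\wK})$ of $\overline{V}$ has each component fixed by $\wK^-=\wK\cap N^-$, simply because $v_{\wK}\in V^{\wK}$ and $\wK^-\subset\wK$, so if your criterion for $N^-$-invariance were correct then all of $\overline{V}$ would be $N^-$-invariant, which is absurd (for $V$ cuspidal one has $\overline{V}^{N^-}\simeq\overline{V_N}=0$). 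The action of $n^-\in N^-$ on the completion involves conjugating the indexing subgroups, $(n^-\cdot x)_{\wK}=e_{\wK}\,\pi(n^-)\,x_{\wK'}$ for $\wK'$ sufficiently small, so invariance is a statement comparing different levels of the inverse system. Verifying it for $x=\Phi(w)$ uses the explicit construction of $s_{\wK}$ (via the dominant elements $a\in C_A^+$, which conjugate a given $n^-$ into $\wK^-$ after enough iterations), not merely the formal surjectivity-plus-compatible-section data recorded in \cref{T:gen Jacquet}. This is exactly the step the paper flags as the one ``which is then proved to land inside the space of $N^-$-invariants'', following \cite[VI.9.7]{Renard}.

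The backward map $\Psi$ has a parallel problem: the quotient $p\colon V\to V_N$ is only $\wP$-equivariant, not $\wG$-equivariant, and using the Iwahori factorization one computes $p(e_{\wK}v)=e_{\wK\cap\wL}\,p(e_{\wK\cap N^-}v)$, which differs from $e_{\wK\cap\wL}\,p(v)$ for $v\in V^{\wK'}$ with $\wK'\subsetneq\wK$. So the family $(p_{\wK})$ does not commute with the transition maps, and $\Psi$ is not defined on all of $\overline{V}$; making it well defined on $\overline{V}^{N^-}$ already requires the invariance analysis you skipped. Finally, $\Ker(p_{\wK})=V(N)\cap V^{\wK}$ is generated by vectors $(n-1)v$ with $n\in N$, not $N^-$, and ``$v_{\wK}$ is $\wK^-$-fixed'' carries no information since it holds for arbitrary elements of $\overline{V}$, so the injectivity argument at the end is vacuous. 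In short, this corollary is essentially equivalent to second adjointness and cannot be extracted formally from the bare statement of \cref{T:gen Jacquet}; the missing input is the quantitative content of the construction of the sections.
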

	In representation theory of $p$-adic groups taking invariants under unipotent groups
        is not a well behaved functor but co-invariants are. 
	The following corollary recovers the lost properties of invariants provided we are willing to work with completed representations.
	
	Using \cref{C:invar and coinvar are isomorphic} one can now prove Bernstein's second adjointness theorem easily (we follow the exposition in \cite[VI.9.7]{Renard} for linear groups).
	Other references for the linear case are \cite[Theorem 19]{BerNotes}, \cite{Bush-loc}, and \cite{BezKazh2nd}.
	
	\begin{theorem}\label{T:second adjointness}
		The functor $\bfi_{\wL,\wP}^\wG$ is left adjoint to $\bfr_{\wL,\wP^-}^\wG$.
	\end{theorem}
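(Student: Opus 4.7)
The plan is to combine Corollary \ref{C:right adjoint of parab ind}, which produces a right adjoint of $\bfi_{\wL,\wP}^{\wG}$ in a somewhat awkward form involving the completion functor, with Corollary \ref{C:invar and coinvar are isomorphic}, where the deep content (the generalized Jacquet Lemma, Theorem \ref{T:gen Jacquet}) enters. Once those two ingredients are in hand, the statement becomes essentially formal.

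First I would invoke Corollary \ref{C:right adjoint of parab ind} to present the right adjoint $R$ of $\bfi_{\wL,\wP}^{\wG}$ as
\[
R(V) \;=\; \bigl((\Res_\wP^\wG \overline{V})_{nd\mhyphen \wP}\bigr)^{N},
\]
keeping track of the modular-character twists implicit in the normalization $\bfi_{\wL,\wP}^{\wG}(\pi) = \ind_\wP^\wG(\pi\otimes\delta_\wP^{-1/2})$ and in the factor $\delta_\wP$ appearing in Proposition \ref{P:ind_P^G as tensor product}. The problem thus reduces to identifying $R(V)$ naturally in $V$ with the normalized Jacquet module along the opposite parabolic, $\bfr_{\wL,\wP^-}^{\wG}(V) = (\Res_{\wP^-}^{\wG}V \otimes \delta_{\wP^-}^{1/2})_{N^-}$.

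The next step is to give a concrete interpretation of $R(V)$. A vector $v \in \overline{V}$ lies in $R(V)$ precisely when it is fixed by $N$ and also by some open compact subgroup $K_\wP \le \wP$. Every such $K_\wP$ has the form $K \cap \wP$ for $K \le \wG$ an open compact admitting an Iwahori factorization with respect to $\wP=\wL N$, and then $K\cap\wP = (K\cap N)(K\cap\wL)$. For an $N$-fixed $v$, the condition therefore collapses to $v$ being fixed by some $K\cap\wL$, i.e., to $v$ being $\wL$-smooth. In other words, $R(V)$ is exactly the $\wL$-smooth submodule of $\overline{V}^{\,N}$. Now I would apply Corollary \ref{C:invar and coinvar are isomorphic} with the roles of $\wP$ and $\wP^-$ interchanged (the proof of that corollary is symmetric in this choice), yielding a natural $\wL$-equivariant isomorphism
\[
\overline{V}^{\,N} \;\xrightarrow{\sim}\; \overline{V_{N^-}}.
\]
Since $V_{N^-}$ is already a smooth $\wL$-module, passing to $\wL$-smooth parts on both sides produces a natural isomorphism $R(V) \simeq V_{N^-}$.

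Finally I would reconcile modular characters. Using $\delta_{\wP^-} = \delta_\wP^{-1}$ together with the definition of $\bfr_{\wL,\wP^-}^{\wG}$, the $\delta_\wP$-twists built into Corollary \ref{C:right adjoint of parab ind} combine with the $\delta_\wP^{-1/2}$ that defines $\bfi_{\wL,\wP}^{\wG}$ to reproduce exactly the $\delta_{\wP^-}^{1/2}$ appearing in $\bfr_{\wL,\wP^-}^{\wG}$, giving the desired natural isomorphism $R \simeq \bfr_{\wL,\wP^-}^{\wG}$ of functors. The main (and really only) mathematical obstacle in this proof is Corollary \ref{C:invar and coinvar are isomorphic}, which is itself a repackaging of the generalized Jacquet Lemma; everything else is formal manipulation of adjunctions. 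A secondary concern is verifying functoriality in $V$ at each step, so that one obtains a genuine adjunction of functors rather than only a bijection on $\Hom$-spaces for each pair of objects — this is automatic here because each intermediate isomorphism (the tensor–Hom adjunction, the identification $\overline{V} = \Hom_\wG(\cH(\wG),V)$, and the Jacquet-type isomorphism) is manifestly natural.
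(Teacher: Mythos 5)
Your proposal is correct and takes essentially the same route as the paper: the paper's proof is the same chain of adjunctions run at the level of $\Hom$-spaces (\cref{P:ind_P^G as tensor product}, \cref{P:right adjoint to ind_P^G}, the inflation/$(-)^N$ adjunction, \cref{C:invar and coinvar are isomorphic} applied with $\wP$ and $\wP^-$ interchanged, and \cref{R:non-deg is right adjoint}), which is exactly what you bundle into \cref{C:right adjoint of parab ind} before identifying the resulting right adjoint with $\bfr_{\wL,\wP^-}^\wG$. The only cosmetic imprecision is that not every compact open subgroup of $\wP$ equals $K\cap\wP$ for an Iwahori-factorizable $K$ — one only needs that such subgroups form a cofinal family — but this does not affect the argument.
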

	\begin{proof}
		We have the following natural isomorphisms:
		\begin{align*}
			\Hom_\wG(\bfi_{\wL,\wP}^\wG(V),W) & 
			= \Hom_\wG(\ind_\wP^\wG \Res_\wP^\wL (\delta_P^{1/2}\otimes V),W)  & \text{ by definition}\\
			&=\Hom_\wG(\cH(\wG)\otimes_{\cH(\wP)}\Res_\wP^\wL(\delta_P^{-1/2}\otimes V),W) & \text{by \cref{P:ind_P^G as tensor product}}\\
			& = \Hom_\wP(\Res_\wP^\wL(\delta_P^{-1/2}\otimes V),\overline{W}) & \text{by \cref{P:right adjoint to ind_P^G}}\\
			& = \Hom_\wL(\delta_P^{-1/2}\otimes V,\overline{W}^N) & \Res_\wP^\wL \dashv (-)^N\\
			& = \Hom_\wL(V,\delta_P^{1/2}\otimes \overline{W_{N^-}})&\text{by \cref{C:invar and coinvar are isomorphic}}\\
			& = \Hom_\wL(V,\delta_P^{1/2} \otimes W_{N^-})& \text{ by \cref{R:non-deg is right adjoint}}\\
			& = \Hom_\wL(V,\bfr_{\wL,\wP^-}^\wG(W)) & \text{ by definition.}
		\end{align*}
	\end{proof}

	Since every functor that admits an exact right adjoint preserves projective objects, we deduce the rather non-trivial fact:
	\begin{corollary}\label{C:induction preserves projectives}
		The functor $\bfi_{\wL,\wP}^\wG\colon \cM(\wL)\to \cM(\wG)$ sends projective objects to projective objects.
	\end{corollary}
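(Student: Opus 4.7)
The plan is to invoke the general categorical principle that a left adjoint whose right adjoint is exact automatically preserves projective objects, and to apply it to the adjoint pair provided by the second adjointness theorem just established.

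More precisely, I would first recall that by \cref{T:second adjointness} the functor $\bfi_{\wL,\wP}^{\wG}$ is left adjoint to $\bfr_{\wL,\wP^-}^{\wG}$, and that the parabolic restriction functor $\bfr_{\wL,\wP^-}^{\wG}$ is exact (this was recorded in \cref{SS:ind and res} as one of the basic properties of the normalized Jacquet functor, the coinvariants functor under the unipotent radical being exact since $N^-$ is an increasing union of compact open subgroups).

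Next, I would show directly that $\bfi_{\wL,\wP}^{\wG}(P)$ is projective whenever $P \in \cM(\wL)$ is projective. For this, it suffices to prove that $\Hom_{\wG}(\bfi_{\wL,\wP}^{\wG}(P), -)$ is an exact functor on $\cM(\wG)$. Using the adjunction isomorphism, this functor is naturally identified with
\[ \Hom_{\wL}(P, \bfr_{\wL,\wP^-}^{\wG}(-)), \]
which is the composition of $\bfr_{\wL,\wP^-}^{\wG}$ with $\Hom_{\wL}(P, -)$. Both of these are exact: the former by the previous paragraph, the latter by projectivity of $P$ in $\cM(\wL)$. Hence the composition is exact, proving projectivity of $\bfi_{\wL,\wP}^{\wG}(P)$.

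There is no real obstacle here; all of the content is packed into the second adjointness theorem, which has just been proved. The corollary is genuinely a formal consequence, and the only thing worth emphasizing is that the naive Frobenius reciprocity (which pairs $\bfi_{\wL,\wP}^{\wG}$ as a right adjoint to $\bfr_{\wL,\wP}^{\wG}$) would not suffice to give this result — it is precisely the \emph{second} adjointness, putting $\bfi_{\wL,\wP}^{\wG}$ in the position of a left adjoint, that makes the preservation of projectives possible.
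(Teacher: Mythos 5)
Your argument is correct and is exactly the paper's: the corollary is deduced as a formal consequence of the second adjointness theorem (\cref{T:second adjointness}), using the standard fact that a left adjoint to an exact functor preserves projective objects. Your explicit unwinding of $\Hom_{\wG}(\bfi_{\wL,\wP}^{\wG}(P),-)\simeq\Hom_{\wL}(P,\bfr_{\wL,\wP^-}^{\wG}(-))$ is just the proof of that general fact, which the paper leaves implicit.
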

	
	
	\section{Blocks as module categories}\label{S:Blocks as module cats}
	In this section we describe each block $\cM(\wG)_\fs$, $\fs\in\cB(\wG)$, as the category of modules over some algebra $\cR_\fs$ and prove some basic homological properties of this algebra.
	
	The key results of this section are \cref{C:Frob sym cond for R_rho} and \cref{P:vanishing of ext over R}.

	\subsection{Cuspidals}
	Fix $(\wL,\rho)$  a cuspidal data and denote by $\fs\in\cB(\wG)$ its equivalence class (conjugation and inertia).

	Define the following representation of $\wL$:
	\[ \Pi_{[\rho]}:=\ind_{\wLo}^\wL (\rho\mid_{\wLo}) =\rho \otimes  \ind_{\wLo}^\wL (\bC),
        \]
	where we recall that $\wLo$ is the subgroup of $\wL$ generated by all compact subgroups.

	\begin{proposition}\label{P:projective generator for cuspidal block}
		The representation $\Pi_{[\rho]}$ is a finite type projective generator in $\cM(\wL)_{[\rho]}$.
	\end{proposition}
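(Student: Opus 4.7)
The plan is to verify separately the three defining properties: projectivity, finite generation, and being a generator.

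For \emph{projectivity}, I would apply \cref{P:irred res to Go and inertia classes} (with $\wL$ in place of $G$) to decompose $\rho|_{\wLo}$ as a finite direct sum of irreducible $\wLo$-representations. These summands are cuspidal, hence compact by Harish-Chandra's \cref{T:Harish-Chandra}, and compact representations of $\wLo$ are projective(-injective) in $\cM(\wLo)$ by the discussion surrounding \cref{T:main on cpct reps} and \cref{T:dec compact times non-compact}. Since $\wL/\wLo$ is discrete, $\wLo$ is an open subgroup of $\wL$, and compact induction from an open subgroup preserves projectives by \cref{L:induction from open adjunction}. Therefore $\Pi_{[\rho]}=\ind_{\wLo}^\wL(\rho|_{\wLo})$ is projective in $\cM(\wL)$, hence in the Serre subcategory $\cM(\wL)_{[\rho]}$.

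For \emph{finite generation}, the same \cref{P:irred res to Go and inertia classes} asserts that $\rho|_{\wLo}$ has finite length, so I may pick vectors $v_1,\dots,v_n\in \rho|_{\wLo}$ generating it as a $\wLo$-module. Letting $f_i\in \Pi_{[\rho]}$ denote the unique section supported on the coset $\wLo$ with $f_i(h)=hv_i$ for $h\in \wLo$, a direct coset-by-coset check (any element of $\ind_{\wLo}^\wL(\rho|_{\wLo})$ has finite support in $\wL/\wLo$, and on each occurring coset the fiber is generated over $\wLo$ by translates of the $v_j$'s) shows that $f_1,\dots,f_n$ generate $\Pi_{[\rho]}$ as a $\wL$-module.

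To show $\Pi_{[\rho]}$ is a \emph{generator} of $\cM(\wL)_{[\rho]}$, I combine Frobenius reciprocity for open subgroups (\cref{L:induction from open adjunction})
\[ \Hom_\wL(\Pi_{[\rho]}, V) \;=\; \Hom_{\wLo}(\rho|_{\wLo},\, V|_{\wLo}) \]
with the semisimplicity of the category of compact $\wLo$-representations. For a non-zero $V\in\cM(\wL)_{[\rho]}$, every irreducible $\wL$-subquotient of $V$ is of the form $\rho\otimes\chi$ with $\chi\in\cX(\wL)$, whose restriction to $\wLo$ equals $\rho|_{\wLo}$ by \cref{P:irred res to Go and inertia classes}. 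Hence every irreducible $\wLo$-subquotient of $V|_{\wLo}$ belongs to the finite set $\cS$ of irreducible summands of $\rho|_{\wLo}$. By \cref{T:main on cpct reps}, $\cM(\wLo)_{[\cS]}$ is semisimple, so $V|_{\wLo}$ is a non-zero direct sum of objects of $\cS$; projecting onto one that also occurs in $\rho|_{\wLo}$ and including produces a non-zero element of $\Hom_{\wLo}(\rho|_{\wLo},V|_{\wLo})$.

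The only mildly non-formal step is the last one, where I must observe that an object of the block $\cM(\wL)_{[\rho]}$ restricts to an object of the semisimple subcategory $\cM(\wLo)_{[\cS]}$; this is immediate from exactness of restriction together with \cref{P:irred res to Go and inertia classes}. Everything else combines results collected earlier in the paper about compact (modulo center) representations with the adjunction properties of compact induction from an open subgroup.
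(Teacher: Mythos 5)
Your proof is correct and follows essentially the same route as the paper: projectivity via Harish-Chandra plus the fact that $\ind_{\wLo}^{\wL}$ from the open subgroup $\wLo$ preserves projectives, finite generation via the coset description of the induced module, and the generator property via the adjunction $\Hom_{\wL}(\Pi_{[\rho]},V)=\Hom_{\wLo}(\rho|_{\wLo},V|_{\wLo})$. The only (harmless) variation is in the last step, where you invoke semisimplicity of $\cM(\wLo)_{[\cS]}$ to produce a nonzero map into $V|_{\wLo}$ directly, whereas the paper uses projectivity of $\Pi_{[\rho]}$ to reduce to an irreducible $\chi\rho$ and observes that the resulting Hom space contains the identity.
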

	\begin{proof}
		First we show the projectivity.
		Since $\wLo\le\wL$ is an open subgroup, the functor $\ind_{\wLo}^\wL$ is also left adjoint to $\Res_{\wLo}^\wL$ (see \cref{L:induction from open adjunction}) which is exact, hence $\ind_{\wLo}^\wL$ preserves projective objects. 
		It is therefore enough to show that $\rho|_\wLo$ is projective which follows immediately from  Harish-Chandra's \cref{T:Harish-Chandra} and the fact that compact representations are projective by \cref{S:splitting reps}.
		
		Let us now show that $\Pi_{[\rho]}$ is a generator of $\cM(\wL)_{[\rho]}$.
		Let $\pi$ be an arbitrary representation in $\cM(\wL)_{[\rho]}$.
		We must show that there's a non-zero morphism $\Pi_{[\rho]}\to \pi$.
		Recall (paragraph before \cref{T:cuspidal block}) that the category $\cM(\wL)_{[\rho]}$ is defined as consisting of those smooth representations of $\wL$ all  whose irreducible  subquotients belong to $[\rho]$.
		Since  $\Pi_{[\rho]}$ is a projective representation of $\wL$, it is  enough to prove that
		an irreducible representation belonging to $\cM(\wL)_{[\rho]}$, thus  isomorphic to $\chi\rho$ for some unramified character $\chi\in\cX(\wL)$, appears as a quotient of $\Pi_{[\rho]}$.
		By the adjunction \cref{L:induction from open adjunction}, we have
		\[ \Hom_{\wL}(\Pi_{[\rho]},\chi\rho) = \Hom_\wLo(\rho\mid_{\wLo},\chi\rho\mid_\wLo), \]
		and the latter is non-zero as it contains the identity.
		
		In order to show that
		$\Pi_{[\rho]}$
		is of finite type, one notices that $\wLo\backslash \wL$ is a discrete group and leads to a basis of $\ind_\wLo^\wL(\rho\mid_\wLo)$ indexed by these cosets and a basis of $\rho$.
		The action of $\wL$ permutes this basis and hence, by choosing a finite generating set of $\rho\mid_\wLo$, we obtain a finite generating set of $\Pi_{[\rho]}$ as an $\wL$-representation, proving that $\Pi_{[\rho]}$ is of finite type.
	\end{proof}
	
	We put $\cR_{[\rho]}:=\End_\wL(\Pi_{[\rho]})$. Generalities from
        category theory (see \cref{P:equiv module category}), give us the following corollary:
	\begin{corollary}		
	 We have an equivalence of categories
	\begin{align*}
		\cM(\wL)_{[\rho]} &\longrightarrow \rmod\cR_{[\rho]}\\
		\pi &\mapsto \Hom_\wL(\Pi_{[\rho]},\pi).\notag
	\end{align*}
	\end{corollary}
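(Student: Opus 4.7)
The plan is to deduce this corollary as a direct application of the Morita-type statement in Proposition \ref{P:equiv module category}, with most of the work already done in Proposition \ref{P:projective generator for cuspidal block}. The remaining task is simply to check the hypotheses: that $\cM(\wL)_{[\rho]}$ admits coproducts and that $\Pi_{[\rho]}$ is a compact progenerator.

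First I would verify that $\cM(\wL)_{[\rho]}$ admits arbitrary coproducts. This is immediate from the Bernstein decomposition \cref{T:Bernstein dec for tildeG}: the ambient category $\cM(\wL)$ admits arbitrary direct sums, and the block $\cM(\wL)_{[\rho]}$ is one of the factors in the product decomposition, so it inherits coproducts (concretely, a direct sum of objects whose subquotients lie in the inertia class $[\rho]$ again has all its subquotients in $[\rho]$). Next, Proposition \ref{P:projective generator for cuspidal block} already provides that $\Pi_{[\rho]}$ is projective and a generator in $\cM(\wL)_{[\rho]}$, and also of finite type.

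The only remaining point is to upgrade ``finite type'' to ``compact'' in the sense recalled before \cref{P:equiv module category}, namely that $\Hom_\wL(\Pi_{[\rho]},-)$ commutes with arbitrary direct sums. This is a routine fact in any cocomplete abelian category: if $\Pi_{[\rho]}$ is generated by finitely many vectors $v_1,\dots,v_n$, then any $\wL$-equivariant morphism $f\colon \Pi_{[\rho]}\to \bigoplus_{i\in I} V_i$ is determined by the images $f(v_1),\dots,f(v_n)$, each of which has nonzero components in only finitely many indices $i\in I$. Hence $f$ factors through a finite subsum $\bigoplus_{i\in I_0} V_i$ with $I_0\subset I$ finite, which shows that the natural map
\[ \bigoplus_{i\in I} \Hom_\wL(\Pi_{[\rho]},V_i) \longrightarrow \Hom_\wL\bigl(\Pi_{[\rho]},\bigoplus_{i\in I}V_i\bigr) \]
is an isomorphism. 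Thus $\Pi_{[\rho]}$ is a compact progenerator.

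With all hypotheses of \cref{P:equiv module category} verified, the functor $\pi\mapsto \Hom_\wL(\Pi_{[\rho]},\pi)$ is an equivalence of categories between $\cM(\wL)_{[\rho]}$ and the category of right modules over $\End_\wL(\Pi_{[\rho]})=\cR_{[\rho]}$, as claimed. There is no real obstacle here; the content of the corollary is essentially packaged in \cref{P:projective generator for cuspidal block} together with the abstract Morita principle \cref{P:equiv module category}, and the proof is a bookkeeping check of the hypotheses.
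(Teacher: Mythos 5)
Your proposal is correct and follows exactly the route the paper takes: the paper simply invokes \cref{P:equiv module category} together with \cref{P:projective generator for cuspidal block}, leaving the verification of the hypotheses (existence of coproducts in the block, and the standard upgrade from finite generation to compactness) implicit, whereas you spell them out. The bookkeeping you supply is accurate, so there is nothing to add.
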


	
	Below we give a description of the algebra $\cR_{[\rho]}$ by generators and relations.
	
	Recall that $\cG_\rho\subset  \Hom_\gr(\Lambda,\bC^\times) = \cX(\wL)$ denotes the stabilizer of $\rho$, where $\Lambda = \wL/\wLo$, i.e.,  $\cG_\rho = \{\chi \in  \Hom_\gr(\Lambda,\bC^\times) | \rho \otimes \chi \simeq \rho\}$.
	The group $\cG_\rho$ naturally acts on the complex torus  $\cX(\wL)$
by translations, and hence also on the
algebra of regular functions on $\cO(\cX(\wL)) = \bC[\Lambda]:= \cA$ which we denote by $\chi(f)= {}^\chi f$ for
$\chi \in \cG_\rho$. By Schur's lemma, the isomorphism $\rho \otimes \chi \simeq \rho$ defines a unique map -- up to scalars --
from the space
underlying $\rho$ to itself, hence provides us with a 2-cocycle  $ c \in H^2(\cG_\rho, \bC)$, which in turn defines a
twisted group algebra $\bC[\cG_{\rho},c]$, the algebra generated by the intertwining operators from $\rho$ to itself
through the isomorphisms $\rho \otimes \chi \simeq \rho$.

\begin{proposition}[{\cite[Proposition 28]{BerNotes}}]
		\label{P:R_rho presentation}
		The algebra $\cR_{[\rho]}$ has the following presentation:
		\begin{enumerate}
			\item As a vector space $\cR_{[\rho]} = \cA\otimes \bC[\cG_{\rho},c]$,
			\item $\cA$ and $\bC[\cG_{\rho},c]$ are subalgebras,
			\item $f b_\chi = b_\chi {}^\chi f$ for all $f\in\cA$ and $\chi\in \cG_{\rho}$, with
                        $b_\chi = 1 \otimes \chi \in  \cA\otimes \bC[\cG_{\rho},c] =\cR_{[\rho]}$.
		\end{enumerate}
\end{proposition}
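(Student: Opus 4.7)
The plan is to compute $\cR_{[\rho]}$ first as a vector space via Frobenius reciprocity and a Mackey-type decomposition, and then to recover the ring structure by tracking composition through these identifications.

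Since $\wLo$ is an open, normal subgroup of $\wL$ with quotient $\Lambda$, the Frobenius adjunction for compact induction (\cref{L:induction from open adjunction}) combined with the Mackey formula gives
\[
\cR_{[\rho]} \;=\; \Hom_{\wLo}\!\left(\rho|_{\wLo},\, \bigoplus_{\lambda\in\Lambda} {}^{\tilde\lambda}(\rho|_{\wLo})\right),
\]
where $\tilde\lambda\in\wL$ is any lift of $\lambda$. Because $\rho$ is a representation of the whole group $\wL$, the operator $\rho(\tilde\lambda)$ furnishes a $\wLo$-linear isomorphism $\rho|_{\wLo}\simeq {}^{\tilde\lambda}(\rho|_{\wLo})$; together with the projection formula $\Pi_{[\rho]}\simeq \rho\otimes\cA$ (already used in \cref{P:projective generator for cuspidal block}), this produces a natural vector-space decomposition
\[
\cR_{[\rho]}\;\simeq\;\End_{\wLo}(\rho|_{\wLo})\otimes \cA.
\]

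Next I would identify $\End_{\wLo}(\rho|_{\wLo})$ with the twisted group algebra $\bC[\cG_\rho,c]$ by weight analysis under conjugation by $\wL$. The action factors through $\Lambda$ and then through a finite quotient (by finite-dimensionality), so it is semisimple. Any weight $\eta\colon\Lambda\to\bC^\times$ that appears must, by Schur's lemma, come from a $\wL$-intertwiner $\rho\otimes\eta\to\rho$, forcing $\eta\in\cG_\rho$; conversely, each $\chi\in\cG_\rho$ yields an intertwiner $T_\chi$ (unique up to scalar) which is a weight vector of weight $\chi$. Hence $\{T_\chi\}_{\chi\in\cG_\rho}$ is a basis of $\End_{\wLo}(\rho|_{\wLo})$ and the scalars in $T_{\chi'}T_\chi = c(\chi',\chi)\,T_{\chi'\chi}$ define the $2$-cocycle $c\in H^2(\cG_\rho,\bC^\times)$, yielding $\End_{\wLo}(\rho|_{\wLo})\simeq\bC[\cG_\rho,c]$ as algebras.

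Finally, unwinding the Frobenius bijection gives the composition formula
\[
(S\otimes\delta_{\lambda'})\cdot(T\otimes\delta_\lambda) \;=\; \bigl(\tilde\lambda\,S\,\tilde\lambda^{-1}\,T\bigr)\otimes\delta_{\lambda+\lambda'}
\]
on pure tensors, independent of the chosen lifts since $\wLo$ acts trivially by conjugation on $\End_{\wLo}(\rho|_{\wLo})$. Setting $S=T=1$ shows that $\cA$ embeds as a subalgebra, setting $\lambda=\lambda'=0$ does the same for $\bC[\cG_\rho,c]$, and combining $S=T_\chi$, $T=1$ with the weight identity $\tilde\lambda T_\chi\tilde\lambda^{-1}=\chi(\lambda)T_\chi$ and the formula $({}^\chi f)(\eta)=f(\chi^{-1}\eta)$ for the $\cG_\rho$-action on $\cA=\cO(\cX(\wL))$ yields the sought relation $f\,b_\chi = b_\chi\cdot{}^\chi f$. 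The principal technical point is the semisimplicity-plus-weight analysis for $\End_{\wLo}(\rho|_{\wLo})$; once that is in hand, everything else is bookkeeping with the adjunction.
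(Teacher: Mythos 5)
Your proof is correct and follows essentially the same route as the paper's: the adjunction/projection-formula identification $\cR_{[\rho]}\simeq\End_{\wLo}(\rho|_{\wLo})\otimes\cA$ as vector spaces, diagonalization of the conjugation action of $\Lambda$ on $\End_{\wLo}(\rho|_{\wLo})$ to identify it with the twisted group algebra $\bC[\cG_\rho,c]$ via Schur's lemma, and explicit unwinding of the isomorphism to obtain the commutation relation $f\,b_\chi=b_\chi\,{}^\chi f$. The one imprecision is your claim that the $\Lambda$-action on $\End_{\wLo}(\rho|_{\wLo})$ factors through a finite quotient ``by finite-dimensionality'' — that alone does not suffice; the correct justification (as in the paper) is that $Z(\wL)$ acts on $\rho$ by a scalar, so conjugation is trivial on $Z(\wL)\wLo$ and the action factors through the finite group $\wL/Z(\wL)\wLo$, whence semisimplicity.
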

\begin{proof}
	The proof is fairly elementary but we will give a sketch.

	First notice that by adjunction, we have a natural isomorphism of vector spaces
	 \begin{align}\label{Eq:R_rho as v space}
	 	\cR_{[\rho]} = \End_\wL(\ind_\wLo^\wL(\rho|_\wLo)) \simeq \End_\wLo(\rho|_\wLo)\otimes \cA.
	 \end{align}
	
Since	$\End_\wL(\ind_\wLo^\wL(\bC)) = \cA$, and since $\ind_\wLo^\wL(\rho|_\wLo)\simeq \rho\otimes \ind_\wLo^\wL(\bC)$, this exhibits $\cA$ as a subalgebra of $\cR_{[\rho]}$ and moreover it shows that \eqref{Eq:R_rho as v space} is an isomorphism of right $\cA$-modules.
	
	Second, notice that $\Lambda=\wL/\wLo$ acts on $\End_\wLo(\rho|_\wLo)$ and the action factors through the finite quotient $\wL/Z(\wL)\wLo$.
	Moreover the action is diagonalizable and the eigenvalues (i.e., characters of $\Lambda$) that appear are, by definition, exactly the elements of $\cG_\rho$.
	We deduce a canonical isomorphism of vector spaces 
	\[ \End_\wLo(\rho|_\wLo) \simeq \bC[\cG_\rho]. \]
	If $b_\chi,b_\mu\in \End_\wLo(\rho|_\wLo)$ correspond to $\chi,\mu\in\cG_\rho$ through the previous isomorphism, then their product belongs to the eigenspace corresponding to $\chi\mu$, i.e., we have $b_\chi b_\mu = c(\chi,\mu)b_{\chi\mu}$ for some non-zero $c(\chi,\mu)\in\bC^\times$.
	Associativity plus unity implies that $c(-,-)$ is a two cocycle of $\cG_\rho$ with values in $\bC^\times$.
	This allows us to construct a twisted group algebra $\bC[\cG_\rho,c]$ and in turn provides us with an isomorphism of algebras
	\[ \End_\wLo(\rho|_\wLo) \simeq \bC[\cG_\rho,c].\]

	The last step consists in showing how the subalgebra $\cA$ and $\bC[\cG_\rho,c]$ interact inside $\cR_{[\rho]}$.
	This is done by writing down explicitly the isomorphism in \eqref{Eq:R_rho as v space}.
\end{proof}
	
	Recall (see \cite[Proposition III.12.1]{Artin-nonc}) that an Azumaya algebra $R$ over a commutative ring $Z$ with identity
        is a $Z$-algebra such that for some faithfully flat extension of commutative rings $Z\hookrightarrow Z'$ we have $R\otimes_ZZ'\simeq M_n(Z')$.
	Equivalently, $R$ is a projective module of constant rank over $Z$ and the multiplication map $R\otimes_Z R^\op \to \End_Z(R)$ is an isomorphism of rings.

	In particular, one deduces in a straightforward way the following proposition 
        from the above presentation of $\cR_{[\rho]}$.
        
	\begin{proposition}\label{P:R_rho is Azumaya and Z Laur pol}
		The natural inclusion $\cA^{\cG_\rho}\hookrightarrow \cR_{[\rho]}$ identifies $\cA^{\cG_\rho}$ with the center $Z(\cR_{[\rho]})$.
		In particular $Z(\cR_{[\rho]})$ is isomorphic to a Laurent polynomial algebra.
		Moreover $\cR_{[\rho]}$ is an Azumaya algebra over its center.
	\end{proposition}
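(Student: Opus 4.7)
\textbf{Proof plan for Proposition \ref{P:R_rho is Azumaya and Z Laur pol}.}

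My plan is to exploit the explicit presentation of $\cR_{[\rho]}$ given in \cref{P:R_rho presentation} to run three short arguments in succession. First, for the identification of the center, any element of $\cR_{[\rho]}$ can be uniquely written as $z=\sum_{\chi\in\cG_\rho} f_\chi b_\chi$ with $f_\chi\in\cA$. The inclusion $\cA^{\cG_\rho}\subseteq Z(\cR_{[\rho]})$ is immediate from the commutation relation $f b_\chi = b_\chi{}^\chi f$: if $f\in\cA^{\cG_\rho}$ then $fb_\chi = b_\chi f$, and $f$ also commutes with everything in $\cA$. For the converse, imposing $[g,z]=0$ for every $g\in\cA$ yields $\sum_\chi (g-{}^{\chi^{-1}}g)f_\chi b_\chi = 0$, which forces $f_\chi = 0$ whenever $\cG_\rho\ni\chi\neq 1$ (using that distinct characters in $\cG_\rho$ give distinct actions on $\cA$, since $\cG_\rho$ embeds in $\Hom(\Lambda,\bC^\times)$). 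So $z=f_1$, and imposing $[b_\mu,z]=0$ for all $\mu$ then forces $f_1\in\cA^{\cG_\rho}$.

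Second, for the structural description of $\cA^{\cG_\rho}$, I would unwind the $\cG_\rho$-action on $\cA=\bC[\Lambda]$: each character $\chi\in\cG_\rho\subset\Hom(\Lambda,\bC^\times)$ acts diagonally on the basis $\{e_\lambda\}_{\lambda\in\Lambda}$ by ${}^\chi e_\lambda = \chi(\lambda)^{-1} e_\lambda$. Consequently $\cA^{\cG_\rho}$ is spanned by the $e_\lambda$ with $\lambda\in\Lambda':=\bigcap_{\chi\in\cG_\rho}\ker\chi$, so $\cA^{\cG_\rho}=\bC[\Lambda']$. Since the evaluation map $\Lambda\to\Hom(\cG_\rho,\bC^\times)$ is surjective by Pontryagin duality (its dual is the given inclusion $\cG_\rho\hookrightarrow\Hom(\Lambda,\bC^\times)$), the subgroup $\Lambda'$ has finite index $|\cG_\rho|$ in $\Lambda$, hence is free abelian of the same rank. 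Thus $\cA^{\cG_\rho}$ is a Laurent polynomial ring.

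Third, for the Azumaya claim, the key geometric input is that $\cG_\rho$ acts \emph{freely} on $\cX(\wL)=\Spec\cA$ by translations: $\chi\psi=\psi$ forces $\chi=1$ inside the group $\cX(\wL)$. It follows that the ring extension $\cA^{\cG_\rho}\hookrightarrow\cA$ is a finite \'etale (even Galois) cover of degree $|\cG_\rho|$ and in particular is faithfully flat. Since Azumaya descends along faithfully flat maps, it suffices to show that the base change $\cR_{[\rho]}\otimes_{\cA^{\cG_\rho}}\cA$ is a matrix algebra over $\cA$. Using the presentation, this base change has a natural $\cA$-basis $\{e_i b_\chi\}_{i,\chi}$ where $\{e_i\}$ is a system of idempotents in $\cA\otimes_{\cA^{\cG_\rho}}\cA$ corresponding to the free $\cG_\rho$-orbit, and the commutation rule $e_i b_\chi = b_\chi e_{\chi^{-1}\cdot i}$ exhibits these as matrix units twisted by the $2$-cocycle $c$. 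Re-indexing $f_{i,j}:=e_i b_\chi$ with $\chi$ the unique element sending $i$ to $j$ (existence and uniqueness by freeness), the product rule becomes $f_{i,j}f_{j,k}=c(\chi_{ij},\chi_{jk})f_{i,k}$, which after rescaling (trivializing the cocycle over $\bC$, using $H^2(\cG_\rho,\bC^\times)$ acts by a coboundary pointwise) gives matrix units in $M_{|\cG_\rho|}(\cA)$. This yields the desired isomorphism and finishes the proof.

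The only mildly technical step is the last one: verifying the matrix-unit identification in the presence of the cocycle $c$. This is not a genuine obstacle because fiberwise one is asking that a $|\cG_\rho|^2$-dimensional associative $\bC$-algebra with a clearly nondegenerate pairing be simple, which is checked directly; a cleaner alternative is to verify simplicity of each fiber $\cR_{[\rho]}\otimes_{\cA^{\cG_\rho}}\kappa(\fm)$ at closed points $\fm$ and invoke the characterization of Azumaya algebras via central simple fibers of constant dimension over the Cohen--Macaulay base $\cA^{\cG_\rho}$.
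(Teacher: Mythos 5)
Your proposal is correct and follows exactly the route the paper intends: the paper gives no details here, asserting only that the proposition ``follows in a straightforward way'' from the presentation of $\cR_{[\rho]}$ in \cref{P:R_rho presentation} (and pointing to Bushnell--Henniart for an independent proof), and your three steps are precisely that straightforward deduction. The center computation and the identification $\cA^{\cG_\rho}=\bC[\Lambda']$ with $\Lambda'=\bigcap_{\chi\in\cG_\rho}\ker\chi$ of finite index are both sound.

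One small caveat on the Azumaya step: the parenthetical claim that the $2$-cocycle can be rescaled away because ``$H^2(\cG_\rho,\bC^\times)$ acts by a coboundary pointwise'' is misleading as written, since the Schur multiplier of a finite abelian group is generally nonzero and $c$ itself need not be a coboundary on $\cG_\rho$. What is true, and what your construction actually uses, is that the induced cocycle on the pair groupoid of a free orbit is always trivializable, or equivalently that $\mathrm{Fun}(\cG_\rho)\rtimes_c\cG_\rho$ is a $|\cG_\rho|^2$-dimensional algebra acting faithfully and irreducibly on $\mathrm{Fun}(\cG_\rho)$, hence is $M_{|\cG_\rho|}(\bC)$ regardless of the class of $c$. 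Your stated fallback --- checking that every fiber $\cR_{[\rho]}\otimes_{\cA^{\cG_\rho}}\kappa(\fm)$ is central simple of constant dimension, using freeness of the translation action so that $\cA/\fm\cA\simeq\bC^{|\cG_\rho|}$ --- is the clean way to finish and closes this gap.
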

	For a different proof, see \cite[Proposition 8.1]{Bus-Hen}. 
	Note that our algebra $\cR_{[\rho]}$ is a matrix algebra over the algebra $E_G$ of \cite{Bus-Hen}.

	The next lemma is easy and is valid for any Azumaya algebra (see \cite[Proposition IV.2.1]{Artin-nonc}).
	\begin{lemma}\label{L:Azumaya are selfdual}
		Let $R$ be an Azumaya algebra over its center $Z$ which is a noetherian ring.
		Then the trace map $\tr \colon R \to Z$ provides an isomorphism of $R$-bimodules
		\[ R\to \Hom_Z(R,Z).\]
	\end{lemma}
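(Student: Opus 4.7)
The plan is to construct the claimed isomorphism via the trace form $\phi\colon R \to \Hom_Z(R,Z)$ defined by $\phi(r)(s) := \tr(rs)$, where $\tr$ denotes the reduced trace of the Azumaya algebra $R$ over $Z$. First I would recall (or set up) the reduced trace: since $R$ is a finitely generated projective $Z$-module, the left multiplication $L_r\colon R \to R$ is a $Z$-linear endomorphism with a well-defined trace in $Z$; equivalently, one obtains $\tr$ by faithfully flat descent from the ordinary matrix trace after splitting $R$.

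Next, I would verify that $\phi$ is $R$-bimodule equivariant, with the standard bimodule structure on $\Hom_Z(R,Z)$ (left action by $(r'\cdot f)(s)=f(sr')$ and right action by $(f\cdot r')(s)=f(r's)$). This is a direct consequence of the cyclicity property $\tr(xy)=\tr(yx)$ of the reduced trace: for instance $\phi(r'r)(s)=\tr(r'rs)=\tr(rsr')=\phi(r)(sr')=(r'\cdot\phi(r))(s)$, and right-linearity is even more immediate.

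To show $\phi$ is an isomorphism, I would use faithfully flat descent. Being an Azumaya algebra, there exists a faithfully flat (indeed étale) extension $Z \hookrightarrow Z'$ such that $R\otimes_Z Z' \cong M_n(Z')$, under which the reduced trace becomes the ordinary matrix trace. Since both $R$ and $\Hom_Z(R,Z)$ commute with faithfully flat base change (the latter because $R$ is $Z$-projective of finite rank), the map $\phi$ is an isomorphism if and only if its base change to $Z'$ is. This base change is the trace pairing on $M_n(Z')$, whose non-degeneracy is verified directly on elementary matrices via $\tr(E_{ij}E_{kl})=\delta_{jk}\delta_{il}$, exhibiting the dual basis explicitly.

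The only mild obstacle is to make precise the existence of a global reduced trace map $\tr\colon R \to Z$; the cleanest way is to define it via the descent procedure above and check independence of the splitting using the fact that the matrix trace is $\mathrm{GL}_n$-invariant, so it descends through any cocycle twist. Once $\tr$ is available globally, the bimodule equivariance of $\phi$ is formal and the isomorphism statement is purely local, reducing to the familiar case of matrix algebras.
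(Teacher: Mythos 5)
Your proposal is correct and follows essentially the same route as the paper: define the reduced trace by faithfully flat descent from the matrix trace, check the resulting pairing is nondegenerate (and bimodule-equivariant, via cyclicity) by reducing to $M_n(Z')$ after a splitting base change. The only small imprecision is the parenthetical claim that the reduced trace agrees with the trace of left multiplication $L_r$ — these differ by a factor of the degree $n$ — but since everything here is over $\bC$ this factor is invertible and your argument, which ultimately uses the descent definition, is unaffected.
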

	\begin{proof}
		First,  this is obvious for $R=M_n(Z)$.
		Using faithfully-flat descent, one first defines the trace map unambiguously for any Azumaya algebra
		$R$        and then sees that it provides a non-degenerate symmetric pairing $R\otimes_Z R\to Z$ which leads to the isomorphism asserted in  the statement of the lemma.
	\end{proof}
	
	Applying this to our algebra $\cR_{[\rho]}$ with center $\cZ_{[\rho]}:=Z(\cR_{[\rho]})$ which is a Laurent polynomial ring, we get the following corollary (see \eqref{Cond: Hom(R,A)=R as bimodules}):
	\begin{corollary}\label{C:Frob sym cond for R_rho}
		There is an isomorphism of $\cR_{[\rho]}$-bimodules
		\[\cR_{[\rho]}\simeq \Hom_{\cZ_{[\rho]}}(\cR_{[\rho]},\cZ_{[\rho]}).\]
		In other words, $\cR_{[\rho]}$ satisfies condition (FsG) in Definition \ref{FsG}.
	\end{corollary}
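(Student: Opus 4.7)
The plan is that this corollary is essentially a direct application of Lemma \ref{L:Azumaya are selfdual} to the algebra $R = \cR_{[\rho]}$ with $Z = \cZ_{[\rho]}$, combined with unwinding what condition (FsG) means in this specific situation. First I would verify that the hypotheses of the lemma are satisfied: by Proposition \ref{P:R_rho is Azumaya and Z Laur pol}, $\cR_{[\rho]}$ is an Azumaya algebra over its center $\cZ_{[\rho]}$, and $\cZ_{[\rho]}$, being a Laurent polynomial algebra over $\bC$, is noetherian (in fact smooth). The lemma then immediately provides the desired isomorphism of $\cR_{[\rho]}$-bimodules
\[\cR_{[\rho]}\;\xrightarrow{\sim}\;\Hom_{\cZ_{[\rho]}}(\cR_{[\rho]},\cZ_{[\rho]}),\qquad a\mapsto (b\mapsto \tr(ab)),\]
where $\tr$ is the reduced trace of the Azumaya algebra.

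Next I would translate this into condition (FsG). Let $d = d(\wL)$ so that $\cZ_{[\rho]} \simeq \bC[X_1^{\pm 1},\dots,X_d^{\pm 1}]$ by Proposition \ref{P:R_rho is Azumaya and Z Laur pol} and Theorem \ref{T:center for a cuspidal component}. By Definition \ref{D:dualizing complex for A^n}, the normalized dualizing complex is $\omega_{\cZ_{[\rho]}}^\circ = \cZ_{[\rho]}[d]$. Moreover, since an Azumaya algebra is projective over its center, $\cR_{[\rho]}$ is a projective (hence flat) $\cZ_{[\rho]}$-module, so the derived Hom agrees with the underived one:
\[D_{GS/\cZ_{[\rho]}}(\cR_{[\rho]}) \;=\; \RHom_{\cZ_{[\rho]}}(\cR_{[\rho]},\cZ_{[\rho]}[d]) \;=\; \Hom_{\cZ_{[\rho]}}(\cR_{[\rho]},\cZ_{[\rho]})[d].\]
Combining this with the first part yields $D_{GS/\cZ_{[\rho]}}(\cR_{[\rho]}) \simeq \cR_{[\rho]}[d]$ as $\cR_{[\rho]}$-bimodules, which is exactly condition (FsG) of Definition \ref{FsG}.

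There is no real obstacle here; the only mild point to check is that the trace pairing indeed produces a map of bimodules, but this is standard and reduces to the identities $\tr(xab) = \tr(bxa) = \tr(axb)$ (symmetry and cyclicity of the reduced trace, which hold after faithfully flat base change to a matrix algebra and descend). Thus the whole argument amounts to citing Proposition \ref{P:R_rho is Azumaya and Z Laur pol}, invoking Lemma \ref{L:Azumaya are selfdual}, and rewriting the conclusion in the form demanded by the paragraph preceding Corollary \ref{C:D_h=D_h/A=D_GS=(-)^* on finite length if (FsG) satisfied}, where (FsG) for a finite projective algebra over a Laurent polynomial ring was reformulated precisely as the bimodule isomorphism in \eqref{Cond: Hom(R,A)=R as bimodules}.
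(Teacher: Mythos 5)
Your proposal is correct and matches the paper's argument: the corollary is obtained exactly by applying Lemma \ref{L:Azumaya are selfdual} to $\cR_{[\rho]}$, whose hypotheses are supplied by Proposition \ref{P:R_rho is Azumaya and Z Laur pol}, and the translation into condition (FsG) via $\omega^\circ_{\cZ_{[\rho]}}=\cZ_{[\rho]}[d]$ and projectivity of $\cR_{[\rho]}$ over its center is precisely the reformulation \eqref{Cond: Hom(R,A)=R as bimodules} already recorded in the paper. Nothing is missing.
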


	Let us now apply what we have learned about $\cR_{[\rho]}$ to prove the following proposition which plays a crucial role in the study of the homological duality functor (see \cref{T:homological duality single degree}).
	
	Recall that for $\rho$ a cuspidal representation of $\wL$, we defined $d(\rho)$ to be the split rank of the center of $L$ (which is a Levi subgroup of $G$).
	\begin{proposition}\label{P:vanishing of ext over R}
		For $V$, a $\cR_{[\rho]}$-module which is finite dimensional over $\bC$,  we have
		\begin{align*}
			\Ext^i_{\cR_{[\rho]}}(V,\cR_{[\rho]}) = 0 \text{ for all }i\neq d(\rho).
		\end{align*}
	\end{proposition}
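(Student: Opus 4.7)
The plan is to recognize the proposition as a direct application of \cref{C:D_h=D_h/A=D_GS=(-)^* on finite length if (FsG) satisfied} to the algebra $\cR_{[\rho]}$ over its center $\cZ_{[\rho]}$. The three hypotheses of that corollary will already be in place from the preceding results: $\cR_{[\rho]}$ is a finite projective $\cZ_{[\rho]}$-algebra (Azumaya, by \cref{P:R_rho is Azumaya and Z Laur pol}), $\cZ_{[\rho]}$ is a Laurent polynomial algebra, and condition \eqref{Cond: Hom(R,A)=R as bimodules} (i.e., (FsG)) holds by \cref{C:Frob sym cond for R_rho}. So essentially nothing new is needed beyond a small verification that the dimension count and the finite-length hypothesis fit.

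First I would pin down the Krull dimension of $\cZ_{[\rho]}$. By \cref{P:R_rho presentation}(3) the group $\cG_\rho \subset \cX(\wL) = \Hom_\gr(\Lambda,\bC^\times)$ acts on $\cA = \bC[\Lambda]$ by $\chi\cdot \lambda = \chi(\lambda)\,\lambda$ for $\lambda \in \Lambda$, so the invariants $\cA^{\cG_\rho}$ are spanned by the lattice $\Lambda' := \bigcap_{\chi \in \cG_\rho}\ker\chi \subset \Lambda$, which is of finite index in $\Lambda$ since $\cG_\rho$ is finite. Hence $\cZ_{[\rho]} = \bC[\Lambda']$ is a Laurent polynomial algebra of rank equal to $\mathrm{rk}\,\Lambda = \mathrm{rk}(\wL/\wLo) = d(\wL) = d(\rho)$.

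Next I would note that a $\cR_{[\rho]}$-module $V$ that is finite-dimensional over $\bC$ is automatically of finite length as a $\cR_{[\rho]}$-module: any chain of submodules stabilizes by dimension, and every simple quotient is finite-dimensional. With all hypotheses in place, apply \cref{C:D_h=D_h/A=D_GS=(-)^* on finite length if (FsG) satisfied} with $R = \cR_{[\rho]}$, $A = \cZ_{[\rho]}$ and $d = d(\rho)$ to obtain a natural isomorphism in $\cD^b(\cR_{[\rho]}\lmod)$
\[ \RHom_{\cR_{[\rho]}}(V, \cR_{[\rho]})[d(\rho)] \;\simeq\; V^*. \]
Reading off cohomology gives $\Ext^i_{\cR_{[\rho]}}(V, \cR_{[\rho]}) = 0$ for all $i \neq d(\rho)$ (and in fact $\Ext^{d(\rho)}_{\cR_{[\rho]}}(V,\cR_{[\rho]}) = V^*$).

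There is no real obstacle, as the substantive work has already been carried out: the Azumaya structure of $\cR_{[\rho]}$ delivers the self-duality (FsG), while the Laurent-polynomial nature of $\cZ_{[\rho]}$ ensures its normalized dualizing complex is simply $\cZ_{[\rho]}[d(\rho)]$, so that the commutative Grothendieck--Serre side of the computation concentrates in a single degree. The only point to double-check is the bimodule (rather than one-sided) nature of the isomorphism in (FsG), which is what allows the shifted homological duality to be identified with contragredient and hence forces the vanishing; this is exactly the content of \cref{C:Frob sym cond for R_rho} via \cref{L:Azumaya are selfdual}.
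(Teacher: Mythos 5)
Your proof is correct, and at its core it runs on the same two inputs as the paper's: the Azumaya/Frobenius-symmetric self-duality $\cR_{[\rho]}\simeq\Hom_{\cZ_{[\rho]}}(\cR_{[\rho]},\cZ_{[\rho]})$ from \cref{C:Frob sym cond for R_rho}, and the fact that $\cZ_{[\rho]}$ is a Laurent polynomial ring of Krull dimension $d(\rho)$ from \cref{P:R_rho is Azumaya and Z Laur pol} (your lattice computation $\cA^{\cG_\rho}=\bC[\Lambda']$ with $\Lambda'$ of finite index is a correct justification of the latter). The packaging differs: the paper argues directly, writing the forgetful functor $\rmod\cR_{[\rho]}\to\rmod\cZ_{[\rho]}$ as $-\otimes_{\cR_{[\rho]}}\cR_{[\rho]}$ with exact right adjoint $\Hom_{\cZ_{[\rho]}}(\cR_{[\rho]},-)$, so that $\Ext^i_{\cR_{[\rho]}}(V,\cR_{[\rho]})\simeq\Ext^i_{\cZ_{[\rho]}}(V,\cZ_{[\rho]})$, and then quotes the bare commutative-algebra vanishing over a regular ring; you instead route through the Grothendieck--Serre formalism of \cref{SS:general algebra} by citing \cref{C:D_h=D_h/A=D_GS=(-)^* on finite length if (FsG) satisfied}, whose proof is the same adjunction. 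Your route costs nothing (that corollary precedes this proposition, so there is no circularity) and buys a bit more, namely the identification $\Ext^{d(\rho)}_{\cR_{[\rho]}}(V,\cR_{[\rho]})\simeq V^*$, which the paper only extracts later in \cref{C:D_h on finlen cusp is contrag}; the paper's direct argument is marginally more elementary in that, for the vanishing alone, only the one-sided version of (FsG) is needed, as noted in \cref{R:what makes vanishing}, a point you also correctly flag.
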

	\begin{proof}
		For simplicity we will put $\cR = \cR_{[\rho]}$ and $\cZ:=Z(\cR_{[\rho]})$ in this proof.
		The forgetful functor 
		\begin{align*}	 
			F\colon \rmod\cR\to  \rmod\cZ 
		\end{align*}
		can also be written as $F = -\otimes_{\cR}\cR$ where ${}_\cR\cR_\cZ$ is viewed as an $\cR\mhyp\cZ$-bimodule.
		As such, $F$ has a right adjoint $F':=\Hom_\cZ(\cR_\cZ,-)$.
		
		Since $\cR$ is an Azumaya algebra (\cref{P:R_rho is Azumaya and Z Laur pol}), it is a projective $\cZ$-module, hence  both $F$ and $F'$ are exact functors.
		It follows that the adjunction extends to Ext groups.
		Applying \cref{C:Frob sym cond for R_rho} we therefore get
		\[ \Ext^i_\cR(V,\cR) = \Ext^i_\cR(V,F'(\cZ)) = \Ext^i_\cZ(V,\cZ),\]
		and now we conclude the proof of the proposition by a well-known result in commutative algebra
                since $\cZ$ is a Laurent polynomial algebra (by \cref{P:R_rho is Azumaya and Z Laur pol})
                of Krull dimension $d(\rho)$ and $V$ is a
$\cZ$-module which is a
finite dimensional vector space  over $\bC$. 
	\end{proof}
	\begin{remark}\label{R:what makes vanishing}
		The statement holds with the same proof if we only assume that $\cR_{[\rho]}\simeq\Hom_{\cZ_{[\rho]}}(\cR_{[\rho]},\cZ_{[\rho]})$ as right $\cR_{[\rho]}$-modules.
	\end{remark}
	
	\subsection{Non-cuspidal blocks: a projective generator}\label{SS:induced proj gen}
	Moving on to the block in $\wG$ corresponding to $\fs=[\wL,\rho]\in\cB(\wG)$, let us define the representation
	\[ \Pi_\fs:=\oplus_{\wP} \bfi_{\wL,\wP}^\wG(\Pi_{[\rho]}) = \oplus_{\wP}\bfi_{\wL,\wP}^{\wG} (\ind_{\wLo}^{\wL}(\rho|_{\wLo}))\]
	where the sum ranges over all parabolics $\wP$ with Levi $\wL$ (it is a finite sum).
	
	\begin{remark}
		Actually it would be enough to consider $\bfi_{\wL,\wP}^\wG(\Pi_{[\rho]})$ for a single parabolic as this module turns out to be (non-canonically) independent of the parabolic containing $\wL$.
		See  \cite[p. 96]{BerNotes}) for a proof as well as
                \cite[VI.10.1]{Renard} for some more details. 
                However, we will not need it.
		One advantage of taking the direct sum instead of a single parabolic induction is that it makes the isomorphism from \cref{P:homological dual of progren Pi_s} canonical.
	\end{remark}
	
	%

	\begin{lemma}\label{L:projective generator of M(G)_s}
		The module $\Pi_\fs$ is a finite type projective-generator of $\cM(\wG)_\fs$.
	\end{lemma}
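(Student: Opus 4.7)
The plan is to verify the three properties separately: membership in $\cM(\wG)_\fs$ together with finite type, projectivity, and the generator property. The first two are entirely formal given the machinery already set up; the generator property is the substantial step and is where second adjointness is essential.

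First, observe that each summand $\bfi_{\wL,\wP}^\wG(\Pi_{[\rho]})$ lies in $\cM(\wG)_\fs$. Indeed, by \cref{P:projective generator for cuspidal block}, $\Pi_{[\rho]}\in\cM(\wL)_{[\rho]}$, so \cref{L:block as generated by induction from cusp component} forces $\bfi_{\wL,\wP}^\wG(\Pi_{[\rho]})\in\cM(\wG)_\fs$, and a finite direct sum of objects of $\cM(\wG)_\fs$ stays in $\cM(\wG)_\fs$. For finite type: $\Pi_{[\rho]}$ is of finite type by \cref{P:projective generator for cuspidal block}, parabolic induction preserves finite type by \cref{T:ind preserves finite type}, and a finite direct sum of finite type objects is of finite type. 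For projectivity: $\Pi_{[\rho]}$ is projective in $\cM(\wL)$ (again by \cref{P:projective generator for cuspidal block}), parabolic induction sends projectives to projectives by \cref{C:induction preserves projectives} (a consequence of second adjointness, since $\bfi_{\wL,\wP}^\wG$ admits the exact right adjoint $\bfr_{\wL,\wP^-}^\wG$), and a finite direct sum of projectives is projective.

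The remaining work is to prove that $\Pi_\fs$ generates $\cM(\wG)_\fs$. The case $\wL=\wG$ reduces to \cref{P:projective generator for cuspidal block}, so assume $\wL\subsetneq\wG$. Using that $\Pi_\fs$ is projective, it suffices to check that $\Hom_\wG(\Pi_\fs,\pi)\neq 0$ for every simple object $\pi\in\cM(\wG)_\fs$: indeed, any nonzero $M\in\cM(\wG)_\fs$ contains a nonzero finitely generated subobject (use a single nonzero smooth vector), which by noetherianity of $\cM(\wG)$ (\cref{T:M(G) is noetherian}) has a simple quotient, and projectivity of $\Pi_\fs$ then lifts a nonzero map to the simple quotient to a nonzero map into $M$. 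By second adjointness (\cref{T:second adjointness}),
\[ \Hom_\wG(\Pi_\fs,\pi)=\bigoplus_{\wP}\Hom_\wL\bigl(\Pi_{[\rho]},\,\bfr_{\wL,\wP^-}^\wG(\pi)\bigr), \]
and since $\Pi_{[\rho]}$ is a generator of $\cM(\wL)_{[\rho]}$ (\cref{P:projective generator for cuspidal block}), it is enough to find a parabolic $\wP$ with Levi $\wL$ such that the component of $\bfr_{\wL,\wP^-}^\wG(\pi)$ in $\cM(\wL)_{[\rho]}$ is nonzero.

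The main obstacle is this last point, which is where the role of summing over all parabolics in the definition of $\Pi_\fs$ becomes crucial. The key claim is: for an irreducible $\pi$ with cuspidal support $[\wL,\rho]_\wG$, there exist a parabolic $\wP$ with Levi $\wL$ and an irreducible $\rho'\in[\rho]\subset\Irr_\cs(\wL)$ such that $\pi$ is a quotient of $\bfi_{\wL,\wP}^\wG(\rho')$. Granting this, second adjointness gives
\[ 0\neq \Hom_\wG\bigl(\bfi_{\wL,\wP}^\wG(\rho'),\pi\bigr)=\Hom_\wL\bigl(\rho',\bfr_{\wL,\wP^-}^\wG(\pi)\bigr), \]
so $\rho'$ embeds into $\bfr_{\wL,\wP^-}^\wG(\pi)$, exhibiting a nonzero $[\rho]$-component of the Jacquet module, and we are done. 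To prove the claim: by the definition of cuspidal support (\cref{D:cuspidal support}) $\pi$ is a subquotient of $\bfi_{\wL,\wP_0}^\wG(\rho_0)$ for some $\wP_0$ and $\rho_0\in[\rho]$; by Frobenius reciprocity $\bfr_{\wL,\wP_0}^\wG(\pi)\neq 0$, so $\pi$ is a quotient of $\bfi_{\wL,\wP_0^-}^\wG(\rho_1)$ for some irreducible quotient $\rho_1$ of $\bfr_{\wL,\wP_0}^\wG(\pi)$, and by the geometric lemma (\cref{T:geometric lemma conseq}) the irreducible subquotients of $\bfr_{\wL,\wP_0}^\wG(\pi)$ all lie in the Weyl orbit of $[\rho]$, which is a single class in $\cB(\wG)$ but may involve different inertia classes in $\cM(\wL)$. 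Replacing $\wP_0^-$ by a Weyl-conjugate parabolic corresponding to a Weyl element that brings $\rho_1$ back into the inertia class $[\rho]$ produces the required $\wP$ and $\rho'\in[\rho]$. It is precisely to absorb this Weyl-group ambiguity that $\Pi_\fs$ is defined as the sum over all parabolics containing $\wL$.
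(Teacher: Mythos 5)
Your proof follows the same route as the paper: projectivity and finite type exactly as in the paper (second adjointness gives an exact right adjoint to $\bfi_{\wL,\wP}^\wG$, and \cref{T:ind preserves finite type}), and the generator property via reduction to irreducibles, second adjointness, and the fact that $\Pi_{[\rho]}$ generates $\cM(\wL)_{[\rho]}$. The one place you diverge is also the one place you are more careful than the paper: the paper simply asserts that $\bfr_{\wL,\wP^-}^\wG(\pi)$ has a non-zero component in $\cM(\wL)_{[\rho]}$ for a fixed $\wP$ (which is true but not literally ``by definition of the block'', since the block only records the inertia class up to $\wG$-conjugation), whereas you prove a weaker statement --- non-vanishing for \emph{some} parabolic with Levi $\wL$ --- by moving a Weyl conjugate back into $[\rho]_\wL$, which suffices because $\Pi_\fs$ sums over all such parabolics. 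One small slip: to conclude via second adjointness that $\pi$ is a quotient of $\bfi_{\wL,\wP_0^-}^\wG(\rho_1)$, you need $\rho_1$ to be an irreducible \emph{sub}representation of $\bfr_{\wL,\wP_0}^\wG(\pi)$, not an irreducible quotient (an irreducible quotient would instead exhibit $\pi$ as a subrepresentation of $\bfi_{\wL,\wP_0}^\wG(\rho_1)$ by Frobenius reciprocity); since the Jacquet module is non-zero of finite length with all constituents cuspidal in the Weyl orbit of $[\rho]$, an irreducible subrepresentation of the required form exists and the rest of your argument goes through unchanged. Note also that the paper's preceding remark attributes the sum over parabolics to canonicity of \cref{P:homological dual of progren Pi_s} rather than to the Weyl-group ambiguity, since the induced modules from different parabolics are in fact (non-canonically) isomorphic.
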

	\begin{proof}

		The projectivity follows at once as a consequence of the second-adjointness \cref{C:induction preserves projectives} together with \cref{P:projective generator for cuspidal block}: the functor $\bfi_{\wL,\wP}^{\wG}$ has an exact right adjoint, hence it preserves projectives and $\Pi_{[\rho]}$ is projective. 
		Since parabolic induction preserves finite type (see \cref{T:ind preserves finite type}) we have that $\Pi_\fs$ is also finitely generated.
		
		Let $\pi\in \cM(\wG)_\fs$ be a  non-zero representation of $\wG$.
		We need to show that $\Hom_{\wG}(\Pi_\fs,\pi)$ is non zero and for that,
                since $\Pi_\fs$ is projective, it is enough to consider the case $\pi$ irreducible. 
		By definition of the block $\cM(\wG)_\fs$, $\bfr_{\wL,\wP^-}^\wG(\pi)$ is non-zero in the block
                $\cM(\wL)_{[\rho]}$.
		The second-adjointness \cref{T:second adjointness} and the fact that $\Pi_{[\rho]}$ is a generator of $\cM(\wL)_{[\rho]}$ (see \cref{P:projective generator for cuspidal block}) imply that
		\[ \Hom_\wG( \bfi_{\wL,\wP}^\wG(\Pi_{[\rho]}),\pi) = \Hom_\wL(\Pi_{[\rho]}, \bfr_{\wL,\wP^-}^\wG(\pi))\neq 0 \]
		and hence $\Hom_\wG(\Pi_\fs,\pi)\neq 0$.
	\end{proof}
	
	
	An abelian category admitting all coproducts and having a finitely generated (compact) progenerator is equivalent to the category of right modules over its endomorphism ring (see \cref{P:equiv module category}).
	Putting $\cR_\fs:=\End_{\wG}(\Pi_\fs)$, we thus deduce the following:
	\begin{proposition}\label{P:block equiv to modules over algebra}
		The functor
		\begin{align*}
			\cM(\wG)_\fs &\to \rmod\cR_\fs\\
			V&\mapsto \Hom_{\wG}(\Pi_\fs,V)
		\end{align*}
		is an equivalence of categories sending finite length representations of $\wG$ in
                $\cM(\wG)_\fs$
to $\cR_\fs$-modules which are finite dimensional over $\bC$.
	\end{proposition}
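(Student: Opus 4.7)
The plan is to deduce this from two earlier results. By \cref{L:projective generator of M(G)_s}, the representation $\Pi_\fs$ is a finite type projective generator of $\cM(\wG)_\fs$, which, being a Bernstein block, admits arbitrary direct sums. The abstract \cref{P:equiv module category} then directly produces the equivalence
\[ \cM(\wG)_\fs \xrightarrow{\sim} \rmod \cR_\fs,\qquad V \mapsto \Hom_\wG(\Pi_\fs,V), \]
so the first assertion is essentially formal.

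For the refinement to finite length representations, I will verify both implications. For the forward direction, fix $V\in\cM(\wG)_\fs$ of finite length. Unfolding the definition of $\Pi_\fs$ and combining second adjointness (\cref{T:second adjointness}) with Frobenius reciprocity for the open subgroup $\wLo\subset\wL$ (\cref{L:induction from open adjunction}), I rewrite
\[ \Hom_\wG(\Pi_\fs,V) \;\simeq\; \bigoplus_\wP \Hom_{\wLo}\!\bigl(\rho|_\wLo,\,(\bfr_{\wL,\wP^-}^\wG V)|_\wLo\bigr), \]
a finite direct sum. Parabolic restriction preserves finite length (\cref{C:res preserves finite length}); moreover restriction from $\wL$ to $\wLo$ also preserves finite length, because by \cref{P:irred res to Go and inertia classes} every irreducible $\wL$-representation restricts to a finite length semisimple $\wLo$-representation, and one then concludes for arbitrary finite length modules by induction on length. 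Since $\rho|_\wLo$ is a finite semisimple sum of compact irreducibles of $\wLo$ (\cref{T:Harish-Chandra} together with \cref{P:irred res to Go and inertia classes}), the semisimplicity and Schur property for compact representations (\cref{T:main on cpct reps}) force each summand above to be finite dimensional.

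For the converse, any right $\cR_\fs$-module $M$ with $\dim_\bC M < \infty$ automatically has finite length as an $\cR_\fs$-module, its length being bounded by its $\bC$-dimension, so the equivalence transports this back to finite length of the corresponding $V\in\cM(\wG)_\fs$. The only genuinely substantive step is the finite-dimensionality reduction at the $\wLo$-level: this is where the concrete structure theory of compact representations from \cref{SS:compact split}, and ultimately uniform admissibility (\cref{T:uniform adm}) packaged into the semisimplicity statement \cref{T:main on cpct reps}, enters the argument; all the remaining steps are either formal category-theoretic transport or direct invocations of already established results.
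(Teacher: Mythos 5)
Your proposal is correct and, for the main assertion, follows exactly the paper's route: \cref{L:projective generator of M(G)_s} supplies the compact (finite type) projective generator and \cref{P:equiv module category} yields the equivalence; the paper in fact states the proposition with no further proof. The finite-length clause, which the paper leaves unjustified, you verify correctly: the chain of adjunctions $\Hom_\wG(\Pi_\fs,V)\simeq\bigoplus_\wP\Hom_{\wLo}(\rho|_\wLo,(\bfr_{\wL,\wP^-}^\wG V)|_\wLo)$ is valid, and finite-dimensionality follows since $\bfr_{\wL,\wP^-}^\wG V$ is finite length (\cref{C:res preserves finite length}), restricts to a finite length $\wLo$-module, and $\rho|_\wLo$ is a finite sum of compact (hence admissible, projective-injective) irreducibles. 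A marginally shorter route to the same conclusion is Howe's theorem (\cref{T:Howe theorem}): finite length means admissible, and $\Hom_{\wLo}$ from an admissible module into a finite length module is finite dimensional by Schur's lemma; but your argument via \cref{T:main on cpct reps} is equally sound.
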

	
	\subsection{Center of $\cM(\wG)_\fs$}\label{SS:center of a block}
	This section plays no role in this work but we record it for completeness.
	We give the description of the center of the algebra $\cR_\fs$ for $\fs = [\wL,\rho]\in\cB(\wG)$.
	In turn this determines the center of the block $\cM(\wG)_\fs$ (see \cref{P:block equiv to modules over algebra}, \cref{R:equiv of cats center} and \cref{Ex:center of A-mod}).
	
	Recall that $\cX(\wL)$ acts on the irreducible cuspidals $\Irr(\wL)_\cs$ and we denoted by $\cG_\rho$ the stabilizer of $\rho$.
	Denote by $W_{[\rho]}:=\Stab_{W_{\wL,\wG}}([\rho])$,  the stabilizer of the inertia class $[\rho]$ in the relative Weyl group $W_{\wL,\wG}$.
	
	Recall also the $\bZ$-lattice $\Lambda(\wL) = \wL/\wLo = \Lambda(L) = L/\Lo$.
	The characters of $\Lambda(\wL)$ form the torus $\cX(\wL)$  whose ring of regular functions is identified with the group algebra of $\Lambda(\wL)$:
	\[ \bC[\Lambda(\wL)] = \cO(\cX(\wL)). \]
	
	The linear case of the following theorem is due to \cite[2.12]{BerDel} but the proof works also for finite central extensions. 
	Another reference is \cite[VI.10.4]{Renard}.
	\begin{theorem} \label{T:center of M(G)_s as invariants}
		The algebra $\cR_\fs$ contains the Laurent polynomial algebra $\cO(\cX(\wL)/\cG_\rho)$ as a subalgebra and it is finite as a left (or right)-module over it.
		Moreover, the center of $\cR_\fs$ is nothing but
		\[ \cZ(\cR_\fs) =\cO(\cX(\wL)/\cG_\rho)^{W_{[\rho]}}. \]
	\end{theorem}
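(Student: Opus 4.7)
The strategy is to compute the endomorphism algebra $\cR_\fs = \End_\wG(\Pi_\fs)$ directly by combining the second adjointness theorem with the geometric lemma, reducing the problem to a calculation involving the already-understood algebra $\cR_{[\rho]}$ of the cuspidal block, on which a residue of the relative Weyl group $W_{[\rho]}$ acts.

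First, I would exhibit the embedding $\cO(\cX(\wL)/\cG_\rho) \hookrightarrow \cR_\fs$. By \cref{P:R_rho is Azumaya and Z Laur pol}, this algebra is precisely the center $\cZ_{[\rho]}$ of $\cR_{[\rho]} = \End_\wL(\Pi_{[\rho]})$, and via functoriality of $\bfi_{\wL,\wP}^\wG$ it produces compatible endomorphisms of each summand $\bfi_{\wL,\wP}^\wG(\Pi_{[\rho]})$ of $\Pi_\fs$, which then assemble into elements of $\cR_\fs$ lying in the diagonal blocks indexed by $(\wP,\wP)$.

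Next, I would compute each matrix entry of $\cR_\fs$ using the second adjointness theorem \cref{T:second adjointness}:
\[ \Hom_\wG(\bfi_{\wL,\wP}^\wG\Pi_{[\rho]},\bfi_{\wL,\wQ}^\wG\Pi_{[\rho]}) \simeq \Hom_\wL(\Pi_{[\rho]},\bfr_{\wL,\wP^-}^\wG\bfi_{\wL,\wQ}^\wG\Pi_{[\rho]}). \]
Because every irreducible subquotient of $\Pi_{[\rho]}$ is cuspidal, the geometric lemma \cref{T:geometric lemma conseq} produces on the right-hand side a finite filtration whose subquotients are of the form ${}^w\Pi_{[\rho]}$ for $w \in W(\wL,\wL) = N_\wG(\wL)/\wL$. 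Only the indices $w$ with ${}^w[\rho] = [\rho]$, i.e., $w \in W_{[\rho]}$, can contribute, since any other ${}^w\Pi_{[\rho]}$ lives in a distinct Bernstein block of $\wL$ by \cref{T:JH series of induced of cuspidals}. Projectivity of $\Pi_{[\rho]}$ (\cref{P:projective generator for cuspidal block}) implies the corresponding sequences of Hom spaces remain exact, and I can choose intertwining operators $T_w \colon \Pi_{[\rho]} \to {}^w\Pi_{[\rho]}$ realizing each $w \in W_{[\rho]}$. This yields a $\cR_{[\rho]}$-module decomposition (up to an indexing by pairs $(\wP,\wQ)$, which contributes only Morita-equivalent data) of the form $\cR_\fs = \bigoplus_{w\in W_{[\rho]}} \cR_{[\rho]}\cdot T_w$, with twisted crossed-product multiplication $T_w \cdot r = {}^w r\cdot T_w$. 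Finiteness of $W_{[\rho]}$ and finiteness of $\cR_{[\rho]}$ over $\cZ_{[\rho]}$ (\cref{P:R_rho presentation}) then give that $\cR_\fs$ is finite over $\cO(\cX(\wL)/\cG_\rho)$.

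Finally, I would extract the center. Since $\cR_{[\rho]}$ is Azumaya over $\cZ_{[\rho]}$ (\cref{P:R_rho is Azumaya and Z Laur pol}), its centralizer inside the crossed product structure must be contained in $\cZ_{[\rho]}\cdot\bC[W_{[\rho]}]$; commuting further with each $T_w$ imposes $W_{[\rho]}$-invariance on the $\cZ_{[\rho]}$-component and forces the $\bC[W_{[\rho]}]$-component to be trivial. Hence $\cZ(\cR_\fs) = \cZ_{[\rho]}^{W_{[\rho]}} = \cO(\cX(\wL)/\cG_\rho)^{W_{[\rho]}}$, as asserted.

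\textbf{Main obstacle.} The most delicate point is Step 2: showing that the geometric-lemma filtration actually splits, and, in particular, constructing coherent lifts $T_w$ for $w\in W_{[\rho]}$ across all parabolic pairs $(\wP,\wQ)$. One must keep track of a potential 2-cocycle twist for $W_{[\rho]}$ analogous to the cocycle $c$ appearing in \cref{P:R_rho presentation}, which would not affect the computation of the center but makes the precise bimodule identification subtle. An alternative that bypasses this entirely is to act by the abstract Bernstein center $\cZ(\cM(\wG))$ on $\Pi_\fs$ and show that its image equals $\cO(\cX(\wL)/\cG_\rho)^{W_{[\rho]}}$ using the known description of $\cZ(\cM(\wG))$ via supercuspidal supports.
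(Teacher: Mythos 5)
The paper does not actually prove this theorem: it cites \cite[2.12]{BerDel} and \cite[VI.10.4]{Renard} and asserts that the linear-case argument carries over to covering groups. Your outline follows that standard argument in its first two steps (embed $\cZ_{[\rho]}$ diagonally, compute the Hom spaces by second adjointness, filter $\bfr_{\wL,\wP^-}^\wG\bfi_{\wL,\wQ}^\wG\Pi_{[\rho]}$ by the geometric lemma, keep only $w\in W_{[\rho]}$, and conclude finiteness from the finite filtration by finitely generated $\cR_{[\rho]}$-modules). Up to that point the argument is sound: a finite filtration by finite $\cZ_{[\rho]}$-modules suffices for finiteness, with no need for splitting.

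The gap is in the assertion $\cR_\fs=\bigoplus_{w\in W_{[\rho]}}\cR_{[\rho]}\cdot T_w$ with $T_w\cdot r={}^w r\cdot T_w$, on which your entire computation of the center rests. This crossed-product decomposition is false in general: the geometric lemma gives only a \emph{filtration} by $(\cR_{[\rho]},\cR_{[\rho]})$-bimodules, and the intertwining operators $T_w$ have poles on $\cX(\wL)$, so they exist only after localizing at the generic point of $\Spec\cZ_{[\rho]}^{W_{[\rho]}}$. The Iwahori block of $\SL_2(F)$ is the standard counterexample: there $\cR_\fs$ is the affine Hecke algebra, which is a filtered deformation of $\bC[X^{\pm1}]\rtimes(\bZ/2)$ but not isomorphic to it, and identifying its center requires the Bernstein presentation, not a centralizer computation inside a crossed product. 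The correct completion of your argument is to base-change to the fraction field $K$ of $\cZ_{[\rho]}^{W_{[\rho]}}$, where the filtration does split and the generic irreducibility of induced representations makes $\cR_\fs\otimes K$ an honest (twisted) crossed product with center $K$; one then shows $Z(\cR_\fs)=\cR_\fs\cap(\cZ_{[\rho]}\otimes K)^{W_{[\rho]}}=\cZ_{[\rho]}^{W_{[\rho]}}$, using that $\cR_\fs$ is torsion-free (indeed finite projective, by the filtration with projective subquotients) over $\cZ_{[\rho]}^{W_{[\rho]}}$. Finally, the alternative you propose at the end --- reading the answer off from ``the known description of $\cZ(\cM(\wG))$ via supercuspidal supports'' --- is circular in this paper's logical order: for covering groups that description of the Bernstein center is \emph{deduced from} the present theorem via $\cM(\wG)_\fs\simeq\rmod\cR_\fs$, not available beforehand.
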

	Since $\cZ(\cR_\fs)$ has no non-trivial idempotents, we immediately get the following corollary.
	\begin{corollary}\label{L:block M(G)_s is indec}
		The category $\cM(\wG)_\fs$ is indecomposable.
	\end{corollary}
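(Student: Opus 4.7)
The plan is to reduce the indecomposability of $\cM(\wG)_\fs$ to a purely algebraic statement about the center $\cZ(\cR_\fs)$, and then exploit the explicit description given in \cref{T:center of M(G)_s as invariants}.

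First, I would use \cref{P:block equiv to modules over algebra} to replace $\cM(\wG)_\fs$ by the equivalent category $\rmod \cR_\fs$. Combined with \cref{R:equiv of cats center} and \cref{Ex:center of A-mod}, this identifies the categorical center $\cZ(\cM(\wG)_\fs)$ with the ring-theoretic center $\cZ(\cR_\fs)$. Next, I would invoke \cref{P:center of product of cats}: a decomposition $\cM(\wG)_\fs \simeq \cC_1 \times \cC_2$ with both factors non-zero would yield a non-trivial idempotent of the center. Thus it suffices to show that $\cZ(\cR_\fs)$ admits no non-trivial idempotents, i.e., that $\Spec \cZ(\cR_\fs)$ is connected.

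By \cref{T:center of M(G)_s as invariants}, we have $\cZ(\cR_\fs) = \cO(\cX(\wL)/\cG_\rho)^{W_{[\rho]}}$. The torus $\cX(\wL)$ is an irreducible variety (its coordinate ring $\bC[\Lambda(\wL)]$ is a Laurent polynomial ring, hence an integral domain). Its quotient by the finite group $\cG_\rho$ remains irreducible, so $\cO(\cX(\wL)/\cG_\rho) = \bC[\Lambda(\wL)]^{\cG_\rho}$ is an integral domain. Taking further invariants under $W_{[\rho]}$ yields a subring of this domain, hence $\cZ(\cR_\fs)$ is itself an integral domain. An integral domain contains no idempotents other than $0$ and $1$.

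Putting this together: any idempotent $e \in \cZ(\cR_\fs)$ satisfies $e^2 = e$, which in a domain forces $e \in \{0, 1\}$. Hence $\cZ(\cR_\fs)$ has no non-trivial idempotents, and therefore $\cM(\wG)_\fs$ cannot split as a non-trivial product of abelian subcategories. There is no genuinely hard step here; the entire argument is formal once \cref{T:center of M(G)_s as invariants} is in hand, and the only observation worth flagging is that irreducibility of $\cX(\wL)$ is preserved under taking a finite quotient (which follows immediately from the fact that the invariant ring of a domain under a finite group action is again a domain).
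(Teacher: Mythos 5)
Your proposal is correct and follows exactly the paper's route: the paper deduces the corollary immediately from the observation that $\cZ(\cR_\fs)=\cO(\cX(\wL)/\cG_\rho)^{W_{[\rho]}}$ has no non-trivial idempotents, which is precisely your argument (it is a subring of an integral domain, hence a domain). You have merely filled in the formal steps (the equivalence with $\rmod\cR_\fs$ and \cref{P:center of product of cats}) that the paper leaves implicit.
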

	
	\subsection{Revisiting Howe's theorem}
	Since Bernstein's center plays a prominent role in the theory, it seems worthwhile mentioning the following applications whose proofs are parallel to the linear case. This section is only for completeness, it is not used in the sequel.
	
	We say that a smooth representation $V$ of $\wG$ is $\cZ(\wG)$-admissible (where $\cZ(\wG)$ is the Bernstein
        center of the category of smooth representations of $\wG$)
        if for any open compact $\wK\le \wG$,  the $\cZ(\wG)$-module $V^{\wK}$ is finitely generated.
	We have
	\begin{theorem}[{\cite[VI.10.8]{Renard}}]\label{T:finite type=Z adm and fin comp}
	A smooth representation $V$ of $\wG$ is of finite type if and only if it is $\cZ(\wG)$-admissible and it has only finitely many non-zero Bernstein components.
	\end{theorem}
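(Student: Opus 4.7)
The plan is to reduce to a single Bernstein block via the decomposition from \cref{T:Bernstein dec for tildeG}, and then use the description of each block $\cM(\wG)_\fs$ as $\rmod\cR_\fs$ (\cref{P:block equiv to modules over algebra}), where $\cR_\fs$ is finite as a module over its Noetherian center $\cZ_\fs$ described in \cref{T:center of M(G)_s as invariants}. Both directions will boil down to translating the hypotheses through this equivalence and applying standard commutative algebra over $\cZ_\fs$.

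For the forward direction, suppose $V$ is finitely generated, say by $v_1,\dots,v_n$. Each $v_i$ decomposes via Bernstein as $v_i=\sum_\fs v_{i,\fs}$ with only finitely many terms non-zero, so only finitely many components $V_\fs$ are non-zero. To verify $\cZ$-admissibility at a fixed open compact $\wK$, it suffices to treat each block separately. Under the equivalence $\cM(\wG)_\fs\simeq\rmod\cR_\fs$, the representation $V_\fs$ corresponds to a finitely generated right $\cR_\fs$-module $M_\fs$, and the $\fs$-component $(\cH(\wG)e_\wK)_\fs$ of $\ind_\wK^\wG\bC$ corresponds to a finitely generated right $\cR_\fs$-module $N_{\fs}^\wK$. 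The identification
\[V_\fs^\wK=\Hom_\wG((\cH(\wG)e_\wK)_\fs,V_\fs)=\Hom_{\cR_\fs}(N_{\fs}^\wK,M_\fs)\]
then exhibits $V_\fs^\wK$ as a $\cZ_\fs$-submodule of $\Hom_{\cZ_\fs}(N_{\fs}^\wK,M_\fs)$, which is finitely generated over the Noetherian ring $\cZ_\fs$ since both arguments are finitely generated $\cZ_\fs$-modules (using that $\cR_\fs$ is finite over $\cZ_\fs$).

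For the reverse direction, suppose $V$ is $\cZ(\wG)$-admissible with only finitely many non-zero Bernstein components; it suffices to show each $V_\fs$ is of finite type as a $\wG$-module. Since $\Pi_\fs$ is of finite type (\cref{L:projective generator of M(G)_s}) and smooth, one can choose an open compact $\wK$ small enough that $\Pi_\fs$ is generated by $\Pi_\fs^\wK$ as a $\wG$-module. Writing any $V_\fs\in\cM(\wG)_\fs$ as a quotient of $\Pi_\fs^{(I)}$, it follows that $V_\fs$ is itself generated over $\wG$ by $V_\fs^\wK$. By hypothesis, $V_\fs^\wK$ is finitely generated over $\cZ_\fs$, and since the block-level Hecke algebra $\cH(\wG,\wK)_\fs$ is finite over $\cZ_\fs$ (again via the equivalence and the finiteness of $\cR_\fs$ over $\cZ_\fs$), $V_\fs^\wK$ is generated by finitely many elements $v_1,\dots,v_n$ as a $\cH(\wG,\wK)_\fs$-module. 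The $\wG$-submodule $W:=\sum_{i=1}^n\wG\cdot v_i\subseteq V_\fs$ then has $W^\wK\supseteq \cH(\wG,\wK)_\fs\cdot\{v_1,\dots,v_n\}=V_\fs^\wK$, so $W^\wK=V_\fs^\wK$, and since $V_\fs$ is generated over $\wG$ by $V_\fs^\wK$, we conclude $W=V_\fs$.

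The main technical obstacle is the reverse direction: concretely, the assertion that every object of $\cM(\wG)_\fs$ is generated by its $\wK$-invariants for $\wK$ small enough, together with the finiteness of $\cH(\wG,\wK)_\fs$ over $\cZ_\fs$. Both are essentially consequences of the block equivalence and of the structural results for $\Pi_\fs$ collected in \cref{SS:induced proj gen}, but care is needed to verify that a single choice of $\wK$ works uniformly for all $V_\fs$ in the block.
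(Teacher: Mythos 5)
Your proof is correct and follows essentially the same route as the argument the paper defers to (Renard VI.10.8): reduce to a single block via the Bernstein decomposition, use the compact projective generator $\Pi_\fs$ and the finiteness of $\cR_\fs$ over its Noetherian center $\cZ_\fs$, and in the converse direction exploit that every object of $\cM(\wG)_\fs$ is generated by its $\wK$-invariants for a single $\wK$ adapted to $\Pi_\fs$. (One small remark: in the reverse direction the finiteness of $\cH(\wG,\wK)_\fs$ over $\cZ_\fs$ is not actually needed — finite generation of $V_\fs^\wK$ over $\cZ_\fs$ already gives finite generation over any larger algebra acting on it.)
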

	We say that a representation is $\cZ(\wG)$-finite if the action of $\cZ(\wG)$ on it factors through an ideal of finite codimension in  $\cZ(\wG)$.

	The following is a generalization of Howe's theorem.
	\begin{theorem} Let $V$ be a smooth representation of $\wG$.
	Any two of the following properties implies the third, and then $V$ is of finite length:
	\begin{enumerate}
		\item $V$ is of finite type,
		\item $V$ is admissible,
		\item $V$ is $\cZ(\wG)$-finite.
	\end{enumerate}
	\end{theorem}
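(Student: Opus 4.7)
The strategy is to combine the classical Howe theorem (\cref{T:Howe theorem}) with Bernstein's refinement (\cref{T:finite type=Z adm and fin comp}) together with one elementary observation: $\cZ(\wG)$-finiteness forces the representation to live in only finitely many Bernstein blocks. Indeed, if $V$ is $\cZ(\wG)$-finite, the action factors through $\cZ(\wG)/I$ with $I$ of finite codimension; the Bernstein decomposition $\cZ(\wG)=\prod_{\fs}\cZ_{\fs}$ gives orthogonal central idempotents $e_{\fs}$, and only finitely many of them can have nonzero image in the finite-dimensional quotient $\cZ(\wG)/I$, so $V=\bigoplus_{\fs}V_{\fs}$ has only finitely many nonzero summands.

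With this in hand I would check the three implications separately. For \emph{(1)+(2)$\Rightarrow$(3)}: by \cref{T:Howe theorem} $V$ has finite length, so in particular the Bernstein decomposition $V=\bigoplus V_{\fs}$ has only finitely many nonzero terms. On each block, \cref{T:center of M(G)_s as invariants} identifies $\cZ_{\fs}$ with a finitely generated commutative ring over which $\cR_{\fs}$ is finite; any finite-length module over $\cR_{\fs}$ is therefore annihilated by an ideal of finite codimension in $\cZ_{\fs}$. Putting these finitely many ideals together produces a finite-codimension ideal of $\cZ(\wG)$ annihilating $V$, so (3) holds. For \emph{(1)+(3)$\Rightarrow$(2)}: by \cref{T:finite type=Z adm and fin comp}, finite type gives $V^{\wK}$ finitely generated over $\cZ(\wG)$ for every open compact $\wK$; since $\cZ(\wG)$ acts on $V$ through a finite-dimensional quotient, $V^{\wK}$ is finite dimensional, so $V$ is admissible. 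For \emph{(2)+(3)$\Rightarrow$(1)}: admissibility makes each $V^{\wK}$ finite dimensional, hence a fortiori finitely generated over $\cZ(\wG)$, so $V$ is $\cZ(\wG)$-admissible; the observation above gives finitely many nonzero Bernstein components, so \cref{T:finite type=Z adm and fin comp} yields that $V$ is of finite type.

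Finally, once any two conditions hold the other does too, so in particular $V$ is both admissible and finitely generated, and \cref{T:Howe theorem} gives that $V$ is of finite length.

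There is no real obstacle in the argument: everything is a formal bookkeeping with the Bernstein decomposition once one has the two main theorems (\cref{T:Howe theorem} and \cref{T:finite type=Z adm and fin comp}). The only subtle point is the observation that a finite-codimension ideal of $\cZ(\wG)=\prod_{\fs}\cZ_{\fs}$ kills all but finitely many block components; this is immediate from considering the images of the mutually orthogonal idempotents $e_{\fs}$ in the finite-dimensional quotient $\cZ(\wG)/I$.
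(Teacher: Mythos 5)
Your proof is correct and follows essentially the same route as the paper: all three implications rest on Howe's theorem, \cref{T:finite type=Z adm and fin comp}, and the observation that a finite-codimension ideal of $\cZ(\wG)=\prod_\fs\cZ_\fs$ kills all but finitely many block components. The only cosmetic difference is in (1)+(2)$\Rightarrow$(3), where the paper simply invokes Schur's lemma on the finitely many irreducible constituents while you route the same fact through the block-by-block finiteness of $\cR_\fs$ over $\cZ_\fs$; both yield the required finite-codimension annihilator.
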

	\begin{proof}
	Howe's theorem says that (1)+(2) imply finite length and hence by Schur's lemma we have (3).
	
	Assume (1)+(3). \cref{T:finite type=Z adm and fin comp} implies that $V$ is $\cZ(\wG)$-admissible. Hence for any open compact $\wK\le \wG$ the invariants $V^\wK$ are finitely generated over $\cZ(\wG)$ and now (3) implies that $V^\wK$ is finite dimensional.
	
	Assume (2)+(3). Clearly $V$ is $\cZ(\wG)$ admissible and since the action of $\cZ(\wG)$, a unitary ring, factorizes through a finite codimension ideal we get that $V$ has only finitely many non-zero Bernstein components. We conclude by \cref{T:finite type=Z adm and fin comp}.
	\end{proof}

	\section{Homological duality}\label{S:homological duality}
	\subsection{Vanishing}\label{SS:vanishing}
	Here we prove the vanishing part of our main result, namely \cref{T:homol prop of D_h-intro}\eqref{T:subpoint:vanishing Ext for D_h},
	following the strategy in \cite{BerNotes}.
	
	The main ingredients are \cref{S:Bernstein dec} on Bernstein's decomposition, Bernstein's second adjoint \cref{T:second adjointness},
	and the vanishing result from \cref{P:vanishing of ext over R}.
	
	Consider
	$\cM(\wG)_{fg}$ the full subcategory of smooth representations of $\wG$ which are finitely generated. 
	It is still an abelian category and the parabolic induction and restriction functors preserve it.
	The homological duality is defined as the following functor between derived categories
	
	\begin{align}
		D_h\colon &\cD^b_{fg}(\cM(\wG))\to \cD^b_{fg}(\cM(\wG))^\op\\
		& \pi \mapsto \RHom_\wG(\pi,\cH(\wG))\notag
	\end{align}
	where $\cH(\wG)$ denotes the Hecke algebra of $\wG$.
	The structure of left $\wG$ representation is through the right action of $\cH(\wG)$ on itself and through the involution $f\mapsto \breve{f}\colon \cH(\wG)\to \cH(\wG)$ defined by $\breve{f}(g):=f(g\inv)$.
	
	\begin{remark}\hfill
		\begin{enumerate}
			\item 	Notice that $D_h$ lands indeed in the bounded derived category because the category of smooth representations $\cM(\wG)$ has finite global dimension (\cref{SS:finite homological dimension}).			
			\item 	One could also define $D_h$ without the assumption on finitely generated but then $D_h$ would land outside smooth modules, indeed in the category of all $\cH(\wG)$-modules. 
			We could come back to smooth modules simply by taking the smooth part (see the paragraph after \cref{P:ind_P^G as tensor product}).
		\end{enumerate}
	\end{remark}

	The following theorem is the most important property of the homological duality functor and is due to Bernstein.
	Fix $\fs = [\wL,\rho]\in\cB(\wG)$, a cuspidal datum.
	\begin{theorem}\label{T:homological duality single degree}
		If  $\pi\in\cM(\wG)_\fs$ is of finite length, then $D_h(\pi)$ has cohomology only in degree $d(\fs)$.
	\end{theorem}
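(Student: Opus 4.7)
The plan is to follow Bernstein's original strategy: decompose along Bernstein blocks, apply second adjointness to descend the computation from $\wG$ to the Levi $\wL$, and on $\wL$ invoke the Frobenius-symmetric property of the cuspidal-block algebra $\cR_{[\rho]}$, which forces the desired concentration in degree $d(\fs)=d(\wL)=d(\rho)$.

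First I would use \cref{T:Bernstein dec for tildeG} to write $\cH(\wG) = \bigoplus_{\ft \in \cB(\wG)} \cH(\wG)_\ft$ as a left $\wG$-module; since $\pi \in \cM(\wG)_\fs$ and blocks are derived orthogonal (\cref{P:derived orthogonal of components of M(G)}), $D_h(\pi)$ reduces to $\RHom_\wG(\pi, \cH(\wG)_\fs)$. Next, devissage using the long exact sequence of $\Ext$ in the (finite length, finite global dimension) block $\cM(\wG)_\fs$ reduces the claim to the case where $\pi$ is irreducible. By \cref{L:block as generated by induction from cusp component}, every such $\pi$ is an irreducible subquotient of $\bfi_{\wL,\wP}^\wG(\sigma)$ for some irreducible cuspidal $\sigma$ of $\wL$ in the inertia class $[\rho]$, and a further devissage reduces to $\pi = \bfi_{\wL,\wP}^\wG(\sigma)$.

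The second step invokes Bernstein's second adjointness (\cref{T:second adjointness}). Since $\bfr_{\wL,\wP^-}^\wG$ is exact, $\bfi_{\wL,\wP}^\wG$ preserves projectives (\cref{C:induction preserves projectives}), so the adjunction extends to derived functors:
$$
\RHom_\wG(\bfi_{\wL,\wP}^\wG(\sigma), \cH(\wG)_\fs) \simeq \RHom_\wL(\sigma, \bfr_{\wL,\wP^-}^\wG(\cH(\wG)_\fs)).
$$
Translating via the cuspidal-block equivalence $\cM(\wL)_{[\rho]} \simeq \rmod\, \cR_{[\rho]}$, this becomes $\RHom_{\cR_{[\rho]}}(M_\sigma, N)$ where $M_\sigma := \Hom_\wL(\Pi_{[\rho]}, \sigma)$ is finite-dimensional over $\bC$ (since $\sigma$ is admissible by \cref{T:irred is adm}) and $N$ is the $\cR_{[\rho]}$-module corresponding to $\bfr_{\wL,\wP^-}^\wG(\cH(\wG)_\fs)$. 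The key algebraic input is then \cref{P:vanishing of ext over R}: for $V$ finite-dimensional as an $\cR_{[\rho]}$-module, $\Ext^i_{\cR_{[\rho]}}(V, \cR_{[\rho]}) = 0$ for $i \neq d(\rho)$, which rests on the Frobenius-symmetric identification $\cR_{[\rho]} \simeq \Hom_{\cZ_{[\rho]}}(\cR_{[\rho]}, \cZ_{[\rho]})$ (\cref{C:Frob sym cond for R_rho}) to transfer the computation to commutative algebra over the Laurent-polynomial ring $\cZ_{[\rho]}$ of Krull dimension $d(\rho)$.

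The main obstacle will be the identification in the second step: ensuring that $N$ is (a direct summand of) a free $\cR_{[\rho]}$-module, so that the vanishing of $\Ext^i(M_\sigma, \cR_{[\rho]})$ transfers to $\Ext^i(M_\sigma, N)$. In the purely cuspidal subcase $\wL = \wG$ this is exactly \cref{C:full cuspidal homological dual of it}, which identifies $\Hom_\wG(\Pi_{[\rho]}, \cH(\wG)) \simeq \Pi_{[\rho^\vee]}$ — a free $\cR_{[\rho]}$-module of finite rank. For general $\wL$, a Mackey-type analysis of the parabolic restriction combined with the cuspidal identification on $\wL$ itself should yield the analogous structural statement for $N$; once this is established, the theorem follows, with the correct degree $d(\fs) = d(\rho)$ appearing as the Krull dimension of the Laurent-polynomial center $\cZ_{[\rho]}$.
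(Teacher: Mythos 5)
Your overall architecture (Bernstein decomposition, descent to the Levi by an adjunction, and the vanishing of $\Ext^i_{\cR_{[\rho]}}(V,\cR_{[\rho]})$ for $i\neq d(\rho)$ via the Azumaya/Frobenius-symmetric property of $\cR_{[\rho]}$) matches the paper's, but you run the adjunction in the wrong direction, and this creates two genuine gaps. First, the reduction from an arbitrary irreducible $\pi$ to $\pi=\bfi_{\wL,\wP}^{\wG}(\sigma)$ is not a valid d\'evissage: $\pi$ is only a subquotient of $\bfi_{\wL,\wP}^{\wG}(\sigma)$, and concentration of $D_h$ in a single degree does not pass from a representation to its subobjects or quotients. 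From $0\to A\to B\to C\to 0$ with $D_h(B)$ concentrated in degree $d$, the long exact sequence only yields $H^iD_h(A)\simeq H^{i+1}D_h(C)$ for $i\neq d-1,d$, which forces nothing. Second, even for $\pi=\bfi_{\wL,\wP}^{\wG}(\sigma)$, your use of second adjointness leaves $N=\bfr_{\wL,\wP^-}^{\wG}(\cH(\wG)_\fs)$ in the second slot, and the needed fact that $N$ is a projective (or a summand of a free) $\cR_{[\rho]}$-module is exactly what you concede is unproved; this is not a routine Mackey computation, and the paper never needs it.

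The paper's proof avoids both problems by keeping the finite length representation $\pi$ in the \emph{first} argument throughout. One first replaces $\cH(\wG)_\fs$ by the projective generator $\Pi_\fs=\bfi_{\wL,\wP}^{\wG}(\Pi_{[\rho]})$ (legitimate since $\cH(\wG)_\fs$ is projective, hence a summand of a direct sum of copies of $\Pi_\fs$, and $\Ext^i_{\wG}(\pi,-)$ commutes with direct sums for $\pi$ finitely generated). Then \emph{Frobenius reciprocity} --- the first adjointness $\bfr_{\wL,\wP}^{\wG}\dashv\bfi_{\wL,\wP}^{\wG}$, not the second --- gives $\Ext^i_{\wG}(\pi,\Pi_\fs)=\Ext^i_{\wL}(\bfr_{\wL,\wP}^{\wG}(\pi),\Pi_{[\rho]})$. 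Now the first argument $\bfr_{\wL,\wP}^{\wG}(\pi)$ is finite length by \cref{C:res preserves finite length}, and under the equivalence $\cM(\wL)_{[\rho]}\simeq\rmod\cR_{[\rho]}$ the second argument corresponds to the free module $\cR_{[\rho]}$ itself, so \cref{P:vanishing of ext over R} applies directly to any finite length $\pi$, with no reduction to irreducible or induced representations needed. (Second adjointness does enter, but only to show that $\Pi_\fs$ is projective, via \cref{C:induction preserves projectives}, and a generator.)
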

	\begin{proof} (following \cite[Theorem 31]{BerNotes})
		We need to show that $\Ext^i_\wG(\pi,\cH) = 0$ for $i\neq d(\fs)$, where we put for short $\cH = \cH(\wG)$.
		
		Decompose the representation $\cH(\wG)$ according to Bernstein's decomposition,
                \cref{T:Bernstein dec for tildeG}.
		Clearly only the component $\cH(\wG)_\fs$ is important to us.
		Moreover, since $\cH(\wG)$ is a projective representation (\cref{R:H(G) is projective}),
                and $\Pi_\fs$ is a projective generator of $\cM(\wG)_\fs$ (see \cref{L:projective generator of M(G)_s}), it is enough to show:
		\begin{align}\label{Eq:Ext vanishing with Pi_fs}
			\Ext^i_\wG(\pi,\Pi_\fs)=0,\text{ for all }i\neq d(\fs).
		\end{align}
		Recall from \S\ref{SS:induced proj gen} that $\Pi_\fs = \bfi_{\wL,\wP}^{\wG}(\Pi_{[\rho]})$.
		Since parabolic induction and restriction are exact functors, the Frobenius adjunction gives
		\begin{align}\label{Eq:Ext with Pi_fs equal with Pi_rho}
			\Ext^i_\wG(\pi,\Pi_\fs) = \Ext^i_\wL(\bfr_{\wL,\wP}^\wG(\pi),\Pi_{[\rho]}).
		\end{align}
		The representation $\pi$ is of finite length, hence by \cref{C:res preserves finite length} the same is true of $\bfr_{\wL,\wP}^\wG(\pi)$.
		Through the categorical equivalence $\cM(\wL)_{[\rho]}\simeq \rmod\cR_{[\rho]}$ (see \cref{P:block equiv to modules over algebra}), the vanishing of \eqref{Eq:Ext with Pi_fs equal with Pi_rho} follows from 
		\[ \Ext^i_{\cR_{[\rho]}}(V,\cR_{[\rho]}) = 0, \text{ for all }i\neq d(\rho)=d(\fs), \]
		for all finite dimensional $\cR_{[\rho]}$-modules $V$.
		This was proved in \cref{P:vanishing of ext over R}.
	\end{proof}

	Given $\pi\in\cM(\wG)_\fs^\finl$ a finite length representation in a given block, we will denote by $\bD_h(\pi)$ the representation $H^{d(\fs)}(D_h(\pi))$. 
	
	\subsection{Interaction with induction and restriction}\label{SS:Dh and ind res}
	The objective of this section is to investigate how does the (full, derived) homological duality commute with parabolic induction and restriction.
	We follow the proof of the linear case from \cite[Theorem 31(4,5)]{BerNotes}.
	\begin{proposition}\label{P:D_h an ind res}
		Let $\wP = \wL N$ be a parabolic with Levi decomposition in $\wG$.
		We have the following natural isomorphisms of functors when restricted to the bounded derived category of finitely generated smooth $\wG$-modules $\cD^b_{fg}(\cM(\wG))$:
		\begin{enumerate}
			\item $D_h \bfi_{\wL,\wP}^\wG \simeq \bfi_{\wL,\wP^-}^\wG D_h$,
			\item $D_h \bfr_{\wL,\wP}^\wG \simeq \bfr_{\wL,\wP}^\wG D_h$.
		\end{enumerate}
	\end{proposition}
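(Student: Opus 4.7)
The plan for both isomorphisms follows the same template: start from an adjunction to displace $\RHom$ from the $\wG$-level to the $\wL$-level, identify a parabolically restricted Hecke algebra with a parabolically induced one as a bimodule (a kind of projection formula), and then commute the outer parabolic functor past $\RHom$.

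For (1), I would begin with $D_h\bfi_{\wL,\wP}^{\wG}(\sigma)=\RHom_{\wG}(\bfi_{\wL,\wP}^{\wG}(\sigma),\cH(\wG))$ and apply Bernstein's second adjointness (\cref{T:second adjointness}) to rewrite this as $\RHom_{\wL}(\sigma,\bfr_{\wL,\wP^-}^{\wG}(\cH(\wG)))$, where the parabolic restriction is applied to the \emph{left} $\wG$-action on $\cH(\wG)$ while the remaining \emph{right} $\wG$-action endows the output with its $\wG$-module structure. The crucial step is then to establish a bimodule identification
\[\bfr_{\wL,\wP^-}^{\wG}(\cH(\wG))\;\simeq\;\bfi_{\wL,\wP^-}^{\wG}(\cH(\wL))\]
as $(\wL,\wG)$-bimodules, where on the left the restriction acts on the left $\wG$-action and on the right the induction acts on the right $\wL$-action of $\cH(\wL)$ to produce a right $\wG$-action. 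Both sides identify, after bookkeeping of the modulus characters $\delta_{\wP^-}^{\pm 1/2}$ that cancel between the two normalizations, with the space $C_c^\infty(N^-\backslash\wG)$ equipped with the natural left $\wL$-action (via $\wL\hookrightarrow N^-\backslash\wP^-$) and right $\wG$-action by translation. This is proved by directly unwinding the coinvariants description of the Jacquet functor and the function-space description of parabolic induction; it is essentially a geometric statement about the homogeneous space $N^-\backslash\wG$.

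With the bimodule identification in hand, one concludes (1) by commuting the outer $\bfi_{\wL,\wP^-}^{\wG}$ past $\RHom_\wL(\sigma,-)$. This is legitimate because the induction affects only the right $\wG$-module structure (not the left $\wL$-action against which $\RHom_\wL$ is computed), and because parabolic induction is exact; moreover, as a functor it is both a left and a right adjoint (by Frobenius reciprocity and by \cref{T:second adjointness} applied to the opposite parabolic), so it commutes with both limits and colimits in the derived sense. This yields
\[\RHom_\wL(\sigma,\bfi_{\wL,\wP^-}^{\wG}(\cH(\wL)))\;\simeq\;\bfi_{\wL,\wP^-}^{\wG}(\RHom_\wL(\sigma,\cH(\wL)))\;=\;\bfi_{\wL,\wP^-}^{\wG}D_h(\sigma).\]

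Part (2) follows the same template but uses ordinary Frobenius reciprocity $\bfr_{\wL,\wP}^{\wG}\dashv\bfi_{\wL,\wP}^{\wG}$ in place of second adjointness: one rewrites $D_h\bfr_{\wL,\wP}^{\wG}(\pi)=\RHom_\wL(\bfr_{\wL,\wP}^{\wG}(\pi),\cH(\wL))\simeq\RHom_\wG(\pi,\bfi_{\wL,\wP}^{\wG}(\cH(\wL)))$, where $\bfi_{\wL,\wP}^{\wG}$ now acts on the left $\wL$-action of $\cH(\wL)$. The analogous ``opposite'' bimodule identification is
\[\bfi_{\wL,\wP}^{\wG}(\cH(\wL))\;\simeq\;\bfr_{\wL,\wP}^{\wG}(\cH(\wG))\]
as $(\wG,\wL)$-bimodules, both realizations of $C_c^\infty(\wG/N)$ with left $\wG$-action by translation and right $\wL$-action via $\wL\hookrightarrow\wP/N$. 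Commuting the exact functor $\bfr_{\wL,\wP}^{\wG}$ past $\RHom_\wG(\pi,-)$ then produces the desired isomorphism.

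The main obstacle is the bimodule identification of a parabolically restricted Hecke algebra with a parabolically induced one, and in particular the careful matching of the modular characters $\delta_{\wP}^{\pm 1/2}$ arising from the normalizations of $\bfi$ and $\bfr$. Everything else (adjunction, exactness, commutation of $\RHom$ with exact functors acting on a distinct variable) is formal. Once the bimodule isomorphisms are established, both parts of the proposition drop out in parallel.
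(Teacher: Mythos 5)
Your proposal follows the same architecture as the paper's proof: an adjunction (second adjointness for (1), Frobenius reciprocity for (2)) moves the $\RHom$ to the other group, the restricted Hecke algebra $\bfr_\wP(\cH(\wG))$ is identified with the induced one $\bfi_\wP(\cH(\wL))$ as a bimodule (both being $\delta^{1/2}\otimes\cC^\infty_c(\wG/N)$, the paper's \eqref{Eq:ind of H(L)}), and the outer parabolic functor is then commuted past the $\RHom$. You also correctly flag the modulus-character bookkeeping.

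You have, however, misjudged where the real work lies. The commutation of $\bfi_{\wL,\wP^-}^\wG$ (resp.\ $\bfr_{\wL,\wP}^\wG$) past $\RHom_\wL(\sigma,-)$ (resp.\ $\RHom_\wG(\pi,-)$) is \emph{not} formal, and your justification --- parabolic induction is exact and is both a left and a right adjoint, hence commutes with limits and colimits --- does not establish it. Preservation of (co)limits concerns diagrams in the variable the functor acts on; here induction acts on the right module structure of the bimodule while the Hom is computed against the left structure, and there is no general commutation. Concretely, for an arbitrary smooth $V\in\cM(\wL)$ the natural map $\bfi_\wP\Hom_\wL(V,\cH(\wL))\to\Hom_\wL(V,\bfi_\wP\cH(\wL))$ is injective but in general not surjective: the left side consists of functions $\wG\to\Hom(V,\cH(\wL))$ that are locally constant as maps into the Hom space, the right side of functions $F$ such that $F(-,v)$ is locally constant for each $v$ separately, and these coincide precisely when $V$ is finitely generated. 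Likewise in part (2), identifying the $N$-coinvariants of $\Hom_\wG(W,\cH(\wG))$ with $\Hom_\wG(W,\cC^\infty_c(\wG/N))$ uses projectivity of $W$ (for surjectivity onto the latter) and finite generation of $W$ (to know that $\Hom_\wG(W,\cH(\wG))$ is a smooth $N$-module, so that $\Hom_\wG(W,\cH(\wG))[N]=\Hom_\wG(W,\cH(\wG)[N])$). The correct route is therefore to reduce, within $\cD^b_{fg}$, to finitely generated projective objects and prove the commutation for those by this explicit argument; finite generation enters essentially at this step, not merely as a standing hypothesis. With that step repaired, your proof coincides with the paper's.
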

	\begin{proof}
		To shorten the notation, we write $\bfi_\wP$ and $\bfr_\wP$ for the parabolic induction and restriction functors.
		
		Since all objects in $\cM(\wG)_{fg}$ admit a finite resolution by finitely generated
		projective objects, it is enough to define and prove the required isomorphisms for finitely
		generated projective objects.
		Therefore, using the second adjointness and the Frobenius reciprocity,  
		we see that the isomorphisms in part (1) and in part (2) of the proposition are equivalent to proving natural isomorphisms 
		\begin{align}
			\bfi_{\wP^-} \Hom_\wL(V,\cH(\wL)) &\simeq \Hom_\wL(V,\bfr_{\wP^-}(\cH(\wG))),\label{Eq:ind Hom}\\
			\bfr_\wP\Hom_\wG(W,\cH(\wG)) & \simeq \Hom_\wG(W,\bfi_\wP(\cH(\wL))),\label{Eq:res Hom}
		\end{align}
		for $V\in\cM(\wL)$ and $W\in\cM(\wG)$, both projective, and finitely generated representations, and where
		we have considered $\cH(\wL)$ as an $\wL\times \wL$ module and $\cH(\wG)$ as a $\wG\times \wG$-module.

		For the proof of these isomorphisms,  we begin by noticing
		the following identifications of $\wL\times\wG$-modules: 
		\begin{align}\label{Eq:ind of H(L)}
			(\id\times\bfi_\wP)(\cH(\wL)) \simeq \Ind_{ \wL \times \wP}^{\wL \times \wG}(\cH(\wL)\delta^{-1/2})   \simeq \delta^{1/2} \otimes \cC^\infty_c(\wG/N) 
			\simeq (\id\times\bfr_\wP)(\cH(\wG)), 
		\end{align}
		where in the first appearance, $\delta$ is a character of $\wL \times \wP$ trivial on $\wL$, whereas in the second appearance, $\delta$ is a character of $\wL \times \wG$ trivial on $\wG$; this also explains why
                the power of delta changes sign since $\cH(\wL) = \ind^{\wL \times \wL}_{\Delta(\wL)} \bC$.

		The following is an easy but crucial observation valid for any finitely generated
		smooth representation $V$ of $\wL$ (we will apply this to finitely generated projective modules
		as noted earlier):
		\begin{align} \label{Eq:easy crucial obs}
			\bfi_\wP\Hom_\wL(V, \cH(\wL)) &\simeq \Hom_\wL(V,(\id\times\bfi_\wP) (\cH(\wL)), \end{align}
		as representations of $\wG$ where
		in $\Hom_\wL(V,\cH(\wL))$,  both $V$ and $\cH(\wL)$ are considered as left $\wL$-modules,
		thus $\Hom_\wL(V,\cH(\wL))$ is a right $\wL$-module through the right $\wL$-action on $\cH(\wL)$; the
		$\wL \times \wG$-representation $(\id\times\bfi_\wP) (\cH(\wL)) = \bfi_\wP^{\wG} (\cH(\wL))$ has
		$\wL$ action through the left action
		of $\wL$ on  $\cH(\wL)$, and a right  $\wG$ action.
		To see the isomorphism in equation \eqref{Eq:easy crucial obs}, note that 
		by the definition of an induced representation, both sides of \eqref{Eq:easy crucial obs}
		give rise to  functions
		$F\colon \wG \times V \rightarrow \cH(\wL)$ which are linear maps $V \rightarrow \cH(\wL)$ when restricted
		to any $g \in \wG$, and satisfy:
		\begin{enumerate}
			\item $F(g, \ell v) = \ell F(g,v)$, for all $g\in \wG, \ell \in \wL, v \in V$.
			\item $F(gp, v) = F(g,v)\cdot \ell$ where $p \in \wP = \wL N$ has the form $p=\ell n$, with $\ell \in \wL$, and $n \in N$.
		\end{enumerate}
		
		Further, such an $F$ arises from the left hand side of the isomorphism in \eqref{Eq:easy crucial obs} if and only if
		the corresponding map $\wG \rightarrow \Hom (V, \cH(\wL))$ is locally constant on $\wG$, whereas
		such an $F$ arises from the right hand side of the equality in eq. \eqref{Eq:easy crucial obs} if and only if for each
		$v \in V$, $F(-,v)$  is a locally constant map from $\wG$ to $\cH(\wL)$. Thus the functions $F$ which arise from the left hand side of equation \eqref{Eq:easy crucial obs} are always contained in those which arise from the right hand side, and the converse holds if  $V$ is finitely generated over $\wL$,
		say by $v_1,\cdots, v_n$. Then assuming that each of the functions $F(-,v_i)$ are constant in a neighborhood $U(g_0)$
		of a fixed $g_0\in \wG$, then by equation 1. above,  $F(g,\ell v_i) = \ell F(g, v_i)=\ell F(g_0, v_i)$
		for all $g \in U(g_0)$, $\ell \in \wL$,
		hence $F(g,v) = F(g_0,v)$ for all $g \in U(g_0)$, $v \in V$. Now the isomorphism \eqref{Eq:easy crucial obs} (applied to
		$\wP^-$ in place of $\wP$) together with \eqref{Eq:ind of H(L)} proves the isomorphism \eqref{Eq:ind Hom}, and hence the isomorphism
		in part 1. of the proposition.

		Similarly for any finitely generated smooth, projective, representation $W$ of $\wG$ we have 
		\[ \bfr_\wP\Hom_\wG(W,\cH(\wG))\simeq \Hom_\wG(W,
		(\id\times\bfr_\wP)(\cH(\wG)),\]
		as $\wL$-modules and naturally in $W$. This assertion is equivalent to proving that the normalized Jacquet module of
		$\Hom_\wG(W,\cH(\wG))$ (considered as a right $G$-module) is the same as  $\delta^{1/2}\Hom_\wG(W,\cC^\infty_c(\wG/N) )$ which is equivalent to proving that:
		\begin{enumerate}
			\item the natural map from 
			$\Hom_\wG(W,\cH(\wG))$ to   $\Hom_\wG(W,\cC^\infty_c(\wG/N) )$ is surjective. This is a consequence of $W$ being projective.
			\item The kernel of the natural map $\Hom_\wG(W,\cH(\wG)) \to \Hom_\wG(W,\cC^\infty_c(\wG/N) )$ consists of
			$\Hom_\wG(W,\cH(\wG)) [N]$
			where for any smooth representation ${\mathcal W}$ of $N$,
			\[{\mathcal W}[N] = \{n\cdot w -w| w \in{\mathcal W}\}
			=  \{v \in {\mathcal W}| \int_{N_i}n\cdot v = 0 \}\] where $N_i$ is some compact open subgroup of $ N$ depending on
			$v \in {\mathcal W}$. It follows that
			\[\Hom_\wG(W,\cH(\wG)) [N] = \Hom_\wG(W,\cH(\wG) [N]),\]
			using that $ \Hom_\wG(W,\cH(\wG))$ is a smooth representation of $N$ which is the case as $W$ is a finitely generated $\wG$-module.
		\end{enumerate}
		
		Applying the functor $\Hom_\wG(W,-)$ to the exact sequence
		\[ 0 \to \cH(\wG)[N] \to \cH(\wG) \to \cC^\infty_c(\wG/N) \to 0,\]
		and noting that $W$ is projective, we have the exact sequence
		\[ 0 \to  \Hom_\wG(W,\cH(\wG) [N]) \to  \Hom_\wG(W,\cH(\wG)) \to \Hom_\wG(W, \cC^\infty_c(\wG/N)) \to 0,\]
		proving the assertion that the Jacquet module of
		$\Hom_\wG(W,\cH(\wG))$, considered as a right $\wG$-module, is the same as  $\Hom_\wG(W,\cC^\infty_c(\wG/N) )$.
		This completes the proof of the isomorphism
		in \eqref{Eq:res Hom}, and hence part 2. of the proposition.
	\end{proof}

	\subsection{On projective generators}\label{SS:projective generators and D_h}
	In order to better understand the homological duality functor $D_h$, we will compute its value on the projective generators $\Pi_{[\rho]}\in\cM(\wG)_{[\rho]}$ and $\Pi_\fs\in \cM(\wG)_\fs$ from \cref{S:Blocks as module cats}.
	The answer is as nice as it could possibly be.
	Then we will use this computation to describe the functors $(-)^\vee$ and $D_h$ on the $\cR_\fs$-side: we obtain the contragredient, resp. homological, duality for $\cR_\fs$.
	
	Recall that if $\rho\in\cM(\wG)$ is irreducible cuspidal, then $\Pi_\rho$ was defined to be $\ind_{\wGo}^\wG(\rho|_\wGo)$.
	Since a cuspidal representation is compact modulo center by Harish-Chandra's theorem, the following is a restatement of \cref{C:full cuspidal homological dual of it}:
	\begin{lemma}\label{L:homological dual of progen Pi_rho}
		If $\rho\in\cM(\wG)$ is irreducible cuspidal, then
		\[ D_h(\Pi_{[\rho]})\simeq \Pi_{[\rho^\vee]}.\]
	\end{lemma}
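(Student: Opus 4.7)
The plan is to reduce the derived homological duality to ordinary Hom by exploiting projectivity, and then to invoke the concrete computation already carried out on the full cuspidal component of $\cH(\wG)$.

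First, I would observe that $\Pi_{[\rho]} = \ind_{\wGo}^{\wG}(\rho|_{\wGo})$ is a projective object of $\cM(\wG)$: this is established in Proposition \ref{P:projective generator for cuspidal block}, whose proof relies on the fact that $\wGo \le \wG$ is open (so $\ind_{\wGo}^{\wG}$ preserves projectives by \cref{L:induction from open adjunction}) and that $\rho|_{\wGo}$ is a direct sum of compact irreducibles of $\wGo$ by Harish-Chandra's \cref{T:Harish-Chandra}, hence projective in $\cM(\wGo)$. Projectivity of $\Pi_{[\rho]}$ immediately implies that
\[ D_h(\Pi_{[\rho]}) = \RHom_\wG(\Pi_{[\rho]},\cH(\wG)) \simeq \Hom_\wG(\Pi_{[\rho]},\cH(\wG)) \]
concentrated in cohomological degree $0$.

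Second, since $\rho$ is an irreducible cuspidal representation of $\wG$, Harish-Chandra's theorem guarantees that $\rho$ is compact modulo center, so the hypotheses of \cref{C:full cuspidal homological dual of it} apply (after the standard observation that the results of \cref{SS:compact mod center} for a general locally compact totally disconnected group apply equally well to covering groups $\wG\to G$, since the argument is purely representation-theoretic). That corollary yields a canonical isomorphism of $\wG$-modules
\[ \Hom_\wG\bigl(\ind_{\wGo}^{\wG}(\rho|_{\wGo}),\ \cH(\wG)\bigr) \simeq \ind_{\wGo}^{\wG}(\rho^\vee|_{\wGo}) = \Pi_{[\rho^\vee]}. \]
Combining these two observations delivers $D_h(\Pi_{[\rho]})\simeq \Pi_{[\rho^\vee]}$ in $\cD^b(\cM(\wG))$.

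The only subtlety to check is a book-keeping one: in the definition of $D_h$ the Hecke algebra $\cH(\wG)$ is turned into a left $\wG$-module via the right regular action composed with the involution $f\mapsto \breve{f}$, whereas \cref{C:full cuspidal homological dual of it} phrases $\Hom_\wG(-,\cH(\wG))$ as a right $\wG$-module through the right regular action. These conventions differ only by the involution $g\mapsto g^{-1}$, which induces an isomorphism $\ind_{\wGo}^{\wG}(\rho^\vee|_{\wGo})\simeq \Pi_{[\rho^\vee]}$ since $\wGo$ is normal in $\wG$ and the class $[\rho^\vee]$ is preserved by this involution. This is the only place where any care is required, and there is no genuine obstacle; the heavy lifting was already done in \cref{P:cusp component of H(G) as GxG module} underlying \cref{C:full cuspidal homological dual of it}.
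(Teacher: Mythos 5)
Your proposal is correct and follows exactly the route the paper takes: the paper presents this lemma as a direct restatement of \cref{C:full cuspidal homological dual of it} after invoking Harish-Chandra's theorem to see that $\rho$ is compact modulo center. Your additional remarks — that projectivity of $\Pi_{[\rho]}$ (from \cref{P:projective generator for cuspidal block}) collapses $\RHom$ to $\Hom$ in degree zero, and the left/right module bookkeeping — are correct and usefully make explicit what the paper leaves implicit.
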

	
	Let $\fs = (\wL,\rho)\in\cB(\wG)$ be a cuspidal datum (up to inertia and conjugation). 
	The projective generator $\Pi_\fs\in\cM(\wG)_\fs$ was defined in \cref{SS:induced proj gen} as 
	\[ \Pi_\fs = \oplus_{\wP} \bfi_{\wL,\wP}^\wG \Pi_{[\rho]}.\]
	We denote by $\fs^\vee = (\wL,\rho^\vee)$ the contragredient cuspidal datum (it could be the same as $\fs$ but in general it is not).
	\begin{proposition}\label{P:homological dual of progren Pi_s}
		With the above notation we have
		$D_h(\Pi_\fs)\simeq \Pi_{\fs^\vee}$.
	\end{proposition}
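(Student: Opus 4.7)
The plan is to reduce the computation of $D_h(\Pi_\fs)$ to the cuspidal case by commuting the homological duality past parabolic induction, using the two previously established ingredients: Lemma \ref{L:homological dual of progen Pi_rho} (the cuspidal case $D_h(\Pi_{[\rho]}) \simeq \Pi_{[\rho^\vee]}$) and Proposition \ref{P:D_h an ind res}\,(1) ($D_h \circ \bfi_{\wL,\wP}^\wG \simeq \bfi_{\wL,\wP^-}^\wG \circ D_h$).

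First, I would note that the sum defining $\Pi_\fs$ is indexed by the finite set $\cP(\wL)$ of parabolic subgroups of $\wG$ admitting $\wL$ as a Levi factor, so since $D_h$ is additive (indeed triangulated on $\cD^b_{fg}(\cM(\wG))$) it commutes with this finite direct sum:
\[
D_h(\Pi_\fs) \;=\; D_h\Bigl(\bigoplus_{\wP \in \cP(\wL)} \bfi_{\wL,\wP}^\wG \Pi_{[\rho]}\Bigr) \;\simeq\; \bigoplus_{\wP \in \cP(\wL)} D_h\bigl(\bfi_{\wL,\wP}^\wG \Pi_{[\rho]}\bigr).
\]
Next, I apply Proposition \ref{P:D_h an ind res}\,(1) to each summand in order to move $D_h$ past the parabolic induction, at the cost of replacing $\wP$ by the opposite parabolic $\wP^-$:
\[
D_h\bigl(\bfi_{\wL,\wP}^\wG \Pi_{[\rho]}\bigr) \;\simeq\; \bfi_{\wL,\wP^-}^\wG\bigl(D_h \Pi_{[\rho]}\bigr).
\]
By Lemma \ref{L:homological dual of progen Pi_rho}, $D_h(\Pi_{[\rho]}) \simeq \Pi_{[\rho^\vee]}$, living in degree $0$, so this yields $\bfi_{\wL,\wP^-}^\wG \Pi_{[\rho^\vee]}$ in each summand.

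Finally, the involution $\wP \mapsto \wP^-$ is a bijection of $\cP(\wL)$ onto itself, so re-indexing the sum we obtain
\[
D_h(\Pi_\fs) \;\simeq\; \bigoplus_{\wP \in \cP(\wL)} \bfi_{\wL,\wP^-}^\wG \Pi_{[\rho^\vee]} \;=\; \bigoplus_{\wQ \in \cP(\wL)} \bfi_{\wL,\wQ}^\wG \Pi_{[\rho^\vee]} \;=\; \Pi_{\fs^\vee},
\]
where the last equality is the very definition of $\Pi_{\fs^\vee}$. I do not foresee any serious obstacle: the proof is essentially a formal chain of identifications, the one point worth checking carefully being that the isomorphism in Proposition \ref{P:D_h an ind res}\,(1) is natural, so that the direct sum of these identifications assembles (canonically) into the stated isomorphism; the reason we sum over \emph{all} parabolics with Levi $\wL$ rather than a single one is precisely to make this reindexing --- and hence the resulting isomorphism --- canonical.
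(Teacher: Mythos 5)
Your argument is exactly the paper's proof: commute $D_h$ with the finite direct sum, apply Proposition \ref{P:D_h an ind res}(1) to move $D_h$ past parabolic induction at the cost of passing to the opposite parabolic, invoke Lemma \ref{L:homological dual of progen Pi_rho} for the cuspidal case, and reindex via $\wP \mapsto \wP^-$. Your closing remark about why summing over all parabolics makes the isomorphism canonical matches the remark the paper makes just before introducing $\Pi_\fs$.
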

	\begin{proof}
		Use the previous lemma together with $D_h \bfi_{\wL,\wP}^\wG\simeq \bfi_{\wL,\wP^-}^\wG D_h$ (see \cref{P:D_h an ind res}) to get the following isomorphisms
		\begin{align*}
			D_h(\Pi_\fs) & = \oplus_{\wP} D_h(\bfi_{\wL,\wP}^\wG(\Pi_{[\rho]}))\\
			&\simeq \oplus_{\wP} \bfi_{\wL,\wP^-} D_h(\Pi_{[\rho]})\\
			& \simeq \oplus_{\wP}\bfi_{\wL,\wP^-}\Pi_{[\rho^\vee]}\\
			& = \Pi_{\fs^\vee}.\qedhere
		\end{align*}
	\end{proof}

	\begin{corollary}\label{C:D_h sends block to its contragredient}
		Homological duality restricts to a functor 
		\[D_h\colon \cD^b(\cM(\wG)_\fs)\to \cD^b(\cM(\wG)_{\fs^\vee})^\op\]
		which is moreover involutive when restricted to finitely generated modules.
	\end{corollary}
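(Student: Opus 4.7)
The plan is to deduce both assertions from what has been established earlier in the paper: Proposition \ref{P:homological dual of progren Pi_s} on the projective generator, the Bernstein decomposition, and the abstract results of \S\ref{S:abstract duality theorem}.

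First, I would argue that $D_h$ sends $\cD^b(\cM(\wG)_\fs)$ into $\cD^b(\cM(\wG)_{\fs^\vee})^\op$ by exploiting the $\wG\times\wG$-bimodule structure of $\cH(\wG)$. Applying the Bernstein decomposition of \cref{T:Bernstein dec for tildeG} to the group $\wG\times\wG$ yields a decomposition of bimodules
\[ \cH(\wG)\simeq \bigoplus_{\ft_1,\ft_2\in\cB(\wG)} \cH(\wG)_{\ft_1,\ft_2},\]
and the corollary after \cref{P:cusp component of H(G) as GxG module} (together with its remark) shows that $\cH(\wG)_{\ft_1,\ft_2}=0$ unless $\ft_2=\ft_1^\vee$. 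Hence $\cH(\wG)=\bigoplus_{\ft}\cH(\wG)_{\ft,\ft^\vee}$, where $\cH(\wG)_{\ft,\ft^\vee}$ lives in block $\ft$ for the left $\wG$-action and in block $\ft^\vee$ for the right $\wG$-action. For $\pi\in\cM(\wG)_\fs$, the derived orthogonality of blocks (\cref{P:derived orthogonal of components of M(G)}) gives
\[ D_h(\pi)=\RHom_{\wG}(\pi,\cH(\wG))\simeq \RHom_{\wG}(\pi,\cH(\wG)_{\fs,\fs^\vee}),\]
which by construction takes values in $\cD^b(\cM(\wG)_{\fs^\vee})^\op$. This extends immediately to the whole bounded derived category by passing to complexes term by term.

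Second, for the involutivity on finitely generated modules I would combine two facts. On the one hand, $\cM(\wG)$ has finite global dimension (see \S\ref{SS:finite homological dimension}) and is noetherian (\cref{T:M(G) is noetherian}), so any finitely generated object of $\cM(\wG)_\fs$ is perfect: by \cref{L:projective generator of M(G)_s}, it admits a finite resolution by finite direct sums of the projective generator $\Pi_\fs$. On the other hand, $D_h(\Pi_\fs)\simeq \Pi_{\fs^\vee}$ by \cref{P:homological dual of progren Pi_s}, and symmetrically $D_h(\Pi_{\fs^\vee})\simeq \Pi_\fs$ (since $(\fs^\vee)^\vee=\fs$). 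Applying $D_h$ twice to such a resolution therefore produces a complex of finite direct sums of $\Pi_\fs$, and the evaluation map $\ev\colon\Id\to D_h^2$ of \cref{P:abstract D_h is an involution} is an isomorphism on the generators and hence, by naturality, on the whole complex. Equivalently, one may invoke \cref{C:D_h square to id for fin.h.dim} directly.

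The only non-formal point is the identification of the $\wG\times\wG$-bimodule block structure of $\cH(\wG)$, but this has already been carried out in \cref{P:cusp component of H(G) as GxG module} and the ensuing remark, so no serious obstacle remains. Everything else is a direct application of results already at our disposal.
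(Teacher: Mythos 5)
Your second paragraph is essentially the paper's own proof: the paper deduces the containment $D_h(\cD^b(\cM(\wG)_\fs))\subset \cD^b(\cM(\wG)_{\fs^\vee})^\op$ from $D_h(\Pi_\fs)\simeq\Pi_{\fs^\vee}$ (\cref{P:homological dual of progren Pi_s}) together with finite resolutions by finitely generated projectives in the block (these are direct summands of finite direct sums of $\Pi_\fs$ rather than literal finite direct sums, but since $D_h$ commutes with summands this changes nothing), and it gets the involutivity directly from \cref{C:D_h square to id for fin.h.dim}. Your first paragraph, however, takes a genuinely different route and has a gap as written: the corollary following \cref{P:cusp component of H(G) as GxG module} only establishes that the $\wG\times\wG$-component of $\cH(\wG)$ attached to $\pi_1\boxtimes\pi_2^\vee$ vanishes unless $\pi_2\simeq\pi_1\chi$ for $\pi_1,\pi_2$ \emph{cuspidal} representations of $\wG$, i.e., it treats only the cuspidal blocks of $\wG\times\wG$. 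The analogous statement for arbitrary Bernstein blocks $\ft_1,\ft_2$ is merely \emph{proposed} in the remark that follows (as a conjectural Plancherel-type decomposition) and is not proved anywhere in the paper, so you cannot cite it to get $\cH(\wG)\simeq\bigoplus_{\ft}\cH(\wG)_{\ft,\ft^\vee}$ in general. This does not sink the proposal, because your second paragraph independently yields the containment for finitely generated objects (the only place $D_h$ is defined in any case); but you should either drop the first paragraph or derive the bimodule statement from \cref{P:homological dual of progren Pi_s} and the projectivity of $\cH(\wG)$ rather than from the cuspidal computation.
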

	\begin{proof}
		The functor $D_h$ sends the projective generator $\Pi_\fs$ of $\cM(\wG)_\fs$ to the projective generator $\Pi_{\fs^\vee}$ of $\cM(\wG)_{\fs^\vee}$.
		The duality statement for finitely generated follows from \cref{C:D_h square to id for fin.h.dim}.
	\end{proof}
	Thanks to the vanishing result for finite length modules (\cref{T:homological duality single degree}), we can deduce
	\begin{corollary}\label{C:subp main thm:involut} 
		The functor $D_h$ restricts to an involution 
		\[ \bD_h\colon \cM(\wG)_\fs^\finl \to (\cM(\wG)_{\fs^\vee}^\finl)^\op.\]
	\end{corollary}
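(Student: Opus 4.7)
The strategy is to deduce the statement from the three properties already collected in Theorem \ref{T:homol prop of D_h-intro}: the vanishing of $D_h(\pi)$ outside degree $d(\fs)$ for $\pi\in \cM(\wG)_\fs^\finl$ (Theorem \ref{T:homological duality single degree}), the involutivity $D_h^2\simeq \Id$ on $\cD^b_{fg}(\cM(\wG))$ (Corollary \ref{C:D_h sends block to its contragredient}, coming from Proposition \ref{P:abstract D_h is an involution} and finite global dimension), and the commutation with parabolic induction $D_h\circ \bfi_{\wL,\wP}^\wG\simeq \bfi_{\wL,\wP^-}^\wG\circ D_h$ (Proposition \ref{P:D_h an ind res}). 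There are four things to check: that $\bD_h(\pi)$ is a genuine object of $\cM(\wG)_{\fs^\vee}$, that $\bD_h$ is exact and contravariant on $\cM(\wG)_\fs^\finl$, that $\bD_h$ lands in $\cM(\wG)_{\fs^\vee}^\finl$, and finally that $\bD_h^2\simeq \Id$.

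The first point is immediate: the vanishing theorem says $D_h(\pi)$ is concentrated in degree $d(\fs)$, so $\bD_h(\pi)=H^{d(\fs)}(D_h(\pi))$ is a module, and Corollary \ref{C:D_h sends block to its contragredient} places it in $\cM(\wG)_{\fs^\vee}$. For exactness, given a short exact sequence $0\to \pi_1\to \pi_2\to \pi_3\to 0$ in $\cM(\wG)_\fs^\finl$, applying the triangulated functor $D_h$ produces a distinguished triangle; the associated long exact cohomology sequence collapses to a four-term sequence because each $D_h(\pi_i)$ is concentrated in the single degree $d(\fs)$, yielding $0\to \bD_h(\pi_3)\to \bD_h(\pi_2)\to \bD_h(\pi_1)\to 0$.

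The delicate point, and the one I expect to be the main obstacle, is showing that $\bD_h(\pi)$ is again of finite length. The vanishing theorem is only established for finite length modules, so I cannot directly feed $\bD_h(\pi)$ back into $D_h$ to deduce finiteness through a circular involution argument. Instead, I plan to bootstrap through the cuspidal case (Theorem \ref{T:homol prop of D_h-intro}\eqref{subp:intro-D_h on cusp}): for any irreducible cuspidal $\rho'\in\cM(\wL)_{[\rho]}$, the induced representation $\bfi_{\wL,\wP}^\wG(\rho')$ is of finite length by Theorem \ref{T:JH series of induced of cuspidals}, and the commutation formula together with the cuspidal computation gives
\[ \bD_h\bigl(\bfi_{\wL,\wP}^\wG(\rho')\bigr)\simeq \bfi_{\wL,\wP^-}^\wG(\rho'^\vee),\]
which is again of finite length in $\cM(\wG)_{\fs^\vee}$. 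Now any irreducible $\sigma\in\cM(\wG)_\fs$ has cuspidal support in $[\wL,\rho]$ by definition of the block, hence sits as a subquotient of such an $\bfi_{\wL,\wP}^\wG(\rho')$; by the exactness established in the previous paragraph, $\bD_h(\sigma)$ occurs as a corresponding subquotient of the finite length module $\bfi_{\wL,\wP^-}^\wG(\rho'^\vee)$, and is therefore of finite length. Finally, an induction on composition length combined with exactness extends the conclusion to all of $\cM(\wG)_\fs^\finl$.

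Once $\bD_h$ is known to preserve finite length, the involution property follows formally. For $\pi\in \cM(\wG)_\fs^\finl$ we have $D_h(\pi)\simeq \bD_h(\pi)[-d(\fs)]$, and since $\bD_h(\pi)\in \cM(\wG)_{\fs^\vee}^\finl$ with $d(\fs^\vee)=d(\fs)$, likewise $D_h(\bD_h(\pi))\simeq \bD_h^2(\pi)[-d(\fs)]$. Composing and invoking $D_h^2\simeq \Id$ on finitely generated modules yields $\bD_h^2(\pi)\simeq \pi$, functorially in $\pi$, completing the proof of the corollary.
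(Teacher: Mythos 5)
Your argument is correct and follows exactly the route the paper intends: the corollary is stated without proof as a consequence of the vanishing theorem, the involutivity of $D_h$ on finitely generated complexes, and (per the introduction) the commutation with parabolic induction together with the computation of $D_h$ on irreducible cuspidals. The one step the paper leaves entirely implicit — that $\bD_h$ preserves finite length — is handled correctly by your bootstrap through cuspidal supports and exactness, so nothing is missing.
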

	
	\begin{cor}\label{C:R_s anti-iso to R_svee}
		The homological duality induces an anti-isomorphism of algebras
		\[ \cR_\fs\simeq \cR_{\fs^\vee},\]
		or equivalently, an isomorphism of algebras $\cR_\fs^\op\simeq \cR_{\fs^\vee}$.
	\end{cor}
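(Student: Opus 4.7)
The plan is to deduce this corollary directly from \cref{P:homological dual of progren Pi_s} together with the general categorical principle that a contravariant equivalence induces anti-isomorphisms of endomorphism rings. No new technical input is required.

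First, I would recall the setup. By \cref{L:projective generator of M(G)_s} and \cref{P:block equiv to modules over algebra}, $\Pi_\fs$ (resp.\ $\Pi_{\fs^\vee}$) is a finitely generated projective generator of $\cM(\wG)_\fs$ (resp.\ $\cM(\wG)_{\fs^\vee}$), and $\cR_\fs=\End_\wG(\Pi_\fs)$, $\cR_{\fs^\vee}=\End_\wG(\Pi_{\fs^\vee})$. Projectivity guarantees that these ordinary endomorphism rings coincide with the derived endomorphism rings: $\End_{\cD^b(\cM(\wG)_\fs)}(\Pi_\fs)=\cR_\fs$ and analogously on the $\fs^\vee$ side.

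Next, by \cref{C:D_h sends block to its contragredient} the homological duality restricts to a contravariant equivalence of categories
\[
D_h\colon \cD^b_{fg}(\cM(\wG)_\fs)\xrightarrow{\sim}\cD^b_{fg}(\cM(\wG)_{\fs^\vee})^\op.
\]
Being contravariant, $D_h$ satisfies $D_h(g\circ f)=D_h(f)\circ D_h(g)$, so applying it to morphisms produces an algebra anti-homomorphism on endomorphism rings. Combining this with the identification $D_h(\Pi_\fs)\simeq \Pi_{\fs^\vee}$ from \cref{P:homological dual of progren Pi_s}, we obtain an anti-homomorphism of algebras
\[
D_h\colon \cR_\fs=\End_{\cD^b(\cM(\wG)_\fs)}(\Pi_\fs)\longrightarrow \End_{\cD^b(\cM(\wG)_{\fs^\vee})}(\Pi_{\fs^\vee})=\cR_{\fs^\vee}.
\]

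Finally, the involutivity $D_h^2\simeq \Id$ on finitely generated modules (\cref{C:D_h sends block to its contragredient}) shows that performing the symmetric construction starting from $\Pi_{\fs^\vee}$ yields an anti-homomorphism $\cR_{\fs^\vee}\to \cR_\fs$ which is inverse to the one above. Therefore the map is an anti-isomorphism, equivalently an isomorphism $\cR_\fs^\op\simeq \cR_{\fs^\vee}$ of algebras. Since every ingredient has already been established, there is essentially no obstacle; the only point worth being careful about is the sign convention (contravariance $\Rightarrow$ anti-homomorphism, not homomorphism), which is already built into the statement.
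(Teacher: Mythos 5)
Your proposal is correct and follows essentially the same route as the paper: both arguments combine the involutivity of $D_h$ (hence that it is a contravariant equivalence) with the identification $D_h(\Pi_\fs)\simeq\Pi_{\fs^\vee}$ from \cref{P:homological dual of progren Pi_s} to get an anti-isomorphism of endomorphism algebras. Your extra remark that projectivity of $\Pi_\fs$ lets one identify the derived and ordinary endomorphism rings is a sensible precaution but not a departure from the paper's argument.
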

	\begin{proof}
		Since $D_h^2\simeq\Id$ (see \cref{C:D_h square to id for fin.h.dim}) we see that in particular $D_h$ is an equivalence of categories (contravariant).
		Hence it induces an anti-isomorphism of algebras:
		\[\cR_\fs = \Hom_\wG(\Pi_\fs,\Pi_\fs) \simeq \Hom_\wG(D_h\Pi_\fs,D_h\Pi_\fs) = \cR_{\fs^\vee}\]
		where in the last equality we used \cref{P:homological dual of progren Pi_s}.
	\end{proof}
	
	The above corollary means that we have an equivalence of categories $\cR_\fs\lmod\simeq \rmod\cR_{\fs^\vee}$.
	In other words, given a right module $V\in\rmod\cR_\fs$  its dual $V^*=\Hom_\bC(V,\bC)$ is naturally a left $\cR_\fs$-module, hence we can view it as a right $\cR_{\fs^\vee}$-module.
	Put shortly, taking the dual vector spaces gives us a functor
	\[ (-)^*\colon \rmod\cR_\fs\to \rmod\cR_{\fs^\vee}.\]
	
	We are now ready to describe the contragredient on the $\cR_\fs$-module side.
	The answer is not surprising at all:
	\begin{proposition}\label{P:contragredient on the R_s side}
		There is a commutative square of functors
		\[ 	\begin{tikzcd}
			\cM(\wG)_{\fs} \ar[r,"\sim"] \ar[d,"(-)^\vee"'] & \rmod \cR_{\fs} \arrow[d,"(-)^*"]\\
			(\cM(\wG)_{\fs^\vee})^\op \ar[r,"\sim"] & (\rmod \cR_{\fs^\vee})^\op.
		\end{tikzcd}\]
	\end{proposition}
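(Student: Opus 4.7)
Proof plan. The key observation is that $\Pi_\fs$ is a finitely generated projective object in $\cM(\wG)$ (by Lemma~\ref{L:projective generator of M(G)_s} combined with Corollary~\ref{C:induction preserves projectives}), hence a perfect complex concentrated in degree zero. I therefore plan to apply Lemma~\ref{L:Hom(M N)=Hom(M A)otimesN} directly at the level of abelian categories: for $V\in\cM(\wG)_\fs$ there will be a canonical isomorphism
\[
\Hom_\wG(\Pi_\fs,\cH(\wG))\otimes_{\cH(\wG)} V \xrightarrow{\sim} \Hom_\wG(\Pi_\fs,V),
\]
natural in $V$. By Proposition~\ref{P:homological dual of progren Pi_s}, the left tensor factor is $D_h(\Pi_\fs)\simeq \Pi_{\fs^\vee}$ (where the usual involution on $\cH(\wG)$ is used to convert the natural right $\wG$-action inherited from the right action on $\cH(\wG)$ into a left $\wG$-action).

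Next I would reinterpret the tensor product as diagonal coinvariants. A standard check using the Hecke algebra involution identifies
\[
\Hom_\wG(\Pi_\fs,\cH(\wG))\otimes_{\cH(\wG)} V \;\simeq\; (\Pi_{\fs^\vee}\otimes_\bC V)_\wG,
\]
with $\wG$ acting diagonally on the right. Dualizing the previous isomorphism over $\bC$ then gives
\[
\Hom_\wG(\Pi_\fs,V)^* \;\simeq\; \bigl((\Pi_{\fs^\vee}\otimes V)_\wG\bigr)^* \;\simeq\; \Hom_\wG(\Pi_{\fs^\vee}, V^\vee).
\]
The last identification is the usual contragredient adjunction: a linear functional on the diagonal coinvariants of $W_1\otimes W_2$ is the same datum as a $\wG$-invariant bilinear form on $W_1\times W_2$, and for smooth $W_1$ this is the same as a $\wG$-equivariant map $W_1\to W_2^\vee$ (smoothness of the image is automatic from smoothness of $W_1$). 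Applying this to $W_1=\Pi_{\fs^\vee}$ and $W_2=V$ produces the desired natural isomorphism of vector spaces.

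To upgrade this to an isomorphism of right $\cR_{\fs^\vee}$-modules, where $\Hom_\wG(\Pi_\fs,V)^*$ receives its $\cR_{\fs^\vee}$-structure via the anti-isomorphism $\cR_\fs^{\op}\simeq\cR_{\fs^\vee}$ of Corollary~\ref{C:R_s anti-iso to R_svee}, I will invoke naturality. Each of the three isomorphisms above is natural in $\Pi_\fs$ (respectively $\Pi_{\fs^\vee}$), so the composite intertwines the actions of $\End_\wG(\Pi_\fs)=\cR_\fs$ on one side with those of $\End_\wG(\Pi_{\fs^\vee})=\cR_{\fs^\vee}$ on the other. By construction the resulting identification of these endomorphism rings, up to dualization, is precisely the anti-isomorphism coming from $D_h$, so the diagram will commute as a diagram of functors valued in right $\cR_{\fs^\vee}$-modules.

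The main obstacle will be purely bookkeeping: the conceptual content is essentially that both sides compute the same space of $\wG$-invariant pairings $\Pi_{\fs^\vee}\times V\to\bC$, but one must track the left/right $\wG$-structures on $\Hom_\wG(\Pi_\fs,\cH(\wG))$, the Hecke-algebra involution used to match this with $\Pi_{\fs^\vee}$, and the compatibility of the $\cR_\fs^{\op}\simeq\cR_{\fs^\vee}$ identification with precomposition of endomorphisms. An alternative route, which I would use as a sanity check, is to apply the abstract Serre-functor identity of Corollary~\ref{C:natural pairing RHom is perfect} directly to $M=\Pi_\fs$ and $N=V$ to obtain $\Hom_\wG(\Pi_\fs,V)^*\simeq\Hom_\wG(V,\Pi_{\fs^\vee}^\vee)$, and then use the smooth-rep transposition $\Hom_\wG(V,\Pi_{\fs^\vee}^\vee)\simeq \Hom_\wG(\Pi_{\fs^\vee},V^\vee)$; both paths arrive at the same conclusion.
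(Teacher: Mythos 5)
Your proposal is correct and follows essentially the same route as the paper: both use that $\Pi_\fs$ is a finitely generated projective so that $\Hom_\wG(\Pi_\fs,-)\simeq\Hom_\wG(\Pi_\fs,\cH(\wG))\otimes_{\cH(\wG)}-\simeq\Pi_{\fs^\vee}\otimes_{\cH(\wG)}-$ (via Lemma~\ref{L:Hom(M N)=Hom(M A)otimesN} and Proposition~\ref{P:homological dual of progren Pi_s}), then dualize and apply tensor--hom adjunction to land on $\Hom_\wG(\Pi_{\fs^\vee},V^\vee)$. Your extra passage through diagonal coinvariants and the explicit naturality discussion for the $\cR_{\fs^\vee}$-module structure are just more detailed bookkeeping of the same argument.
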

	\begin{proof} 	
		The fact that the vertical right arrow makes sense was discussed above as a consequence of \cref{C:R_s anti-iso to R_svee}
		
		Note that $\Pi_{\fs}$ being projective and finitely generated, together with the calculation from \cref{P:homological dual of progren Pi_s}, implies that we have  natural isomorphisms of functors
		\[ \Hom_{\wG}(\Pi_{\fs},-)\simeq \Hom_\wG(\Pi_\fs,\cH(\wG))\otimes_{\cH(\wG)}- \simeq \Pi_{\fs^\vee}\otimes_{\cH(\wG)} - .\]		
		Let $V\in\cM(\wG)_\fs$ and use tensor-hom adjunction to obtain natural isomorphisms
		\begin{align*}
			\Hom_\wG(\Pi_{\fs}, V)^* & \simeq \Hom(\Pi_{\fs^\vee}\otimes_{\cH(\wG)} V, \bC)\\
			& \simeq \Hom_\wG(\Pi_{\fs^\vee},V^*)\\
			& \simeq \Hom_\wG(\Pi_{\fs^\vee},V^\vee)
		\end{align*}
		where the last equality follows because the image of a smooth module is smooth.
	\end{proof}
	
	As a final computation, we show that the homological duality on the $\cM(\wG)_\fs$ side goes to homological duality on the $\cR_\fs$-module side:
	\begin{proposition}\label{P:homolog duality on the R_s side}
		There is a commutative square of functors
		\[ 	\begin{tikzcd}
			\cD^b(\cM(\wG)_{\fs}) \ar[r,"\sim"] \ar[d,"D_h"'] & \cD^b(\rmod \cR_{\fs}) \arrow[d,"D_h"]\\
			\cD^b(\cM(\wG)_{\fs^\vee})^\op \ar[r,"\sim"] & \cD^b(\rmod \cR_{\fs^\vee})^\op
		\end{tikzcd}\]
		where the $D_h$ on the right is $\RHom_{\cR_\fs}(-,\cR_\fs)$ and we again use the algebra anti-isomorphism $\cR_\fs \simeq \cR_{\fs^\vee}$ from \cref{C:R_s anti-iso to R_svee}.
	\end{proposition}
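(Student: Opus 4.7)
The strategy is to chase an object $V\in\cD^b(\cM(\wG)_\fs)$ around the square and exhibit a natural isomorphism between the two resulting objects. Put $M_V := \RHom_\wG(\Pi_\fs,V) = \Hom_\wG(\Pi_\fs,V)$ (since $\Pi_\fs$ is projective); this is the image of $V$ under the equivalence of \cref{P:block equiv to modules over algebra}, which sends $\Pi_\fs$ to $\cR_\fs$ viewed as a right $\cR_\fs$-module via post-composition. Going \emph{down then right} yields $\RHom_\wG(\Pi_{\fs^\vee}, D_h(V))$, while going \emph{right then down} yields $\RHom_{\cR_\fs}(M_V,\cR_\fs)$. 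The plan is to produce a natural quasi-isomorphism
\[
\RHom_\wG(\Pi_{\fs^\vee}, D_h(V)) \simeq \RHom_{\cR_\fs}(M_V,\cR_\fs),
\]
compatible with the algebra anti-isomorphism $\cR_\fs^\op\simeq \cR_{\fs^\vee}$ of \cref{C:R_s anti-iso to R_svee}.

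First I would apply \cref{P:homological dual of progren Pi_s}, which identifies $\Pi_{\fs^\vee}\simeq D_h(\Pi_\fs)$, to rewrite the left-hand side as
\[
\RHom_\wG(D_h(\Pi_\fs),D_h(V)).
\]
Next, by \cref{C:D_h sends block to its contragredient} (or \cref{C:D_h square to id for fin.h.dim}), the functor
$D_h\colon \cD^b(\cM(\wG)_\fs)\to \cD^b(\cM(\wG)_{\fs^\vee})^\op$
is an equivalence of triangulated categories with $D_h^2\simeq\Id$, so it induces a natural isomorphism
\[
\RHom_\wG(D_h(\Pi_\fs),D_h(V)) \simeq \RHom_\wG(V,\Pi_\fs).
\]
Finally, applying the equivalence $\cM(\wG)_\fs\simeq \rmod\cR_\fs$ at the derived level (which is legitimate by the projectivity of the progenerator $\Pi_\fs$), I identify
\[
\RHom_\wG(V,\Pi_\fs) \simeq \RHom_{\cR_\fs}(M_V,F_\fs(\Pi_\fs)) = \RHom_{\cR_\fs}(M_V,\cR_\fs).
\]
Composing these three natural isomorphisms produces the required identification of objects.

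The remaining task — which I expect to be the only genuine point requiring care — is to verify compatibility of module structures. The object $\RHom_\wG(\Pi_{\fs^\vee}, D_h(V))$ carries a right $\cR_{\fs^\vee}=\End_\wG(\Pi_{\fs^\vee})$-action by precomposition, while $\RHom_{\cR_\fs}(M_V,\cR_\fs)$ carries a left $\cR_\fs$-action by post-multiplication on the target $\cR_\fs$. The middle step above transports the $\cR_\fs$-action on $\Pi_\fs$ through $D_h$ to the $\cR_{\fs^\vee}$-action on $\Pi_{\fs^\vee}$, and the resulting identification of the two actions is precisely the anti-isomorphism $\cR_\fs^\op\simeq\cR_{\fs^\vee}$ from \cref{C:R_s anti-iso to R_svee}. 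Once this bookkeeping is unwound, the square commutes up to a canonical natural isomorphism, proving the proposition.
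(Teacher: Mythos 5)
Your proposal is correct and follows essentially the same route as the paper: both legs of the square are identified with $\RHom_\wG(V,\Pi_\fs)$, using $\Pi_{\fs^\vee}\simeq D_h(\Pi_\fs)$ together with the involutivity of $D_h$ for the left-bottom composition, and the derived full faithfulness of the progenerator equivalence (which the paper phrases as the bi-adjunction between $\Hom_\wG(\Pi_\fs,-)$ and $-\otimes_{\cR_\fs}\Pi_\fs$) for the right-top composition. Your extra remark on matching the module structures via the anti-isomorphism $\cR_\fs^\op\simeq\cR_{\fs^\vee}$ is the same bookkeeping the paper handles implicitly.
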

	\begin{proof}
		We deal first with the left-bottom composition.
		Since $D_h$ is involutive (see \cref{C:D_h square to id for fin.h.dim}), it induces an isomorphism on Hom spaces. 
		In particular, using \cref{P:homological dual of progren Pi_s}, we get an isomorphism of left $\cR_\fs$-modules
		\[ \RHom_\wG(\Pi_{\fs^\vee}, D_h(V)) = \RHom_\wG(V,\Pi_\fs)\]
		for all $V\in\cD^b(\cM(\wG)_\fs)$.
		
		On the other hand, let us recall that the equivalence $\cM(\wG)_\fs\to \rmod\cR_\fs$ was given by
		the functor $\Hom_\wG(\Pi_\fs,-)$.
		The tensor-Hom adjunction gives its left adjoint as $-\otimes_{\cR_\fs} \Pi_\fs$.
		Being an equivalence implies the adjunction is a bi-adjunction, i.e.,  we also have that $\Hom_{\wG}(\Pi_\fs,-)$ is left adjoint to $-\otimes_{\cR_\fs} \Pi_\fs$.
		
		We can now compute the right-top composition and conclude the proof:
		\begin{align*}
			\RHom_{\cR_\fs}(\RHom_\wG(\Pi_\fs,V),\cR_{\fs})  &\simeq \RHom_\wG(V,\cR_{\fs}\otimes_{\cR_\fs} \Pi_\fs)\\
			&\simeq \RHom_\wG(V,\Pi_\fs),
		\end{align*}
		natural isomorphisms of left $\cR_\fs$-modules.
	\end{proof}
	
	\begin{remark}
		Using a similar argument and Morita theory (see for example \cite[4.11 Theorem 2, Corollary 2]{Pareigis-categories}) one shows that 
		under an equivalence of module categories $R\lmod \simeq
                S\lmod$, the $R$-bimodule $R$ is sent to the $S$-bimodule $S$, and as a consequence, the  homological duality for $R$ corresponds to the homological duality for $S$. 
	\end{remark}
	
	As a consequence of this section, in order to study homological duality on $\cM(\wG)$ we can study it on $\rmod \cR_\fs$ for all the cuspidal data $\fs$.
	This will play an important role in \cref{S:dualities on finite length} where we show that homological duality on finite length cuspidals is isomorphic to the contragredient (up to a shift).
	
	\section{The duality theorem of Schneider--Stuhler}\label{S:duality SchSt}
	
	We will use the duality theorem from \cref{S:abstract duality theorem}, applied to the Hecke algebra $\cH(\wG)$,
	to deduce a theorem of Schneider and Stuhler  \cite[Duality theorem]{SchStu}.
	In loc.cit., the result is stated and proved in the subcategory of modules with central character but this restriction was removed in \cite{NoriPras}. 
	Our proof is independent both of \cite{SchStu} and of \cite{NoriPras}, and it, moreover, provides a generalization of the main result of \cite{NoriPras} from irreducible modules to modules of finite length.
	The approach that we take was suggested in \cite[\S 3.4]{BBK}.

	Let $\fs=[\rho,\wL]\in\cB(\wG)$ be a cuspidal datum and put $d=d(\fs)$ the split rank of the center of $\wL$.
	We denote the contragredient cuspidal datum by $\fs^\vee = [\rho^\vee,\wL]$ for which we have $d(\fs^\vee)=d$.
	
	Recall that in \cref{T:homological duality single degree} it was proved that, for $\pi\in\cM(\wG)_\fs^\finl$, we have $H^i(D_h(\pi))\neq0$ only for $i=d$. 
	We put $\bD_h(\pi) = H^d(D_h(\pi))$.
	
	\begin{theorem}\label{T:main duality Schneider-Stuhler}
		Let $\pi\in\cM(\wG)_\fs^\finl$ be a finite length representation in the Bernstein block $\fs$ and let $\pi'\in\cM(\wG)$ be any smooth representation.
		Then the natural pairing
			\[ \Ext^{i}_\wG(\pi, \pi')  \times \Ext^{d-i}_\wG(\pi', \bD_h(\pi)^\vee)  \rightarrow 
			\Ext^{d}_\wG(\pi, D_h(\pi)^\vee)   \to \bC\] 
			provides an isomorphism
			\[ \Ext^{i}_\wG(\pi, \pi')^*\simeq  \Ext^{d-i}_\wG(\pi', \bD_h(\pi)^\vee).\]
			
	 If $\pi$ is, moreover, irreducible, then $\Ext^d_\wG(\pi,\bD_h(\pi)^\vee)\simeq \bC$.	
	\end{theorem}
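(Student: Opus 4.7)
The plan is to combine the abstract derived-level Serre duality of \cref{C:natural pairing RHom is perfect}, applied to the Hecke algebra $A=\cH(\wG)$, with the key vanishing theorem \cref{T:homological duality single degree}, and then extract the claimed statement at the level of $\Ext$ groups by taking cohomology.

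First I would verify that $\pi$ is perfect as a non-degenerate $\cH(\wG)$-module. Since $\pi$ has finite length it is finitely generated, and as $\cM(\wG)$ has finite homological dimension (\cref{SS:finite homological dimension}) together with the noetherianity of \cref{T:M(G) is noetherian}, $\pi$ admits a bounded resolution by finitely generated non-degenerate projective modules. Hence the hypotheses of \cref{C:natural pairing RHom is perfect} are met, and it produces a perfect pairing of complexes
\[ \RHom_{\cH}(\pi,\pi')\otimes^{L}_{\bC}\RHom_{\cH}(\pi',D_h(\pi)^\vee)\longrightarrow \RHom_{\cH}(\pi,D_h(\pi)^\vee)\xrightarrow{\;\can_\pi\;}\bC,\]
equivalently a natural isomorphism $\RHom_{\cH}(\pi,\pi')^{*}\simeq \RHom_{\cH}(\pi',D_h(\pi)^\vee)$.

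Next I would invoke \cref{T:homological duality single degree}: since $\pi\in\cM(\wG)_\fs^{\finl}$, the complex $D_h(\pi)$ is concentrated in cohomological degree $d=d(\fs)$, so that $D_h(\pi)\simeq \bD_h(\pi)[-d]$ and consequently $D_h(\pi)^\vee\simeq \bD_h(\pi)^\vee[-d]$. Substituting this into the isomorphism above and passing to cohomology in degree $i$ yields exactly
\[ \Ext^{i}_{\wG}(\pi,\pi')^{*}\simeq \Ext^{d-i}_{\wG}(\pi',\bD_h(\pi)^\vee),\]
with the underlying pairing being the ordinary Yoneda composition $\Ext^i\times \Ext^{d-i}\to \Ext^d$ followed by the canonical functional $\can_\pi$, now incarnated as a map $\Ext^{d}_{\wG}(\pi,\bD_h(\pi)^\vee)\to\bC$ (note that $\Ext^{d}_{\wG}(\pi,\bD_h(\pi)^\vee)= \Hom_{\cD^b(\cM(\wG))}(\pi,D_h(\pi)^\vee)$).

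Finally, for the irreducible case I would set $\pi'=\pi$ and $i=0$: $\pi$ is admissible by \cref{T:irred is adm}, so Schur's lemma gives $\Hom_{\wG}(\pi,\pi)\simeq\bC$, and the displayed isomorphism immediately gives $\Ext^{d}_{\wG}(\pi,\bD_h(\pi)^\vee)\simeq\bC$. The only real bookkeeping — not so much an obstacle as a point requiring care — is to check that the cohomological dualization produces the indices $i$ and $d-i$ rather than some other shift; this is forced by the fact that $\RHom_{\cH}(\pi,D_h(\pi)^\vee)$ itself is concentrated in degree $d$, so the derived trace map is pinned down by the single functional $\can_\pi$, and the perfect pairing of complexes descends to perfect pairings of individual $\Ext$ groups.
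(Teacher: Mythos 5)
Your proposal is correct and follows essentially the same route as the paper: the first part is the abstract perfect pairing of \cref{C:natural pairing RHom is perfect} applied to $\cH(\wG)$ (with $\pi$ perfect thanks to finite global dimension and noetherianity) combined with the vanishing of \cref{T:homological duality single degree}, and the irreducible case is Schur's lemma fed back into the duality, which is exactly what the paper does via \cref{C:Hom M into DNak(M) is one dim}. One small sign slip: since $(-)^\vee$ is contravariant and applied degree-wise, $D_h(\pi)\simeq\bD_h(\pi)[-d]$ gives $D_h(\pi)^\vee\simeq\bD_h(\pi)^\vee[+d]$, i.e., $D_\Nak(\pi)=\bD_h(\pi)^\vee[d]$ — which is in fact what your final index bookkeeping and your identity $\Ext^d_\wG(\pi,\bD_h(\pi)^\vee)=\Hom_{\cD^b(\cM(\wG))}(\pi,D_h(\pi)^\vee)$ require (also note that $\RHom_\cH(\pi,D_h(\pi)^\vee)$ need not be concentrated in a single degree; the pairing is pinned down by $\can_\pi$ on its degree-zero cohomology regardless).
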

	\begin{proof}
		The first part is just a reformulation of \cref{C:natural pairing RHom is perfect}.
		The second part comes from \cref{C:Hom M into DNak(M) is one dim} by noticing that $D_\Nak(\pi) = \bD_h(\pi)^\vee[d]$.
		
	\end{proof}

	\begin{remark}
		Note that $(\cdot)^\vee\circ \bD_h$ is an endofunctor of the category $\cM(\wG)_\fs^\finl$ of finite length modules.	
		We would like to think that $\bD_h$ commutes with the contragredient on finite length modules but we are not able to prove it.
		Another way to reformulate it is to say that $(-)^\vee\circ \bD_h$ is an involution on $\cM(\wG)_\fs^\finl$.
		\cref{C:D_h on finlen cusp is contrag} confirms this for finite length \emph{cuspidal} representations.
		More generally, $D_h$ should commute with Grothendieck--Serre duality (over the Bernstein center) for all finitely generated $\wG$-modules.\footnote{This was suggested to us by Roman Bezrukavnikov.} 
		This will be investigated in future work.
	\end{remark}

	\section{Dualities on finite length representations}\label{S:dualities on finite length}
	In this section we  prove that the homological duality restricted to finite length cuspidal
        representations is nothing else but contragredient duality up to a shift
        (see \cref{C:D_h on finlen cusp is contrag}). This is due to the fact that the condition (FsG) in Definition \ref{FsG}  is satisfied on a cuspidal block (see \cref{C:Frob sym cond for R_rho}).
	Along the way we will show the well-known fact that Grothendieck--Serre duality over the Bernstein center, restricted to finite length modules, is just the contragredient duality (see \cref{C:D_GS on admissibles is contrag}).
	We make use of the generalities developed in \cref{SS:general algebra}.
		
	Recall the Bernstein decomposition (\cref{T:Bernstein dec for tildeG}): 
	the category $\cM(\wG)$ of smooth representations of $\wG$ 
        decomposes into 
	blocks $\cM(\wG)\simeq \prod_\fs \cM(\wG)_\fs$, 
	and for each block,  we have $\cM(\wG)_\fs \simeq \rmod \cR_\fs$
        (see \cref{P:block equiv to modules over algebra}).
	
	Moreover, we have shown (see \cref{P:contragredient on the R_s side} and \cref{P:homolog duality on the R_s side}) that under the equivalence $\cM(\wG)_\fs \simeq \rmod \cR_\fs$,
        the contragredient on $\cM(\wG)_\fs$ corresponds to taking dual vector spaces on $\rmod\cR_\fs$ and that homological duality on $\cM(\wG)_\fs$ corresponds to homological duality on $\rmod\cR_\fs$.
	
	Recall from  \cref{SS:center of a block} that the center of the block $\cM(\wG)_\fs$ is $\cZ_\fs:=\cZ(\cR_\fs)$ and that $\cR_\fs$ is finite over $\cZ_\fs$.
	From the block decomposition, the (Bernstein) center of the category $\cM(\wG)$ satisfies
	\[ \cZ = \prod_\fs \cZ_\fs.\]
	Moreover, we know that each $\cZ_\fs$ is the ring of invariants of a finite group acting on a Laurent polynomial ring (see \cref{T:center of M(G)_s as invariants}), therefore it is a Cohen-Macaulay ring, and has a dualizing complex $\omega_{\cZ_\fs}^\circ$.
	The dualizing complex for the ring $\cZ$ is then 
	\[\omega_\cZ^\circ = \prod_\fs \omega_{\cZ_\fs}^\circ.\]

	The following is the relative version of \cref{D:D_GS for commutative k-algebras} and the boundedness of the derived categories is justified by the fact that $\omega_\cZ^\circ$ has finite injective dimension (see \cref{R:fin.inj.dim}):
	\begin{definition}
	The Grothendieck--Serre duality relative to the center is defined by
	\[ D_{GS/\cZ}
        =\RHom_\cZ(-,\omega^\circ_\cZ)^\sm\colon \cD^b_{fg}(\cM(\wG))\to \cD^b_{fg}(\cM(\wG))^\op \]
	where the superscript ``sm''  signifies taking smooth vectors.
	\end{definition}
	
	\begin{corollary} \label{C:D_GS on admissibles is contrag}
		The functor $D_{GS/\cZ}$ restricted to admissible modules is isomorphic to the contragredient functor.
		In particular, this holds for all finite length $\wG$-modules.
	\end{corollary}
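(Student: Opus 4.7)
The plan is to use the Bernstein decomposition to reduce to a single block $\fs$, and then compute $D_{GS/\cZ}(\pi)^K$ for every compact open $K\le\wG$ and match it with $(\pi^\vee)^K$. Under $\cM(\wG) = \prod_\fs \cM(\wG)_\fs$ both the center and the dualizing complex factor as $\cZ = \prod_\fs \cZ_\fs$ and $\omega_\cZ^\circ = \prod_\fs \omega_{\cZ_\fs}^\circ$, so the functors $D_{GS/\cZ}$ and $(-)^\vee$ respect the decomposition and one may work one block at a time. By \cref{T:center of M(G)_s as invariants} the algebra $\cZ_\fs$ is the ring of invariants of a finite group acting on a Laurent polynomial ring, hence Cohen--Macaulay, so $\omega_{\cZ_\fs}^\circ$ sits in a single cohomological degree.

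The central observation is that if $\pi\in\cM(\wG)_\fs$ is admissible, then for every compact open $K\le\wG$ the space $\pi^K$ is finite dimensional over $\bC$, so the action of $\cZ_\fs$ on $\pi^K$ factors through a quotient of finite codimension; in particular $\pi^K$ has finite length as a $\cZ_\fs$-module, and \cref{L:D_GS on finl for commutative A} gives
\[ \RHom_{\cZ_\fs}(\pi^K,\omega_{\cZ_\fs}^\circ) \simeq (\pi^K)^* \]
concentrated in degree zero. To upgrade this to a computation of $D_{GS/\cZ}(\pi)^K$, I would pick an injective resolution $I^\bullet$ of $\omega_{\cZ_\fs}^\circ$ (on which $K$ acts trivially since it is a complex of $\cZ_\fs$-modules) and use the averaging isomorphism $\pi^K \simeq \pi_K$ for compact $K$ together with the adjunction $\Hom_{\cZ_\fs}(-,I^\bullet)^K = \Hom_{\cZ_\fs}((-)_K,I^\bullet)$, to obtain $\RHom_{\cZ_\fs}(\pi,\omega_{\cZ_\fs}^\circ)^K \simeq \RHom_{\cZ_\fs}(\pi^K,\omega_{\cZ_\fs}^\circ)$. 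Since any $K$-invariant vector is automatically smooth, this yields $D_{GS/\cZ}(\pi)^K \simeq (\pi^K)^* = (\pi^\vee)^K$ for every $K$.

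To finish I would verify that these identifications are compatible with the inclusions $V^K\hookrightarrow V^{K'}$ for $K'\subset K$: both on the contragredient side and on the Grothendieck--Serre side the relevant inclusion is induced by the averaging projection $e_K\colon \pi^{K'}\twoheadrightarrow\pi^K$ (on the latter side, by precomposition at the level of $\RHom_{\cZ_\fs}$), so the two systems match. A smooth $\wG$-module is the colimit of its $K$-invariants, so this compatible system glues to a $\wG$-equivariant isomorphism $D_{GS/\cZ}(\pi)\simeq \pi^\vee$, natural in $\pi$. The only delicate point, which I expect to be the main (though mild) obstacle, is verifying cleanly that $(-)^K$ commutes with $\RHom_{\cZ_\fs}(-,\omega_{\cZ_\fs}^\circ)$ at the derived level --- this is handled by the injective resolution of $\omega_{\cZ_\fs}^\circ$ together with the exactness of $(-)^K$ on smooth representations.
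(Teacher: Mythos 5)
Your proposal is correct and takes essentially the same route as the paper: both reduce to the $\wK$-fixed vectors, which are finite dimensional and hence of finite length over the center, and then invoke the finite-length case of Grothendieck--Serre duality (\cref{L:D_GS on finl for commutative A} / \cref{C:D_GS/A on finite length R-mod}). The only difference is organizational --- the paper writes $V=\bigcup_\wK V^\wK$ and computes $\RHom_\cZ(V,\omega_\cZ^\circ)=\varprojlim_\wK (V^\wK)^*=V^*$ before taking smooth vectors, whereas you compute $D_{GS/\cZ}(\pi)^\wK$ directly via an injective resolution and the identification $\pi^\wK\simeq\pi_\wK$, which neatly sidesteps the commutation of $\RHom$ with the filtered colimit.
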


\begin{proof}
		Let $V$ be an admissible module for $\wG$.
		For any open compact subgroup $\wK \subset \wG$,  $V^\wK$ is a finite dimensional module over the $\wK$-biinvariant Hecke algebra $\cH(\wG\git \wK)$  and it is also a module over the center $\cZ$.
		
		We have the following natural isomorphisms:
		\begin{align*}
			\RHom_\cZ(V,\omega_Z^\circ) & = \RHom_\cZ(\bigcup_\wK V^\wK, \omega_Z^\circ)&\\
			& = \varprojlim_\wK \RHom_\cZ(V^\wK,\omega_Z^\circ) &\\
			& = \varprojlim_\wK (V^\wK)^*& \text{ by \cref{C:D_GS/A on finite length R-mod}} \\
			& = V^*,&
		\end{align*}

and we conclude by taking smooth vectors.
	\end{proof}

	Grothendieck--Serre duality relative to the center behaves well under equivalences of categories.
	We record the following for completeness as an analogue of \cref{P:contragredient on the R_s side}	and \cref{P:homolog duality on the R_s side}.
\begin{proposition}\label{P:D_GS under equiv to R_s-mod}		
		Let $\fs=[\wL,\rho]\in\cB(\wG)$ be a cuspidal datum.
		Then, under the equivalence $\cM(\wG)_\fs\simeq \rmod \cR_\fs$, we have the following commutative square of functors
			\[ 	\begin{tikzcd}
			\cD^b(\cM(\wG)_{\fs}) \ar[r,"\sim"] \ar[d,"D_{GS}"'] & \cD^b(\rmod \cR_{\fs}) \arrow[d,"D_{GS}"]\\
			\cD^b(\cM(\wG)_{\fs^\vee})^\op \ar[r,"\sim"] & \cD^b(\rmod \cR_{\fs^\vee})^\op.
		\end{tikzcd}\]
\end{proposition}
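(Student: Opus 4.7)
The plan is to reduce the statement to a single Bernstein block and then exploit the $\cZ_\fs$-linearity of the Morita equivalence $F = \Hom_\wG(\Pi_\fs, -) \colon \cM(\wG)_\fs \to \rmod \cR_\fs$. Since the Bernstein center decomposes as $\cZ = \prod_\fs \cZ_\fs$ (with $\cZ_\fs = Z(\cR_\fs)$ by \cref{T:center of M(G)_s as invariants}) and the normalized dualizing complex splits accordingly as $\omega_\cZ^\circ = \prod_\fs \omega_{\cZ_\fs}^\circ$, for $\pi \in \cM(\wG)_\fs$ we have $D_{GS/\cZ}(\pi) = \RHom_{\cZ_\fs}(\pi, \omega_{\cZ_\fs}^\circ)^\sm$. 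Thus I need to produce a natural isomorphism
\[ F(\RHom_{\cZ_\fs}(\pi, \omega_{\cZ_\fs}^\circ)^\sm) \simeq \RHom_{\cZ_\fs}(F(\pi), \omega_{\cZ_\fs}^\circ) \]
in $\cD^b(\rmod\cR_\fs)$. An initial observation removes the smooth-vectors correction: since $\Pi_\fs$ is smooth and projective in $\cM(\wG)_\fs$, the identity $\RHom_\wG(\Pi_\fs, V) = \RHom_\wG(\Pi_\fs, V^\sm)$ holds for any $\wG$-module complex $V$, because any $\wG$-equivariant map out of a smooth module automatically lands in the smooth part of its target.

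Next, I would invoke the Morita counit isomorphism $\Pi_\fs \otimes_{\cR_\fs}^L F(\pi) \xrightarrow{\sim} \pi$, which follows from $F$ being an equivalence with left adjoint $G = - \otimes_{\cR_\fs} \Pi_\fs$ (as used in the proof of \cref{P:homolog duality on the R_s side}). Substituting into the inner slot of $\RHom_{\cZ_\fs}(-, N)$ and applying the tensor-hom adjunction
\[ \RHom_{\cZ_\fs}(\Pi_\fs \otimes^L_{\cR_\fs} M, N) \simeq \RHom_{\cR_\fs}(M, \RHom_{\cZ_\fs}(\Pi_\fs, N)), \]
valid because $\Pi_\fs$ is an $(\cH(\wG), \cR_\fs)$-bimodule with $\cZ_\fs$ acting through the center of $\cR_\fs$, yields
\[ \RHom_{\cZ_\fs}(\pi, N) \simeq \RHom_{\cR_\fs}(F(\pi), \RHom_{\cZ_\fs}(\Pi_\fs, N)). \]
Applying $F = \RHom_\wG(\Pi_\fs, -)$ to both sides and commuting it past $\RHom_{\cR_\fs}(F(\pi), -)$ --- permissible since the $\wG$- and $\cR_\fs$-actions on $\Pi_\fs$ commute by $\cR_\fs = \End_\wG(\Pi_\fs)$ --- one arrives at
\[ F(\RHom_{\cZ_\fs}(\pi, N)) \simeq \RHom_{\cR_\fs}(F(\pi), F(\RHom_{\cZ_\fs}(\Pi_\fs, N))). \]

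To conclude, one identifies $F(\RHom_{\cZ_\fs}(\Pi_\fs, N)) \simeq \RHom_{\cZ_\fs}(\cR_\fs, N)$ as $\cR_\fs$-bimodules; then the standard adjunction $\RHom_{\cR_\fs}(-, \RHom_{\cZ_\fs}(\cR_\fs, N)) \simeq \RHom_{\cZ_\fs}(-, N)$ on $\rmod\cR_\fs$ (which uses the finiteness of $\cR_\fs$ over $\cZ_\fs$, cf. \cref{T:center of M(G)_s as invariants}) closes the argument upon taking $N = \omega_{\cZ_\fs}^\circ$. The main obstacle will be making this last bimodule identification rigorous: it requires carefully tracking the interplay of the left $\wG$-action, the right $\cR_\fs$-action coming from $\cR_\fs = \End_\wG(\Pi_\fs)$, and the $\cZ_\fs$-module structure, and then verifying that the natural candidate map --- constructed from the tautological pairing corresponding to $F(\Pi_\fs) = \cR_\fs$ --- is actually a quasi-isomorphism of $\cR_\fs$-bimodule complexes. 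This is the genuinely non-formal input; once it is in place, the composition of adjunctions and isomorphisms above provides the desired commutative square $F \circ D_{GS/\cZ} \simeq D_{GS} \circ F$.
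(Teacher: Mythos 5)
Your formal skeleton (Morita counit $\Pi_\fs\otimes^L_{\cR_\fs}F(\pi)\xrightarrow{\sim}\pi$ followed by tensor--hom adjunction over $\cZ_\fs$) is a legitimate alternative to the paper's route, but two things need repair. First, a bookkeeping error that is not cosmetic: $D_{GS/\cZ}$ sends the block $\fs$ to the block $\fs^\vee$ (it is the contragredient on admissibles, \cref{C:D_GS on admissibles is contrag}), so the equivalence to be applied to $D_{GS/\cZ}(\pi)$ is $\Hom_\wG(\Pi_{\fs^\vee},-)$, not your $F=\Hom_\wG(\Pi_\fs,-)$. As written, $F\bigl(\RHom_{\cZ_\fs}(\pi,\omega_{\cZ_\fs}^\circ)^\sm\bigr)$ vanishes whenever $\fs\neq\fs^\vee$ because distinct blocks are orthogonal; likewise $\RHom_{\cZ_\fs}(F(\pi),\omega_{\cZ_\fs}^\circ)$ is naturally a \emph{left} $\cR_\fs$-module, i.e.\ an object of $\rmod\cR_{\fs^\vee}$ via \cref{C:R_s anti-iso to R_svee}, not of $\rmod\cR_\fs$. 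This is precisely why the square in the statement has $\fs^\vee$ in its bottom row.

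Second, and more importantly, the step you yourself flag as ``the genuinely non-formal input'' --- the bimodule identification $\Hom_\wG(\Pi_{\fs^\vee},\RHom_{\cZ_\fs}(\Pi_\fs,N))\simeq\RHom_{\cZ_\fs}(\cR_\fs,N)$ in its corrected form --- is left unproven, and it is exactly the special case $V=\Pi_\fs$ of the identity the proposition asserts; declaring it an obstacle does not discharge it. The paper supplies this content in one stroke: since $\Pi_\fs$ is a finitely generated projective (hence perfect) $\cH(\wG)$-module, \cref{L:Hom(M N)=Hom(M A)otimesN} gives $\Hom_\wG(\Pi_\fs,V)\simeq\Hom_\wG(\Pi_\fs,\cH(\wG))\otimes_{\cH(\wG)}V$, and \cref{P:homological dual of progren Pi_s} identifies $\Hom_\wG(\Pi_\fs,\cH(\wG))\simeq\Pi_{\fs^\vee}$; a single tensor--hom adjunction then yields
\[ \RHom_{\cZ_\fs}\bigl(\Hom_\wG(\Pi_\fs,V),\omega_{\cZ_\fs}^\circ\bigr)\simeq\Hom_\wG\bigl(\Pi_{\fs^\vee},\RHom_{\cZ_\fs}(V,\omega_{\cZ_\fs}^\circ)\bigr), \]
which is the commutativity of the square. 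You should either import $D_h(\Pi_\fs)\simeq\Pi_{\fs^\vee}$ at the point where you need your bimodule identification, or switch to this shorter argument outright; without one of these inputs your chain of adjunctions does not close.
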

\begin{proof}
	Under an equivalence of categories, the centers are isomorphic. 
	In this proof we put $Z:=\cZ_\fs$.
	So the two $D_{GS}$ functors in the diagram are both $\RHom_Z(-,\omega_{Z}^\circ)$.
	
	\cref{C:D_GS on admissibles is contrag} ensures that the target of the left vertical arrow is correct.
	Similarly, use \cref{C:D_GS/A on finite length R-mod} together with \cref{C:R_s anti-iso to R_svee} for the right vertical arrow.
	
	We have to show that, for $V\in\cM(\wG)$, there are natural isomorphisms 
	\begin{align}\label{Eq:D_GS commutes with equiv}
		\RHom_Z(\Hom_\wG(\Pi_\fs,V),\omega_Z^\circ) \simeq \Hom_\wG (\Pi_{\fs^\vee},\RHom_Z(V,\omega_Z^\circ)).
	\end{align}
	Notice that $\Pi_\fs$ is projective, so we don't need to derive the Hom space from it.
	
	Using \cref{L:Hom(M N)=Hom(M A)otimesN} and \cref{P:homological dual of progren Pi_s} we have $\Hom_\wG(\Pi_\fs,V)\simeq \Pi_{\fs^\vee}\otimes_{\cH(\wG)} V$.
	We can rewrite the left hand-side of \eqref{Eq:D_GS commutes with equiv} and use tensor-hom adjunction to conclude.
\end{proof}

	Let $\rho\in\cM(\wG)$ be an irreducible cuspidal representation and put as usual $d=\rank_\bZ(\wG/\wGo)$.
	It has already been proved that homological duality restricts to a contravariant functor
	\[ \bD_h:=[d]\circ D_h\colon \cM(\wG)_{[\rho]}^\finl \to (\cM(\wG)_{[\rho^\vee]}^\finl)^\op,\]
	see \cref{C:D_h sends block to its contragredient} and \cref{T:homological duality single degree}.
	We can see that it is not new:
	\begin{corollary}\label{C:D_h on finlen cusp is contrag}
		For $\rho\in\cM(\wG)$ an irreducible cuspidal representation of $\wG$, and  $d=\rank_\bZ(\wG/\wGo)$,
the above functor $\bD_h$ is isomorphic to the contragredient.
	\end{corollary}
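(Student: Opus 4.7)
The strategy is to transfer the statement to the module category $\rmod \cR_{[\rho]}$, where everything becomes a consequence of the structural result that $\cR_{[\rho]}$ is Azumaya over a Laurent polynomial algebra. More precisely, I would argue as follows.

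First, I would invoke the equivalence of categories $\cM(\wG)_{[\rho]}\simeq \rmod\cR_{[\rho]}$ established in \cref{P:block equiv to modules over algebra}, together with the two compatibility squares proved in \cref{P:contragredient on the R_s side} and \cref{P:homolog duality on the R_s side}. The first tells us that, under this equivalence, the contragredient on $\cM(\wG)_{[\rho]}$ is identified with the $\bC$-linear dual $(-)^*$ on $\rmod\cR_{[\rho]}$ (using the anti-isomorphism $\cR_{[\rho]}\simeq \cR_{[\rho^\vee]}$ of \cref{C:R_s anti-iso to R_svee}); the second tells us that $D_h$ on $\cD^b(\cM(\wG)_{[\rho]})$ corresponds to $\RHom_{\cR_{[\rho]}}(-,\cR_{[\rho]})$ on $\cD^b(\rmod\cR_{[\rho]})$. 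Thus it suffices to prove that $[d]\circ \RHom_{\cR_{[\rho]}}(-,\cR_{[\rho]})$ agrees with $(-)^*$ on finite length right $\cR_{[\rho]}$-modules.

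Next, I would apply the machinery of \cref{SS:general algebra} with $R=\cR_{[\rho]}$ and $A = \cZ_{[\rho]}$. By \cref{P:R_rho is Azumaya and Z Laur pol}, $\cR_{[\rho]}$ is Azumaya over its center, which is a Laurent polynomial algebra $\cO(\cX(\wL)/\cG_\rho)$ of Krull dimension $d=d(\rho)$. In particular $\cR_{[\rho]}$ is finite over $\cZ_{[\rho]}$, and \cref{C:Frob sym cond for R_rho} (an immediate consequence of \cref{L:Azumaya are selfdual}) shows that $\cR_{[\rho]}$ satisfies condition (FsG) of \cref{FsG}. Since the normalized dualizing complex of a Laurent polynomial algebra in $d$ variables is $\cZ_{[\rho]}[d]$, the hypothesis of \cref{P:D_h=D_GS = (-)^* for finite length R-mod under cond (FsG)} applies directly. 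That proposition yields the chain of isomorphisms
\[
[d]\circ \RHom_{\cR_{[\rho]}}(-,\cR_{[\rho]}) \ \simeq\  D_{GS/\cZ_{[\rho]}} \ \simeq\  (-)^*
\]
as functors on $\rmod{}^\finl \cR_{[\rho]}$, the second isomorphism being \cref{C:D_GS/A on finite length R-mod}.

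Combining the two steps, $[d]\circ D_h$ on $\cM(\wG)_{[\rho]}^\finl$ is naturally isomorphic to the contragredient functor landing in $(\cM(\wG)_{[\rho^\vee]}^\finl)^\op$, which is precisely the statement of the corollary. No step here is delicate: the only nontrivial ingredients are the Azumaya structure of $\cR_{[\rho]}$ (already in hand from \cref{P:R_rho presentation,P:R_rho is Azumaya and Z Laur pol}) and the general formalism of \cref{SS:general algebra}, so the proof is essentially a matter of assembling results proved earlier in the paper. The only point that deserves a line of verification is that the identifications of contragredient and homological duality with their $\cR_{[\rho]}$-counterparts are natural in the module, so that the isomorphism $\bD_h\simeq (-)^\vee$ is an isomorphism of functors rather than merely a pointwise equivalence; this is guaranteed by the naturality built into \cref{P:contragredient on the R_s side,P:homolog duality on the R_s side}.
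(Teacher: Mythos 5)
Your proposal is correct and follows essentially the same route as the paper's own proof: reduce to $\rmod\cR_{[\rho]}$ via \cref{P:block equiv to modules over algebra}, \cref{P:contragredient on the R_s side} and \cref{P:homolog duality on the R_s side}, then invoke the Azumaya/(FsG) property of $\cR_{[\rho]}$ and the general formalism of \cref{SS:general algebra}. The only (immaterial) difference is that you cite \cref{P:D_h=D_GS = (-)^* for finite length R-mod under cond (FsG)} directly where the paper cites its specialization \cref{C:D_h=D_h/A=D_GS=(-)^* on finite length if (FsG) satisfied}.
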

	\begin{proof}
		The block $\cM(\wG)_{[\rho]}$ is equivalent to the category $\rmod\cR_{[\rho]}$ and this equivalence commutes with homological duality (see \cref{P:homolog duality on the R_s side}) as well as with contragredient (see \cref{P:contragredient on the R_s side}).
		Moreover, it sends finite length $\wG$-modules to finite length $\cR_{[\rho]}$-modules.
		In order to prove the claim it is therefore enough to do it for $\rmod\cR_{[\rho]}$.
		
		The center of $\cR_{[\rho]}$ is a Laurent polynomial algebra of dimension $d$ (see \cref{P:R_rho is Azumaya and Z Laur pol}) and moreover $\cR_{[\rho]}$ satisfies the condition \eqref{Cond: Hom(R,A)=R as bimodules} (a particular case of
                the condition (FsG) in Definition \ref{FsG}) by \cref{C:Frob sym cond for R_rho}.
		Applying \cref{C:D_h=D_h/A=D_GS=(-)^* on finite length if (FsG) satisfied} we conclude.
	\end{proof}
	The same argument gives the following corollary.
	\begin{corollary}\label{C:D_h=D_GS on all cuspidal block} Let $\rho\in\cM(\wG)$ be an irreducible cuspidal representation and $d=\rank_\bZ(\wG/\wGo)$.
		Then on the cuspidal block $\cD^b_{fg}(\cM(\wG)_{[\rho]})$, we have a natural isomorphism of functors
		\[ [d]\circ D_h\simeq D_{GS/\cZ}.\]
	\end{corollary}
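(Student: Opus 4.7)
The plan is to transport the statement from $\cM(\wG)_{[\rho]}$ to the module category $\rmod\cR_{[\rho]}$, where everything reduces to the abstract criterion \cref{C:D_h=D_GS if and only if FsG} for the isomorphism $[d]\circ D_h\simeq D_{GS/A}$ to hold as functors on the whole derived category.

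First, I would invoke \cref{P:block equiv to modules over algebra} to pass from $\cD^b_{fg}(\cM(\wG)_{[\rho]})$ to $\cD^b_{fg}(\rmod \cR_{[\rho]})$. Under this equivalence, \cref{P:homolog duality on the R_s side} tells us that the homological duality $D_h=\RHom_\wG(-,\cH(\wG))$ corresponds to $\RHom_{\cR_{[\rho]}}(-,\cR_{[\rho]})$, and \cref{P:D_GS under equiv to R_s-mod} tells us that $D_{GS/\cZ}$ corresponds to $\RHom_{\cZ_{[\rho]}}(-,\omega_{\cZ_{[\rho]}}^\circ)$. Thus it suffices to establish the natural isomorphism of functors
\[ [d]\circ \RHom_{\cR_{[\rho]}}(-,\cR_{[\rho]}) \simeq \RHom_{\cZ_{[\rho]}}(-,\omega_{\cZ_{[\rho]}}^\circ) \]
on $\cD^b_{fg}(\rmod\cR_{[\rho]})$.

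Next, I would apply \cref{C:D_h=D_GS if and only if FsG} with $R=\cR_{[\rho]}$ and $A=\cZ_{[\rho]}$: it gives the desired isomorphism of functors if and only if the Frobenius--symmetric--Gorenstein condition (FsG) holds, namely $D_{GS/\cZ_{[\rho]}}(\cR_{[\rho]})\simeq \cR_{[\rho]}[d]$ as $\cR_{[\rho]}$-bimodules. By \cref{P:R_rho is Azumaya and Z Laur pol}, $\cZ_{[\rho]}$ is a Laurent polynomial algebra in $d$ variables, so its normalized dualizing complex is $\omega_{\cZ_{[\rho]}}^\circ = \cZ_{[\rho]}[d]$ (see \cref{D:dualizing complex for A^n}). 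Hence
\[ D_{GS/\cZ_{[\rho]}}(\cR_{[\rho]}) = \RHom_{\cZ_{[\rho]}}(\cR_{[\rho]},\cZ_{[\rho]})[d] = \Hom_{\cZ_{[\rho]}}(\cR_{[\rho]},\cZ_{[\rho]})[d], \]
where the last equality uses that $\cR_{[\rho]}$ is projective over $\cZ_{[\rho]}$ (being Azumaya). Now \cref{C:Frob sym cond for R_rho} provides an $\cR_{[\rho]}$-bimodule isomorphism $\Hom_{\cZ_{[\rho]}}(\cR_{[\rho]},\cZ_{[\rho]})\simeq \cR_{[\rho]}$, so condition (FsG) holds.

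The only subtlety to watch is the bimodule structure in (FsG): the isomorphism in \cref{C:Frob sym cond for R_rho} must respect \emph{both} $\cR_{[\rho]}$-actions, which is precisely what the trace-pairing argument of \cref{L:Azumaya are selfdual} delivers. With condition (FsG) verified as an $\cR_{[\rho]}$-bimodule isomorphism, the hypothesis of \cref{C:D_h=D_GS if and only if FsG} is satisfied, and we obtain the claimed natural isomorphism of functors on all of $\cD^b_{fg}(\rmod\cR_{[\rho]})$, not merely on finite length modules. Transporting back through the block equivalence completes the proof.
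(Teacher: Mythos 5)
Your proof is correct and follows essentially the same route as the paper: the paper deduces this corollary by "the same argument" as \cref{C:D_h on finlen cusp is contrag}, namely transporting both dualities to $\rmod\cR_{[\rho]}$ via the block equivalence and then verifying condition (FsG) through \cref{P:R_rho is Azumaya and Z Laur pol} and \cref{C:Frob sym cond for R_rho} so that \cref{C:D_h=D_GS if and only if FsG} applies. Your explicit attention to the bimodule structure of the trace-pairing isomorphism is exactly the point that \cref{L:Azumaya are selfdual} is there to secure.
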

	
	\begin{remark} 
		If $G$ is a torus  then all smooth representations of a finite central extension $\wG$ are cuspidal and therefore the above corollaries apply  to \emph{all} finite length smooth $\wG$-modules.
		Moreover, in this situation the Grothendieck--Serre duality and homological duality agree (up to shift) on all smooth modules.
	\end{remark}
	
	More generally, the following gives a necessary and sufficient condition for homological duality to be isomorphic to Grothendieck--Serre duality:	
	\begin{corollary}\label{C:D_h=D_GS if and only if}
		On a block $\cD^b(\cM(\wG)_\fs)$, we have that $[d(\fs)]\circ D_h\simeq  D_{GS/\cZ}$ if and only if $\cR_\fs$ satisfies condition (FsG) in  Definition \ref{FsG}.
	\end{corollary}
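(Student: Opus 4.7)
The plan is to transport the question along the equivalence $\cM(\wG)_\fs \simeq \rmod \cR_\fs$ from \cref{P:block equiv to modules over algebra} and then invoke the abstract result \cref{C:D_h=D_GS if and only if FsG} applied to the pair $(R,A) = (\cR_\fs,\cZ_\fs)$, where $\cZ_\fs = \cZ(\cR_\fs)$ is the center of the block (see \cref{T:center of M(G)_s as invariants}) and $\cR_\fs$ is finite over $\cZ_\fs$. The only content to verify is that, after passing through the equivalence of blocks, the two functors on $\cM(\wG)_\fs$ are matched with their algebraic counterparts on $\cR_\fs$.

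First, I would recall that under the equivalence $\cM(\wG)_\fs \simeq \rmod \cR_\fs$ the homological duality $D_h = \RHom_{\cH(\wG)}(-,\cH(\wG))$ corresponds to $\RHom_{\cR_\fs}(-,\cR_\fs)$; this is exactly the content of \cref{P:homolog duality on the R_s side}. Similarly, by \cref{P:D_GS under equiv to R_s-mod}, the Grothendieck--Serre duality $D_{GS/\cZ}$ on $\cM(\wG)_\fs$ corresponds to $\RHom_{\cZ_\fs}(-,\omega_{\cZ_\fs}^\circ)$ on $\rmod \cR_\fs$, since the Bernstein center acts through the block factor $\cZ_\fs$ and the normalized dualizing complex of $\cZ$ decomposes as $\omega_\cZ^\circ = \prod_\fs \omega_{\cZ_\fs}^\circ$. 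Therefore the question of whether $[d(\fs)]\circ D_h \simeq D_{GS/\cZ}$ on $\cD^b(\cM(\wG)_\fs)$ is equivalent to the question of whether $[d(\fs)]\circ \RHom_{\cR_\fs}(-,\cR_\fs) \simeq \RHom_{\cZ_\fs}(-,\omega_{\cZ_\fs}^\circ)$ on $\cD^b(\rmod \cR_\fs)$.

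Next, I would apply \cref{C:D_h=D_GS if and only if FsG} verbatim to the algebra $\cR_\fs$ equipped with the inclusion $\cZ_\fs \hookrightarrow Z(\cR_\fs)$: it asserts precisely that $[d]\circ D_h \simeq D_{GS/\cZ_\fs}$ as functors on all $\cR_\fs$-modules if and only if $D_{GS/\cZ_\fs}(\cR_\fs) \simeq \cR_\fs[d]$ as $\cR_\fs$-bimodules, which is Condition (FsG) from Definition \ref{FsG} for $R = \cR_\fs$, $A = \cZ_\fs$, and $d = d(\fs)$. Combining the two displayed natural isomorphisms of functors yields the claim in both directions.

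The only step requiring any care is the ``if and only if'' statement of \cref{C:D_h=D_GS if and only if FsG}, which ultimately rests on Yoneda's lemma applied to the identification $D_{GS/\cZ_\fs}(-) = \RHom_{\cR_\fs}(-,\RHom_{\cZ_\fs}(\cR_\fs,\omega_{\cZ_\fs}^\circ))$; so no new obstacle arises here. I expect no serious obstacle in this proof—the heavy lifting has already been done in establishing that the equivalence of categories is compatible with both duality functors (\cref{P:homolog duality on the R_s side}, \cref{P:D_GS under equiv to R_s-mod}) and in proving the abstract algebraic criterion (\cref{C:D_h=D_GS if and only if FsG}); the corollary is simply the combination of these ingredients.
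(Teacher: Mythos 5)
Your proposal is correct and matches the paper's argument exactly: the paper's proof reads ``Given \cref{P:D_GS under equiv to R_s-mod}, this is simply a restatement of \cref{C:D_h=D_GS if and only if FsG},'' which is precisely the transport-along-the-equivalence-then-apply-the-abstract-criterion strategy you describe. Your additional citation of \cref{P:homolog duality on the R_s side} for matching the homological duality across the equivalence is implicit in the paper and only makes the argument more complete.
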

	\begin{proof}
	Given \cref{P:D_GS under equiv to R_s-mod}, this is simply a restatement of \cref{C:D_h=D_GS if and only if FsG}.
	\end{proof}
	
	\begin{example}
	Let $\fs=[\wL,\rho]$ be a cuspidal datum such that  the stabilizer of the inertia class $[\rho]$ in the relative Weyl group is trivial, i.e., there is no non-trivial $w\in W(\wL,\wG)$ such 	that ${}^w\rho \simeq \rho\chi$ for some unramified character $\chi$ of $L$.
	Then by a result of Roche \cite[Theorem 3.1]{Roche-parab}, parabolic induction induces an equivalence of blocks $\cM(L)_{[\rho]}\to\cM(G)_{\fs}$ and therefore a Morita equivalence of  $\cR_{[\rho]}$ and $\cR_{\fs}$.
	As $\cR_{[\rho]}$ is Azumaya by \cref{P:R_rho is Azumaya and Z Laur pol}, the same is true of $\cR_\fs$.
	By the general property of Azumaya algebras recalled in \cref{L:Azumaya are selfdual}, we find that the algebra $\cR_\fs$ satisfies condition  
	(FsG).
\end{example}

	\begin{example} \label{12.9} By \cref{C:D_GS on admissibles is contrag},
        the Grothendieck--Serre duality on irreducible representations of $\wG$ 
	is simply the contragredient. On the other hand, as it is well known, the homological duality 
	(same as Aubert-Zelevinsky duality for irreducible representations) interchanges the trivial representation and the Steinberg representation
	of $\SL_2(F)$. Therefore the Corollary above is not true for all representations of $\SL_2(F)$ which means that
the        condition (FsG) in Definition \ref{FsG} does not hold for the Iwahori block for $\SL_2(F)$.
	\end{example}
	
	Let us also note the particular case of a free abelian group $\Gamma\simeq \bZ^d$. (The
        abelian category of representations of $\Gamma$ is the Iwahori block of
        a split torus over any $p$-adic field, so in considering this example, we
        are not going outside the realm of $p$-adic groups!)
	The category of representations of $\Gamma$
        is the same as  the category of representations of the group algebra $\bC[\Gamma]\simeq \bC[X_1^\pm,\dots,X_d^\pm]$.
	From \cref{C:D_h=D_h/A=D_GS=(-)^* on finite length if (FsG) satisfied} we get:
	\begin{corollary}
		The Grothendieck--Serre duality, (shifted by $d$) homological duality and contragredient duality all agree on finite length $\Gamma$-modules:
		\[ D_{GS} = [d]\circ D_h = (-)^* \text{ as functors on } \cM(\Gamma)^\finl.\]
                Equivalently, for a finite length module $M$ over $A= \bC[X_1^\pm,\dots,X_d^\pm]$,
                \[ \Ext_A^d(M, A) \cong \Hom_\bC(M,\bC),\]
an isomorphism of $A$-modules,                an assertion which can also be made for  a finite length module $M$ over $A= \bC[X_1,\dots,X_d]$.
	\end{corollary}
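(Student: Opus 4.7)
The plan is to derive the corollary as a direct specialization of \cref{P:D_h=D_GS = (-)^* for finite length R-mod under cond (FsG)} to the degenerate case where the (possibly non-commutative) algebra $R$ coincides with its commutative subalgebra $A$.

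First I would identify the dualizing complex. For both $A = \bC[X_1,\dots,X_d]$ and $A = \bC[X_1^\pm,\dots,X_d^\pm]$, the ring $A$ is the coordinate ring of a smooth affine $\bC$-variety of dimension $d$, hence $\omega_A = \Lambda^d \Omega^1_A$ is free of rank one over $A$, and the normalized dualizing complex is $\omega_A^\circ = A[d]$. In the polynomial case this is exactly \cref{D:dualizing complex for A^n}; in the Laurent case the same formula holds, since $\tfrac{dX_1}{X_1}\wedge\cdots\wedge\tfrac{dX_d}{X_d}$ is a free generator of $\omega_A$, so again $\omega_A^\circ = A[d]$.

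Next I would verify condition (FsG) of Definition \ref{FsG} in the degenerate case $R = A$: the requirement $D_{GS/A}(R)\simeq R[d]$ as $R$-bimodules becomes $\RHom_A(A,\omega_A^\circ) \simeq A[d]$, which holds tautologically since $\RHom_A(A,-)$ is the identity functor and $\omega_A^\circ = A[d]$ by the previous step. With this, \cref{P:D_h=D_GS = (-)^* for finite length R-mod under cond (FsG)} applies and yields, on the subcategory of finite length $A$-modules, natural isomorphisms of functors
\[ [d]\circ D_h \;\simeq\; D_{GS/A} \;\simeq\; (-)^*.\]
Since $R = A$, the relative Grothendieck--Serre duality $D_{GS/A}$ coincides with $D_{GS}$, giving the stated equality $D_{GS} = [d]\circ D_h = (-)^*$ on $\cM(\Gamma)^\finl \simeq A\lmod^\finl$. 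The $\Ext$-formulation then comes from passing to $0$th cohomology: $\RHom_A(M,A)[d]$ is concentrated in degree $0$ for any finite length $M$, so $\Ext_A^i(M,A) = 0$ for $i\neq d$, and $\Ext_A^d(M,A) \simeq \Hom_\bC(M,\bC)$ as $A$-modules.

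No real obstacle is expected: the argument is a pure bookkeeping specialization of the framework set up in \cref{SS:general algebra}. The only moment requiring care is the identification $\omega_A^\circ = A[d]$ for the Laurent polynomial ring, but one can either verify it directly from the smoothness of $\bG_m^d$, or appeal to \cref{D:dualizing from finite maps} using the fact that $\bC[X_1^\pm,\dots,X_d^\pm]$ is a localization of the polynomial ring (combined with the standard compatibility of normalized dualizing complexes with localization in the smooth case).
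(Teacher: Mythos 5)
Your proposal is correct and follows essentially the same route as the paper, which deduces the corollary by citing \cref{C:D_h=D_h/A=D_GS=(-)^* on finite length if (FsG) satisfied} — itself the specialization of \cref{P:D_h=D_GS = (-)^* for finite length R-mod under cond (FsG)} to a Laurent-polynomial base, where condition (FsG) for $R=A$ is tautological. Your extra care about $\omega_A^\circ = A[d]$ for the Laurent ring (via smoothness or localization) only makes explicit something the paper asserts without comment.
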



	\vspace{.2cm}
		
	\noindent
	DF: IRMA Institut de Mathematique
	7 rue Ren\'e-Descartes, 67000 Strasbourg
	
	\noindent
	Email: {\tt fratila@math.unistra.fr}.
	\vspace{.2cm}
	
	\noindent DP:  Indian Institute of Technology Bombay, Mumbai. 
	
	\noindent Email: {\tt prasad.dipendra@gmail.com}
	
\end{document}